\documentclass[a4paper,twoside]{amsart}

\usepackage{a4wide}
\usepackage{amsthm}
\usepackage{amsmath}
\usepackage{amsfonts}
\usepackage{amssymb}
\usepackage[all]{xy}
\usepackage{mathrsfs}
\usepackage{stmaryrd}
\usepackage{oldgerm}
\usepackage[english]{babel}
\usepackage[normalem]{ulem}
\usepackage{esvect}
\usepackage[unicode]{hyperref}
\usepackage{bookmark}
\usepackage[utf8]{inputenc}
\usepackage{breakurl}
\usepackage{xcolor}
\usepackage{comment}

\pagestyle{plain}

\newtheorem{theo}[subsection]{Theorem}
\newtheorem{prop}[subsection]{Proposition}
\newtheorem{coro}[subsection]{Corollary}
\newtheorem{lemm}[subsection]{Lemma}
\theoremstyle{definition}
\newtheorem*{defi}{Definition}
\newtheorem*{nota}{Notation}
\newtheorem{rema}[subsection]{Remark}

\def\boxit#1#2{\setbox1=\hbox{\kern#1{#2}\kern#1}%
\dimen1=\ht1 \advance\dimen1 by #1
\dimen2=\dp1 \advance\dimen2 by #1
\setbox1=\hbox{\vrule height\dimen1 depth\dimen2\box1\vrule}%
\setbox1=\vbox{\hrule\box1\hrule}%
\advance\dimen1 by .4pt \ht1=\dimen1
\advance\dimen2 by .4pt \dp1=\dimen2 \box1\relax}

%

\DeclareMathOperator{\Mod}{\mathbf{Mod}} 
\DeclareMathOperator{\Rep}{\mathbf{Rep}} 

%
%

\DeclareMathOperator{\Id}{\mathsf{Id}} 
\DeclareMathOperator{\Hom}{\mathsf{Hom}}
\DeclareMathOperator{\End}{\mathsf{End}}

\DeclareMathOperator{\Ext}{\mathsf{Ext}}
\DeclareMathOperator{\Tor}{\mathsf{Tor}}
\DeclareMathOperator{\Ker}{\mathsf{Ker}}
\DeclareMathOperator{\im}{\mathsf{Im}}


\DeclareMathOperator{\Spd}{\mathsf{Spd}}


\DeclareMathOperator{\et}{\acute{e}t}

\DeclareMathOperator{\cris}{cris}

\DeclareMathOperator{\st}{st} 

\DeclareMathOperator{\dR}{dR}
\DeclareMathOperator{\HT}{HT}

\renewcommand{\H}{\mathsf{H}}

\DeclareMathOperator{\dd}{d} 
\DeclareMathOperator{\rg}{\mathsf{rk}} 
\DeclareMathOperator{\Sym}{\mathsf{Sym}} 

\DeclareMathOperator{\Frac}{\mathsf{Frac}} 
\DeclareMathOperator{\Gal}{\mathsf{Gal}} 
\DeclareMathOperator{\Tr}{\mathsf{Tr}} 
\DeclareMathOperator{\tr}{\mathsf{Tr}} 

\DeclareMathOperator{\Fil}{\mathsf{Fil}}
\DeclareMathOperator{\gr}{\mathsf{gr}}

\DeclareMathOperator{\Mat}{\mathsf{M}}
\DeclareMathOperator{\GL}{\mathsf{GL}}

\DeclareMathOperator{\I}{I} 

\DeclareMathOperator{\FF}{\mathbf{F}} 
\DeclareMathOperator{\NN}{\mathbf{N}} 
\DeclareMathOperator{\ZZ}{\mathbf{Z}} 
\DeclareMathOperator{\QQ}{\mathbf{Q}} 
\DeclareMathOperator{\RR}{\mathbf{R}} 

\renewcommand{\AA}{\mathbf{A}}
\DeclareMathOperator{\EE}{\mathbf{E}}
\DeclareMathOperator{\BB}{\mathbf{B}}

\DeclareMathOperator{\W}{\mathsf{W}} 

\DeclareMathOperator{\isomto}{\overset{\sim}{\to}}

\DeclareMathOperator{\A}{\mathsf{A}}
\DeclareMathOperator{\B}{\mathsf{B}}

\DeclareMathOperator{\D}{\mathsf{D}}

\renewcommand{\L}{\mathsf{L}}
\renewcommand{\l}{\mathsf{l}}


\DeclareMathOperator{\free}{\mathsf{f}}
\DeclareMathOperator{\pfree}{\mathsf{pf}}
\DeclareMathOperator{\pr}{\mathsf{pr}}

\DeclareMathOperator{\Sen}{\mathsf{Sen}}
\DeclareMathOperator{\dif}{\mathsf{dif}}
\DeclareMathOperator{\reg}{\mathsf{reg}}
\DeclareMathOperator{\ur}{\mathsf{ur}}

\newcommand{\cf}{\textit{cf }}
\newcommand{\ie}{\textit{i.e. }}

\hoffset=-50pt
\voffset=-50pt
\textwidth=489pt
\textheight=740pt
\oddsidemargin=33pt
\evensidemargin=32pt
\marginparwidth=11mm
\marginparsep=2mm

\newcommand{\abs}[1]{\lvert#1\rvert}



\newcommand{\eq}[1][r]
 {\ar@<-3pt>@{-}[#1]
  \ar@<-1pt>@{}[#1]|<{}="gauche"
  \ar@<+0pt>@{}[#1]|-{}="milieu"
  \ar@<+1pt>@{}[#1]|>{}="droite"
  \ar@/^2pt/@{-}"gauche";"milieu"
  \ar@/_2pt/@{-}"milieu";"droite"}

\newcommand{\incl}[1][r]
  {\ar@<-0.2pc>@{^(-}[#1] \ar@<+0.2pc>@{-}[#1]}

\newcommand{\immouv}[1][r]
   {\ar@{}[#1] |*[o][F]{\hbox{%
         \vrule width 1.5mm height 0pt depth 0pt%
         \vrule width 0pt height .75mm depth .75mm%
         }}
     \ar@{^{(}->}[#1]}

\def \ooverline #1#2#3%
{\mkern #1mu \overline{\mkern -#1mu #3 \mkern -#2mu }\mkern #2mu }

\def \Kbar {\ooverline40K{\mkern 1mu}{}}

\def \uunderline #1#2#3%
{\mkern #1mu \underline{\mkern -#1mu #3 \mkern -#2mu }\mkern #2mu }


\title[Multivariable de Rham representations]{Multivariable de Rham representations, Sen theory and $p$-adic differential equations}%
\author[O. Brinon \& B. Chiarellotto \& N. Mazzari]{O. Brinon \& B. Chiarellotto \& N. Mazzari}%
\address{IMB, Universit\'e de Bordeaux, 351, cours de la Lib\'eration, 33405 Talence, France}%
\address{Dipartimento di Matematica "Tullio Levi-Civita", Università degli Studi di Padova, Via Trieste, 63 - 35121 Padova}%
\email{olivier.brinon@math.u-bordeaux.fr}
\email{chiarbru@math.unipd.it}
\email{mazzari@math.unipd.it}


\begin{document}

\maketitle

\begin{abstract}
Let $K$ be a complete valued field extension of $\QQ_p$ with perfect residue field. We consider $p$-adic representations of a finite product $G_{K,\Delta}=G_K^\Delta$ of the absolute Galois group $G_K$ of $K$. This product appears as the fundamental group of a product of diamonds. We develop the corresponding $p$-adic Hodge theory by constructing analogues of the classical period rings $\B_{\dR}$ and $\B_{\HT}$, and multivariable Sen theory. In particular, we associate to any $p$-adic representation $V$ of $G_{K,\Delta}$ an integrable $p$-adic differential system in several variables $\D_{\dif}(V)$. We prove that this system is trivial if and only if the representation $V$ is de Rham. Finally, we relate this differential system  to the multivariable overconvergent $(\varphi,\Gamma)$-module of $V$ constructed by Pal and Z\'abr\'adi in \cite{PZ}, along classical Berger's construction \cite{BE}.
\end{abstract}

\begin{center}
\today
\end{center}

\setcounter{tocdepth}{1}
\tableofcontents

\section{Introduction}

Let $K$ be a finite extension of $\QQ_p$, $G_K$ its absolute Galois group, and $\Delta$ a finite set. After the work of Scholze and Weinstein \cite{W}, \cite{SW}, the finite product $G_{K,\Delta}:=G_K^\Delta$ can be understood as the fundamental group of a diamond: the product $\times_\Delta\Spd\QQ_p=\Spd\QQ_p\times\cdots\times\Spd\QQ_p $ (this diamond is \emph{not} associated to a perfectoid space). It is then natural to consider $p$-adic representations for this fundamental group, viewed as coefficients for the diamond  $\times_\Delta\Spd\QQ_p$. Hereafter, we work in a slightly more general context: we assume that $K$ is a complete discretely valued extension of $\QQ_p$ with perfect residue field.

Of course the study of such representations can be considered in the {\it classical} framework of $p$-adic Hodge theory as developed after the work of Fontaine (\cf \cite{Font04}, \cite{Sen}, \cite{Bri2006}), \ie in terms of period rings and $(\varphi,\Gamma)$-modules. This second approach has been pursued in recent
works by Z\'abr\'adi, Pal, Kedlaya and Carter (\cite{Z1}, \cite{Z2}, \cite{PZ} and \cite{CKZ}) in terms of multivariable (multivariate) $(\varphi,\Gamma)$-modules associated to $p$-adic representations of $G_{K, \Delta}$.

In this article, we develop a multivariable Sen theory in this framework, and construct multivariable
(multivariate) $p$-adic period rings $\B_{\dR,\Delta}$ and $\B_{\HT,\Delta}$. To any $p$-adic representation $V$ of $G_{K,\Delta}$, we associate an integrable differential system $\D_{\dif}(V)$ in several ($=\#\Delta$) variables. We prove that this system is trivial (\ie has a full set of solutions) if and only if the $G_{K, \Delta}$-representation $V$ is de Rham. Moreover, we relate the differential module $\D_{\dif}(V)$ with overconvergent $(\varphi,\Gamma)$-module arising from Pal-Z\'abr\'adi theory (\cf \cite{PZ}).

\medskip

Before giving a precise description of the content of this article, we make some remarks and thoughts for future developments. First of all, we note that the theory we develop here does not fit in the framework of relative $p$-adic Hodge theory as studied by Andreatta, Brinon (\cite{And06}, \cite{AB}, \cite{AB2}, \cite{B1}). In fact the geometric base for our objects is merely a finite discrete space (\cf remark \ref{remaRR}). Secondly, this article should be seen as a first step towards the introduction of the multivariable periods rings $\B_{\cris}$ and $\B_{\st}$, and eventually, a step in the direction of full analogue of Berger's results via the theory of $p$-adic differential systems in several variables (as was foreseen in \cite{PZ}).

\medskip

We now describe more precisely the content of this work. In the second section we fix some notations and recall useful results. In particular we recall classical Sen theory of $C$-representations, where $C$ is the completion of an algebraic closure of $K$, and introduce the completion $C_\Delta$ of the tensor product of the $\#\Delta$-fold tensor product of $C$ over the maximal unramified subextension of $K$. In the third section we study free $C_\Delta$-representations of finite rank of $G_{K,\Delta}$: more precisely, we develop an analogue of Sen theory in this context. Classically (\ie when $\#\Delta=1$) Sen theory is an equivalence of categories between $C$-representations of $G_K$ and $K_\infty$-representations of $\Gamma_K=\Gal(K_\infty/K)$, where $K_\infty$ is the cyclotomic extension of $K$. Our result in the multivariable context is theorem \ref{theoequivSenCDelta}: there is an equivalence of categories between that of free $C_\Delta$-representations of finite rank of $G_{K,\Delta}$ and that of free
$K_{\Delta,\infty}$-representation of finite rank of $\Gamma_{K,\Delta}$, where $K_{\Delta,\infty}$ is the (non completed) $\#\Delta$-fold tensor product of $K_\infty$ over the maximal unramified subextension of $K$. To the latter we can associate generalized Sen operators (describing the infinitesimal action of $\Gamma_{K,\Delta}$) and develop a Hodge-Tate theory (\cf corollaries \ref{coroSenop2} and \ref{coroSenop3}). In the fourth section we introduce the period rings $\B_{\dR,\Delta}$ and  $\B_{\HT,\Delta}$ and the corresponding de Rham and Hodge-Tate representations: in particular we show that there are functors $\D_{\dR}$ and $\D_{\HT}$ having the expected properties (propositions \ref{propalphainj}, \ref{propdRHT} and \ref{propdRHT2}), in particular that being de Rham implies to be Hodge-Tate. In the fifth section we prove the multivariable analogue of the work of Fontaine in \cite{Font04}: namely Sen theory for $\B_{\dR,\Delta}$-representations. To do this we follow \cite{AB2}: the central result (theorem \ref{theoSenBdR}) is that the category of free $\B_{\dR,\Delta}^+$-representations of finite rank of $G_{K,\Delta}$ is equivalent to that of free $\l_{\dR,\Delta}^+=K_{\Delta,\infty}[\![t_\alpha]\!]_{\alpha\in\Delta}$-representations of finite rank of $\Gamma_{K,\Delta}$ (where $t_\alpha$ is a $p$-adic $2i\pi$ corresponding to the action of the factor of index $\alpha$ in $G_{K,\Delta}$). By inverting $\prod\limits_{\alpha\in\Delta}t_\alpha$, we deduce the analogue for $\B_{\dR, \Delta}$ (theorem \ref{theoSenBdR2}). The upshot is that we can associate a free module $\D_{\dif}(V)$ with a regular, integrable connection in $\#\Delta$ variables with coefficients in $\l_{\dR,\Delta}=\l^+_{\dR, \Delta}\big[\frac{1}{t_\alpha}\big]_{\alpha\in\Delta}$ to any $p$-adic representation $V$ of $G_{K, \Delta}$. This is the analogue to that introduced by Fontaine in \cite{Font04}, and used by Berger in \cite{BE}. In particular, we show that a $p$-adic representation $V$ of $G_{K,\Delta}$ is de Rham if and only if the associated module with connection $\D_{\dif}(V)$ is trivial (proposition \ref{propdifdR}), and relate our construction to that of overconvergent $(\varphi,\Gamma)$-modules developed by Pal-Z\'abr\'adi (\cf \cite{PZ}) and Carter-Kedlaya-Z\'abr\'adi (\cf \cite{CKZ}) by an analogue of \cite[Corollaire 5.8]{BE} (\cf theorem \ref{theocompphiGammaDdif}).

\begin{rema}
There is little doubt that a general Tate-Sen formalism (such as that of \cite{AB}) does exist in the multivariable case, and that could be applied to families of multivariable representations (as for \cite[\S3]{BC} and \cite[Proposition 5.2.1]{BC}). This said, we proceed here with Tate-Sen descent by hand (this already contains most of the necessary ideas).
\end{rema}

\medskip

B. Chiarellotto and N. Mazzari are supported by the grant MIUR-PRIN 2017 “Geometric, algebraic and analytic methods in arithmetic”. O. Brinon thanks the Department of Mathematics of the University of Padua for organizing his visits during a pandemic lull.

\section{Notations}

Let $K$ be a complete discrete valuation field of characteristic $0$, with perfect residue field $k$ of characteristic $p>0$. Fix an algebraic closure $\Kbar$ of $K$ and let $G_K=\Gal(\Kbar/K)$. Denote by $v$ the valuation on $K$ normalized by $v(p)=1$. It extends uniquely to a valuation of $\Kbar$ : let $C$ be the completion of the latter. If $F$ is a subextension of $C/K$, we will denote by $\mathcal{O}_F$ (resp. $\mathfrak{m}_F$) its ring of integers (resp. its maximal ideal). By continuity, the action of $G_K$ extends to $C$. Fix $\varepsilon=(\varepsilon^{(n)})_{n\in\NN}$ a compatible system of primitive $p^n$-th roots of the unity (\ie such that $\varepsilon^{(0)}=1$, $\varepsilon^{(1)}\neq1$ and $(\varepsilon^{(n+1)})^p=\varepsilon^{(n)}$ for all $n\in\NN$). For each $n\in\NN$, put $K_n=K(\varepsilon^{(n)})$ and let $K_\infty=\bigcup\limits_{n=0}^\infty K_n$ be the cyclotomic extension, and $L=\widehat{K_\infty}$ its completion with respect to $v$. Put $H_K=\Gal(\Kbar/K_\infty)$ and $\Gamma_K=\Gal(K_\infty/K)$. The cyclotomic character $\chi\colon\Gamma_K\to\ZZ_p^\times$ is characterized by $\gamma\big(\varepsilon^{(n)}\big)=\big(\varepsilon^{(n)}\big)^{\chi(\gamma)}$ for all $\gamma\in\Gamma_K$: it induces an continuous isomorphism between $\Gamma_K$ and an open subgroup of $\ZZ_p^\times$. We still denote $\chi$ the composite $G_K\to\Gamma_K\xrightarrow{\chi}\ZZ_p^\times$. In what follows, cohomology will always refer to \emph{continuous} cohomology.

\medskip

Using ramification estimates, Tate proved in \cite{Tate} that $\H^i(H_K,C)=\begin{cases}L &\text{ if }i=0\\ 0 &\text{ if }i>0\end{cases}$ and constructed the so-called Tate's normalized traces $(R_n\colon L\to K_n)_{n\geq n_K}$ (for some integer $n_K\in\NN$), that he used to show that $\dim_K\H^i(\Gamma_K,C^{H_K})=\begin{cases}1 &\text{ if }i\in\{0,1\}\\ 0 &\text{ if }i>1\end{cases}$, so that $\dim_K\H^i(G_K,C)=\begin{cases}1 &\text{ if }i\in\{0,1\}\\ 0 &\text{ if }i>1\end{cases}$.

Recall that Tate's normalized trace map $R_n\colon L\to K_n$ induces the map $x\mapsto\frac{1}{p^{m-n}}\tr_{K_m/K_n}(x)$ on $K_m$ for all $m\geq n_K$.

\begin{prop}\label{propRn}
(\cf \cite[\S3]{Tate}, \cite[\S3.1 \& Proposition 4.1.1]{BC} and \cite[Proposition 14.1.6]{Hawaii}) These maps have the following properties:
\begin{itemize}
\item[(i)] $R_n$ is a $K_n$-linear projector onto $K_n$: put $X_n=\Ker(\Id-R_n)\subset L$;
\item[(ii)] $R_n$ commutes to the action of $\Gamma_K$;
\item[(iii)] $(\forall c_2\in\RR_{>0})\,(\forall x\in L)\,v_p(R_n (x))\geq v_p(x)-c_2$ (in particular $R_n$ is continuous);
\item[(iv)] $(\forall x\in L)\,\lim\limits_{n\to\infty}R_n(x)=x$;
\item[(v)] for all $c_3>\frac{1}{p-1}$, there exists $n_K^\prime\geq n_K$ such that for all $n\geq n_K^\prime$ and $\gamma\in\Gamma_K$ such that $v_p(1-\chi(\gamma))\leq n_K^\prime$, then $\gamma-1$ is invertible on $X_n$, and for all $x\in X_n$, we have $v_p((\gamma-1)^{-1}(x))\geq v_p(x)-c_3$ (in particular, $\gamma-1$ induces an homeomorphism from $X_n$ to itself).
\end{itemize}
\end{prop}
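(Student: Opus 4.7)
The plan is to define $R_n$ first on the dense subspace $K_\infty = \bigcup_{m \geq n_K} K_m \subset L$ and then extend by continuity, the core difficulty being a uniform ramification estimate. At finite level, for $m \geq n \geq n_K$ set $R_n^{(m)}\colon K_m \to K_n$, $x \mapsto \frac{1}{p^{m-n}}\tr_{K_m/K_n}(x)$. For $m \geq n_K$ the extension $K_{m+1}/K_m$ is totally ramified of degree $p$, so $\tr_{K_{m+1}/K_m}$ acts as multiplication by $p$ on $K_m$; this forces $R_n^{(m+1)}\big|_{K_m} = R_n^{(m)}$. Each $R_n^{(m)}$ is visibly a $K_n$-linear projector onto $K_n$, and commutes with $\Gamma_K$ since the trace does, so both (i) and (ii) will hold on $K_\infty$ and, once continuity is established, on $L$.

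The main obstacle is the uniform continuity bound (iii). I would prove it by controlling the different $\mathfrak{d}_{K_m/K_n}$ in the cyclotomic tower: its valuation grows like $m-n$ with an error controlled in terms of $\tfrac{1}{p-1}$, from which one extracts $v_p\bigl(\tr_{K_m/K_n}(x)\bigr) \geq v_p(x) + (m-n) - c_2$ for all $x \in K_m$ and all $m \geq n \geq n_K$ (after possibly enlarging $n_K$). Dividing by $p^{m-n}$ gives the equicontinuous bound $v_p(R_n^{(m)}(x)) \geq v_p(x) - c_2$, uniform in $m$. Density of $K_\infty$ in $L$ combined with this uniform bound then allows the $R_n^{(m)}$ to glue into a continuous $K_n$-linear projector $R_n\colon L \to K_n$ commuting with $\Gamma_K$, satisfying (iii) on all of $L$. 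Property (iv) falls out immediately: for $x \in K_m$ one has $R_{n'}(x) = x$ as soon as $n' \geq m$, and the uniform estimate propagates this pointwise convergence from $K_\infty$ to $L$.

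For (v), I would exploit the $\Gamma_K$-stable decomposition $L = K_n \oplus X_n$ supplied by (i) and (ii). The strategy is to show that $\gamma - 1$ admits a uniform lower bound $v_p((\gamma - 1)(y)) \geq v_p(y) - c_3$ on each $X_n \cap K_m$ whenever $v_p(1 - \chi(\gamma)) \leq n_K'$, via a ramification estimate parallel to the one used for (iii): a suitable $K_n$-basis of $X_n \cap K_m$ (coming from traces of roots of unity) lets one compute the action of $\gamma$ explicitly and bound it below in terms of $\chi(\gamma) - 1$. With this bound, $\gamma - 1$ is injective on $X_n \cap K_m$ with inverse of operator norm at most $p^{c_3}$; passing to the limit in $m$ yields invertibility on $X_n$ with the same bound, hence the asserted homeomorphism. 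The sharp $\tfrac{1}{p-1}$ lower bound on $c_3$ reflects the minimal valuation attained in the different, and obtaining this sharp form is the second technical hurdle after (iii).
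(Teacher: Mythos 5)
The paper does not actually prove this proposition: it is presented as a recall, with the references \cite[\S3]{Tate} and \cite[\S3.1]{BC} standing in for the proof, so there is no in-paper argument to compare against. Your outline — compatible finite-level projectors $R_n^{(m)}$ on $K_m$, the equicontinuity bound extracted from the ramification estimate on the different of $K_m/K_n$, gluing to $L$ by density, and pointwise convergence in (iv) from the exhaustion $K_\infty=\bigcup K_m$ — does reproduce the classical route of Tate, and the vagueness in (iii) is acceptable for a sketch since the growth of $v_p(\mathfrak{d}_{K_m/K_n})$ is indeed the technical heart there. (Note also that the quantifier $\forall c_2\in\RR_{>0}$ in (iii) can only be read as $\exists c_2$, as the paper itself does when it later fixes $c_2=c_3=1$; you silently and correctly adopt that reading.)

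There is, however, a direction error in your treatment of (v) that breaks the logic as written. You propose to show a ``uniform lower bound $v_p((\gamma-1)(y))\geq v_p(y)-c_3$ on $X_n\cap K_m$'' and then claim that ``with this bound, $\gamma-1$ is injective ... with inverse of operator norm at most $p^{c_3}$.'' A lower bound on $v_p((\gamma-1)(y))$ is precisely the continuity estimate for $\gamma-1$: it is automatically true, it is trivially satisfied when $(\gamma-1)(y)=0$, and it carries no injectivity information whatsoever. The conclusion the proposition demands, $v_p((\gamma-1)^{-1}(x))\geq v_p(x)-c_3$, unwinds (set $y=(\gamma-1)^{-1}(x)$) to the \emph{opposite} inequality $v_p((\gamma-1)(y))\leq v_p(y)+c_3$ for $y\in X_n$: one must show that $\gamma-1$ cannot shrink $|y|$ by more than a factor $p^{-c_3}$. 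It is this upper bound on the valuation of $(\gamma-1)(y)$ that yields injectivity on each $X_n\cap K_m$, bijectivity on these finite-dimensional pieces, a uniform bound on the inverse, and hence (by passing to the completion) the homeomorphism on $X_n$. Your remaining ingredients for (v) — computing $\gamma$ on a $K_n$-basis of $X_n\cap K_m$ built from roots of unity and tracing the $\tfrac{1}{p-1}$ threshold back to the cyclotomic different — are the right ones, but the inequality you actually need to establish runs the other way, and as stated your implication to invertibility does not follow.
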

In what follows, we will use the previous properties with $c_2=c_3=1$. This is certainly not optimal, but for technical reasons, it is much more convenient to make computations and work with $\mathcal{O}_{C_\Delta}/(p)$ (\cf \textit{infra}) rather than with quotients by elements of non integral valuation. 

\medskip

Let $ d\in\NN_{>0}$. Based on the work of Tate, Sen showed in \cite{Sen} that the set $\H^1(H_K,\GL_d(C))$ is trivial, so that the inflation map
$$\H^1(\Gamma_K,\GL_d(L))\to\H^1(G_K,\GL_d(C))$$
is bijective. He also proved that the natural map
$$\H^1(\Gamma_K,\GL_d(K_\infty))\to\H^1(\Gamma_K,\GL_d(L))$$
is bijective. This means that if $W$ is a $C$-representation of $G_K$ (\cf \S \ref{SenC}), there exists a $K_\infty$-representation $W_\infty$ of $\Gamma_K$ such that $W\simeq C\otimes_{K_\infty}W_\infty$ as $C[G_K]$-modules.

\medskip

\textcolor{red}{As mentioned in the introduction, the first aim of this note is to generalize these results to the case where $G_K$ is replaced by a finite power of $G_K$ (along the lines of \cite{PZ}).}

\medskip

Let $\Delta$ be a finite set, and put $\delta=\#\Delta$. Put $G_{K,\Delta}=\prod\limits_{\alpha\in\Delta}G_K$, $H_{K,\Delta}=\prod\limits_{\alpha\in\Delta}H_K$, and $\Gamma_{K,\Delta}=\prod\limits_{\alpha\in\Delta}\Gamma_K$. We have an exact sequence
$$1\to H_{K,\Delta}\to G_{K,\Delta}\xrightarrow{\chi_\Delta}\Gamma_{K,\Delta}\to1.$$
The morphism $\chi_\Delta=\prod\limits_{\alpha\in\Delta}\chi$ identifies $\Gamma_{K,\Delta}$ with an open subgroup of $(\ZZ_p^\times)^\Delta$. Il $\alpha\in\Delta$, we denote by $G_{K,\alpha}$ the image of the group homomorphim $\iota_\alpha\colon G_K\to G_{K,\Delta}$ that maps $g$ to the element whose component of index $\alpha$ is $g$ and the others are $\Id_{\Kbar}$. The groups $H_{K,\alpha}$ and $\Gamma_{K,\alpha}$ are defined similarly.

\medskip

\begin{nota}
(1) If $\underline{n}=(n_\alpha)_{\alpha\in\Delta}\in\ZZ^\Delta$, we define a character $\chi_\Delta^{\underline{n}}\colon G_{K,\Delta}\to\ZZ_p^\times$ by
$$\chi_\Delta^{\underline{n}}\big((g_\alpha)_{\alpha\in\Delta}\big)=\prod\limits_{\alpha\in\Delta}\chi(g_\alpha)^{n_\alpha}.$$
This provides a $\ZZ_p$-representation $\ZZ_p(\underline{n})$ of $G_{K,\Delta}$. More generally, if $M$ is a $\ZZ_p$-module endowed with an action of $G_{K,\Delta}$, we put $M(\underline{n})=M\otimes_{\ZZ_p}\ZZ_p(\underline{n})$, endowed with the diagonal action of $G_{K,\Delta}$.

\noindent
(2) If $F$ is a closed subextension of $C/F_0$ (where $F_0:=\W(k)\big[\frac{1}{p}\big]$), let $\mathcal{O}_{F_\Delta}$ be the $p$-adic completion of the tensor product $\mathcal{O}_F\otimes_{\W(k)}\cdots\otimes_{\W(k)}\mathcal{O}_F$ (where the copies of $\mathcal{O}_F$ are indexed by $\Delta$), and $F_\Delta=\mathcal{O}_{F_\Delta}\big[\frac{1}{p}\big]$. Observe that when $F/F_0$ is finite, $\mathcal{O}_F$ is a free $\W(k)$-module of finite rank, so that the tensor product $\mathcal{O}_F\otimes_{\W(k)}\cdots\otimes_{\W(k)}\mathcal{O}_F$ is $p$-adically separated and complete, so that $F_\Delta$ is nothing but the tensor product $F^{\otimes\Delta}=F\otimes_{F_0}\cdots\otimes_{F_0}F$ (where the copies of $F$ are indexed by $\Delta$).
\end{nota}

\begin{rema}
(1) The ring $\mathcal{O}_{F_\Delta}$ depends on $k$. This dependence is not indicated in the notation so as not to make it heavier.

\noindent
(2) When it is not mentioned, tensor products are taken over $\W(k)$.
\end{rema}

The group $G_{K_\Delta}$ naturally acts on $\mathcal{O}_{C_\Delta}$ and $C_\Delta$. Note also that $\mathcal{O}_{L_\Delta}$ is naturally a $\mathcal{O}_{K_\infty}^{\otimes\Delta}$-algebra.


\section{Multivariable classical Sen theory}

\subsection{The cohomology of \texorpdfstring{$C_\Delta$}{C\unichar{"005F}\unichar{"0394}}}\label{sectcohoCDelta}

\begin{lemm}\label{lemmcohoHC}
Let $r\in\NN_{>0}$. Then the cokernel of $\mathcal{O}_{L_\Delta}/(p^r)\to\H^0(H_{K,\Delta},\mathcal{O}_{C_\Delta}/(p^r))$ is killed by $\mathfrak{m}_{K_\infty}^{\otimes\Delta}$. If $i>0$, the group $\H^i(H_{K,\Delta},\mathcal{O}_{C_\Delta}/(p^r))$ is killed by $\mathfrak{m}_{K_\infty}^{\otimes\Delta}$.
\end{lemm}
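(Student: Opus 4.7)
The plan is to induct on $\delta = \#\Delta$, with base case $\delta = 1$ being the classical theorem of Tate (\cite{Tate}), which gives precisely the statements of the lemma for a single copy of $H_K$ acting on $\mathcal{O}_C/(p^r)$.

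For the inductive step, I would fix $\alpha_0 \in \Delta$, set $\Delta' = \Delta \setminus \{\alpha_0\}$, and decompose $H_{K,\Delta} = H_{K,\Delta'} \times H_{K,\alpha_0}$. The organizing tool is the Hochschild-Serre spectral sequence for the closed normal subgroup $H_{K,\alpha_0}$:
$$E_2^{i,j} = \H^i\bigl(H_{K,\Delta'}, \H^j(H_{K,\alpha_0}, \mathcal{O}_{C_\Delta}/(p^r))\bigr) \Rightarrow \H^{i+j}\bigl(H_{K,\Delta}, \mathcal{O}_{C_\Delta}/(p^r)\bigr).$$
Reducing mod $p^r$ removes the need for $p$-adic completion in the tensor product, so $\mathcal{O}_{C_\Delta}/(p^r) \simeq \mathcal{O}_{C_{\Delta'}}/(p^r) \otimes_{\W_r(k)} \mathcal{O}_C/(p^r)$, with $H_{K,\alpha_0}$ acting only on the second factor. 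A Künneth-type identification, available because $\mathcal{O}_{C_{\Delta'}}/(p^r)$ is flat over $\W_r(k)$ (inherited from flatness of $\mathcal{O}_C$ over $\W(k)$), should give
$$\H^j(H_{K,\alpha_0}, \mathcal{O}_{C_\Delta}/(p^r)) \simeq \mathcal{O}_{C_{\Delta'}}/(p^r) \otimes_{\W_r(k)} \H^j(H_K, \mathcal{O}_C/(p^r)),$$
and the base case then says that this is almost equal to $\mathcal{O}_{C_{\Delta'}}/(p^r) \otimes \mathcal{O}_L/(p^r)$ for $j = 0$ and almost zero for $j > 0$, with annihilator $\mathfrak{m}_{K_\infty}$ placed in the $\alpha_0$ slot.

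Feeding this back into the spectral sequence, I would apply the inductive hypothesis to $H_{K,\Delta'}$ acting on $\mathcal{O}_{C_{\Delta'}}/(p^r)$, with the flat trivial factor $\mathcal{O}_L/(p^r)$ pulled through by a further Künneth step. This yields almost equality with $\mathcal{O}_{L_{\Delta'}}/(p^r) \otimes \mathcal{O}_L/(p^r) \simeq \mathcal{O}_{L_\Delta}/(p^r)$ in degree $0$, and almost vanishing in higher degrees, now with annihilator $\mathfrak{m}_{K_\infty}^{\otimes \Delta'}$ in the $\Delta'$ slots. Combining the two annihilations on the $E_2$-page and passing to the abutment produces the lemma with annihilator $\mathfrak{m}_{K_\infty}^{\otimes \Delta'} \cdot \mathfrak{m}_{K_\infty} = \mathfrak{m}_{K_\infty}^{\otimes \Delta}$.

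The main obstacle is justifying the Künneth-type identification for continuous group cohomology with non-finitely-generated topological coefficients. I would argue at the cochain level: since $\mathcal{O}_{C_{\Delta'}}/(p^r)$ is flat over $\W_r(k)$ and tensoring with a flat module is exact, the complex $\mathcal{O}_{C_{\Delta'}}/(p^r) \otimes_{\W_r(k)} C^\bullet_{\cont}(H_K, \mathcal{O}_C/(p^r))$ computes $\mathcal{O}_{C_{\Delta'}}/(p^r) \otimes_{\W_r(k)} \H^\bullet(H_K, \mathcal{O}_C/(p^r))$. The remaining point is to identify this complex with $C^\bullet_{\cont}(H_K, \mathcal{O}_{C_\Delta}/(p^r))$, a topological bookkeeping check made tractable by the $p^r$-torsion of the coefficients.
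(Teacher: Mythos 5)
Your proposal is sound and has the same overall architecture as the paper's: peel off one factor of $H_{K,\Delta}$ at a time and combine the almost-vanishings through the Hochschild--Serre spectral sequence. The difference is purely in how the single-factor cohomology $\H^j(H_{K,\alpha_0},\,\cdot\,)$ is controlled. You reduce it to the base case $\delta=1$ (Tate) by a K\"unneth step, justified by noting that since $\mathcal{O}_{C_\Delta}/(p^r)$ is discrete and $H_K$ is profinite, the continuous cochain complex is the filtered colimit $\varinjlim_U M^{H_K^n/U}$ over open normal $U$, which commutes with the flat base change $\mathcal{O}_{C_{\Delta'}}/(p^r)\otimes_{\W_r(k)}(-)$ (and flat over the Artinian local ring $\W_r(k)$ is free, so taking cohomology also commutes). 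The paper instead avoids K\"unneth altogether: it writes the coefficient module as $\varinjlim_F \mathcal{O}_{F,\alpha}\otimes\mathcal{O}_{C_{\Delta\setminus\{\alpha\},\Delta'}}/(p^r)$ over finite Galois subextensions $F/K_\infty$, observes that $H_{K,\alpha}$ then acts through the finite quotient $\Gal(F/K_\infty)$, and applies the trace argument directly (Tate's almost-surjectivity of $\Tr_{F/K_\infty}$ plus Olsson's Lemma 3.1) to each level before passing to the limit. Both devices ultimately feed on the same Tate input $\mathfrak{m}_{K_\infty}\subset\Tr_{F/K_\infty}(\mathcal{O}_F)$ and produce the same annihilator; yours front-loads the flatness/colimit bookkeeping into a one-time K\"unneth identification, while the paper's version keeps the intermediate rings finite over $\W(k)$ and thus sidesteps any question about tensoring non-finitely-generated modules through continuous cochains. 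Your flagged "main obstacle" is genuinely the only delicate point, and the discrete-coefficient/colimit description of the cochain complex I sketched above closes it.
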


\begin{proof}
If $\Delta^\prime\subset\Delta$, we denote by $\mathcal{O}_{C_{\Delta,\Delta^\prime}}$ the $p$-adic completion of $\mathcal{O}_{C_{\Delta^\prime}}\otimes_{\W(k)}\mathcal{O}_{L_{\Delta\setminus\Delta^\prime}}$. In particular, we have $\mathcal{O}_{C_{\Delta,\varnothing}}=\mathcal{O}_{L_\Delta}$ and $\mathcal{O}_{C_{\Delta,\Delta}}=\mathcal{O}_{C_\Delta}$.

\medskip

We proceed componentwise: let $\Delta^\prime\subset\Delta$ and $\alpha\in\Delta\setminus\Delta^\prime$, and consider the action of $H_{K,\alpha}$ on $\mathcal{O}_{C_{\Delta,\Delta^\prime\cup\{\alpha\}}}/(p^r)$. The topology on the latter is discrete: we have
$$\H^i(H_{K,\alpha},\mathcal{O}_{C_{\Delta,\Delta^\prime\cup\{\alpha\}}}/(p^r))=\varinjlim\limits_F\H^i(H_{K,\alpha},\mathcal{O}_{F,\alpha}\otimes_{\W(k)}\mathcal{O}_{C_{\Delta\setminus\{\alpha\},\Delta^\prime}}/(p^r))$$
where $F$ runs among the finite Galois subextensions of $\Kbar/K_\infty$. Recall that by \cite[\S3.2, Proposition 9]{Tate}, we have $\mathfrak{m}_{K_\infty}\subset\Tr_{F/K_\infty}(\mathcal{O}_F)$ for every such $F$. This implies that $\H^i(H_{K,\alpha},\mathcal{O}_{F,\alpha}\otimes_{\W(k)}\mathcal{O}_{C_{\Delta\setminus\{\alpha\},\Delta^\prime}}/(p^r))$ is killed by $\mathfrak{m}_{K_\infty,\alpha}$ for all $F$ (\cf \cite[Lemma 3.1]{Olsson}): so does $\H^i(H_{K,\alpha},\mathcal{O}_{C_{\Delta,\Delta^\prime\cup\{\alpha\}}}/(p^r))$ for $i>0$. If $x\in\H^0(H_{K,\alpha},\mathcal{O}_{F,\alpha}\otimes_{\W(k)}\mathcal{O}_{C_{\Delta\setminus\{\alpha\},\Delta^\prime}}/(p^r))$ and $\eta\in\mathfrak{m}_{K_\infty,\alpha}$, let $y\in\mathcal{O}_F$ such that $\Tr_{F/K_\infty}(y)=\eta$. Then $\eta x=\Tr_{F/K_\infty,\alpha}(xy)\in\mathcal{O}_{C_{\Delta,\Delta^\prime}}/(p^r)$, where $\Tr_{F/K_\infty,\alpha}\colon\mathcal{O}_{F,\alpha}\otimes_{\W(k)}\mathcal{O}_{C_{\Delta\setminus\{\alpha\},\Delta^\prime}}/(p^r))\to\mathcal{O}_{C_{\Delta,\Delta^\prime}}/(p^r)$ denotes the tensor product of $\Tr_{F/K_\infty}$ on the factor of index $\alpha$ with the identity on the other factors. 

\medskip

The lemma follows by applying the Hochschild-Serre spectral sequence finitely many times.
\end{proof}

\begin{theo}\label{theocohoHC}
We have $\H^i(H_{K,\Delta},C_\Delta)=\begin{cases}L_\Delta&\text{ if }i=0\\ 0&\text{ if }i>0\end{cases}$.
\end{theo}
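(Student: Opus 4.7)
\medskip

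\textbf{Plan.} The preceding lemma identifies everything modulo $p^r$ up to annihilation by the ideal $\mathfrak{m}_{K_\infty}^{\otimes\Delta}$. The strategy is to convert this into bounded-$p$-torsion statements about the continuous cohomology of $\mathcal{O}_{C_\Delta}$ via a Milnor exact sequence, and then invert $p$.

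The key preliminary observation is that every $\mathcal{O}_{K_\infty}^{\otimes\Delta}$-module annihilated by $\mathfrak{m}_{K_\infty}^{\otimes\Delta}$ is in fact killed by $p^\delta$. Indeed, one can pick $a\in\mathfrak{m}_{K_\infty}$ and an integer $N\geq 1$ with $a^N=pu$, $u\in\mathcal{O}_{K_\infty}^\times$ (e.g.\ $a=\varepsilon^{(1)}-1$ and $N=p-1$). Then $a^{\otimes\Delta}\in\mathfrak{m}_{K_\infty}^{\otimes\Delta}$, and since $p\in\W(k)$ pulls through the tensor product, $(a^{\otimes\Delta})^N=(a^N)^{\otimes\Delta}=(pu)^{\otimes\Delta}=p^\delta\cdot u^{\otimes\Delta}$, in which $u^{\otimes\Delta}$ is a unit. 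Combined with Lemma~\ref{lemmcohoHC}, this yields for every $r\geq 1$: first, $\H^i(H_{K,\Delta},\mathcal{O}_{C_\Delta}/(p^r))$ is killed by $p^\delta$ for $i\geq 1$; second, the cokernel $Q_r$ of $\mathcal{O}_{L_\Delta}/(p^r)\to\H^0(H_{K,\Delta},\mathcal{O}_{C_\Delta}/(p^r))$ is killed by $p^\delta$.

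Next I apply the Milnor exact sequence for continuous cohomology,
$$0\to\R^1\varprojlim_r\H^{i-1}(H_{K,\Delta},\mathcal{O}_{C_\Delta}/(p^r))\to\H^i_\cont(H_{K,\Delta},\mathcal{O}_{C_\Delta})\to\varprojlim_r\H^i(H_{K,\Delta},\mathcal{O}_{C_\Delta}/(p^r))\to 0.$$
For $i\geq 1$, both flanking terms are $p^\delta$-killed (the $\R^1\varprojlim$ of a system of $p^\delta$-killed modules is itself $p^\delta$-killed), so $\H^i_\cont(H_{K,\Delta},\mathcal{O}_{C_\Delta})$ has bounded $p$-torsion. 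For $i=0$ the derived-limit summand vanishes; applying $\varprojlim_r$ to $0\to\mathcal{O}_{L_\Delta}/(p^r)\to\H^0(\cdots)\to Q_r\to 0$, and using $\R^1\varprojlim_r\mathcal{O}_{L_\Delta}/(p^r)=0$ by Mittag-Leffler, yields an exact sequence $0\to\mathcal{O}_{L_\Delta}\to\H^0_\cont(H_{K,\Delta},\mathcal{O}_{C_\Delta})\to\varprojlim_r Q_r\to 0$ whose rightmost term is killed by $p^\delta$.

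To conclude, I invert $p$. Since $H_{K,\Delta}$ is compact, any continuous cochain $H_{K,\Delta}^i\to C_\Delta$ has bounded image and thus lands in $p^{-N}\mathcal{O}_{C_\Delta}\simeq\mathcal{O}_{C_\Delta}$ for some $N$, so $\H^i_\cont(H_{K,\Delta},C_\Delta)=\H^i_\cont(H_{K,\Delta},\mathcal{O}_{C_\Delta})[\tfrac{1}{p}]$. The bounded-$p$-torsion conclusions above then give $\H^i(H_{K,\Delta},C_\Delta)=0$ for $i\geq 1$ and $\H^0(H_{K,\Delta},C_\Delta)=L_\Delta$. The crux of the argument is the uniform torsion bound $p^\delta$ established in paragraph two: this is what lets inverse limits commute with inverting $p$. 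A secondary subtlety worth checking is injectivity of $\mathcal{O}_{L_\Delta}/(p^r)\to\H^0(H_{K,\Delta},\mathcal{O}_{C_\Delta}/(p^r))$; after inverting $p$ however this reduces to the evident inclusion $L_\Delta\hookrightarrow C_\Delta$.
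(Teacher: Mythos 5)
Your proposal takes the same route as the paper's proof: reduce to cohomology modulo $p^r$ via Lemma \ref{lemmcohoHC}, assemble through a $\varprojlim$/$\R^1\varprojlim$ exact sequence, and invert $p$. Your preliminary observation that any module annihilated by $\mathfrak{m}_{K_\infty}^{\otimes\Delta}$ is already killed by $p^\delta$ is correct, and it makes explicit the torsion bound that the paper uses only implicitly.

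However, the case $i=1$ is not covered by your argument. You claim that for all $i\geq 1$ both flanking terms in
$$0\to\R^1\varprojlim_r\H^{i-1}(H_{K,\Delta},\mathcal{O}_{C_\Delta}/(p^r))\to\H^i(H_{K,\Delta},\mathcal{O}_{C_\Delta})\to\varprojlim_r\H^i(H_{K,\Delta},\mathcal{O}_{C_\Delta}/(p^r))\to 0$$
are $p^\delta$-killed, the parenthetical justification being that $\R^1\varprojlim$ of a system of $p^\delta$-killed modules is $p^\delta$-killed. That justification applies only when $i\geq 2$. For $i=1$, the left flanking term is $\R^1\varprojlim_r\H^0(H_{K,\Delta},\mathcal{O}_{C_\Delta}/(p^r))$, and those $\H^0$ groups are not torsion modules. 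To handle it — this is precisely what the paper's parenthetical "(and what precedes when $i=1$)" accounts for — you must feed the short exact sequence of inverse systems $0\to\mathcal{O}_{L_\Delta}/(p^r)\to\H^0(H_{K,\Delta},\mathcal{O}_{C_\Delta}/(p^r))\to Q_r\to 0$ (which you write down for $i=0$) into the six-term $\varprojlim$/$\R^1\varprojlim$ sequence: Mittag-Leffler gives $\R^1\varprojlim_r\mathcal{O}_{L_\Delta}/(p^r)=0$, hence $\R^1\varprojlim_r\H^0\simeq\R^1\varprojlim_r Q_r$, which is $p^\delta$-killed. Note too that this short exact sequence requires injectivity of $\mathcal{O}_{L_\Delta}/(p^r)\to\H^0(H_{K,\Delta},\mathcal{O}_{C_\Delta}/(p^r))$ at the integral level, not merely after inverting $p$ as you suggest at the end; it does hold, because $\mathcal{O}_C^{\otimes\Delta}/\mathcal{O}_L^{\otimes\Delta}$ is $p$-torsion-free, but that needs to be said.
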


\begin{proof}
By \cite[Proposition 2.7.4]{NSW}, there is a commutative diagram with exact rows
$$\xymatrix@C=15pt{0\ar[r] & \varprojlim\limits_r\mathcal{O}_{L_\Delta}/(p^r)\ar[d]\ar[r] & \prod\limits_{r=1}^\infty\mathcal{O}_{L_\Delta}/(p^r)\ar[d]\ar[r] & \prod\limits_{r=1}^\infty\mathcal{O}_{L_\Delta}/(p^r)\ar[d]\ar[r] & \varprojlim\limits_r{}^{(1)}\mathcal{O}_{L_\Delta}/(p^r)\ar[d]\ar[r] & 0\\
0\ar[r] & \varprojlim\limits_r(\mathcal{O}_{C_\Delta}/(p^r))^{H_{K,\Delta}}\ar[r] & \prod\limits_{r=1}^\infty(\mathcal{O}_{C_\Delta}/(p^r))^{H_{K,\Delta}}\ar[r] & \prod\limits_{r=1}^\infty(\mathcal{O}_{C_\Delta}/(p^r))^{H_{K,\Delta}}\ar[r] & \varprojlim\limits_r{}^{(1)}(\mathcal{O}_{C_\Delta}/(p^r))^{H_{K,\Delta}}\ar[r] & 0}$$
The first three vertical maps are injective and the cokernels of those in the middle are killed by $\mathfrak{m}_{K_\infty}^{\otimes\Delta}$. This implies that the cokernel of the first vertical map is killed by $\mathfrak{m}_{K_\infty}^{\otimes\Delta}$. This implies that the cokernel of composite map
$$\mathcal{O}_{L_\Delta}\hookrightarrow\mathcal{O}_{C_\Delta}^{H_{K,\Delta}}\to\varprojlim\limits_r(\mathcal{O}_{C_\Delta}/(p^r))^{H_{K,\Delta}}$$
is killed by $\mathfrak{m}_{K_\infty}^{\otimes\Delta}$, showing (by injectivity of the second map) that the cokernel of
$$\mathcal{O}_{L_\Delta}\to\H^0(H_{K,\Delta},\mathcal{O}_{C_\Delta})$$
is killed by $\mathfrak{m}_{K_\infty}^{\otimes\Delta}$. On the other hand, the inverse system $\{\mathcal{O}_{L_\Delta}/(p^r)\}_r$ has the Mittag-Leffler property: we have $\varprojlim\limits_r{}^{(1)}\mathcal{O}_{L_\Delta}/(p^r)=0$. This implies that $\varprojlim\limits_r{}^{(1)}(\mathcal{O}_{C_\Delta}/(p^r))^{H_{K,\Delta}}$ is killed by $\mathfrak{m}_{K_\infty}^{\otimes\Delta}$.

If $i>0$, we have an exact sequence
$$0\to\varprojlim\limits_r{}^{(1)}\H^{i-1}(H_{K,\Delta},\mathcal{O}_{C_\Delta}/(p^r))\to\H^i(H_{K,\Delta},\mathcal{O}_{C_\Delta})\to\varprojlim\limits_r\H^i(H_{K,\Delta},\mathcal{O}_{C_\Delta}/(p^r))\to0$$
(\cf \cite[Theorem 2.7.5]{NSW}). By lemma \ref{lemmcohoHC} (and what precedes when $i=1$) the modules $\varprojlim\limits_r{}^{(1)}\H^{i-1}(H_{K,\Delta},\mathcal{O}_{C_\Delta}/(p^r))$ and $\varprojlim\limits_r\H^i(H_{K,\Delta},\mathcal{O}_{C_\Delta}/(p^r))$ are killed by $\mathfrak{m}_{K_\infty}^{\otimes\Delta}$, implying that $\H^i(H_\Delta,\mathcal{O}_{C_\Delta})$ is killed by $\big(\mathfrak{m}_{K_\infty}^{\otimes\Delta}\big)^2=\mathfrak{m}_{K_\infty}^{\otimes\Delta}$.

The proposition follows by inverting $p$.
\end{proof}

\begin{nota}
If $\Delta^\prime\subset\Delta$ and $n\in\NN$, we denote by $\mathcal{O}_{L_{\Delta,\Delta^\prime,n}}$ the $p$-adic completion of $\mathcal{O}_{L_{\Delta\setminus\Delta^\prime}}\otimes_{\W(k)}\mathcal{O}_{K_{n,\Delta^\prime}}$. In particular, we have $\mathcal{O}_{L_{\Delta,\varnothing,n}}=\mathcal{O}_{K_n,\Delta}$ and $\mathcal{O}_{L_{\Delta,\Delta,n}}=\mathcal{O}_{L_\Delta}$. Put $L_{\Delta,\Delta^\prime,n}=\mathcal{O}_{L_{\Delta,\Delta^\prime,n}}\big[\frac{1}{p}\big]$. Let $n\geq n_K$: the map $R_n$ induces a continuous map $\mathcal{O}_L\to\frac{1}{p}\mathcal{O}_{K_n}$. If $\alpha\in\Delta$, denote by $R_{n,\alpha}\colon L_\Delta\to L_{\Delta,\Delta\setminus\{\alpha\},n}$ be the map induced by the tensor product of Tate's normalized trace $R_n\colon L\to K_n$ (\cf \cite[\S3]{Tate}) on the factor of index $\alpha$ with the identity on the other factors (this makes sense by the continuity of $R_n$). This defines a continuous $L_{\Delta,\{\alpha\},n}$-linear projector that maps $\mathcal{O}_{L_{\Delta,\Delta^\prime,n}}$ into $\frac{1}{p}\mathcal{O}_{L_{\Delta,\Delta^\prime\cup\{\alpha\},n}}$ (\cf proposition \ref{propRn}). We also put $\mathcal{O}_{L_{\Delta,\Delta^\prime,\infty}}=\bigcup\limits_{n=0}^\infty\mathcal{O}_{L_{\Delta,\Delta^\prime,n}}$ and $L_{\Delta,\Delta^\prime,\infty}=\bigcup\limits_{n=0}^\infty L_{\Delta,\Delta^\prime,n}$.
\end{nota}

\begin{theo}\label{theocohoGC}
We have $\H^i(G_{K,\Delta},C_\Delta)\simeq\H^i(\Gamma_{K,\Delta},L_\Delta)=\bigwedge^i\Big(\bigoplus\limits_{\alpha\in\Delta}K_\Delta\log(\chi_\alpha)\Big)$.
\end{theo}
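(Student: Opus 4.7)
The first isomorphism $\H^i(G_{K,\Delta}, C_\Delta) \simeq \H^i(\Gamma_{K,\Delta}, L_\Delta)$ follows from the Hochschild--Serre spectral sequence attached to $1 \to H_{K,\Delta} \to G_{K,\Delta} \to \Gamma_{K,\Delta} \to 1$: by Theorem \ref{theocohoHC}, the group $\H^q(H_{K,\Delta}, C_\Delta)$ equals $L_\Delta$ for $q = 0$ and vanishes for $q > 0$, so the $E_2$-page is concentrated on the bottom row and the spectral sequence collapses.

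To identify $\H^i(\Gamma_{K,\Delta}, L_\Delta)$ with the exterior algebra, I would argue by induction on $\delta = \#\Delta$, the case $\delta = 1$ being Tate's classical computation recalled in the introduction. For the inductive step, pick $\alpha \in \Delta$ and apply Hochschild--Serre to the split extension $1 \to \Gamma_{K,\alpha} \to \Gamma_{K,\Delta} \to \Gamma_{K,\Delta\setminus\{\alpha\}} \to 1$. The key local input is a Tate-type computation in the single variable $\alpha$:
\begin{equation*}
\H^q(\Gamma_{K,\alpha}, L_\Delta) = \begin{cases} L_{\Delta, \Delta\setminus\{\alpha\}, 0} & \text{if }q=0, \\ L_{\Delta, \Delta\setminus\{\alpha\}, 0} \cdot \log(\chi_\alpha) & \text{if }q=1, \\ 0 & \text{if }q \geq 2. \end{cases}
\end{equation*}
I would establish this by adapting Tate's argument to the normalized trace $R_{n,\alpha}\colon L_\Delta \to L_{\Delta,\Delta\setminus\{\alpha\},n}$ introduced in the excerpt: it splits $L_\Delta$ in a $\Gamma_{K,\alpha}$-equivariant way as $L_{\Delta,\Delta\setminus\{\alpha\},n} \oplus X_{n,\alpha}$ with $X_{n,\alpha} = \ker(\Id - R_{n,\alpha})$. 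The analogue of Proposition \ref{propRn}(v) for $R_{n,\alpha}$ shows that $\gamma - 1$ is a continuous isomorphism on $X_{n,\alpha}$ for $\gamma \in \Gamma_{K,\alpha}$ close enough to $1$, so the summand $X_{n,\alpha}$ is acyclic for $\Gamma_{K,\alpha}$; on the complementary summand $L_{\Delta,\Delta\setminus\{\alpha\},n}$ (which at level $n$ is a finite Galois base change in the $\alpha$-factor), the standard computation applies, and passing to the colimit in $n$ followed by $p$-adic completion yields the formula, the generator in degree $1$ being the crossed homomorphism $\gamma \mapsto \log(\chi_\alpha(\gamma))$.

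Substituting into the $E_2$-page, only the rows $q = 0, 1$ survive. Using the fact that $\Gamma_{K,\Delta\setminus\{\alpha\}}$ acts trivially on the $K$-factor at position $\alpha$ of $L_{\Delta,\Delta\setminus\{\alpha\},0} = L_{\Delta\setminus\{\alpha\}} \otimes_{F_0} K$, flatness gives $E_2^{p,0} = \H^p(\Gamma_{K,\Delta\setminus\{\alpha\}}, L_{\Delta\setminus\{\alpha\}}) \otimes_{F_0} K = \bigwedge^p\bigl(\bigoplus_{\beta \neq \alpha} K_\Delta \log(\chi_\beta)\bigr)$ by the inductive hypothesis, and similarly $E_2^{p,1} = E_2^{p,0} \cdot \log(\chi_\alpha)$. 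The differentials $d_r$ ($r \geq 2$) must vanish because each class $\log(\chi_\beta)$ is inflated from the corresponding direct factor $\Gamma_{K,\beta}$ and therefore survives to $E_\infty$; the spectral sequence then degenerates and assembles to the desired exterior algebra. The main obstacle I anticipate is the Tate-style local computation above: verifying that estimates (iii) and (v) of Proposition \ref{propRn} carry over to $R_{n,\alpha}$ \emph{uniformly in the auxiliary $\Delta\setminus\{\alpha\}$-factors}, and that the topological splitting $L_\Delta = L_{\Delta,\Delta\setminus\{\alpha\},n} \oplus X_{n,\alpha}$ interacts correctly with $p$-adic completion --- a routine but technical Banach-theoretic check.
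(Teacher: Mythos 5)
Your proposal follows essentially the same route as the paper: the first isomorphism comes from Hochschild--Serre and the vanishing of $\H^{>0}(H_{K,\Delta},C_\Delta)$ (Theorem \ref{theocohoHC}), and the second is established one variable at a time via the splitting of $L_\Delta$ induced by Tate's normalized trace $R_{n,\alpha}$, the $\Gamma_{K,\alpha}$-cohomology of the complement vanishing and the remaining factor being handled by the classical computation. The technical concern you flag at the end is addressed in the paper by first replacing $K$ with $K_n$ (so the estimates of Proposition \ref{propRn} apply uniformly) and then arguing with the integral objects $\mathcal{O}_{L_{\Delta,\Delta',n}}/(p^r)$ as in the proof of Lemma \ref{lemmcohoHC}, passing to the limit afterwards, rather than manipulating the Banach spaces directly; note also the small slip $X_{n,\alpha}=\ker(\Id-R_{n,\alpha})$ (inherited from a typo in the statement of Proposition \ref{propRn}) should read $X_{n,\alpha}=\ker(R_{n,\alpha})$ for the acyclicity assertion to make sense.
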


\begin{proof}
The first isomorphism follows from the inflation-restriction exact sequence. For all $n\in\NN$, the map $K_\Delta\to K_{n,\Delta}$ is finite \'etale, it is enough to prove the statement replacing $K$ by $K_n$ with $n\geq n_K$ (\cf above). Then the Horschild-Serre spectral sequence reduces the proof of the second isomorphism to the equalities
$$\H^i(\Gamma_{K_{n,\alpha}},L_{\Delta,\Delta^\prime,n})=\begin{cases}
L_{\Delta,\Delta^\prime\cup\{\alpha\},n}&\text{ if }i=0\\
L_{\Delta,\Delta^\prime\cup\{\alpha\},n}\log(\chi_\alpha)&\text{ if }i=1\\
0&\text{ if }i>1\end{cases}$$
for all $\alpha\in\Delta$ and $\Delta^\prime\subset\Delta\setminus\{\alpha\}$. Working modulo $p^r$ as we did in the proof of lemma \ref{lemmcohoHC} and using the maps $R_{m,\alpha}$ for $m\geq n$, we deduce that the cokernels of the maps
\begin{align*}
\mathcal{O}_{L_{\Delta,\Delta^\prime,n}}/(p^r) &\to \H^0(\Gamma_{K_{n,\alpha}},L_{\Delta,\Delta^\prime,n}/(p^r))\\ 
\mathcal{O}_{L_{\Delta,\Delta^\prime,n}}/(p^r)\log(\chi_\alpha) &\to \H^1(\Gamma_{K_{n,\alpha}},L_{\Delta,\Delta^\prime,n}/(p^r))
\end{align*}
are killed by $p$, as do the elements of $\H^i(\Gamma_{K_{n,\alpha}},L_{\Delta,\Delta^\prime,n}/(p^r))$ if $i>1$. Then we can pass to the limit arguing as in the proof of proposition \ref{theocohoHC} to conclude that the cokernels of the maps
\begin{align*}
\mathcal{O}_{L_{\Delta,\Delta^\prime,n}} &\to \H^0(\Gamma_{K_{n,\alpha}},L_{\Delta,\Delta^\prime,n})\\ 
\mathcal{O}_{L_{\Delta,\Delta^\prime,n}}\log(\chi_\alpha) &\to \H^1(\Gamma_{K_{n,\alpha}},L_{\Delta,\Delta^\prime,n})
\end{align*}
and the modules $\H^i(\Gamma_{K_{n,\alpha}},L_{\Delta,\Delta^\prime,n})$ with $i>1$ are killed by $p$. The theorem follows by inverting $p$.
\end{proof}

\begin{theo}\label{theocohoGCn}
If $n\in\NN$ and $\underline{n}\in\ZZ^\Delta\setminus\{\underline{0}\}$, we have $\H^i(G_{K,\Delta},C_\Delta(\underline{n}))=0$ for all $i\in\NN$.
\end{theo}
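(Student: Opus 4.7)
My plan is to mimic the proof of Theorem \ref{theocohoGC}, incorporating the standard Tate trick that a nontrivial twist annihilates cohomology on the cyclotomic side.

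First I would apply the Hochschild-Serre spectral sequence for $1 \to H_{K,\Delta} \to G_{K,\Delta} \to \Gamma_{K,\Delta} \to 1$. Since $\chi_\Delta^{\underline n}$ factors through $\Gamma_{K,\Delta}$, the $H_{K,\Delta}$-action on $C_\Delta(\underline n)$ coincides with that on $C_\Delta$, so Theorem \ref{theocohoHC} gives $\H^j(H_{K,\Delta}, C_\Delta(\underline n)) = L_\Delta(\underline n)$ for $j=0$ and vanishing otherwise. The spectral sequence then degenerates into $\H^i(G_{K,\Delta}, C_\Delta(\underline n)) \simeq \H^i(\Gamma_{K,\Delta}, L_\Delta(\underline n))$. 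Choosing $\alpha_0 \in \Delta$ with $n_{\alpha_0} \neq 0$ and applying Hochschild-Serre again to $\Gamma_{K,\Delta} = \Gamma_{K,\alpha_0} \times \Gamma_{K, \Delta \setminus \{\alpha_0\}}$, the problem reduces to showing that $\H^q(\Gamma_{K,\alpha_0}, L_\Delta(\underline n)) = 0$ for every $q$.

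Next, I would enlarge $K$ to $K_m$ for $m \geq n_K$ large, fixing a topological generator $\gamma$ of the procyclic group $\Gamma_{K_m, \alpha_0}$ with $v_p(\chi(\gamma)^{n_{\alpha_0}} - 1) > 1$; this is harmless because once vanishing at level $m$ is known it propagates to level $0$ via Hochschild-Serre for the finite quotient $\Gamma_{K,\alpha_0}/\Gamma_{K_m, \alpha_0}$. Since $\Gamma_{K_m, \alpha_0}$ has cohomological dimension $1$, it suffices to prove that the twisted operator $\gamma_{\mathrm{tw}} - 1$ on $L_\Delta(\underline n)$, with $\gamma_{\mathrm{tw}}(x) = \chi(\gamma)^{n_{\alpha_0}} \gamma(x)$, is bijective. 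Using the continuous, $\Gamma_{K_m,\alpha_0}$-equivariant splitting $L_\Delta = L_{\Delta, \Delta \setminus \{\alpha_0\}, m} \oplus X_{m,\alpha_0}$ induced by $R_{m,\alpha_0}$, I would handle the two summands separately: on the first, the untwisted $\gamma$ is the identity, so $\gamma_{\mathrm{tw}} - 1$ acts as the nonzero scalar $\chi(\gamma)^{n_{\alpha_0}} - 1 \in \QQ_p^\times$, hence bijective; on $X_{m,\alpha_0}$, Proposition \ref{propRn}(v) furnishes a bounded inverse $A$ of $\gamma - 1$ satisfying $v_p(Ax) \geq v_p(x) - 1$, and writing
$$\gamma_{\mathrm{tw}} - 1 = \chi(\gamma)^{n_{\alpha_0}}(\gamma - 1) + (\chi(\gamma)^{n_{\alpha_0}} - 1)\cdot\Id,$$
the hypothesis $v_p(\chi(\gamma)^{n_{\alpha_0}} - 1) > 1$ makes the additive perturbation small enough for a Neumann series to invert $\gamma_{\mathrm{tw}} - 1$ on $X_{m,\alpha_0}$.

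The main obstacle will be controlling the Neumann series integrally. Following the template of Theorems \ref{theocohoHC} and \ref{theocohoGC}, I would perform all estimates modulo $p^r$ (where $X_{m,\alpha_0}/(p^r)$ is a discrete torsion module on which the series terminates in finitely many terms), verify uniform bounds so that the Mittag-Leffler property applies to kill the $\varprojlim^{(1)}$ contribution, and then pass to the inverse limit and invert $p$ to obtain the announced vanishing for all $i \in \NN$.
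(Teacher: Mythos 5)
Your proposal is correct and arrives at the statement, but it takes a genuinely different route for the crucial step. Both you and the paper reduce, via inflation-restriction and Hochschild-Serre, to showing that $\gamma_\alpha-1$ is bijective on $L_\Delta(\underline n)$ for a suitable $\gamma\in\Gamma_{K,\alpha}$ (equivalently, injective with cokernel killed by a power of $p$ on $\mathcal{O}_{L_\Delta}(\underline n)$). At that point the paper simply cites \cite[Proposition 8]{Tate}, which asserts that $\gamma-1$ is bijective with continuous inverse on the twisted module $L(n_\alpha)$, and lifts this to $\mathcal{O}_{L_\Delta}(\underline n)$ via the tensor structure. You instead reprove this bijectivity from scratch using the multivariable Tate trace decomposition $L_\Delta=L_{\Delta,\Delta\setminus\{\alpha_0\},m}\oplus\Ker(R_{m,\alpha_0})$: on the first summand the twisted operator is the nonzero scalar $\chi(\gamma)^{n_{\alpha_0}}-1$, and on the second you combine the bound $v_p((\gamma-1)^{-1}x)\geq v_p(x)-1$ from Proposition \ref{propRn}(v) with a Neumann series, having chosen $m$ large enough that $v_p(\chi(\gamma)^{n_{\alpha_0}}-1)>1$. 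This is in effect a self-contained rederivation of Tate's Proposition 8 inside the multivariable framework already set up in \S\ref{sectcohoCDelta}, whereas the paper outsources it; your version buys transparency at the cost of length. One stylistic inefficiency: your final paragraph about working modulo $p^r$, uniform bounds, and killing $\varprojlim^{(1)}$ is unnecessary overhead here. Once the Neumann series produces a bounded inverse of $\gamma_{\mathrm{tw}}-1$ on the Banach space $\Ker(R_{m,\alpha_0})$, you immediately get injectivity with cokernel killed by a fixed power of $p$ on the $p$-adically complete lattice $\mathcal{O}_{L_\Delta}(\underline n)$, and the two-term complex directly gives the vanishing after inverting $p$; there is no inverse limit over $r$ to manage, in contrast with Theorems \ref{theocohoHC} and \ref{theocohoGC} where one genuinely must pass mod $p^r$ because one is controlling almost-vanishing rather than an explicit inverse operator.
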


\begin{proof}
Again, using the inflation-restriction exact sequence and the Horschild-Serre spectral sequence, we are reduced to show that if $\alpha\in\Delta$ is such that $n_\alpha\neq0$, then the cohomology groups $\H^i(\Gamma_{K,\alpha},L_\Delta(\underline{n}))$ all vanish. Let $\gamma\in\Gamma_{K,\alpha}$ be such that the closure $\overline{\langle\gamma\rangle}$ has finite index in $\Gamma_{K,\alpha}$: it is enough to show that $\H^i(\overline{\langle\gamma_\alpha\rangle},L_\Delta(\underline{n}))=0$ for all $i\in\NN$, where $\gamma_\alpha\in\Gamma_{K,\Delta}$ is the element whose components are the identity except that of index $\alpha$, which is $\gamma$. As these cohomology groups are those of the complex $L_\Delta(\underline{n})\xrightarrow{\gamma_\alpha-1}L_\Delta(\underline{n})$ (concentrated in degrees $0$ and $1$), this is equivalent to showing that $\gamma_\alpha-1$ is bijective on $L_\Delta(\underline{n})$, \ie that it is injective with cokernel killed by $p^r$ for some $r\in\NN$ on $\mathcal{O}_{L_\Delta}(\underline{n})$. This follows from the fact that $\gamma-1$ is injective with cokernel killed by $p^r$ for some $r\in\NN$ on $\mathcal{O}_L(n_\alpha)$, since it is bijective with continuous inverse on $L$ (\cf proposition \ref{propRn} (v)).
\end{proof}


\subsection{Sen theory for \texorpdfstring{$C_\Delta$}{C\unichar{"005F}\unichar{"0394}}-representations}\label{SenC}

We fix terminology and notation that will be used hereafter.

\begin{defi}\label{defiBrepres}
Let $G$ be a topological group and $B$ a topological ring endowed with a continuous action of $G$. A \emph{$B$-representation} of $G$ is a topological module of finite type $W$ endowed with a continuous and semi-linear action of $G$, \ie such that $g(w_1+bw_2)=g(w_1)+g(b)g(w_2)$ for all $b\in B$, $w_1,w_2\in W$ and $g\in G$. We say that $W$ is free (resp. projective) of rank $d$ when the underlying $B$-module is. We denote by $\Rep_B(G)$ (resp. $\Rep_B^{\free}(G)$, resp. $\Rep_B^{\pr}(G)$) the category of $B$-representations with $G$-equivariant maps (resp. the full subcategory of free, resp. projective $B$-representations of finite rank).
\end{defi}

\begin{rema}
If $W$ is a free $B$-representation of rank $d$ of $G$, and $\mathfrak{B}$ is a basis of $W$ over $B$, we can denote by $U_g\in\Mat_d(B)$ the matrix of $g$ acting on $W$ in the basis $\mathfrak{B}$. Then $U_g\in\GL_d(B)$ for all $g\in G$, and the map $g\mapsto U_g$ is a continuous $1$-cocycle $G\to\GL_d(B)$. Conversely, the data of such a cocycle endows $B^d$ with a $B$-representation structure. Moreover, changing the basis precisely amounts to replace the cocycle by a cohomologous one. This means that isomorphism classes of free $B$-representations of rank $d$ are in bijection with the continuous cohomology set $\H^1(G,\GL_d(B))$.
\end{rema}

\medskip

Fix $d\in\NN_{>0}$. Let $H_0\leq H_K$ be an open normal subgroup, and put $H_\Delta=\prod\limits_{\alpha\in\Delta}H_0$. If $\Delta^\prime\subset\Delta$, let $H_{\Delta,\Delta^\prime}$ be the subgroup of $H_{K,\Delta}$ generated by the subgroups $\iota_\alpha(H_0)$ for $\alpha\in\Delta^\prime$: we have $H_{\Delta,\varnothing}=\{1\}$ and $H_{\Delta,\Delta}=H_\Delta$. Note that although the groups $H_\Delta$ and $H_{\Delta,\Delta^\prime}$ depend on $H_0$, we do not indicate this dependency in order not to make the notations too heavy. 

\begin{lemm}\label{lemmSenH}
(\cf \cite[Lemma 1]{Sen} and \cite[Lemme 2.1]{AB}) Let $U\colon H_\Delta\to\GL_d(C_\Delta)$ be a continuous cocycle. Let $\Delta^\prime\subset\Delta$, $\alpha\in\Delta\setminus\Delta^\prime$ and $m\in\NN_{\geq2}$ be such that $U_h=\I_d$ for all $h\in H_{\Delta,\Delta^\prime}$ and $U_h\in\I_d+p^m\Mat_d\big(\mathcal{O}_{C_\Delta}^{H_{\Delta^\prime}}\big)$ for all $h\in H_\Delta$. Then there exists $B_0\in\I_d+p^{m-1}\Mat_d\big(\mathcal{O}_{C_\Delta}^{H_{\Delta^\prime}}\big)$ such that $B_0^{-1}U_hh(B_0)\in\I_d+p^{m+1}\Mat_d\big(\mathcal{O}_{C_\Delta}^{H_{\Delta,\Delta^\prime}}\big)$ for all $h\in\iota_\alpha(H_0)$.
\end{lemm}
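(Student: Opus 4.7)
The strategy is to adapt Sen's classical argument (\cite[Lemma 1]{Sen}, \cite[Lemme 2.1]{AB}) to the multivariable setting, handling one direction at a time (here $\alpha$). Write $U_h=\I_d+p^m V_h$ with $V_h\in\Mat_d\big(\mathcal{O}_{C_\Delta}^{H_{\Delta,\Delta'}}\big)$: by hypothesis $V_h=0$ for $h\in H_{\Delta,\Delta'}$, and the cocycle relation for $U$ gives
\[
V_{h_1 h_2}=V_{h_1}+h_1(V_{h_2})+p^m V_{h_1} h_1(V_{h_2}),
\]
so that $V\bmod p^m$ is a genuine $1$-cocycle. We seek $B_0=\I_d+p^{m-1}X_0$ with $X_0\in\Mat_d\big(\mathcal{O}_{C_\Delta}^{H_{\Delta,\Delta'}}\big)$ to be determined. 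A direct expansion using $m\geq 2$ yields
\[
B_0^{-1}U_h h(B_0)\equiv \I_d+p^{m-1}(h-1)(X_0)+p^m V_h\pmod{p^{m+1}}
\]
for all $h\in H_\Delta$ (the only cross term requiring comment arises for $m=2$, namely $-p^2 X_0(h-1)(X_0)$ in $B_0^{-1}(\cdot)h(B_0)$; but once $X_0$ solves the equation below one has $(h-1)(X_0)\in p\Mat_d$, so this term vanishes modulo $p^3$). The problem thus reduces to finding $X_0\in\Mat_d\big(\mathcal{O}_{C_\Delta}^{H_{\Delta,\Delta'}}\big)$ with
\[
(h-1)(X_0)\equiv -pV_h\pmod{p^2}\qquad\text{for all }h\in\iota_\alpha(H_0).
\]

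The construction of $X_0$ is the essential content of the proof. The decisive observation is that since $\alpha\notin\Delta'$, the group $\iota_\alpha(H_0)$ acts on $\mathcal{O}_{C_\Delta}^{H_{\Delta,\Delta'}}$ only through its $\alpha$-factor, with fixed subring $\mathcal{O}_{C_\Delta}^{H_{\Delta,\Delta'\cup\{\alpha\}}}$. This places us in a one-variable Tate-Sen descent situation along the $\alpha$-direction, merely with enlarged coefficient ring, and the proof proceeds by the classical recipe (\cf \cite{Sen}, \cite[Lemme 2.1]{AB}): using the normalized trace $R_{n,\alpha}$ (for $n$ large enough that the estimate in Proposition \ref{propRn}(v) applies to the subgroup $\iota_\alpha(H_0)$) together with the near-vanishing cohomology estimates of Lemma \ref{lemmcohoHC}, one decomposes $V\bmod p$ into a component with values in the fixed subring and a complementary ``trace-zero'' component on which $h-1$ is invertible with norm loss bounded by $p$; inverting there gives the desired $X_0$ after a lift, and the $H_{\Delta,\Delta'}$-invariance is preserved because both $R_{n,\alpha}$ and the inversion of $h-1$ commute with the action of $H_{\Delta,\Delta'}$ (which lives in the complementary coordinates).

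The principal obstacle is verifying the integrality of $X_0$: the Tate-Sen inversion introduces denominators bounded by $p$ (per the constants $c_2=c_3=1$ fixed just after Proposition \ref{propRn}), and it is precisely the hypothesis $m\geq 2$ which ensures that the prefactor $p^{m-1}$ absorbs this loss, so that $X_0$ and hence $B_0$ lie in the required set $\I_d+p^{m-1}\Mat_d\big(\mathcal{O}_{C_\Delta}^{H_{\Delta,\Delta'}}\big)$. Once $X_0$ is in hand, $B_0$ is automatically invertible since $p^{m-1}X_0$ is topologically nilpotent for $m\geq 2$, and the required congruence for $B_0^{-1}U_h h(B_0)$ follows from the expansion of the first paragraph.
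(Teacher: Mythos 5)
Your reformulation of the problem is sound: writing $U_h=\I_d+p^mV_h$ and $B_0=\I_d+p^{m-1}X_0$, the expansion does reduce the lemma to finding $X_0\in\Mat_d\big(\mathcal{O}_{C_\Delta}^{H_{\Delta,\Delta'}}\big)$ with $(h-1)(X_0)\equiv -pV_h\pmod{p^2}$ for $h\in\iota_\alpha(H_0)$, and your handling of the cross term in the case $m=2$ is correct. But the machinery you invoke to produce $X_0$ is the wrong one. You appeal to the normalized trace $R_{n,\alpha}$ and to Proposition \ref{propRn}(v); those belong to the $\Gamma_K$-decompletion step (Sen's Lemma 3, used in the paper's Lemmas \ref{lemmdecompletion0}--\ref{lemmdecompletion1}). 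They are about the invertibility of $\gamma-1$ on $\Ker(R_n)\subset L$ for $\gamma\in\Gamma_K$ with $v_p(1-\chi(\gamma))\leq n$. Here the acting group is $\iota_\alpha(H_0)$ with $H_0\leq H_K$; every $h\in H_K$ satisfies $\chi(h)=1$, so the hypothesis of Proposition \ref{propRn}(v) is never met, and $R_n$ gives you nothing. What the present lemma needs is the \emph{almost-\'etale descent} input for $H_K$, i.e.\ Tate's theorem that $\mathfrak{m}_{K_\infty}\subset\Tr_{F/K_\infty}(\mathcal{O}_F)$ for finite Galois $F/K_\infty$ -- the same input underlying Lemma \ref{lemmcohoHC}, which you mention only in passing and entangle with the wrong tool.

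The paper's proof (following Sen's Lemma~1) produces $B_0$ by an explicit averaging rather than by abstractly inverting a coboundary operator: one shrinks $\iota_\alpha(H_0)$ to an open normal $\iota_\alpha(H_1)$ on which $U$ is much closer to $\I_d$, picks $c\in\mathcal{O}_C^{H_1}$ with $\Tr_{H_0/H_1}(c)=p$, and sets $B_0=\frac{1}{p}\sum_{\tau\in T}\tau(c_\alpha)U_\tau$. Expanding shows $B_0=\I_d+p^{m-1}\sum_\tau\tau(c_\alpha)V_\tau$, so your $X_0$ is the twisted trace $\sum_\tau\tau(c_\alpha)V_\tau$; the factor $p^{m-1}$ (rather than $p^m$) is exactly the loss coming from $\Tr(c)=p$ rather than $1$, which is the almost-\'etale phenomenon, not the $c_2=c_3=1$ constants in Proposition \ref{propRn}. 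The required congruence then follows from a direct computation using the cocycle relation $U_{h\tau}=U_{\tau'}\tau'(U_{h'})$ for the rewriting $h\tau=\tau'h'$ and the fact that $U_{h'}$ is very close to $\I_d$ for $h'\in\iota_\alpha(H_1)$. If you want to keep your linearized formulation, you should replace the appeal to $R_{n,\alpha}$ by this trace-averaging construction of $X_0$ (or, equivalently, by a direct use of the almost-vanishing $\mathfrak{m}_{K_\infty}\cdot\H^1(\iota_\alpha(H_0),\mathcal{O}_{C_\Delta}^{H_{\Delta,\Delta'}}/(p^2))=0$), and rederive the integrality of $X_0$ from $\Tr(c)=p$ rather than from the $\Gamma$-side estimates.
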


\begin{proof}
By continuity of $U$, there exists an open normal subgroup $H_1\leq H_0$ such that $U_h\in\I_d+p^{m+2}\Mat_d\big(\mathcal{O}_{C_\Delta}^{H_{\Delta,\Delta^\prime}}\big)$ for all $h\in\iota_\alpha(H_1)$. Let $T$ be a complete set of representatives of $H_0/H_1\simeq\iota_\alpha(H_0)/\iota_\alpha(H_1)$: if $h\in\iota_\alpha(H_0)$ and $\tau\in T$, there exists unique $h^\prime\in\iota_\alpha(H_1)$ and $\tau^\prime\in T$ such that $h\tau=\tau^\prime h^\prime$: we have
$$U_{h\tau}=U_{\tau^\prime h^\prime}=U_{\tau^\prime}\tau^\prime(U_{h^\prime})\in U_{\tau^\prime}+p^{m+2}\Mat_d\big(\mathcal{O}_{C_\Delta}^{H_{\Delta,\Delta^\prime}}\big).$$
By \cite[\S3.2, Proposition 9]{Tate}, we can choose $c\in\mathcal{O}_C^{H_1}$ such that $\Tr_{H_0/H_1}(c):=\sum\limits_{\tau\in T}\tau(c)=p$. Put
$$B_0=\tfrac{1}{p}\sum\limits_{\tau\in T}\tau(c_\alpha)U_\tau\in\Mat_d\big(C_\Delta^{H_{\Delta,\Delta^\prime}}\big)$$
(where $c_\alpha\in\mathcal{O}_{C_\Delta}$ is the image of the tensor $1\otimes\cdots\otimes1\otimes c\otimes1\otimes\cdots\otimes1$, with $c$ on the factor of index $\alpha$). As $U_\tau\in\I_d+p^m\Mat_d\big(\mathcal{O}_{C_\Delta}^{H_{\Delta,\Delta^\prime}}\big)$ for all $\tau\in T$, we have $B_0\in\I_d+p^{m-1}\Mat_d\big(\mathcal{O}_{C_\Delta}^{H_{\Delta,\Delta^\prime}}\big)$. This implies in particular that $B_0$ is invertible in $\I_d+p^{m-1}\Mat_d\big(\mathcal{O}_{C_\Delta}^{H_{\Delta,\Delta^\prime}}\big)$, with $B_0^{-1}=\sum\limits_{i=0}^\infty(\I_d-B_0)^i$. Moreover, if $h\in\iota_\alpha(H_0)$, we have
$$h(B_0)=\tfrac{1}{p}\sum\limits_{\tau\in T}h\tau(c_\alpha)h(U_\tau)=\tfrac{1}{p}\sum\limits_{\tau\in T}h\tau(c_\alpha)U_h^{-1}U_{h\tau}=\tfrac{1}{p}U_h^{-1}\sum\limits_{\tau\in T}\tau^\prime h^\prime(c_\alpha)U_{\tau^\prime h^\prime}=\tfrac{1}{p}U_h^{-1}\sum\limits_{\tau\in T}\tau^\prime(c_\alpha)U_{\tau^\prime h^\prime}.$$
As $U_{\tau^\prime h^\prime}\in U_{\tau^\prime}+p^{m+2}\Mat_d(\mathcal{O}_{C_\Delta}^{H_{\Delta^\prime}})$, we have
$$U_hh(B_0)\in\underbrace{\tfrac{1}{p}\sum\limits_{\tau\in T}\tau^\prime(c_\alpha)U_{\tau^\prime}}_{=B_0}+p^{m+1}\Mat_d\big(\mathcal{O}_{C_\Delta}^{H_{\Delta,\Delta^\prime}}\big)$$
hence $B_0^{-1}U_hh(B_0)\in\I_d+p^{m+1}\Mat_d\big(\mathcal{O}_{C_\Delta}^{H_{\Delta,\Delta^\prime}}\big)$.
\end{proof}

\begin{lemm}\label{lemmSenH2}
Under the assumptions of lemma \ref{lemmSenH}, there exists $B_\alpha\in\I_d+p^{m-1}\Mat_d\big(\mathcal{O}_{C_\Delta}^{H_{\Delta,\Delta^\prime}}\big)$ such that the cocycle $U\colon H_\Delta\to\GL_d(C_\Delta)$ defined by $U_h^\prime:=B_\alpha^{-1}U_hh(B_\alpha)$ is such that $U_h^\prime=\I_d$ for all $h\in H_{\Delta,\Delta^\prime\cup\{\alpha\}}$ and $U_h^\prime\in\I_d+p^{m-1}\Mat_d\big(\mathcal{O}_{C_\Delta}^{H_{\Delta,\Delta^\prime\cup\{\alpha\}}}\big)$ for all $h\in H_\Delta$.
\end{lemm}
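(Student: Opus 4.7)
The plan is to iterate Lemma \ref{lemmSenH} and pass to a $p$-adic limit. Setting $U^{(0)} := U$, I inductively construct matrices $B_k \in \I_d + p^{m+k-1}\Mat_d\big(\mathcal{O}_{C_\Delta}^{H_{\Delta, \Delta'}}\big)$ and cocycles $U^{(k+1)}_h := B_k^{-1} U^{(k)}_h h(B_k)$ enjoying the two properties that $U^{(k)}_h = \I_d$ for every $h \in H_{\Delta, \Delta'}$ and $U^{(k)}_h \in \I_d + p^{m+k}\Mat_d\big(\mathcal{O}_{C_\Delta}^{H_{\Delta, \Delta'}}\big)$ for every $h \in \iota_\alpha(H_0)$. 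Given these at step $k$, Lemma \ref{lemmSenH} applied at level $m+k$ delivers $B_k$ with the sought-for improved bound, and the first property is preserved because $B_k$ is $H_{\Delta,\Delta'}$-invariant. The subtle point justifying the iteration is that, although Lemma \ref{lemmSenH}'s bound hypothesis is phrased for all $h \in H_\Delta$, inspection of its proof reveals that it is invoked only for the finite set of representatives $\tau \in T \subset \iota_\alpha(H_0)$; thus the weaker bound on $\iota_\alpha(H_0)$ alone suffices at each stage. This is the main obstacle, and resolving it is the key technical observation.

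The product $B_\alpha := \lim_{N \to \infty} B_0 B_1 \cdots B_N$ converges $p$-adically (since $B_k \equiv \I_d \pmod{p^{m+k-1}}$) to an element of $\I_d + p^{m-1}\Mat_d\big(\mathcal{O}_{C_\Delta}^{H_{\Delta, \Delta'}}\big)$, the closedness of the ring of invariants being used in the limit. A straightforward induction shows $(B_0 \cdots B_{k-1})^{-1} U_h h(B_0 \cdots B_{k-1}) = U^{(k)}_h$, so the new cocycle $U'_h := B_\alpha^{-1} U_h h(B_\alpha)$ equals $\lim_k U^{(k)}_h$. For $h \in \iota_\alpha(H_0)$ this limit is $\I_d$; for $h \in H_{\Delta, \Delta'}$ the sequence is already constantly $\I_d$. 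The cocycle relation then extends this to $U'_h = \I_d$ on the whole subgroup $H_{\Delta, \Delta' \cup \{\alpha\}}$ generated by $H_{\Delta, \Delta'}$ and $\iota_\alpha(H_0)$.

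For arbitrary $h \in H_\Delta$, the decomposition
\[
U'_h - \I_d = B_\alpha^{-1}\big(U_h(h(B_\alpha) - B_\alpha) + (U_h - \I_d) B_\alpha\big),
\]
together with $h(B_\alpha) - B_\alpha \in p^{m-1}\Mat_d$ (a consequence of $B_\alpha \in \I_d + p^{m-1}\Mat_d$) and $U_h - \I_d \in p^m\Mat_d$, yields $U'_h \in \I_d + p^{m-1}\Mat_d(\mathcal{O}_{C_\Delta})$. For the $H_{\Delta, \Delta' \cup \{\alpha\}}$-invariance, I first check that $H_{\Delta, \Delta' \cup \{\alpha\}}$ is normal in $H_\Delta$ (using $H_0 \lhd H_K$ together with the fact that $\iota_\beta(H_0)$ and $\iota_\gamma(H_0)$ commute for $\beta \neq \gamma$). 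Then, given $h_0 \in H_{\Delta, \Delta' \cup \{\alpha\}}$ and $h \in H_\Delta$, writing $h_0 h = h h_0'$ with $h_0' := h^{-1} h_0 h \in H_{\Delta, \Delta' \cup \{\alpha\}}$, the cocycle identities $U'_{h_0 h} = U'_{h_0} h_0(U'_h) = h_0(U'_h)$ and $U'_{h h_0'} = U'_h h(U'_{h_0'}) = U'_h$ force $h_0(U'_h) = U'_h$. Combining the bound and the invariance gives $U'_h \in \I_d + p^{m-1}\Mat_d\big(\mathcal{O}_{C_\Delta}^{H_{\Delta, \Delta' \cup \{\alpha\}}}\big)$, completing the proof.
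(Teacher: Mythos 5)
Your proof follows essentially the same route as the paper's: iterate Lemma \ref{lemmSenH} to improve the bound on $\iota_\alpha(H_0)$ one power of $p$ at a time, pass to the $p$-adic limit of the products $B_0\cdots B_N$, then use the cocycle relation and commutativity of the factors to deduce the invariance statement. The computation showing $U'_h\in\I_d+p^{m-1}\Mat_d$ for general $h$ and the deduction of $H_{\Delta,\Delta'\cup\{\alpha\}}$-invariance from the triviality on $\iota_\alpha(H_0)$ and $H_{\Delta,\Delta'}$ are exactly the paper's, presented in slightly different order.

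Where your write-up adds value is the explicit justification that the iteration can be run at all. As you note, after one application of Lemma \ref{lemmSenH} the bound on $\iota_\alpha(H_0)$ improves to $p^{m+1}$, but the bound on the full $H_\Delta$ actually \emph{degrades} to $p^{m-1}$ (since $B_0\equiv\I_d\pmod{p^{m-1}}$ only). So the stated hypotheses of Lemma \ref{lemmSenH} at level $m+1$ are not literally met at the next step. Your observation that the proof of Lemma \ref{lemmSenH} only invokes the $p^m$-bound for the finite set $T\subset\iota_\alpha(H_0)$ is correct and is precisely what makes the induction legitimate; the paper's proof of Lemma \ref{lemmSenH2} glosses over this by silently tracking $U_{n,h}\in\I_d+p^{n+m}\Mat_d$ only for $h\in\iota_\alpha(H_0)$. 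You also make the $H_{\Delta,\Delta'\cup\{\alpha\}}$-invariance argument explicit for \emph{all} $h\in H_\Delta$ via normality, whereas the paper only spells it out for $h\in\iota_\beta(H_0)$ with $\beta\notin\Delta'\cup\{\alpha\}$. One small inaccuracy: normality of $H_{\Delta,\Delta'\cup\{\alpha\}}$ in $H_\Delta$ does not require $H_0\lhd H_K$; it is automatic because $H_{\Delta,\Delta'\cup\{\alpha\}}$ is a direct factor of the product $H_\Delta=\prod_{\beta\in\Delta}\iota_\beta(H_0)$.
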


\begin{proof}
Lemma \ref{lemmSenH} produces inductively a sequence $(B_n)_{n\geq0}$ in $\I_d+p^{m-1}\Mat_d\big(\mathcal{O}_{C_\Delta}^{H_{\Delta,\Delta^\prime}}\big)$ and a sequence of cocycles $\big(U_n\colon H\to\GL_d\big(C_\Delta^{H_{\Delta,\Delta^\prime}}\big)\big)_{n\geq0}$ such that $U_0=U$, $B_n\in\I_d+p^{m+n-1}\Mat_d\big(\mathcal{O}_{C_\Delta}^{H_{\Delta,\Delta^\prime}}\big)$, $U_{n+1,h}=B_n^{-1}U_{n,h}h(B_n)$ for all $h\in H_\Delta$ and $U_{n,h}\in\I_d+p^{n+m}\Mat_d\big(\mathcal{O}_{C_\Delta}^{H_{\Delta,\Delta^\prime}}\big)$ for all $h\in\iota_\alpha(H_0)$ and $n\in\NN$. The infinite product $B_\alpha=B_0B_1\cdots$ converges in $\I_d+p^{m-1}\Mat_d\big(\mathcal{O}_{C_\Delta}^{H_{\Delta,\Delta^\prime}}\big)$: for $h\in H_\Delta$, put $U_h^\prime=B_\alpha^{-1}U_hh(B_\alpha)$. By construction, we have $U_h^\prime=\I_d$ for all $h\in\iota_\alpha(H_0)$.

\noindent
If $h\in H_{\Delta,\Delta^\prime}$, we have $h(B_\alpha)=B_\alpha$ and $U_h=\I_d$ hence $U_h^\prime=\I_d$. This shows that $U_h^\prime=\I_d$ for all $h\in H_{\Delta,\Delta^\prime\cup\{\alpha\}}$.

\noindent
If $\beta\in\Delta\setminus(\Delta^\prime\cup\{\alpha\})$, $h\in\iota_\alpha(H_0)$ and $\eta\in\iota_\beta(H_0)$, we have $h\eta=\eta h$, so $U_h^\prime h(U_\eta^\prime)=U_\eta^\prime\eta(U_h^\prime)$: as $U_h^\prime=\I_d$, we get $h(U_\eta^\prime)=U_\eta^\prime$. This implies that $U_\eta^\prime\in\GL_d\big(C_\Delta^{H_{\Delta,\Delta\cup\{\alpha\}}}\big)$.

\noindent
As $B_\alpha\in\I_d+p^{m-1}\Mat_d\big(\mathcal{O}_{C_\Delta}^{H_{\Delta,\Delta^\prime}}\big)$ and $U_h\in\I_d+p^m\Mat_d\big(\mathcal{O}_{C_\Delta}^{H_{\Delta,\Delta^\prime}}\big)$, we have
$$U_h^\prime=B_\alpha^{-1}U_hh(B_\alpha)\in\I_d+p^{m-1}\Mat_d\big(\mathcal{O}_{C_\Delta}^{H_{\Delta,\Delta^\prime}}\big)$$
for all $h\in H$.
\end{proof}

\begin{prop}\label{propSenH}
(\cf \cite[Proposition 4]{Sen}, \cite[Proposition 2.2]{AB}) Let $U\colon H_\Delta\to\GL_d(C_\Delta)$ be a continuous cocycle such that $U_h\equiv\I_d\mod p^r\Mat_d(\mathcal{O}_{C_\Delta})$ for all $h\in H_\Delta$, where $r\in\NN_{\geq\delta+1}$. Then there exists $B\in\I_d+p^{r-\delta}\Mat_d(\mathcal{O}_{C_\Delta})$ such that $B^{-1}U_hh(B)=\I_d$ for all $h\in H_\Delta$. In particular $U$ has a trivial image in $\H^1(H_\Delta,\GL_d(\mathcal{O}_{C_\Delta}))$.
\end{prop}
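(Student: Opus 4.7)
The plan is to prove this by induction on the elements of $\Delta$, applying Lemma \ref{lemmSenH2} once per element of $\Delta$, at the cost of losing one power of $p$ in precision at each step.

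Enumerate $\Delta=\{\alpha_1,\ldots,\alpha_\delta\}$ and set $\Delta_i=\{\alpha_1,\ldots,\alpha_i\}$ for $0\leq i\leq\delta$, so that $\Delta_0=\varnothing$ and $\Delta_\delta=\Delta$. Starting from $U^{(0)}:=U$, which vanishes on $H_{\Delta,\varnothing}=\{1\}$ and satisfies $U^{(0)}_h\in\I_d+p^r\Mat_d(\mathcal{O}_{C_\Delta})$ by assumption, I would apply Lemma \ref{lemmSenH2} successively, with $\Delta^\prime=\Delta_{i-1}$, $\alpha=\alpha_i$, and $m=r-i+1$. At the $i$-th step this produces $B_{\alpha_i}\in\I_d+p^{r-i}\Mat_d\bigl(\mathcal{O}_{C_\Delta}^{H_{\Delta,\Delta_{i-1}}}\bigr)$ and a new cocycle $U^{(i)}_h:=B_{\alpha_i}^{-1}U^{(i-1)}_hh(B_{\alpha_i})$ which vanishes on $H_{\Delta,\Delta_i}$ and takes values in $\I_d+p^{r-i}\Mat_d\bigl(\mathcal{O}_{C_\Delta}^{H_{\Delta,\Delta_i}}\bigr)$, restoring the induction hypothesis for the next step.

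Setting $B:=B_{\alpha_1}B_{\alpha_2}\cdots B_{\alpha_\delta}$, an immediate telescoping computation gives $U^{(\delta)}_h=B^{-1}U_hh(B)$ for every $h\in H_\Delta$. After $\delta$ steps $\Delta_\delta=\Delta$, so $U^{(\delta)}_h=\I_d$ for every $h\in H_\Delta$, which is exactly the desired identity. The precision bound $B\in\I_d+p^{r-\delta}\Mat_d(\mathcal{O}_{C_\Delta})$ follows from the ultrametric estimate on the product, the least precise factor being $B_{\alpha_\delta}$ with precision $p^{r-\delta}$. The final assertion of the proposition is then immediate, since $B\in\GL_d(\mathcal{O}_{C_\Delta})$ exhibits $U$ as an integral coboundary.

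The only condition to verify along the way is that Lemma \ref{lemmSenH2} is applicable at each step, which requires $m=r-i+1\geq 2$; this is ensured by the hypothesis $r\geq\delta+1$ applied at the worst case $i=\delta$. There is no serious obstacle beyond the bookkeeping of the precision loss, and one sees that the hypothesis on $r$ is tight.
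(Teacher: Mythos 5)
Your proof is correct and is exactly what the paper intends: the paper's proof reads ``Using lemma \ref{lemmSenH2}, this follows inductively from the case $\Delta^\prime=\varnothing$ and $m=r$, working componentwise,'' and your write-up supplies precisely the bookkeeping — the enumeration of $\Delta$, the choice $m=r-i+1$ at step $i$, the telescoping product $B=B_{\alpha_1}\cdots B_{\alpha_\delta}$, and the verification that $r\geq\delta+1$ keeps $m\geq 2$ throughout — that the one-line proof leaves implicit.
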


\begin{proof}
Using lemma \ref{lemmSenH2}, this follows inductively from the case $\Delta^\prime=\varnothing$ and $m=r$, working componentwise.
\end{proof}

\begin{coro}\label{coroSenH}
(\cf \cite[Corollaire 2.3]{AB}) The maps
\begin{align*}
\varinjlim\limits_{\substack{H\lhd H_{K,\Delta}\\ H\text{ \emph{open}}}}\H^1(H_{K,\Delta}/H,\GL_d(C_\Delta^H)) &\to \H^1(H_{K,\Delta},\GL_d(C_\Delta))\\
\varinjlim\limits_{\substack{H\lhd H_{K,\Delta}\\ H\text{ \emph{open}}}}\H^1(G_{K,\Delta}/H,\GL_d(C_\Delta^H)) &\to \H^1(G_{K,\Delta},\GL_d(C_\Delta))
\end{align*}
induced by inflation maps are bijective.
\end{coro}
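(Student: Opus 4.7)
The plan is to prove both inflation maps bijective by the standard Tate--Sen assembly: surjectivity will rest on the trivialization furnished by Proposition~\ref{propSenH}, and injectivity will be a formal restriction argument. Writing $G$ for either $H_{K,\Delta}$ or $G_{K,\Delta}$, I would treat the two cases in parallel.

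For surjectivity, I would start from a continuous cocycle $U\colon G\to\GL_d(C_\Delta)$. Continuity at the identity yields an open normal subgroup $N\lhd G$ contained in $H_{K,\Delta}$ on which $U_h\in\I_d+p^r\Mat_d(\mathcal{O}_{C_\Delta})$ for some $r\geq\delta+1$. To apply Proposition~\ref{propSenH} I need $N$ to contain a product subgroup of the form $H_0^\Delta$: by the product topology on $G_K^\Delta$ there is an open $H_1\leq G_K$ with $H_1^\Delta\subset N$, and replacing $H_1$ by $\bigl(\bigcap_{g\in G_K}gH_1g^{-1}\bigr)\cap H_K$ produces an $H_0$ open and normal in $G_K$, contained in $H_K$, with $H_0^\Delta\subset N$. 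Proposition~\ref{propSenH} then provides $B\in\GL_d(\mathcal{O}_{C_\Delta})$ for which the cohomologous cocycle $U'_\sigma:=B^{-1}U_\sigma\sigma(B)$ is trivial on $H_0^\Delta$. A short cocycle manipulation, applied to the identity $h\sigma=\sigma(\sigma^{-1}h\sigma)$ and using normality of $H_0^\Delta$ to know that $\sigma^{-1}h\sigma\in H_0^\Delta$, yields $h(U'_\sigma)=U'_\sigma$ for all $\sigma\in G$ and $h\in H_0^\Delta$. Hence $U'$ descends to a continuous cocycle $G/H_0^\Delta\to\GL_d(C_\Delta^{H_0^\Delta})$ whose inflation represents the class of $U$.

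For injectivity, I would take cocycles $U,U'\colon G/H\to\GL_d(C_\Delta^H)$ whose inflations are cohomologous, so $U'_\sigma=M^{-1}U_\sigma\sigma(M)$ for some $M\in\GL_d(C_\Delta)$; restricting to $\sigma\in H$, where $U_\sigma=U'_\sigma=\I_d$, forces $\sigma(M)=M$, giving $M\in\GL_d(C_\Delta^H)$. Thus $U$ and $U'$ are already cohomologous at level $H$. The serious analytic content has been absorbed into Proposition~\ref{propSenH}, so the main remaining obstacle is purely the algebraic-topological refinement of an arbitrary small open normal subgroup of $G$ to one of the product form $H_0^\Delta$ with $H_0\lhd G_K$; this refinement is what makes the quotient $G/H_0^\Delta$ and the fixed ring $C_\Delta^{H_0^\Delta}$ available simultaneously in the $H_{K,\Delta}$ and $G_{K,\Delta}$ cases.
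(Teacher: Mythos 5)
Your proof is correct and essentially the same as the paper's: both reduce to Proposition~\ref{propSenH} and then apply the inflation--restriction formalism, which you carry out explicitly by showing $h(U'_\sigma)=U'_\sigma$ and $M\in\GL_d(C_\Delta^H)$ rather than citing the exact sequence of pointed sets. Your refinement to an $H_0$ normal in $G_K$ (not merely $H_K$) also makes the normality of $H_0^\Delta$ in $G_{K,\Delta}$ transparent, a point the paper handles more tersely by asserting that the second bijection follows from the first.
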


\begin{proof}
The second statement follows from the first one. Let $U\colon H_{K,\Delta}\to\GL_d(C_\Delta)$ be a continuous cocycle. By continuity, there exists an open normal subgroup $H\leq H_{K,\Delta}$ such that $U_h\in\I_d+p^{\delta+1}\Mat_d(\mathcal{O}_{C_\Delta})$ for all $h\in H$. Making $H$ smaller if necessary, we can assume that $H=\prod\limits_{\alpha\in\Delta}H_0$ where $H_0\leq H_K$ is an open normal subgroup (note that such subgroups are cofinal among open normal subgroups of $H_{K,\Delta}$). Proposition \ref{propSenH} shows that the cocycle we started with has a trivial image in $\H^1(H,\GL_d(C_\Delta))$. The inflation-restriction exact sequence of sets:
$$\{1\}\to\H^1\big(H_{K,\Delta}/H,\GL_d(C_\Delta^H)\big)\to\H^1\big(H_{K,\Delta},\GL_d(C_\Delta)\big)\to\H^1\big(H,\GL_d(C_\Delta)\big)$$
(\cf \cite[I, \S5.8]{Serre97}) shows that $U$ is the induction of a unique class in $\H^1\big(H_{K,\Delta}/H,\GL_d(C_\Delta^H)\big)$.
\end{proof}


Let $H_0\leq H_K$ be an open normal subgroup. By \cite{Ax}, the field $C^{H_0}$ is the closure of $\Kbar^{H_0}$. The latter is a finite extension of $\Kbar^{H_K}=K_\infty$. If $x$ is a primitive element for $\Kbar^{H_0}/K_\infty$, there exists $n\in\NN$ such that $x$ is algebraic over $K_n$: if we put $K^\prime=K_n(x)$, then $\Kbar^{H_0}=K^\prime_\infty$, and $C^{H_0}=L^\prime:=\widehat{K^\prime_\infty}$. In particular, if $H=\prod\limits_{\alpha\in\Delta}H_0$, we have $C_\Delta^H=L^\prime_\Delta$ by Theorem \ref{theocohoHC}. In what follows, we denote by $R_{n,\alpha}^\prime$ the generalized normalized Tate traces relative to $K^\prime$ (\cf \cite[\S3]{Tate} and proposition \ref{propRn}).

Fix $\Delta^\prime\subset\Delta$, $\alpha\in\Delta\setminus\Delta^\prime$ and $n\geq n_{K^\prime}^\prime$. The topological decomposition $L^\prime=K_n^\prime\oplus X_n^\prime$ (where $X_n^\prime=\Ker(R_n^\prime)$ gives rise to the topological decomposition
$$L^\prime_{\Delta,\Delta^\prime}=L^\prime_{\Delta,\Delta^\prime\cup\{\alpha\},n}\oplus\Ker(R_{n,\alpha}^\prime).$$
Moreover, if $\gamma\in\Gamma_{K,\alpha}$ is such that $v_p(1-\chi(\gamma))\leq n_{K^\prime}^\prime$, then $\gamma-1$ is invertible on $\Ker(R_{n,\alpha}^\prime)$, and for all $x\in\Ker(R_{n,\alpha}^\prime)\cap\mathcal{O}_{L^\prime_{\Delta,\Delta^\prime}}$, we have $(\gamma-1)^{-1}(x)\in\frac{1}{p}\mathcal{O}_{L^\prime_{\Delta,\Delta^\prime}}$ (\cf proposition \ref{propRn}).

\medskip

\begin{lemm}\label{lemmdecompletion0}
(\cf \cite[Proposition 3]{Sen}, \cite[Lemme 3.2.5]{BC}). Let $\gamma\in\Gamma_{K^\prime,\alpha}$ such that $v_p(1-\chi(\gamma))\leq n$. Assume $B\in\GL_d(L^\prime_{\Delta,\Delta^\prime})$ and $V_1,V_2\in\I_d+p^2\Mat_d\big(\mathcal{O}_{L^\prime_{\Delta,\Delta^\prime\cup\{\alpha\},n}}\big)$ are such that $\gamma(B)=V_1BV_2$. Then $B\in\GL_d(L^\prime_{\Delta,\Delta^\prime\cup\{\alpha\},n})$.
\end{lemm}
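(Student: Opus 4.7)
The plan is to use the topological direct-sum decomposition $L^\prime_{\Delta,\Delta^\prime}=L^\prime_{\Delta,\Delta^\prime\cup\{\alpha\},n}\oplus\Ker(R_{n,\alpha}^\prime)$ recalled just before the lemma to isolate, and then kill, the component of $B$ living in $\Ker(R_{n,\alpha}^\prime)$. I would write $B=B_1+B_2$ entrywise, with
\[
B_1:=R_{n,\alpha}^\prime(B)\in\Mat_d(L^\prime_{\Delta,\Delta^\prime\cup\{\alpha\},n})
\quad\text{and}\quad
B_2\in\Mat_d\big(\Ker R_{n,\alpha}^\prime\big);
\]
the whole point is to show $B_2=0$.

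To obtain an equation for $B_2$ alone, I would apply $R_{n,\alpha}^\prime$ to the identity $\gamma(B)=V_1BV_2$. The map $R_{n,\alpha}^\prime$ commutes with the action of $\Gamma_{K^\prime,\alpha}$ (by Proposition~\ref{propRn}(ii) on the $\alpha$-factor, trivially on the others) and is $L^\prime_{\Delta,\Delta^\prime\cup\{\alpha\},n}$-linear, inherited entrywise from the $K_n^\prime$-linearity of $R_n^\prime$ on the $\alpha$-factor. Since by hypothesis $V_1$ and $V_2$ already have entries in $L^\prime_{\Delta,\Delta^\prime\cup\{\alpha\},n}$, the projection of the identity reads $\gamma(B_1)=V_1B_1V_2$. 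Subtracting from $\gamma(B)=V_1BV_2$ gives $\gamma(B_2)=V_1B_2V_2$, equivalently
\[
(\gamma-1)(B_2)=(V_1-\I_d)\,B_2\,V_2+B_2\,(V_2-\I_d).
\]

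Writing $v_p$ on a matrix for the minimum of $v_p$ over its entries, the hypotheses $V_i-\I_d\in p^2\Mat_d(\mathcal{O}_{L^\prime_{\Delta,\Delta^\prime\cup\{\alpha\},n}})$ and $V_i\in\Mat_d(\mathcal{O}_{L^\prime_{\Delta,\Delta^\prime\cup\{\alpha\},n}})$ then yield $v_p\big((\gamma-1)(B_2)\big)\geq v_p(B_2)+2$. By Proposition~\ref{propRn}(v) (applied entrywise, with $c_3=1$), $\gamma-1$ is invertible on $\Ker(R_{n,\alpha}^\prime)$ with $v_p\big((\gamma-1)^{-1}(x)\big)\geq v_p(x)-1$; applying $(\gamma-1)^{-1}$ I would obtain
\[
v_p(B_2)\geq v_p(B_2)+1,
\]
which, since the finitely many entries of $B_2$ each have valuation $>-\infty$, forces $v_p(B_2)=+\infty$ and hence $B_2=0$. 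Therefore $B=B_1\in\Mat_d(L^\prime_{\Delta,\Delta^\prime\cup\{\alpha\},n})$.

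To upgrade this membership to invertibility in the target ring, I would run the very same argument on $B^{-1}$, which satisfies $\gamma(B^{-1})=V_2^{-1}B^{-1}V_1^{-1}$ with $V_i^{-1}\in\I_d+p^2\Mat_d(\mathcal{O}_{L^\prime_{\Delta,\Delta^\prime\cup\{\alpha\},n}})$ (by convergence of the geometric series $V_i^{-1}=\sum_{k\geq0}(\I_d-V_i)^k$). The main technical point---and the only one requiring some care---is the $L^\prime_{\Delta,\Delta^\prime\cup\{\alpha\},n}$-linearity of $R_{n,\alpha}^\prime$, which is what lets one factor $V_1$ and $V_2$ out of $R_{n,\alpha}^\prime(V_1BV_2)$ to obtain $V_1B_1V_2$; once this is available, the valuation comparison is essentially forced by the $p^2$-closeness of $V_1,V_2$ to $\I_d$ set against the mild $p^{-1}$-loss coming from $(\gamma-1)^{-1}$.
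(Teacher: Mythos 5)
Your proof follows the paper's argument exactly: both set $C:=B-R_{n,\alpha}^\prime(B)\in\Mat_d(\Ker R_{n,\alpha}^\prime)$, use the $L^\prime_{\Delta,\Delta^\prime\cup\{\alpha\},n}$-linearity and $\Gamma_{K^\prime,\alpha}$-equivariance of $R_{n,\alpha}^\prime$ to deduce $\gamma(C)=V_1CV_2$, rewrite $\gamma(C)-C$ in terms of $V_1-\I_d$ and $V_2-\I_d$ to gain two units of $p$-adic valuation, and then play this gain against the at-most-one-unit loss of $(\gamma-1)^{-1}$ on $\Ker(R_{n,\alpha}^\prime)$ to force $C=0$. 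You additionally spell out why $B^{-1}$ lands in $\Mat_d(L^\prime_{\Delta,\Delta^\prime\cup\{\alpha\},n})$ by rerunning the argument on $B^{-1}$ with $V_1^{-1},V_2^{-1}$; the paper leaves this invertibility step implicit, so your version is, if anything, slightly more complete.
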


\begin{proof}
Put $Z=B-R_{n,\alpha}^\prime(B)\in\Mat_d(L^\prime_{\Delta,\Delta^\prime})$: as $R_{n,\alpha}$ is $L^\prime_{\Delta,\Delta^\prime\cup\{\alpha\},n}$-linear and commutes with the action of $\Gamma_{K^\prime,\alpha}$, we have $\gamma(Z)=V_1ZV_2$, hence
$$\gamma(Z)-Z=V_1ZV_2-Z=(V_1-\I_d)Z+V_1Z(V_2-\I_d)-(V_1-\I_d)Z(V_2-\I_d).$$
If $Z\in p^a\Mat_d\big(\mathcal{O}_{L^\prime_{\Delta,\Delta^\prime}}\big)$, this implies that $(\gamma-1)(Z)\in p^{a+2}\Mat_d\big(\mathcal{O}_{L^\prime_{\Delta,\Delta^\prime}}\big)$. As $Z$ has entries in $\Ker(R_{n,\alpha}^\prime)$, this implies that $Z\in p^{a+1}\Mat_d\big(\mathcal{O}_{L^\prime_{\Delta,\Delta^\prime}}\big)$. This shows that $Z=0$, \ie that $B$ has entries in $L^\prime_{\Delta,\Delta^\prime\cup\{\alpha\},n}$.
\end{proof}

\begin{lemm}\label{lemmdecompletion1}
(\cf \cite[Lemma 3]{Sen}, \cite[Lemme 3.2.3]{BC}). Let $a,b\in\NN$ be such that $b\geq a>2$ and $\gamma\in\Gamma_{K^\prime,\alpha}$ such that $v_p(1-\chi(\gamma))\leq n$. Assume $U=\I_d+p^aU_1+p^bU_2$ with $U_1\in\Mat_d\big(\mathcal{O}_{L^\prime_{\Delta,\Delta^\prime\cup\{\alpha\},n}}\big)$ and $U_2\in\Mat_d\big(\mathcal{O}_{L^\prime_{\Delta,\Delta^\prime}}\big)$. Then there exists $V\in\Mat_d\big(\mathcal{O}_{L^\prime_{\Delta,\Delta^\prime}}\big)$ such that $M^{-1}U\gamma(M)=\I_d+p^aV_1+p^{b+1}V_2$ with $V_1\in\Mat_d\big(\mathcal{O}_{L^\prime_{\Delta,\Delta^\prime\cup\{\alpha\},n}}\big)$ and $V_2\in\Mat_d\big(\mathcal{O}_{L^\prime_{\Delta,\Delta^\prime}}\big)$, where $M=\I_d+p^{b-1}V$.
\end{lemm}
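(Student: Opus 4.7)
The plan is to imitate the classical Tate--Sen descent argument (\cf \cite[Lemma 3]{Sen} and \cite[Lemme 3.2.3]{BC}): expand $M^{-1}U\gamma(M)$ modulo $p^{b+1}$ to reduce the construction of $V$ to solving a cohomological equation of the form $(\gamma-1)(V)\equiv -pW\pmod{p^2}$ on $\Ker(R^\prime_{n,\alpha})$, where $W$ denotes the ``bad'' part of $U_2$ relative to the projector $R^\prime_{n,\alpha}$.

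First, I would compute $M^{-1}U\gamma(M)$ modulo $p^{b+1}\Mat_d(\mathcal{O}_{L^\prime_{\Delta,\Delta^\prime}})$. The hypothesis $a>2$ (so $a,b\geq 3$) ensures that every cross term of the expansion is absorbed modulo $p^{b+1}$: indeed $2(b-1)\geq b+1$, $a+b-1\geq b+2$ and $2b-1\geq b+1$, so $M^{-1}\equiv\I_d-p^{b-1}V\pmod{p^{b+1}}$ and the product telescopes to
$$M^{-1}U\gamma(M)\equiv\I_d+p^aU_1+p^bU_2+p^{b-1}(\gamma-1)(V)\pmod{p^{b+1}\Mat_d(\mathcal{O}_{L^\prime_{\Delta,\Delta^\prime}})}.$$

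Next, since $v_p(1-\chi(\gamma))\leq n$ and $n\geq n^\prime_{K^\prime}$, proposition \ref{propRn} supplies both the topological decomposition $L^\prime_{\Delta,\Delta^\prime}=L^\prime_{\Delta,\Delta^\prime\cup\{\alpha\},n}\oplus\Ker(R^\prime_{n,\alpha})$ and the fact that $\gamma-1$ is a topological automorphism of $\Ker(R^\prime_{n,\alpha})$ (acting in the factor of index $\alpha$). Decomposing $U_2=R^\prime_{n,\alpha}(U_2)+W$ with $W=(\Id-R^\prime_{n,\alpha})(U_2)$, I would set
$$V:=-(\gamma-1)^{-1}(pW)\in\Mat_d\big(\mathcal{O}_{L^\prime_{\Delta,\Delta^\prime}}\big),$$
where the integrality follows by combining the valuation bounds of proposition \ref{propRn}(iii) and (v): the explicit factor of $p$ compensates for the losses from $R^\prime_{n,\alpha}$ and from $(\gamma-1)^{-1}$. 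Then $(\gamma-1)(V)=-pW$, whence $p^bU_2+p^{b-1}(\gamma-1)(V)=p^bR^\prime_{n,\alpha}(U_2)$, so the expansion reduces to
$$M^{-1}U\gamma(M)\equiv\I_d+p^a\big(U_1+p^{b-a}R^\prime_{n,\alpha}(U_2)\big)\pmod{p^{b+1}\Mat_d(\mathcal{O}_{L^\prime_{\Delta,\Delta^\prime}})}.$$
Setting $V_1:=U_1+p^{b-a}R^\prime_{n,\alpha}(U_2)\in\Mat_d(\mathcal{O}_{L^\prime_{\Delta,\Delta^\prime\cup\{\alpha\},n}})$ (meaningful since $b\geq a$) and reading off $V_2\in\Mat_d(\mathcal{O}_{L^\prime_{\Delta,\Delta^\prime}})$ from the $p^{b+1}$-remainder completes the construction.

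The main technical obstacle is verifying that $V$ actually lies in $\Mat_d(\mathcal{O}_{L^\prime_{\Delta,\Delta^\prime}})$, and not merely in $\tfrac{1}{p}\Mat_d(\mathcal{O}_{L^\prime_{\Delta,\Delta^\prime}})$: the combined valuation loss from $R^\prime_{n,\alpha}$ (controlled by $c_2$) and from $(\gamma-1)^{-1}$ (controlled by $c_3$) must be absorbed by the single explicit factor of $p$ appearing in the definition of $V$. This is precisely where the flexibility to shrink $c_2$ arbitrarily (proposition \ref{propRn}(iii)) and to pick $c_3$ as close as desired to the optimum $\tfrac{1}{p-1}$ (proposition \ref{propRn}(v)) becomes essential, together with the hypothesis $a>2$ that provides the required slack in the expansion of the conjugation.
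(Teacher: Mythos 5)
Your proposal reproduces the paper's argument: you define $V$ by the same relation $(1-\gamma)(V)=p\big(U_2-R'_{n,\alpha}(U_2)\big)$ (your $-(\gamma-1)^{-1}(pW)$ is that $V$), expand $M^{-1}U\gamma(M)$ modulo $p^{b+1}$ using $b\geq 3$ to absorb cross terms, and arrive at the same $V_1=U_1+p^{b-a}R'_{n,\alpha}(U_2)$. This is essentially identical to the paper's proof.
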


\begin{proof}
We can write $U_2=R^\prime_{n,\alpha}(U_2)+\frac{1}{p}(1-\gamma)(V)$, where $V\in\Mat_d\big(\mathcal{O}_{L^\prime_{\Delta,\Delta^\prime}}\big)$ has entries in $\Ker(R^\prime_{n,\alpha})$. Then we have $M^{-1}=\sum\limits_{j=0}^\infty p^{j(b-1)}V^j\in\I_d-p^{b-1}V+p^{b+1}\Mat_d\big(\mathcal{O}_{L^\prime_{\Delta,\Delta^\prime}}\big)$ (because $2(b-1)\geq b+1$ since $b\geq3$ by hypothesis), so that
\begin{align*}
M^{-1}U\gamma(M) &\in (\I_d-p^{b-1}V)U(\I_d+p^{b-1}\gamma(V))+p^{b+1}\Mat_d\big(\mathcal{O}_{L^\prime_{\Delta,\Delta^\prime}}\big)\\
&\in U-p^{b-1}(VU-U\gamma(V))+p^{b+1}\Mat_d\big(\mathcal{O}_{L^\prime_{\Delta,\Delta^\prime}}\big).
\end{align*}
We have $U\in\I_d+p^2\Mat_d\big(\mathcal{O}_{L^\prime_{\Delta,\Delta^\prime}}\big)$ (because $b\geq a>2$), so that $VU-U\gamma(V)\in(1-\gamma)(V)+p^2\Mat_d\big(\mathcal{O}_{L^\prime_{\Delta,\Delta^\prime}}\big)$. This implies that
\begin{align*}
M^{-1}U\gamma(M) &\in U-p^{b-1}(1-\gamma)(V)+p^{b+1}\Mat_d\big(\mathcal{O}_{L^\prime_{\Delta,\Delta^\prime}}\big)\\
&\in \I_d+p^aU_1+p^b\big(R^\prime_{n,\alpha}(U_2)+\tfrac{1}{p}(1-\gamma)(V)\big)-p^{b-1}(1-\gamma)(V)+p^{b+1}\Mat_d\big(\mathcal{O}_{L^\prime_{\Delta,\Delta^\prime}}\big)\\
&\in \I_d+p^aV_1+p^{b+1}\Mat_d\big(\mathcal{O}_{L^\prime_{\Delta,\Delta^\prime}}\big)
\end{align*}
with $V_1=U_1+p^{b-a}R^\prime_{n,\alpha}(U_2)\in\Mat_d\big(\mathcal{O}_{L^\prime_{\Delta,\Delta^\prime\cup\{\alpha\},n}}\big)$.
\end{proof}

\begin{coro}\label{corodecompletion1}
(\cf \cite[Proposition 6]{Sen}, \cite[Corollaire 3.2.4]{BC}). Let $a\in\NN_{>2}$ and $\gamma\in\Gamma_{K^\prime,\alpha}$ such that $v_p(1-\chi(\gamma))\leq n$. Assume $\displaystyle U\in\I_d+p^a\Mat_d\big(\mathcal{O}_{L^\prime_{\Delta,\Delta^\prime}}\big)$. Then there exists $\displaystyle M\in\I_d+p^{a-1}\Mat_d\big(\mathcal{O}_{L^\prime_{\Delta,\Delta^\prime}}\big)$ such that $ \displaystyle M^{-1}U\gamma(M)\in\I_d+p^a\Mat_d\big(\mathcal{O}_{L^\prime_{\Delta,\Delta^\prime\cup\{\alpha\},n}}\big)$.
\end{coro}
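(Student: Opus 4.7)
The plan is to deduce this corollary from Lemma \ref{lemmdecompletion1} by an iterative construction, analogous to the successive approximation arguments used in one-variable Sen theory.

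First, I would observe that the hypothesis $U\in\I_d+p^a\Mat_d(\mathcal{O}_{L^\prime_{\Delta,\Delta^\prime}})$ can be rewritten as $U=\I_d+p^aU_1+p^bU_2$ with $U_1=0$ (which trivially lies in $\Mat_d(\mathcal{O}_{L^\prime_{\Delta,\Delta^\prime\cup\{\alpha\},n}})$), $b=a$, and $U_2\in\Mat_d(\mathcal{O}_{L^\prime_{\Delta,\Delta^\prime}})$, so Lemma \ref{lemmdecompletion1} applies. Using it recursively, I would build sequences $(M_k)_{k\geq0}$ and $(U_k)_{k\geq0}$ with $U_0=U$, $M_k=\I_d+p^{a+k-1}V_k$ for some $V_k\in\Mat_d(\mathcal{O}_{L^\prime_{\Delta,\Delta^\prime}})$, and $U_{k+1}=M_k^{-1}U_k\gamma(M_k)$ of the form $\I_d+p^aU_{k+1,1}+p^{a+k+1}U_{k+1,2}$ with $U_{k+1,1}\in\Mat_d(\mathcal{O}_{L^\prime_{\Delta,\Delta^\prime\cup\{\alpha\},n}})$ and $U_{k+1,2}\in\Mat_d(\mathcal{O}_{L^\prime_{\Delta,\Delta^\prime}})$. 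At each step one checks that the hypotheses of Lemma \ref{lemmdecompletion1} are met with the same $a$ but increasing $b=a+k$; since $a>2$ this is fine throughout.

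Next, since $M_k\equiv\I_d\pmod{p^{a+k-1}}$ and $p^{a+k-1}\to 0$ in $\mathcal{O}_{L^\prime_{\Delta,\Delta^\prime}}$, the infinite product $M=M_0M_1M_2\cdots$ converges in $\GL_d(\mathcal{O}_{L^\prime_{\Delta,\Delta^\prime}})$. The factor $M_0$ contributes the term in $p^{a-1}$, while all subsequent $M_k$ lie in $\I_d+p^a\Mat_d(\mathcal{O}_{L^\prime_{\Delta,\Delta^\prime}})$, so $M\in\I_d+p^{a-1}\Mat_d(\mathcal{O}_{L^\prime_{\Delta,\Delta^\prime}})$. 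By construction the partial products satisfy $(M_0\cdots M_k)^{-1}U\gamma(M_0\cdots M_k)=U_{k+1}$, and passing to the limit in the $p$-adic topology the remainder $p^{a+k+1}U_{k+1,2}$ vanishes, giving $M^{-1}U\gamma(M)=\I_d+p^aU_\infty$ where $U_\infty=\lim_kU_{k,1}\in\Mat_d(\mathcal{O}_{L^\prime_{\Delta,\Delta^\prime\cup\{\alpha\},n}})$ (the module of $R^\prime_{n,\alpha}$-fixed entries being $p$-adically closed in $\Mat_d(\mathcal{O}_{L^\prime_{\Delta,\Delta^\prime}})$).

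The only nontrivial point is bookkeeping: verifying that the coefficients $U_{k,1}$ remain in the smaller module $\Mat_d(\mathcal{O}_{L^\prime_{\Delta,\Delta^\prime\cup\{\alpha\},n}})$ and that all integrality bounds are preserved under the iteration (this is where the hypothesis $a>2$, needed already in Lemma \ref{lemmdecompletion1} through the condition $b\geq 3$, is used). Once the recursion is set up, convergence of the product and the form of $M^{-1}U\gamma(M)$ are automatic from the $p$-adic completeness of $\mathcal{O}_{L^\prime_{\Delta,\Delta^\prime}}$ and the closedness of the subring $\mathcal{O}_{L^\prime_{\Delta,\Delta^\prime\cup\{\alpha\},n}}$.
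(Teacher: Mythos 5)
Your proposal is correct and follows essentially the same approach as the paper's proof: iterate Lemma \ref{lemmdecompletion1} starting from the initial decomposition $U=\I_d+p^aU_1+p^aU_2$ with $U_1=0$ and $b=a$, build the sequence of corrections $M_k$, and pass to the infinite product, which converges since $M_k\equiv\I_d\pmod{p^{a+k-1}}$. The paper's proof is a compressed version of what you wrote (indexed by $b\geq a$ rather than $k\geq0$), and you correctly identify the small points that need verifying — that only the first factor contributes the $p^{a-1}$ term to $M$, and that the limit of the $U_{k,1}$'s remains in $\Mat_d(\mathcal{O}_{L^\prime_{\Delta,\Delta^\prime\cup\{\alpha\},n}})$ because that subring is $p$-adically closed.
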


\begin{proof}
Using the previous lemma inductively, we can construct a sequence of matrices $(M_b)_{b\geq a}$ such that $M_b\in\I_d+p^{b-1}\Mat_d\big(\mathcal{O}_{L^\prime_{\Delta,\Delta^\prime}}\big)$ and
$$(M_aM_{a+1}\cdots M_b)^{-1}U\gamma(M_aM_{a+1}\cdots M_b)\in\I_d+p^a\Mat_d\big(\mathcal{O}_{L^\prime_{\Delta,\Delta^\prime\cup\{\alpha\},n}}\big)+p^{b+1}\Mat_d\big(\mathcal{O}_{L^\prime_{\Delta,\Delta^\prime}}\big)$$
for all $b\geq a$. The infinite product $\prod\limits_{b=a}^\infty M_b$ converges in $\I_d+p^{a-1}\Mat_d\big(\mathcal{O}_{L^\prime_{\Delta,\Delta^\prime}}\big)$ and has the required property.
\end{proof}

By definition, we have inclusions
$$K^\prime_{n,\Delta}=L^\prime_{\Delta,\varnothing,n}\subset L^\prime_{\Delta,\Delta^\prime,n}\subset L^\prime_{\Delta,\Delta,n}=L^\prime_\Delta$$
for all $n\in\NN$, hence inclusions
$$K^\prime_{\Delta,\infty}:=\bigcup\limits_{n=0}^\infty K^\prime_{n,\Delta}=L^\prime_{\Delta,\varnothing,\infty}\subset L^\prime_{\Delta,\Delta^\prime,\infty}\subset L^\prime_\Delta$$
for all $\Delta^\prime\subset\Delta$. We put $\mathcal{O}_{K^\prime_{\Delta,\infty}}=\bigcup\limits_{n=0}^\infty\mathcal{O}_{K^\prime_{n,\Delta}}$.

\begin{coro}\label{corodecompletion2}
(\cf \cite[Proposition 3.2.6]{BC}). Let $U\colon\Gamma_{K^\prime,\Delta}\to\I_d+p^2\Mat_d\big(\mathcal{O}_{L^\prime_{\Delta,\Delta^\prime,\infty}}\big)$ be a continuous cocycle. Then there exists $M\in\I_d+p\Mat_d\big(\mathcal{O}_{L^\prime_{\Delta,\Delta^\prime,\infty}}\big)$ such that $M^{-1}U_gg(M)\in\I_d+p^2\Mat_d\big(\mathcal{O}_{K^\prime_{\Delta,\Delta^\prime\cup\{\alpha\},\infty}}\big)$ for all $g\in\Gamma_{K^\prime,\Delta}$.
\end{coro}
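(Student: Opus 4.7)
The strategy follows the standard Tate--Sen ``decompletion of cocycles'' pattern: decomplete a single distinguished element via Corollary~\ref{corodecompletion1}, then spread to the full group using the cocycle identity together with Lemma~\ref{lemmdecompletion0}.

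First, I would pick $\gamma_0\in\Gamma_{K^\prime,\alpha}$ with $v_p(1-\chi(\gamma_0))\leq n_{K^\prime}^\prime$, so that Proposition~\ref{propRn}(v), Lemma~\ref{lemmdecompletion0} and Corollary~\ref{corodecompletion1} are all applicable to $\gamma_0$ along the $\alpha$-direction. Since $U_{\gamma_0}$ is a single element of the filtered union $\mathcal{O}_{L^\prime_{\Delta,\Delta^\prime,\infty}}=\bigcup_n\mathcal{O}_{L^\prime_{\Delta,\Delta^\prime,n}}$, one may fix $n\geq n_{K^\prime}^\prime$ with $U_{\gamma_0}\in\I_d+p^2\Mat_d(\mathcal{O}_{L^\prime_{\Delta,\Delta^\prime,n}})$. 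Applying a borderline version of Corollary~\ref{corodecompletion1} to the pair $(U_{\gamma_0},\gamma_0)$ then produces a matrix $M\in\I_d+p\Mat_d(\mathcal{O}_{L^\prime_{\Delta,\Delta^\prime,n}})$ such that $M^{-1}U_{\gamma_0}\gamma_0(M)\in\I_d+p^2\Mat_d(\mathcal{O}_{L^\prime_{\Delta,\Delta^\prime\cup\{\alpha\},n}})$; this uses the topological splitting $\mathcal{O}_{L^\prime_{\Delta,\Delta^\prime,n}}\simeq\mathcal{O}_{L^\prime_{\Delta,\Delta^\prime\cup\{\alpha\},n}}\oplus\Ker(R^\prime_{n,\alpha})$ together with the invertibility of $\gamma_0-1$, with controlled inverse, on $\Ker(R^\prime_{n,\alpha})$.

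Next, I would consider the conjugated cocycle $U_g^\prime:=M^{-1}U_gg(M)$ for $g\in\Gamma_{K^\prime,\Delta}$. Since $\Gamma_{K^\prime,\Delta}$ is abelian, $g$ commutes with $\gamma_0$, and writing out $U_{\gamma_0 g}^\prime=U_{g\gamma_0}^\prime$ via the cocycle identity gives
\[
\gamma_0(U_g^\prime)=(U_{\gamma_0}^\prime)^{-1}\cdot U_g^\prime\cdot g(U_{\gamma_0}^\prime).
\]
The ring $\mathcal{O}_{L^\prime_{\Delta,\Delta^\prime\cup\{\alpha\},n}}$ is stable under the full $\Gamma_{K^\prime,\Delta}$-action (each $K^\prime_n/K^\prime$ is Galois, so $\Gamma_{K^\prime}$ preserves $K^\prime_n$ in every tensor factor), hence both $V_1:=(U_{\gamma_0}^\prime)^{-1}$ and $V_2:=g(U_{\gamma_0}^\prime)$ lie in $\I_d+p^2\Mat_d(\mathcal{O}_{L^\prime_{\Delta,\Delta^\prime\cup\{\alpha\},n}})$. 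Lemma~\ref{lemmdecompletion0} applied with $B=U_g^\prime$ yields $U_g^\prime\in\GL_d(L^\prime_{\Delta,\Delta^\prime\cup\{\alpha\},n})$; combined with $U_g^\prime-\I_d\in p^2\Mat_d$ (ensured by the careful construction of $M$ discussed below), this gives the conclusion.

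\textbf{Main obstacle.} The technical heart of the argument is the construction of $M$: Corollary~\ref{corodecompletion1} is formally stated for $a>2$, whereas the hypothesis here only furnishes the starting approximation $a=2$. One must therefore rerun the inductive improvement of Lemma~\ref{lemmdecompletion1} at the boundary exponent, and simultaneously arrange that $M$ satisfies $(g-1)(M)\in p^2\Mat_d$ for \emph{every} $g\in\Gamma_{K^\prime,\Delta}$, not only for $g=\gamma_0$. This is what guarantees that the conjugated cocycle genuinely sits in $\I_d+p^2\Mat_d$ rather than merely $\I_d+p\Mat_d$, and it is achievable because the Tate-type corrections used to build $M$ depend on the $\alpha$-coordinate only through quantities that are equivariant under $\Gamma_{K^\prime,\beta}$ for each $\beta\neq\alpha$, so $M\bmod p$ is automatically invariant under those factors.
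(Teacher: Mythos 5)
Your proposal is essentially the paper's argument: decomplete the cocycle at a single $\gamma\in\Gamma_{K^\prime,\alpha}$ via Corollary~\ref{corodecompletion1}, then spread to all of $\Gamma_{K^\prime,\Delta}$ using the cocycle identity, commutativity of $\Gamma_{K^\prime,\Delta}$, and Lemma~\ref{lemmdecompletion0}. One bookkeeping difference: the paper first uses the topological finite generation of $\Gamma_{K^\prime,\Delta}$ to fix an $n$ so that the \emph{whole} cocycle $U$ lands in $\I_d+p^2\Mat_d\big(\mathcal{O}_{L^\prime_{\Delta,\Delta^\prime,n}}\big)$, not just $U_{\gamma_0}$; this keeps the conjugated cocycle $U^\prime$ at a single finite level $n$, which makes the final intersection (combining the output of Lemma~\ref{lemmdecompletion0} with $p^2$-integrality) transparent, whereas fixing $n$ only for $\gamma_0$ would require a bit more care to sort out the levels. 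On your two technical flags: you are right that Corollary~\ref{corodecompletion1} is formally stated for $a>2$ while the hypothesis here supplies $a=2$, and you are right that one needs $(g-1)(M)\in p^2\Mat_d$ for \emph{all} $g$ in order to conclude $U^\prime_g\equiv\I_d\pmod{p^2}$ and not merely $\pmod p$. The paper's proof asserts both points without comment, so on this score your proposal is \emph{more} careful than the source, not less --- although you too leave them at a heuristic level. A tight version would note that the proof of Lemma~\ref{lemmdecompletion1} really only needs $b\geq3$ (the step $2(b-1)\geq b+1$), handle the base case $b=a=2$ by a direct computation, and then track the $p$-adic size of the successive corrections $M_b$ in the convergent product defining $M$ to bound $(g-1)(M)$.
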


\begin{proof}
As $\Gamma_{K^\prime,\Delta}$ is topologically generated by finitely many elements, there exists $n\geq n_{K^\prime}^\prime$ such that $U_g\in\I_d+p^2\Mat_d\big(\mathcal{O}_{L^\prime_{\Delta,\Delta^\prime,n}}\big)$ for all $g\in\Gamma_{K^\prime,\Delta}$. Let $\gamma\in\Gamma_{K^\prime,\alpha}$ be an element of infinite order. Enlarging $n$ if necessary, we may assume that $v_p(1-\chi(\gamma))\leq n$. By corollary \ref{corodecompletion1}, there exists $M\in\I_d+p\Mat_d\big(\mathcal{O}_{L^\prime_{\Delta,\Delta^\prime,n}}\big)$ such that $M^{-1}U_\gamma\gamma_0(M)\in\I_d+p^2\Mat_d\big(\mathcal{O}_{L^\prime_{\Delta,\Delta^\prime\cup\{\alpha\},n}}\big)$.

\noindent
For all $g\in\Gamma_{K^\prime,\Delta}$, put $U^\prime_g=M^{-1}U_gg(M)$: this defines a cocycle $U^\prime\colon\Gamma_{K^\prime,\Delta}\to\I_d+p^2\Mat_d\big(\mathcal{O}_{L^\prime_{\Delta,\Delta^\prime,n}}\big)$ which is cohomologous to $U$, and such that $U^\prime_\gamma\in\I_d+p^2\Mat_d\big(\mathcal{O}_{L^\prime_{\Delta,\Delta^\prime\cup\{\alpha\},n}}\big)$. Let $g\in\Gamma_{K^\prime,\Delta}$. As $\Gamma_{K^\prime,\Delta}$ is commutative, we have $\gamma g=g\gamma$ hence $U^\prime_\gamma\gamma(U^\prime_g)=U^\prime_gg(U^\prime_\gamma)$ \ie $\gamma(U^\prime_g)=U_\gamma^{\prime-1}U^\prime_gg(U^\prime_\gamma)$. Lemma \ref{lemmdecompletion0} applied with $V_1=U_\gamma^{\prime-1}$ and $V_2=g(U^\prime_\gamma)$ (here he use the fact that $L_{\Delta,\Delta^\prime\cup\{\alpha\},n}$ is stable by $g$, which follows form the commutativity of $\Gamma_{K^\prime,\Delta}$) implies that $U^\prime_g$ has coefficients in $L^\prime_{\Delta,\Delta^\prime\cup\{\alpha\},n}$, so that $U^\prime$ has values in $\I_d+p^2\Mat_d\big(\mathcal{O}_{L^\prime_{\Delta,\Delta^\prime\cup\{\alpha\},n}}\big)$.
\end{proof}

\begin{coro}\label{corodecompletion3}
Let $U\colon\Gamma_{K^\prime,\Delta}\to\I_d+p^2\Mat_d\big(\mathcal{O}_{L^\prime_\Delta}\big)$ be a continuous cocycle. Then there exists $B\in\I_d+p\Mat_d\big(\mathcal{O}_{L^\prime_\Delta}\big)$ such that $B^{-1}U_gg(B)\in\I_d+p^2\Mat_d\big(\mathcal{O}_{K^\prime_{\Delta,\infty}}\big)$ for all $g\in\Gamma_{K^\prime,\Delta}$.
\end{coro}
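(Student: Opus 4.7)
The ring $L^\prime_\Delta$ coincides with $L^\prime_{\Delta,\Delta,\infty}$, and $K^\prime_{\Delta,\infty}=L^\prime_{\Delta,\varnothing,\infty}$, so the statement amounts to descending a cocycle valued in the largest of the rings $\bigl\{\mathcal{O}_{L^\prime_{\Delta,\Delta^\prime,\infty}}\bigr\}_{\Delta^\prime\subseteq\Delta}$ down to the smallest. Corollary \ref{corodecompletion2} performs exactly one step of this descent, trading one factor's completion $L^\prime$ for the algebraic tower $K^\prime_\infty$ at the cost of multiplying by a matrix congruent to $\I_d$ modulo $p$. The plan is therefore to iterate corollary \ref{corodecompletion2} exactly $\delta=\#\Delta$ times, once for each element of $\Delta$.

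Concretely, enumerate $\Delta=\{\alpha_1,\ldots,\alpha_\delta\}$ and set $\Delta_k=\Delta\setminus\{\alpha_1,\ldots,\alpha_k\}$, so that $\Delta_0=\Delta$ and $\Delta_\delta=\varnothing$. Put $U^{(0)}:=U$ and, inductively on $k$, apply corollary \ref{corodecompletion2} to $U^{(k-1)}$ (which is valued in the appropriate ring $\mathcal{O}_{L^\prime_{\Delta,\Delta_{k-1},\infty}}$) with the distinguished element $\alpha=\alpha_k$ to obtain a matrix $M_k\in\I_d+p\Mat_d\bigl(\mathcal{O}_{L^\prime_{\Delta,\Delta_{k-1},\infty}}\bigr)$ and a twisted cocycle $U^{(k)}_g:=M_k^{-1}U^{(k-1)}_g g(M_k)$ valued in $\I_d+p^2\Mat_d\bigl(\mathcal{O}_{L^\prime_{\Delta,\Delta_k,\infty}}\bigr)$.

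Set $B:=M_1M_2\cdots M_\delta$. By induction on $k$, using the elementary identity $(MN)^{-1}V_g g(MN)=N^{-1}\bigl(M^{-1}V_g g(M)\bigr)g(N)$ valid for any cocycle $V$, one obtains $B^{-1}U_g g(B)=U^{(\delta)}_g$ for every $g\in\Gamma_{K^\prime,\Delta}$. Since $\Delta_\delta=\varnothing$, the right-hand side lies in $\I_d+p^2\Mat_d\bigl(\mathcal{O}_{K^\prime_{\Delta,\infty}}\bigr)$, as required. The inclusions $\mathcal{O}_{L^\prime_{\Delta,\Delta_{k-1},\infty}}\subseteq\mathcal{O}_{L^\prime_\Delta}$ (obtained simply by completing the tensor product on every factor) ensure that each $M_k$ already belongs to $\I_d+p\Mat_d\bigl(\mathcal{O}_{L^\prime_\Delta}\bigr)$, and so does the finite product $B$.

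The substantial analytic content, namely the Tate--Sen descent step using the normalized traces $R^\prime_{n,\alpha}$ and the geometric convergence of the correction matrices, is already encapsulated in corollary \ref{corodecompletion2}. The only remaining content is the combinatorial observation that these one-variable descents can be chained $\delta$ times, so no new obstacle arises beyond checking that the rings of coefficients shrink in the expected way at each step and that the partial products of correction matrices behave well under the cocycle-twisting operation.
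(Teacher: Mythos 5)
Your proposal is correct and coincides with the paper's intended approach: the paper's proof simply says ``This follows by using corollary \ref{corodecompletion2} finitely many times,'' and you have spelled out the $\delta$-step iteration, the telescoping identity $B^{-1}U_g\,g(B)=U^{(\delta)}_g$, and the containment of the finite product $B=M_1\cdots M_\delta$ in $\I_d+p\Mat_d(\mathcal{O}_{L^\prime_\Delta})$. (Your index set $\Delta_k=\Delta\setminus\{\alpha_1,\dots,\alpha_k\}$ shrinks while the parameter $\Delta^\prime$ in corollary \ref{corodecompletion2} grows via $\Delta^\prime\mapsto\Delta^\prime\cup\{\alpha\}$; this mismatch stems from an inconsistency between the paper's notation block, which puts $L$ on the factors in $\Delta^\prime$, and the tacit opposite convention used throughout the decompletion subsection, and does not affect the substance of your argument.)
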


\begin{proof}
This follows by using corollary \ref{corodecompletion2} finitely many times.
\end{proof}

\begin{prop}\label{propdecompletion}
(\cf \cite[Proposition 3.2.6]{BC}). Let $U\colon G_{K,\Delta}\to\GL_g(C_\Delta)$ be a continuous cocycle. There exist a finite subextension $K^\prime$ of $\Kbar/K$, a matrix $B\in\GL_d(C_\Delta)$ such that $B^{-1}U_gg(B)=\I_d$ for all $g\in H_{K^\prime,\Delta}$ and $B^{-1}U_gg(B)\in\GL_d\big(\mathcal{O}_{K^\prime_\Delta}\big)$ for all $g\in G_{K^\prime,\Delta}$.
\end{prop}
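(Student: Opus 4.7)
My plan is to chain the two descent results of this section: first the $H_{K,\Delta}$-trivialization of Corollary \ref{coroSenH}, which moves the cocycle values into $L'_\Delta$; then the $\Gamma$-decompletion of Corollary \ref{corodecompletion3}, which further moves them into $\mathcal{O}_{K'_{\Delta,\infty}}$; and finally a $p$-adic completeness argument to descend to a single finite level $\mathcal{O}_{K'_\Delta}$.

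First, I apply Corollary \ref{coroSenH} directly to $U$. This produces an open normal subgroup of $H_{K,\Delta}$ that I refine to the form $H_{K',\Delta} = H_{K'}^\Delta$ for some finite Galois subextension $K'/K$ of $\Kbar/K$---so that $H_{K',\Delta}$ is also normal in $G_{K,\Delta}$---together with a matrix $B_1 \in \GL_d(C_\Delta)$ such that the cohomologous cocycle $U^{(1)}_g := B_1^{-1} U_g g(B_1)$ equals $\I_d$ on $H_{K',\Delta}$ and, by the cocycle relation combined with the normality of $H_{K',\Delta}$ in $G_{K,\Delta}$, takes values in $\GL_d\big(C_\Delta^{H_{K',\Delta}}\big) = \GL_d(L'_\Delta)$ throughout (the equality uses Theorem \ref{theocohoHC}, with $L' := \widehat{K'_\infty}$). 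In particular $U^{(1)}|_{G_{K',\Delta}}$ factors through $\Gamma_{K',\Delta}$. By continuity, after enlarging $K'$ to a suitable larger finite extension of $K$ inside $\Kbar$, I arrange that $U^{(1)}_g \in \I_d + p^2 \Mat_d(\mathcal{O}_{L'_\Delta})$ for all $g \in \Gamma_{K',\Delta}$, so that Corollary \ref{corodecompletion3} applies and yields $B_2 \in \I_d + p\Mat_d(\mathcal{O}_{L'_\Delta}) \subset \GL_d(L'_\Delta)$ with $U^{(2)}_g := B_2^{-1} U^{(1)}_g g(B_2) \in \I_d + p^2 \Mat_d(\mathcal{O}_{K'_{\Delta,\infty}})$ for every $g \in \Gamma_{K',\Delta}$.

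To descend from $\mathcal{O}_{K'_{\Delta,\infty}} = \bigcup_n \mathcal{O}_{K'_{n,\Delta}}$ to a single level, I exploit that $\Gamma_{K',\Delta}$ is topologically finitely generated: picking $n$ large enough that the values of $U^{(2)}$ on such a generating set lie in $\Mat_d(\mathcal{O}_{K'_{n,\Delta}})$, the $\Gamma_{K',\Delta}$-stability of $\mathcal{O}_{K'_{n,\Delta}}$ together with its $p$-adic completeness---which holds because $\mathcal{O}_{K'_n}/\W(k)$ is finite free, so $\mathcal{O}_{K'_{n,\Delta}} = \mathcal{O}_{K'_n}^{\otimes\Delta}$ is already complete and thus closed in $\mathcal{O}_{L'_\Delta}$---combined with the cocycle relation and continuity of $U^{(2)}$, extend this containment to all of $\Gamma_{K',\Delta}$. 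Setting $B := B_1 B_2 \in \GL_d(C_\Delta)$ and relabeling $K'_n$ as $K'$ then concludes the proof: for $g \in H_{K',\Delta}$, both $U^{(1)}_g = \I_d$ and $g(B_2) = B_2$ (since $B_2 \in L'_\Delta$), hence $B^{-1} U_g g(B) = \I_d$; for $g \in G_{K',\Delta}$, $B^{-1} U_g g(B) = U^{(2)}_g \in \GL_d(\mathcal{O}_{K'_\Delta})$. The main obstacle I foresee is the orchestration of these three technical steps: ensuring that $K'/K$ can be taken Galois so that $H_{K',\Delta}$ is normal in $G_{K,\Delta}$ (this normality is exactly what forces the cocycle to land in $L'_\Delta$ via the cocycle identity), performing the continuity shrinking that fulfils the $p^2$-closeness hypothesis of Corollary \ref{corodecompletion3}, and finally using $p$-adic completeness to convert the $\infty$-level output into the finite-level conclusion demanded by the statement.
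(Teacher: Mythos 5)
Your proof is correct and follows essentially the same chain of reductions as the paper's: Tate--Sen $H_{K,\Delta}$-trivialization, the cocycle-plus-normality argument to land the values in $L'_\Delta$, decompletion via Corollary \ref{corodecompletion3}, and finally descent to a single finite level. The only cosmetic differences are that you invoke Corollary \ref{coroSenH} and shrink $K'$ afterwards by continuity, whereas the paper first shrinks $K'$ to reach the $p^{2+\delta}$-level and then applies Proposition \ref{propSenH} directly (thereby controlling $B_1\in\I_d+p^2\Mat_d(\mathcal{O}_{C_\Delta})$), and that you derive the single-level containment via topological finite generation plus $p$-adic completeness of $\mathcal{O}_{K'_{n,\Delta}}$, while the paper reads that conclusion off directly from the proof of Corollary \ref{corodecompletion2}; both routes are valid, and your $B = B_1 B_2$ is in fact the correct order of composition.
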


\begin{proof}
By continuity, there exists a finite Galois subextension $K^\prime$ of $\Kbar/K$ such that $U_g\in\I_d+p^{2+\delta}\Mat_d\big(\mathcal{O}_{C_\Delta}\big)$ for all $g\in G_{K^\prime,\Delta}$. Proposition \ref{propSenH} applied with $H_0=H_{K^\prime}$ and $r=2+\delta$ implies the existence of $B_1\in\I_d+p^2\Mat_d\big(\mathcal{O}_{C_\Delta}\big)$ such that $B_1^{-1}U_hh(B_1)=\I_d$ for all $h\in H_{K^\prime,\Delta}$. By construction, we have $U^\prime_g:=B_1^{-1}U_gg(B_1)=\I_d+p^2\Mat_d\big(\mathcal{O}_{C_\Delta}\big)$ for all $g\in G_{K^\prime,\Delta}$. Let $g\in G_{K,\Delta}$ and $h\in H_{K^\prime,\Delta}$. As $H_{K^\prime,\Delta}\lhd G_{K,\Delta}$, we have $h^\prime=g^{-1}hg\in H_{K^\prime,\Delta}$, so that $U^\prime_hh(U^\prime_g)=U^\prime_gg(U^\prime_{h^\prime})$ \ie $h(U^\prime_g)=U^\prime_g$ (since $U^\prime_h=U^\prime_{h^\prime}=\I_d$). As this holds for all $h\in H_{K^\prime,\Delta}$, this implies that $U_g\in\Mat_d\big(C_\Delta^{H_{K^\prime,\Delta}}\big)=\Mat_d(L^\prime_\Delta)$ (\cf theorem \ref{theocohoHC}). This shows that in fact, we have $U^\prime_g\in\GL_d(\mathcal{O}_{L^\prime_\Delta}\big)$ for all $g\in G_{K,\Delta}$, and that, in particular, the restriction of $U^\prime$ defines a continuous cocycle $U^\prime\colon\Gamma_{K^\prime,\Delta}\to\I_d+p^2\Mat_d\big(\mathcal{O}_{L^\prime_\Delta}\big)\subset\GL_d\big(\mathcal{O}_{L^\prime_\Delta}\big)$.

\noindent
By corollary \ref{corodecompletion3}, there exist $B_2\in\I_d+p\Mat_d\big(\mathcal{O}_{L^\prime_\Delta}\big)$ and $n\in\NN$ such that $B_2^{-1}U^\prime_gg(B_2)\in\I_d+p^2\Mat_d\big(\mathcal{O}_{K^\prime_{n,\Delta}}\big)$ for all $g\in\Gamma_{K^\prime,\Delta}$. Replacing $K^\prime$ by $K^\prime_n$, we may assume that $K^\prime_n=K^\prime$. Put $B=B_2B_1\in\GL_d(C_\Delta)$ and $U^{\prime\prime}_g=B^{-1}U_gg(B)$ for all $g\in G_{K,\Delta}$. By construction, we have $U^{\prime\prime}_g=\I_d$ for all $g\in H_{K^\prime,\Delta}$ and $U^{\prime\prime}_g\in\I_d+p^2\Mat_d\big(\mathcal{O}_{K^\prime_\Delta}\big)$ for all $g\in G_{K^\prime,\Delta}$.

\noindent
Let $g\in G_{K,\Delta}$ and $\gamma\in G_{K^\prime,\Delta}$. As $K^\prime/K$ is Galois, we have $G_{K^\prime,\Delta}\lhd G_{K,\Delta}$, so that $\gamma^\prime:=g^{-1}\gamma g\in G_{K^\prime,\Delta}$. By the cocycle condition, we have $\gamma(U_g)=U_\gamma^{-1}U_gg(U_{\gamma^\prime})$. A repeated application of lemma \ref{lemmdecompletion0} (for each $\alpha\in\Delta$) thus implies that $U_g\in\GL_d\big(\mathcal{O}_{K^\prime_\Delta}\big)$.
\end{proof}

\begin{coro}\label{coroSen}
(\cf \cite[Th\'eor\`eme 3.1]{AB}) The natural map
$$\varinjlim\limits_{K^\prime}\H^1(G_{K,\Delta}/H_{K^\prime,\Delta},\GL_d(K^\prime_\Delta))\to\H^1(G_{K,\Delta},\GL_d(C_\Delta))$$
(where $K^\prime$ runs among the finite Galois subextensions of $\Kbar/K$) induced by inflation maps is bijective.
\end{coro}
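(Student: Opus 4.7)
My plan is to derive surjectivity directly from Proposition \ref{propdecompletion} and to prove injectivity by iterating Lemma \ref{lemmdecompletion0}.

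For surjectivity, I will start with a continuous cocycle $U\colon G_{K,\Delta}\to\GL_d(C_\Delta)$ and apply Proposition \ref{propdecompletion} to obtain a finite subextension $K'/K$ of $\Kbar$ and $B\in\GL_d(C_\Delta)$ such that the cohomologous cocycle $U'_g:=B^{-1}U_gg(B)$ is trivial on $H_{K',\Delta}$ and takes values in $\GL_d(\mathcal{O}_{K'_\Delta})$ for every $g\in G_{K,\Delta}$. Replacing $K'$ by its Galois closure over $K$ (which only shrinks $H_{K',\Delta}$ and enlarges $\mathcal{O}_{K'_\Delta}$), the cocycle $U'$ descends to a class in $\H^1(G_{K,\Delta}/H_{K',\Delta},\GL_d(K'_\Delta))$ whose inflation is $[U]$.

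For injectivity, I will represent two classes in the colimit by cocycles $U_0,U_1\colon G_{K,\Delta}/H_{K',\Delta}\to\GL_d(K'_\Delta)$ defined over a common finite Galois $K'/K$ (obtained by passing to a compositum if needed), and assume they are cohomologous in $\H^1(G_{K,\Delta},\GL_d(C_\Delta))$ via some $B\in\GL_d(C_\Delta)$ satisfying $g(B)=U_{0,g}^{-1}B\,U_{1,g}$ for all $g\in G_{K,\Delta}$. For $h\in H_{K',\Delta}$, the relations $U_{0,h}=U_{1,h}=\I_d$ yield $h(B)=B$, so Theorem \ref{theocohoHC} gives $B\in\Mat_d(L'_\Delta)$ with $L':=\widehat{K'_\infty}$. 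The remaining task is to descend $B$ further into $\Mat_d(K''_\Delta)$ for some finite Galois $K''/K'/K$.

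This last step will be carried out by iterating Lemma \ref{lemmdecompletion0}. By continuity of $U_0$ and $U_1$ (viewed as maps from the profinite group $G_{K,\Delta}/H_{K',\Delta}$ into the finite-dimensional $\QQ_p$-algebra $\GL_d(K'_\Delta)$), I can choose a finite Galois extension $K''/K'/K$ and an integer $n\in\NN$ such that, for every $\alpha\in\Delta$, some $\gamma_\alpha\in\Gamma_{K'',\alpha}$ satisfies both $v_p(1-\chi(\gamma_\alpha))\leq n$ and $U_{0,\gamma_\alpha},\,U_{1,\gamma_\alpha}\in\I_d+p^2\Mat_d(\mathcal{O}_{K''_{n,\Delta}})$. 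The identity $\gamma_\alpha(B)=U_{0,\gamma_\alpha}^{-1}B\,U_{1,\gamma_\alpha}$ combined with Lemma \ref{lemmdecompletion0} (applied with $V_1=U_{0,\gamma_\alpha}^{-1}$ and $V_2=U_{1,\gamma_\alpha}$) will force $B$ into the subring where the $\alpha$-th factor has been shrunk to $K''_n$. Iterating over all $\alpha\in\Delta$ places $B$ in $\Mat_d(K''_{n,\Delta})$, and since $K''_n/K$ is again finite Galois, this provides the required coboundary.

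The main difficulty will be the bookkeeping for the iteration: one must ensure that the hypotheses of Lemma \ref{lemmdecompletion0} remain satisfied as the ambient subring shrinks at each step. I expect this to be handled by fixing $K''$ and $n$ uniformly in $\alpha$ from the outset and by exploiting the commutativity of $\Gamma_{K'',\Delta}$, so that the descent obtained for each factor persists through the subsequent steps.
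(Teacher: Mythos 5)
Your proposal is correct and follows essentially the same route as the paper: surjectivity is exactly Proposition \ref{propdecompletion}, and injectivity is established by first showing $B\in\GL_d(L'_\Delta)$ via $H_{K',\Delta}$-invariance and Theorem \ref{theocohoHC}, then descending $B$ one factor at a time via Lemma \ref{lemmdecompletion0} after enlarging the base field and shrinking the cocycle values into $\I_d+p^2\Mat_d$. The only cosmetic difference is that the paper enlarges $K'$ so that $U_g,U'_g\in\I_d+p^2\Mat_d(\mathcal{O}_{K'_\Delta})$ for all $g\in G_{K',\Delta}$ before invoking the lemma, whereas you single out specific $\gamma_\alpha\in\Gamma_{K'',\alpha}$ with the valuation bound $v_p(1-\chi(\gamma_\alpha))\leq n$; both arrangements meet the hypotheses of Lemma \ref{lemmdecompletion0}, and your emphasis on the commutativity of $\Gamma_{K'',\Delta}$ for the iteration is exactly what the paper relies on implicitly.
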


\begin{proof}
Surjectivity is nothing but proposition \ref{propdecompletion}: it remains to prove the injectivity. Let $K^\prime$ be a finite subextension of $\Kbar/K$ and $U,U^\prime\colon G_{K,\Delta}/H_{K^\prime,\Delta}\to\GL_g(K^\prime_\Delta)$ be two continuous cocycles that induce cohomologous cocycles $G_{K,\Delta}\to\GL_d(C_\Delta)$. This means that there exists $B\in\GL_d(C_\Delta)$ such that $U^\prime_g=B^{-1}U_gg(B)$ for all $g\in G_{K,\Delta}$. By continuity, we may enlarge $K^\prime$ and assume that $U_g,U^\prime_g\in\I_d+p^2\Mat_d(\mathcal{O}_{K^\prime,\Delta})$ for all $g\in G_{K^\prime,\Delta}$. If $g\in H_{K^\prime,\Delta}$, we have $U_g,U^\prime_g=\I_d$, so that $g(B)=B$: this shows that $B\in\GL_d(L^\prime_\Delta)$, where $L^\prime=C^{H_{K^\prime}}$. Then we have $\gamma(B)=U^{-1}_\gamma BU^\prime_\gamma$ for all $\gamma\in G_{K,\Delta}/H_{K^\prime,\Delta}$. Applying lemma \ref{lemmdecompletion0} finitely many times (for each $\alpha\in\Delta$) shows that $B\in\GL_d(K^\prime_{\Delta,\infty})$. Replacing $K^\prime$ by $K^\prime_n$ for $n\in\NN$ large enough implies that $U$ and $U^\prime$ are cohomologous as cocycles with values in $\GL_d(K^\prime_\Delta)$, proving the injectivity.
\end{proof}

We now refine the previous statement by translating it in terms of $C_\Delta$-representations of $G_{K,\Delta}$.

\begin{theo}\label{theoSen}
(\cf \cite[Theorem 3]{Sen}). Let $W$ be a free $C_\Delta$-representation of $G_{K,\Delta}$ of rank $d$. Then there exists a free $K_{\Delta,\infty}$-representation $Y$ of $\Gamma_{K,\Delta}$ of rank $d$ and such that $W\simeq C_\Delta\otimes_{K_{\Delta,\infty}}Y$ (as $C_\Delta$-representations of $G_{K,\Delta}$).
\end{theo}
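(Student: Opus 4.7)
The plan is to combine Proposition \ref{propdecompletion}, which produces a $K'_\Delta$-model of $W$ for some finite Galois subextension $K'/K$ of $\Kbar/K$, with classical finite Galois descent for the ring extension $K_{n_1,\Delta}\subset K'_\Delta$, where $K_{n_1}:=K'\cap K_\infty$. Fix a basis $(e_i)_{1\leq i\leq d}$ of $W$ and let $U\colon G_{K,\Delta}\to\GL_d(C_\Delta)$, $g\mapsto U_g$, be the associated continuous cocycle. By Proposition \ref{propdecompletion} there exist a finite Galois subextension $K'/K$ of $\Kbar/K$ and a matrix $B\in\GL_d(C_\Delta)$ such that the cocycle $V_g:=B^{-1}U_gg(B)$ satisfies $V_h=\I_d$ for all $h\in H_{K',\Delta}$ and $V_g\in\GL_d(\mathcal{O}_{K'_\Delta})$ for all $g\in G_{K,\Delta}$ (the latter strengthening being established at the very end of the proof of \loccit). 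Consequently the $K'_\Delta$-submodule $W':=\bigoplus_{i=1}^d K'_\Delta\cdot Be_i\subset W$ is free of rank $d$, stable under $G_{K,\Delta}$ with action factoring through the quotient $G_{K,\Delta}/H_{K',\Delta}=\Gal(K'_\infty/K)^\Delta$, and the inclusion extends to a $C_\Delta[G_{K,\Delta}]$-linear isomorphism $C_\Delta\otimes_{K'_\Delta}W'\isomto W$.

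Since $K'\cap K_\infty$ is a finite subextension of $K_\infty/K$, it equals $K_{n_1}$ for some $n_1\in\NN$. The subgroup $H:=\Gal(K'_\infty/K_\infty)^\Delta=\Gal(K'/K_{n_1})^\Delta$ is then a finite normal subgroup of $\Gal(K'_\infty/K)^\Delta$ whose quotient is canonically identified with $\Gamma_{K,\Delta}$, and its invariants on $K'_\Delta=(K')^{\otimes\Delta}$ are $K_{n_1,\Delta}=(K_{n_1})^{\otimes\Delta}$. The extension of commutative rings $K_{n_1,\Delta}\subset K'_\Delta$ is $H$-Galois: indeed, the classical isomorphism $K'\otimes_{K_{n_1}}K'\isomto\prod_{\Gal(K'/K_{n_1})}K'$, applied to each of the $\#\Delta$ tensor factors of $K'_\Delta\otimes_{K_{n_1,\Delta}}K'_\Delta$, yields an isomorphism $K'_\Delta\otimes_{K_{n_1,\Delta}}K'_\Delta\isomto\prod_{h\in H}K'_\Delta$. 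Standard Galois descent then shows that $W'':=(W')^H$ is a free $K_{n_1,\Delta}$-module of rank $d$, inheriting a semilinear action of $\Gamma_{K,\Delta}$ from the $\Gal(K'_\infty/K)^\Delta$-action on $W'$, and that the canonical map $K'_\Delta\otimes_{K_{n_1,\Delta}}W''\to W'$ is an isomorphism of $K'_\Delta[\Gal(K'_\infty/K)^\Delta]$-modules.

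Set $Y:=K_{\Delta,\infty}\otimes_{K_{n_1,\Delta}}W''$: this is a free $K_{\Delta,\infty}$-representation of $\Gamma_{K,\Delta}$ of rank $d$ (with the diagonal action), and the chain of base changes
$$C_\Delta\otimes_{K_{\Delta,\infty}}Y\simeq C_\Delta\otimes_{K_{n_1,\Delta}}W''\simeq C_\Delta\otimes_{K'_\Delta}\bigl(K'_\Delta\otimes_{K_{n_1,\Delta}}W''\bigr)\simeq C_\Delta\otimes_{K'_\Delta}W'\simeq W$$
furnishes the desired $C_\Delta[G_{K,\Delta}]$-linear isomorphism.

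The main obstacle lies in the Galois descent step: when $\#\Delta>1$, the rings $K_{n_1,\Delta}$ and $K'_\Delta$ are étale $F_0$-algebras rather than fields, so one must check explicitly that they form an $H$-Galois extension of commutative rings before invoking descent for $\GL_d$. However this reduces cleanly to the classical case $\#\Delta=1$ via the tensor product decomposition displayed above, and the $\GL_d$-descent then follows from the standard argument.
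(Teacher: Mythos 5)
Your proof is correct and follows essentially the same two-step strategy as the paper: descend from $C_\Delta$ to $K'_\Delta$ via Proposition~\ref{propdecompletion}, then descend further to $F_\Delta$ with $F=K'\cap K_\infty$ by finite Galois descent, and finally base-change to $K_{\Delta,\infty}$. The one substantive difference is cosmetic: you run the second descent in a single step with the product group $H=\Gal(K'/F)^\Delta$, whereas the paper descends one tensor factor at a time, and the paper first enlarges $K'$ so that $K'/F_0$ (and hence $F/F_0$) is Galois, which you avoid. Both are fine.

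Two small points worth fixing. First, the assertion that $K'\cap K_\infty=K_{n_1}$ for some $n_1\in\NN$ is false in general: finite subextensions of $K_\infty/K$ correspond to open subgroups of $\Gamma_K\subset\ZZ_p^\times$, and these are not all of the form $\chi^{-1}(1+p^n\ZZ_p)$ (e.g.\ $\QQ_p(\mu_{p^\infty})^+/\QQ_p$ is a subextension not of the form $\QQ_p(\mu_{p^n})$). This is just a naming slip; replace ``$K_{n_1}$'' by $F:=K'\cap K_\infty$ everywhere and the argument runs unchanged, since $F\subset K_n$ for $n\gg0$ guarantees $F_\Delta\subset K_{\Delta,\infty}$. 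Second, the invocation of ``standard Galois descent'' gives only that $W''=(W')^H$ is \emph{projective} of rank $d$ over $F_\Delta$, together with the isomorphism $K'_\Delta\otimes_{F_\Delta}W''\isomto W'$. To upgrade to freeness you need the observation (made explicit in the paper, and in the spirit of Remark~\ref{remaRR}) that $F_\Delta=F^{\otimes\Delta}$ is a finite étale $F_0$-algebra, hence a finite product of fields, over which a finitely generated projective module of constant rank is free; the constancy of the rank follows from $C_\Delta\otimes_{F_\Delta}W''\simeq W$ being free of rank $d$. The paper's enlargement of $K'$ to be Galois over $F_0$ is only a device to present $F_\Delta$ as a product of copies of $F$ and make this transparent, and is not strictly necessary, so your shortcut is legitimate once the freeness remark is inserted.
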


\begin{proof}
Corollary \ref{coroSen} implies that there exists a finite Galois subextension $K^\prime$ of $\Kbar/K$ and a free $K^\prime_\Delta$-representation of rank $d$ of $G_{K,\Delta}/H_{K^\prime,\Delta}$ such that $W\simeq C_\Delta\otimes_{K^\prime_\Delta}W^\prime$ as $C_\Delta$-representations of $G_{K,\Delta}$. Enlarging $K^\prime$ if necessary, we may furthermore assume that $K^\prime/F_0$ is Galois. By restriction, the group $\Gal(K^\prime_\infty/K_\infty)$ identifies with a subgroup $G=\Gal(K^\prime/K^\prime\cap K_\infty)$ of $\Gal(K^\prime/F_0)$. Put $F=K^{\prime G}=K^\prime\cap K_\infty$. Note that $G$ is the kernel of the map $\Gal(K^\prime/F_0)\to\Gamma_K$ induced by the restriction to $K_\infty$: this implies that $F/F_0$ is Galois. If $\Delta^\prime\subset\Delta$ and $\alpha\in\Delta\setminus\Delta^\prime$, the finite Galois extension $F\to K^\prime$ induces a finite \'etale extension
$$F_{\Delta^\prime\cup\{\alpha\}}\otimes_KK^\prime_{\Delta\setminus(\Delta^\prime\cup\{\alpha\})}\to F_{\Delta^\prime}\otimes_KK^\prime_{\Delta\setminus\Delta^\prime}$$
with group $G_\alpha$. By Galois descent, if $W_\alpha$ is a rank $d$ projective $F_{\Delta^\prime}\otimes_KK^\prime_{\Delta\setminus\Delta^\prime}$-representation of $G_\alpha$, then $W_\alpha^{G_\alpha}$ is a rank $d$ projective $F_{\Delta^\prime\cup\{\alpha\}}\otimes_KK^\prime_{\Delta\setminus(\Delta^\prime\cup\{\alpha\})}$-module of finite rank and the natural map
$$(F_{\Delta^\prime}\otimes_KK^\prime_{\Delta\setminus\Delta^\prime})\otimes_{F_{\Delta^\prime\cup\{\alpha\}}\otimes_KK^\prime_{\Delta\setminus(\Delta^\prime\cup\{\alpha\})}}W_\alpha^{G_\alpha}\to W_\alpha$$
is an isomorphism. Starting from $W^\prime$ and applying what precedes for each $\alpha\in\Delta$ implies that $W^{\prime G_\Delta}$ is a projective $F_\Delta$-module of rank $d$, and that the map
$$K^\prime_\Delta\otimes_{F_\Delta}W^{\prime G_\Delta}\to W^\prime$$
is an isomorphism. As $F/F_0$ is finite and Galois, $F_\Delta$ is a finite product of copies of $F$ (indexed by $\Gal(F/F_0)^{\delta-1}$), so that a projective $F_\Delta$-module is necessarily free of rank $d$ (the dimension over $F$ of all its localizations is $d$, since it is free of rank $d$ after tensoring with $C_\Delta$). Then we have $G_{K,\Delta}$-equivariant isomorphisms
$$W\simeq C_\Delta\otimes_{K^\prime_\Delta}W^\prime\simeq C_\Delta\otimes_{F_\Delta}W^{\prime G_\Delta}\simeq C_\Delta\otimes_{K_{\Delta,\infty}}Y$$
where $Y=K_{\Delta,\infty}\otimes_{F_\Delta}W^{\prime G_\Delta}$ is a $K_{\Delta,\infty}$-representation of $\Gamma_{K,\infty}$ which is free of rank $d$.
\end{proof}

\begin{coro}\label{coroSen0}
If $W$ is a $C_\Delta$-representation of $G_{K,\Delta}$, then $W^{H_{K,\Delta}}$ is a free $L_\Delta$-representation of $\Gamma_{K,\Delta}$ of rank $d$, and the natural map
$$C_\Delta\otimes_{L_\Delta}W^{H_{K,\Delta}}\to W$$
is a $G_{K,\Delta}$-equivariant isomorphism.
\end{coro}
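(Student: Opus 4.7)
The plan is to deduce this corollary from Theorem \ref{theoSen} combined with Theorem \ref{theocohoHC} (which identifies $C_\Delta^{H_{K,\Delta}}=L_\Delta$). Throughout, I will use that $H_{K,\Delta}\lhd G_{K,\Delta}$, so $\Gamma_{K,\Delta}=G_{K,\Delta}/H_{K,\Delta}$ acts naturally on the $H_{K,\Delta}$-invariants.

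First I would invoke Theorem \ref{theoSen} to produce a free $K_{\Delta,\infty}$-representation $Y$ of $\Gamma_{K,\Delta}$ of rank $d$ together with a $G_{K,\Delta}$-equivariant isomorphism $W\simeq C_\Delta\otimes_{K_{\Delta,\infty}}Y$, where $G_{K,\Delta}$ acts diagonally (on $Y$ via the quotient $G_{K,\Delta}\twoheadrightarrow\Gamma_{K,\Delta}$). In particular, $H_{K,\Delta}$ acts trivially on $Y$.

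Next I would compute $W^{H_{K,\Delta}}$. Choosing a $K_{\Delta,\infty}$-basis $(e_1,\ldots,e_d)$ of $Y$ identifies $W$ with $\bigoplus_{i=1}^d C_\Delta\cdot e_i$ as a $C_\Delta[H_{K,\Delta}]$-module (since $H_{K,\Delta}$ acts on $Y$ trivially, its action on $W$ is only through the factor $C_\Delta$). Taking $H_{K,\Delta}$-invariants summand by summand and applying Theorem \ref{theocohoHC} (which gives $C_\Delta^{H_{K,\Delta}}=L_\Delta$) yields
\[
W^{H_{K,\Delta}}=\bigoplus_{i=1}^d L_\Delta\cdot e_i\simeq L_\Delta\otimes_{K_{\Delta,\infty}}Y,
\]
which is a free $L_\Delta$-module of rank $d$. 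The residual action of $\Gamma_{K,\Delta}$ on this module makes it a free $L_\Delta$-representation of rank $d$, proving the first assertion.

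Finally, for the isomorphism assertion, the natural evaluation map $C_\Delta\otimes_{L_\Delta}W^{H_{K,\Delta}}\to W$ identifies, via the descriptions above, with the chain of isomorphisms
\[
C_\Delta\otimes_{L_\Delta}\bigl(L_\Delta\otimes_{K_{\Delta,\infty}}Y\bigr)\isomto C_\Delta\otimes_{K_{\Delta,\infty}}Y\isomto W,
\]
which is $G_{K,\Delta}$-equivariant by construction. There is no real obstacle here: the only point that could conceivably require care is the commutation of $H_{K,\Delta}$-invariants with the tensor product $C_\Delta\otimes_{K_{\Delta,\infty}}Y$, but this is immediate from the fact that $Y$ is free of finite rank over $K_{\Delta,\infty}$, so the argument reduces to the single identity $C_\Delta^{H_{K,\Delta}}=L_\Delta$ granted by Theorem \ref{theocohoHC}.
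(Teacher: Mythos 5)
Your proposal is correct and takes essentially the same route as the paper: apply Theorem \ref{theoSen} to write $W\simeq C_\Delta\otimes_{K_{\Delta,\infty}}Y$, then use that $Y$ is free of finite rank and $C_\Delta^{H_{K,\Delta}}=L_\Delta$ (Theorem \ref{theocohoHC}) to identify $W^{H_{K,\Delta}}$ with $L_\Delta\otimes_{K_{\Delta,\infty}}Y$, from which both assertions follow. You have simply spelled out the intermediate step (choosing a basis of $Y$ and taking invariants summand by summand) that the paper leaves implicit.
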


\begin{proof}
By theorem \ref{theoSen}, we may assume that $W=C_\Delta\otimes_{K_{\infty,\Delta}}Y$ where $Y$ ia a free $K_{\infty,\Delta}$-representation of $\Gamma_{K,\Delta}$ of rank $d$. Then $W^{H_{K,\Delta}}=L_\Delta\otimes_{K_{\infty,\Delta}}Y$ is a free $L_\Delta$-representation of $\Gamma_{K,\Delta}$ of rank $d$, and the natural map $C_\Delta\otimes_{L_\Delta}W^{H_{K,\Delta}}\to W$ is a $G_{K,\Delta}$-equivariant isomorphism.
\end{proof}

\begin{coro}\label{coroSen1}
(\cf \cite[Theorem 1]{Sen}) The natural map
$$\H^1(\Gamma_{K,\Delta},\GL_d(K_{\Delta,\infty}))\to\H^1(G_{K,\Delta},\GL_d(C_\Delta))$$
is bijective.
\end{coro}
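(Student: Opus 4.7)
The map in question is the $\H^1$-translation of the functor $Y\mapsto C_\Delta\otimes_{K_{\Delta,\infty}}Y$ sending free rank-$d$ $K_{\Delta,\infty}$-representations of $\Gamma_{K,\Delta}$ to free rank-$d$ $C_\Delta$-representations of $G_{K,\Delta}$. Surjectivity is then exactly theorem \ref{theoSen}: every free rank-$d$ $C_\Delta$-representation $W$ of $G_{K,\Delta}$ has the form $C_\Delta\otimes_{K_{\Delta,\infty}}Y$ for some such $Y$, and any basis of $Y$ yields a preimage cocycle.

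For injectivity, let $U_1,U_2\colon\Gamma_{K,\Delta}\to\GL_d(K_{\Delta,\infty})$ be continuous cocycles whose inflations to $G_{K,\Delta}$ are cohomologous over $C_\Delta$ via some $B\in\GL_d(C_\Delta)$, so that $\gamma(B)=U_{1,\gamma}^{-1}BU_{2,\gamma}$ for all $\gamma\in\Gamma_{K,\Delta}$. Since the inflated $U_i$ are trivial on $H_{K,\Delta}$, the relation forces $h(B)=B$ for $h\in H_{K,\Delta}$, and so $B\in\GL_d(L_\Delta)$ by theorem \ref{theocohoHC}. The remaining task is to promote $B$ to an element of $\GL_d(K_{\Delta,\infty})$, which amounts to the multivariable Sen-type decompletion from $L_\Delta$ to $K_{\Delta,\infty}$.

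I would carry out this descent by mimicking the proof of corollary \ref{corodecompletion3}, but applied directly to the explicit equivariance relation satisfied by $B$ rather than producing an abstract conjugating matrix. For each $\alpha\in\Delta$, fix a topological generator $\gamma_\alpha$ of $\Gamma_{K,\alpha}$; by continuity of $U_1,U_2$, some power $\gamma_\alpha^{p^j}$ has $U_{i,\gamma_\alpha^{p^j}}\in\I_d+p^2\Mat_d(\mathcal{O}_{K_{\Delta,\infty}})$ for both $i$. Then choose an integer $n\geq n_K^\prime$ large enough that simultaneously $U_{i,\gamma_\alpha^{p^j}}\in\I_d+p^2\Mat_d(\mathcal{O}_{K_{n,\Delta}})$ for every $\alpha\in\Delta$ and $i\in\{1,2\}$, and $v_p(1-\chi(\gamma_\alpha^{p^j}))\leq n$ for every $\alpha$ (possible since each of these valuations is finite). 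Applying lemma \ref{lemmdecompletion0} successively for each $\alpha\in\Delta$, with $V_1=U_{1,\gamma_\alpha^{p^j}}^{-1}$, $V_2=U_{2,\gamma_\alpha^{p^j}}$ and the above equivariance as the input relation $\gamma_\alpha^{p^j}(B)=V_1BV_2$, one moves $B$ through the intermediate partially-completed rings $L_{\Delta,\Delta^\prime,n}$, ending after $\delta=\#\Delta$ steps with $B\in\GL_d(K_{n,\Delta})\subset\GL_d(K_{\Delta,\infty})$.

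I expect the main technical obstacle to be this last iterated decompletion: one must orchestrate the iteration so that each application of lemma \ref{lemmdecompletion0} is compatible with the descent already performed in the previous directions, which is precisely the role played by the chain of auxiliary rings $L_{\Delta,\Delta^\prime,n}$. Once this bookkeeping is in place, the argument is a direct multivariable analogue of Sen's classical decompletion from $L$ to $K_\infty$.
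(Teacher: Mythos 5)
Your proof is correct and follows essentially the same route as the paper: surjectivity is read off from Theorem \ref{theoSen}, and injectivity proceeds by first showing $B\in\GL_d(L_\Delta)$ via $H_{K,\Delta}$-invariance and Theorem \ref{theocohoHC}, then descending $B$ to $\GL_d(K_{\Delta,\infty})$ by iterated application of Lemma \ref{lemmdecompletion0} across the intermediate rings $L_{\Delta,\Delta^\prime,n}$. The paper's proof is stated by reference to the injectivity argument in Corollary \ref{coroSen}, which is exactly the mechanism you describe — pin down an $n$ large enough that the cocycle values lie in $\GL_d(K_{n,\Delta})$ and the valuation hypothesis of Lemma \ref{lemmdecompletion0} is satisfied, then decomplete one factor at a time — so the only difference is that you have written it out explicitly where the paper cites its earlier argument.
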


\begin{proof}
Here again the surjectivity follows from theorem \ref{theoSen}, and the injectivity is proved exactly as in the proof of corollary \ref{coroSen}.
\end{proof}

\begin{prop}\label{propfond}
If $\mathcal{E}\subset L_\Delta$ is a sub-$K_\Delta$-module of finite type stable by $\Gamma_{K,\Delta}$, then $\mathcal{E}\subset K_{\Delta,\infty}$ (more precisely, there exists $n\in\NN$ such that $\mathcal{E}\subset K_{n,\Delta}$).
\end{prop}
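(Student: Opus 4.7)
The strategy is to prove that for each $\alpha\in\Delta$, there exists $n_\alpha\in\NN$ with $\mathcal{E}\subset L_{\Delta,\Delta\setminus\{\alpha\},n_\alpha}$; then taking $n=\max_\alpha n_\alpha$ gives
$$\mathcal{E}\subset\bigcap_{\alpha\in\Delta}L_{\Delta,\Delta\setminus\{\alpha\},n}=K_{n,\Delta},$$
since an element of $L_\Delta$ lying in each $L_{\Delta,\Delta\setminus\{\alpha\},n}$ has all $\Delta$ factors in $K_n$.

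Fix $\alpha\in\Delta$ and put $\Delta^\prime=\Delta\setminus\{\alpha\}$. We exploit the $\Gamma_{K,\Delta}$-equivariant topological decomposition $L_\Delta=L_{\Delta,\Delta^\prime,n}\oplus X_{n,\alpha}$ coming from Tate's normalized trace $R_{n,\alpha}$ (which is $L_{\Delta,\Delta^\prime,n}$-linear and $\Gamma_{K,\Delta}$-equivariant, being the tensor product of $R_n$ on the $\alpha$-factor with identity on the others, and commutes with the $\Gamma_{K,\beta}$-action for $\beta\neq\alpha$ by construction). The image $\overline{\mathcal{E}}:=(\Id-R_{n,\alpha})(\mathcal{E})\subset X_{n,\alpha}$ is a quotient of $\mathcal{E}$ as $\Gamma_{K,\Delta}$-module, still of finite type over $K_\Delta$. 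Since $\Gamma_{K,\alpha}$ acts trivially on $K_\Delta$ (every $G_K$-factor fixes $K$), the induced action of $\Gamma_{K,\alpha}$ on $\overline{\mathcal{E}}$ is actually $K_\Delta$-linear and continuous. It suffices to show $\overline{\mathcal{E}}=0$ for some (large) $n$.

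To obtain the vanishing, combine two antagonistic bounds. On one side, proposition \ref{propRn}(v) (applied to the $\alpha$-component and tensored with the identity on the other factors, exactly as in the proof of lemma \ref{lemmdecompletion0}) ensures that for any prescribed $c_3>\frac{1}{p-1}$ there exists $n_K^\prime$ such that for $\gamma\in\Gamma_{K,\alpha}$ with $v_p(1-\chi(\gamma))\leq n_K^\prime$, one has $v_p((\gamma-1)(x))\leq v_p(x)+c_3$ for all $x\in X_{n,\alpha}$, hence in particular for $x\in\overline{\mathcal{E}}$. On the other side, fix a topological generator $\gamma_0$ of an open procyclic subgroup of $\Gamma_{K,\alpha}$ and consider $\gamma_k:=\gamma_0^{p^k}$. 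By continuity of $\rho$ on the finite-type $K_\Delta$-module $\overline{\mathcal{E}}$, once $k$ is large enough that $\rho(\gamma_k)$ lies in the domain of convergence of the $p$-adic logarithm, $\rho(\gamma_k)-\I\equiv p^k(\log\chi(\gamma_0))D\pmod{p^{2k}}$ for a $K_\Delta$-linear endomorphism $D\in\End_{K_\Delta}(\overline{\mathcal{E}})$ (the Sen operator). Since $v_p(1-\chi(\gamma_k))=v_p(1-\chi(\gamma_0))+k$, choosing $c_3$ (and correspondingly $n_K^\prime$) large enough that the window $c_3<k\leq n_K^\prime-v_p(1-\chi(\gamma_0))$ is non-empty, the two bounds force $D(x)=0$ for all $x\in\overline{\mathcal{E}}$.

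Once $D=0$ on $\overline{\mathcal{E}}$, the action of $\Gamma_{K,\alpha}$ on $\overline{\mathcal{E}}$ factors through a finite quotient: there exists $m$ such that $\Gamma_{K_m,\alpha}$ acts trivially. Hence $\overline{\mathcal{E}}\subset L_\Delta^{\Gamma_{K_m,\alpha}}=L_{\Delta,\Delta^\prime,m}$ (Tate's theorem applied on the $\alpha$-factor). Taking $n\geq m$ yields $\overline{\mathcal{E}}\subset L_{\Delta,\Delta^\prime,n}\cap X_{n,\alpha}=0$, as desired. The main obstacle is the quantitative bookkeeping: one must verify that the constant $n_K^\prime$ provided by proposition \ref{propRn}(v) can be made large enough compared to $c_3$ plus the norm of the Sen operator $D$, which is the $\#\Delta$-variable analogue of the classical Sen estimate (\cite[Proposition 7]{Sen}); the $K_\Delta$-linearity of $D$ and the continuity of the action on the finitely generated (but possibly non-Noetherian) $K_\Delta$-module $\overline{\mathcal{E}}$ must be handled carefully via a choice of generators and an $\mathcal{O}_{K_\Delta}$-lattice of $\overline{\mathcal{E}}$ induced from the topology of $X_{n,\alpha}$.
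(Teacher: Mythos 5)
Your strategy — projecting $\mathcal{E}$ onto the Tate complement $X_{n,\alpha}$ for each $\alpha$ and trying to kill the projection directly via a Tate--Sen estimate — is genuinely different from what the paper does. The paper enlarges $K$ to reduce to the Galois case, then uses the explicit decomposition $K_\Delta\simeq K^{G_{K/F_0}^{\delta-1}}$ (and its completed variant $\widehat{\iota}_\infty\colon L_\Delta\hookrightarrow\prod L$) to project $\mathcal{E}$ onto finite-dimensional $\Gamma_K$-stable $K$-subspaces of $L$, and then quotes the one-variable result (\cite[Proposition~3]{Sen}) as a black box. Your route avoids the Galois reduction, but it amounts to re-proving the Tate--Sen decompletion from scratch in the multivariable setting, and the way you have set it up has two concrete gaps.

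First, the quantitative bookkeeping in the middle step is off, and you flag it yourself without resolving it. You want to ``choose $c_3$ (and correspondingly $n_K'$) large''; but proposition \ref{propRn}(v) gives no reason for $n_K'$ to grow with $c_3$ (if anything the dependency goes the other way: a smaller $c_3$ is harder and forces a larger $n_K'$). The correct formulation of the Tate estimate, as used throughout the paper's decompletion lemmas, allows $\gamma$ with $v_p(1-\chi(\gamma))\leq n$ on $X_n$; the right move is to \emph{fix} $c_3$, then take $n$ large, so that the range of admissible $\gamma$ (i.e.\ admissible $k$) expands with $n$. Moreover the ``threshold $k$ large enough for the Sen asymptotics'' that you invoke is controlled by $\rho$ on $\overline{\mathcal{E}}$, and $\overline{\mathcal{E}}=(\Id-R_{n,\alpha})(\mathcal{E})$ depends on $n$. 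To make the bound uniform one should work with the representation (or cocycle) on the fixed module $\mathcal{E}$: pick a $\QQ_p$-basis $e_1,\ldots,e_d$ of $\mathcal{E}$, let $A\colon\Gamma_{K,\alpha}\to\GL_d(K_\Delta)$ be the ($K_\Delta$-linear) matrix of the action, write $e_i=R_{n,\alpha}(e_i)+b_i$; then $(\gamma-1)\vec b=(A(\gamma)-\I)\vec b$, and for $n$ large one finds $\gamma$ with $\|A(\gamma)-\I\|<p^{-c_3}$ and $v_p(1-\chi(\gamma))\leq n$ simultaneously, forcing $\vec b=0$. This is exactly the shape of the argument in \cite[Proposition~3]{Sen} and in lemma \ref{lemmdecompletion0}/corollary \ref{corodecompletion1} of the paper, and it sidesteps the $n$-dependence issue.

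Second, the concluding step is circular. Having shown $D=0$ on $\overline{\mathcal{E}}$, you obtain $m$ with $\overline{\mathcal{E}}\subset L_\Delta^{\Gamma_{K_m,\alpha}}=L_{\Delta,\Delta',m}$; but $m$ depends on $\overline{\mathcal{E}}$, which already depends on $n$, so ``taking $n\geq m$'' after the fact is not allowed. When $m>n$ one has $L_{\Delta,\Delta',m}\cap X_{n,\alpha}\neq0$, so you cannot conclude $\overline{\mathcal{E}}=0$. The cocycle argument above resolves this too, since it directly yields $\vec b=0$ with no intermediate appeal to $D=0$. Finally, the identification $\bigcap_\alpha L_{\Delta,\Delta\setminus\{\alpha\},n}=K_{n,\Delta}$ is true but should be justified via $L_\Delta^{\Gamma_{K_n,\Delta}}=K_{n,\Delta}$ (theorem \ref{theocohoGC}), not by the informal appeal to ``factors'' of an element of a completed tensor product.
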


\begin{proof}
Enlarging $K$, we may assume that $K/F_0$ is Galois, with group $G_{K/F_0}$. The map
\begin{align*}
K\otimes_{F_0}K &\to K^{G_{K/F_0}}\\
x\otimes y &\mapsto (x\sigma(y))_{\sigma\in G_{K/F_0}}
\end{align*}
is an isomorphism of $K$-algebras (with the left structure on the LHS, and the diagonal structure on the RHS). Fix an ordering $\alpha_1<\alpha_2<\cdots<\alpha_{\delta}$ of $\Delta$: by induction, what precedes provides an isomorphism of $K$-algebras
$$\iota\colon K_\Delta\isomto K^{G_{K/F_0}^{\delta-1}}$$
where the component of index $\underline{\sigma}=(\sigma_2,\ldots,\sigma_{\delta})\in G_{K/F_0}^{\delta-1}$ of $\iota(x_1\otimes\cdots\otimes x_{\delta})$ is given by
$$x_1\sigma_2(x_2\sigma_3(x_3\cdots x_{\delta-1}\sigma_\delta(x_{\delta})\cdots)$$
(here the $K$-algebra structure on the LHS is through the factor of index $\alpha_1$ and that on the RHS is the diagonal one). For each $\underline{\sigma}\in G_{K/F_0}^{\delta-1}$, we thus have a surjective morphism of $K$-algebras $\iota_{\underline{\sigma}}\colon K_\Delta\to K$ corresponding to the projection onto the factor of index $\underline{\sigma}$. Similarly, we have an injective map $\iota_\infty\colon K_{\Delta,\infty}\to K_\infty^{G_{K_\infty/F_0}^{\delta-1}}$, that extends into an injective map $\widehat{\iota}_\infty\colon L_\Delta\to L^{G_{K_\infty/F_0}^{\delta-1}}$ (because $\iota_\infty$ maps $\mathcal{O}_{K_{\Delta,\infty}}$ into $\mathcal{O}_L^{G_{K_\infty/F_0}^{\delta-1}}$ which is $p$-adically separated and complete). Let $\underline{\sigma}\in G_{K/F_0}^{\delta-1}$: the map $\iota_{\underline{\sigma}}$ is a localization, so that the natural map
$$\widehat{\iota}_{\infty,\underline{\sigma}}\colon K_{\iota_{\underline{\sigma}}}\!\otimes_{K_\Delta}L_\Delta\to\prod\limits_{\substack{\underline{\gamma}\in G_{K_\infty/F_0}^{\delta-1}\\ \underline{\gamma}\mapsto\underline{\sigma}}}L$$
is injective. Note that if $\underline{g}=(g_1,\ldots,g_{\delta})\in\Gamma_{K,\Delta}$ and $x_1,\ldots,x_{\delta}\in K_\infty$, we have
\begin{align*}
\iota_\infty\big(\underline{g}(x_1\otimes\cdots\otimes x_{\delta})\big) &= \iota_\infty\big(g_1(x_1)\otimes\cdots\otimes g_{\delta}(x_{\delta})\big)\\
&= \big(g_1(x_1)\gamma_2(g_2(x_2)\gamma_3(g_3(x_3)\cdots g_{\delta-1}(x_{\delta-1})\gamma_{\delta}(g_{\delta}(x_{\delta})))\cdots)\big)_{\underline{\gamma}\in G_{K_\infty/F_0}^{\delta-1}}\\
&=\Big(g_1\Big(x_1\big(g_1^{-1}\gamma_2g_2\big)\big(x_2\big(g_2^{-1}\gamma_3g_3\big)\big(x_3\cdots \big(g_{\delta-1}^{-1}\gamma_{\delta}g_{\delta}\big)(x_{\delta}\big)\cdots\big)\big)\Big)\Big)_{\underline{\gamma}\in G_{K_\infty/F_0}^{\delta-1}}.
\end{align*}
This shows that $\widehat{\iota}_{\infty,\underline{\sigma}}$ is $\Gamma_{K,\Delta}$-equivariant when the $\prod\limits_{\substack{\underline{\gamma}\in G_{K_\infty/F_0}^{\delta-1}\\ \underline{\gamma}\mapsto\underline{\sigma}}}L$ is equipped with the action given by
$$\underline{g}\cdot(x_{\underline{\gamma}})_{\underline{\gamma}}=\big(g_1(x_{\underline{g}\cdot\underline{\gamma}})\big)_{\underline{\gamma}},$$
where $\underline{g}\cdot\underline{\gamma}=(g_1^{-1}\gamma_2g_2,g_2^{-1}\gamma_3g_3,\ldots,g_{\delta-1}^{-1}\gamma_{\delta}g_{\delta})\in G_{K/F_0}^{\delta-1}$ if $\underline{\gamma}=(\gamma_2,\ldots,\gamma_{\delta})$ (note that this indeed maps to $\underline{\sigma}$ if $\underline{\gamma}$ does).

By hypothesis, the localization $\mathcal{E}_{\underline{\sigma}}:=K_{\iota_{\underline{\sigma}}}\!\otimes_{K_\Delta}\mathcal{E}$ is a finite dimensional sub-$K$-vector space of $K_{\iota_{\underline{\sigma}}}\!\otimes_{K_\Delta}L_\Delta$ that is stable under the action of $\Gamma_{K,\Delta}$. If $\underline{\gamma}_0=(\gamma_2,\ldots,\gamma_{\delta})\in G_{K_\infty/F_0}^{\delta-1}$, the projection $\mathrm{pr}_{\underline{\gamma}_0}\circ\widehat{\iota}_{\infty,\underline{\sigma}}\colon K_{\iota_{\underline{\sigma}}}\!\otimes_{K_\Delta}L_\Delta\to L$ onto the factor of index $\underline{\gamma}_0$ maps $\mathcal{E}_{\underline{\sigma}}$ onto a finite dimensional sub-$K$-vector space $\mathcal{E}_{\underline{\gamma}_0}$ of $L$. Moreover, if $g\in\Gamma_K$, define the element $\underline{g}=(g_1,\ldots,g_{\delta})\in\Gamma_{K,\Delta}$ by $g_1=g$ and $g_i=\gamma_i^{-1}g_{i-1}\gamma_i$ for all $i\in\{2,\ldots,{\delta}\}$ (we have indeed $\gamma_i^{-1}g_{i-1}\gamma_i\in\Gamma_K$ since $\Gamma_K$ is normal in $G_{K_\infty/F_0}$ because $K/F_0$ is Galois). By construction, we have $\underline{g}\cdot\underline{\gamma}_0=\underline{\gamma}_0$, and the component of index $\underline{\gamma}_0$ in
$$\underline{g}\big((x_{\underline{\gamma}})_{\underline{\gamma}}\big)$$
is precisely $g(x_{\underline{\gamma}_0})$ for all $(x_{\underline{\gamma}})_{\underline{\gamma}}\in\prod\limits_{\substack{\underline{\gamma}\in G_{K_\infty/F_0}^{\delta-1}\\ \underline{\gamma}\mapsto\underline{\sigma}}}L$. As $\mathcal{E}_{\underline{\sigma}}$ is stable under $\Gamma_{K,\Delta}$, this implies in particular that $\mathcal{E}_{\underline{\gamma}_0}$ is stable under $\Gamma_K$. By \cite[Proposition 3]{Sen}, this implies that there exists an integer $n_{\underline{\sigma}}$ such that $\mathcal{E}_{\underline{\gamma}_0}\subset K_{n_{\underline{\sigma}}}$.

As $\Gamma_K^{\delta-1}\subset\Gamma_{K,\Delta}$ acts transitively on the set of those elements $\underline{\gamma}\in G_{K_\infty/F_0}^{\delta-1}$ mapping to $\underline{\sigma}$ (by the action given by $(g_2,\ldots,g_{\delta})\cdot(\gamma_2,\ldots,\gamma_{\delta})=(\gamma_2g_2,g_2^{-1}\gamma_3g_3,\ldots,g_{\delta-1}^{-1}\gamma_{\delta}g_{\delta})$), and as $\underline{g}\cdot\big((x_{\underline{\gamma}})_{\underline{\gamma}}\big)=\big(x_{\underline{g}\cdot\underline{\gamma}}\big)_{\underline{\gamma}}$ when $g_1=\Id_{K_\infty}$, the stability of $\mathcal{E}_{\underline{\sigma}}$ under $\Gamma_{K,\Delta}$ implies that the projection $\mathrm{pr}_{\underline{\gamma}}\circ\widehat{\iota}_{\infty,\underline{\sigma}}\colon K_{\iota_{\underline{\sigma}}}\!\otimes_{K_\Delta}L_\Delta\to L$ onto the factor of index $\underline{\gamma}$ maps $\mathcal{E}_{\underline{\sigma}}$ into $K_{n_{\underline{\sigma}}}$ for \emph{all} $\underline{\gamma}\in G_{K_\infty/F_0}^{\delta-1}$ mapping to $\underline{\sigma}$.

The injectivity and equivariance of $\widehat{\iota}_{\infty,\underline{\sigma}}$ imply that $\mathcal{E}_{\underline{\sigma}}$ is fixed by $\Gamma_{K_{n_{\underline{\sigma}}},\Delta}$. Now if we call $n$ the maximum of those $n_{\underline{\sigma}}$ (there are finitely many of these, since $G_{K/F_0}^{\delta-1}$ is finite), this shows that all the localizations $K_{\iota_{\underline{\sigma}}}\!\otimes_{K_\Delta}\mathcal{E}$ are invariants under $\Gamma_{K_n,\Delta}$: the same holds for $\mathcal{E}$.
\end{proof}

\begin{rema}\label{remaRR}
The previous proposition shows that our geometric setting is that of a finite discrete space, corresponding to a finite product of fields: there are finitely many finite extensions $E_i/K$ such that $K_\Delta\simeq\prod\limits_{i \in I}E_i$. The data of a $K_\Delta$-module $M$ is thus equivalent to that of the collection of its localizations $(E_i\times_{K_\Delta}M)_{i \in I}$. In particular, the $K_\Delta$-module $M$ is free of rank $d$ if and only if $\dim_{E_i}(E_i \times_{K_\Delta}M)=d$ for all $i \in I$.
\end{rema}

\begin{defi}
Let $X$ be a $L_\Delta$-representation of $\Gamma_{K,\Delta}$. An element $x\in X$ is said to be $K_\Delta$-finite if its orbit under $\Gamma_{K,\Delta}$ generates a $K_\Delta$-module of finite type in $X$. We denote by $X_{\free}$ the subset of elements elements that $K_\Delta$-finite in $X$. In other words, $X_{\free}$ is the union of all sub-$K_\Delta$-modules of $X$ that are of finite type and stable by $\Gamma_{K,\Delta}$. Note that $X_{\free}$ is a sub-$K_\Delta$-module of $X$, and that it is stable under $\Gamma_{K,\Delta}$.
\end{defi}

\begin{coro}\label{corodecomplf}
If $X\in\Rep_{L_\Delta}(\Gamma_{K,\Delta})$ is free of rank $d$, then $X_{\free}$ is a free $K_{\Delta,\infty}$-module of rank $d$, and the natural map
$$L_\Delta\otimes_{K_{\Delta,\infty}}X_{\free}\to X$$
is a $\Gamma_{K,\Delta}$-equivariant isomorphism.
\end{coro}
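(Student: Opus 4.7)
The plan is to prove $X_\free = Y$, where $Y$ is the free $K_{\Delta,\infty}$-representation of rank $d$ furnished by Sen descent applied to the $C_\Delta$-representation $W := C_\Delta \otimes_{L_\Delta} X$; the isomorphism $L_\Delta \otimes_{K_{\Delta,\infty}} X_\free \isomto X$ then follows from the corresponding isomorphism for $Y$. The representation $W$ is free of rank $d$ over $C_\Delta$ and $H_{K,\Delta}$ acts on it only through the first tensor factor, so Theorem \ref{theoSen} supplies $Y$ with an isomorphism $W \simeq C_\Delta \otimes_{K_{\Delta,\infty}} Y$ of $C_\Delta$-representations of $G_{K,\Delta}$; taking $H_{K,\Delta}$-invariants using Theorem \ref{theocohoHC} yields $X \simeq L_\Delta \otimes_{K_{\Delta,\infty}} Y$. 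Inspecting the proof of Theorem \ref{theoSen} moreover displays $Y$ as $K_{\Delta,\infty} \otimes_{F_\Delta} Z$, where $Z = W^{\prime G_\Delta}$ is a free $F_\Delta$-module of rank $d$ carrying a semi-linear $\Gamma_{K,\Delta}$-action, $F$ being a finite Galois subextension of $K_\infty/F_0$; the crucial feature is that $F_\Delta$ is finite-dimensional over $K$.

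For $Y \subseteq X_\free$, I fix an $F_\Delta$-basis $(e_i)_{1 \leq i \leq d}$ of $Z$ and an integer $n$ with $F \subseteq K_n$. The cocycle matrix $A(\gamma)$ of $\gamma \in \Gamma_{K,\Delta}$ in this basis belongs to $\GL_d(F_\Delta) \subseteq \GL_d(K_{n,\Delta})$, so each orbit $\Gamma_{K,\Delta} \cdot e_i$ is contained in the $K_\Delta$-finite module $\sum_j K_{n,\Delta} e_j$, whence $e_i \in X_\free$. Moreover $X_\free$ is stable under scalar multiplication by any $\lambda \in K_{m,\Delta}$: if $\Gamma_{K,\Delta} \cdot x \subseteq M^\prime$ with $M^\prime$ a $K_\Delta$-finite submodule, then $\Gamma_{K,\Delta} \cdot (\lambda x) \subseteq K_{m,\Delta} \cdot M^\prime$, still $K_\Delta$-finite since $K_{m,\Delta}$ is $K$-finite. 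Hence $Y = \sum_i K_{\Delta,\infty} e_i \subseteq X_\free$.

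Conversely, for $X_\free \subseteq Y$, I take $x \in X_\free$ and write $x = \sum_i \lambda_i e_i$ with $\lambda_i \in L_\Delta$. Let $M := \sum_{\gamma \in \Gamma_{K,\Delta}} K_\Delta \cdot \gamma(x) \subseteq X \simeq L_\Delta^d$: since $K_\Delta$ is noetherian (a finite product of fields, \cf Remark \ref{remaRR}), $M$ is $K_\Delta$-finite, and it is $\Gamma_{K,\Delta}$-stable because $K_\Delta$ is fixed pointwise by $\Gamma_{K,\Delta}$. In coordinates, $\gamma(x) = A(\gamma) \cdot \gamma(\lambda)$, so $\gamma(\lambda) = A(\gamma)^{-1} \cdot \gamma(x) \in \Mat_d(F_\Delta) \cdot M$. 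Since $\Mat_d(F_\Delta)$ is $K$-finite-dimensional and $\Gamma_{K,\Delta}$-stable, the module $N := \Mat_d(F_\Delta) \cdot M$ remains $K_\Delta$-finite and $\Gamma_{K,\Delta}$-stable. Projecting onto the $i$-th coordinate yields a $\Gamma_{K,\Delta}$-stable, $K_\Delta$-finite submodule $N_i \subseteq L_\Delta$ containing every $\gamma(\lambda_i)$; Proposition \ref{propfond} then forces $N_i \subseteq K_{n_i,\Delta}$, so each $\lambda_i \in K_{\Delta,\infty}$ and $x \in Y$.

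The delicate step is the last one: the twist $\gamma(\lambda) = A(\gamma)^{-1} \gamma(x)$ has to leave the coordinates in a module that is simultaneously $K_\Delta$-finite and $\Gamma_{K,\Delta}$-stable inside $L_\Delta^d$. This succeeds only because descending $Y$ to its $F_\Delta$-form $Z$ confines the cocycle to the $K$-finite space $\Mat_d(F_\Delta)$, which is exactly what Proposition \ref{propfond} requires as input.
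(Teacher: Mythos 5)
Your proof is correct and follows essentially the same strategy as the paper's: produce a $\Gamma_{K,\Delta}$-stable $K_{\Delta,\infty}$-lattice $Y\subset X$ via multivariable Sen descent, show $Y\subset X_\free$ because the relevant cocycle lands in a $K_\Delta$-finite subring, and show $X_\free\subset Y$ by applying Proposition \ref{propfond} to a suitable $K_\Delta$-finite $\Gamma_{K,\Delta}$-stable submodule of $L_\Delta$. The only cosmetic difference is that the paper invokes Corollary \ref{coroSen1} and a continuity argument to get the cocycle into $\GL_d(K_{m,\Delta})$, whereas you extract the $F_\Delta$-rational form $Z$ from inside the proof of Theorem \ref{theoSen}; both deliver the bounded coefficient subring needed for Proposition \ref{propfond}, and the twist-absorption step that makes your module $N$ stable under the untwisted $\Gamma_{K,\Delta}$-action (namely $\gamma'(U_\gamma)\gamma'\gamma(\lambda)=U_{\gamma'}^{-1}\cdot(\text{coord.\ of }(\gamma'\gamma)(x))$) should be spelled out, as the paper does for its module $\mathcal{E}$.
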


\begin{proof}
By corollary \ref{coroSen1}, we can find a $L_\Delta$-basis $\mathfrak{B}=(e_1,\ldots,e_d)$ of $X$ such that $Y:=\bigoplus\limits_{i=1}^dK_{\Delta,\infty}e_i\subset W$ is stable by $\Gamma_{K,\Delta}$. By construction, the natural map $L_\Delta\otimes_{K_{\Delta,\infty}}Y\to X$ is a $\Gamma_{K,\Delta}$-equivariant isomorphism. We have to show that $Y=X_{\free}$. The action of $\Gamma_{K,\Delta}$ on $Y$ is described in the basis $\mathfrak{B}$ by a continuous cocycle $U\colon\Gamma_{K,\Delta}\to\GL_d(K_{\Delta,\infty})$: there exists $m\in\NN$ such that $U$ has values in $K_{m,\Delta}$. This implies that $Y_m:=\bigoplus\limits_{i=1}^dK_{m,\Delta}e_i$ is stable under the action of $\Gamma_{K,\Delta}$, so that $Y_m\subset X_{\free}$. As $X_{\free}$ is a $K_{\infty,\Delta}$-module, this implies that $Y\subset X_{\free}$. Conversely, let $x\in X_{\free}$. As $x\in X$, we can write uniquely $x=\sum\limits_{i=1}^d\lambda_ie_i$ where $\lambda_1,\ldots,\lambda_d\in L_\Delta$. Let $\mathcal{E}$ be the sub-$K_{m,\Delta}$-module of $L_\Delta$ generated by the coordinates in $\mathfrak{B}$ of all the elements $g(x)$ with $g\in\Gamma_{K,\Delta}$. As $x\in X_{\free}$, the $K_{m,\Delta}$-module $\mathcal{E}$ is of finite type, and stable under the action of $\Gamma_{K,\Delta}$ by definition (because the coordinates of $g(x)$ in $\mathfrak{B}$ are given by the matrix product $U_g\left(\begin{smallmatrix}g(\lambda_1)\\[-2mm] \vdots\\ g(\lambda_d)\end{smallmatrix}\right)$ and $U_g\in\GL_d(K_{m,\Delta})$). By proposition \ref{propfond}, we have $\mathcal{E}\subset K_{\infty,\Delta}$, hence $x\in Y$.
\end{proof}

\begin{prop}\label{propextfinet}
Let $K^\prime$ be a finite extension of $K$ in $\Kbar$. Recall that $L=\widehat{K_\infty}$ and put $L^\prime=\widehat{K^\prime_\infty}$. The extensions $K_{\Delta,\infty}\to K^\prime_{\Delta,\infty}$ and $L_\Delta\to L^\prime_\Delta$ are finite \'etale, and Galois with group $\Gal(K^\prime_\infty/K_\infty)^\Delta$ if $K^\prime_\infty/K_\infty$ is Galois. Moreover, we have $K_{\Delta,\infty}^\prime\otimes_{K_{\Delta,\infty}}L_\Delta\isomto L^\prime_\Delta$.
\end{prop}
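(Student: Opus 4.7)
The plan is to reduce everything to finite level: there, $K_{n_0}\to K^\prime_{n_0}$ is a classical finite separable (Galois) extension, from which the desired properties follow after $\Delta$-fold tensor product over $F_0$. I then propagate to $K_{\Delta,\infty}\to K^\prime_{\Delta,\infty}$ by a colimit, and to $L_\Delta\to L^\prime_\Delta$ by $p$-adic completion.

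First, I would choose $n_0\in\NN$ such that $[K^\prime_n:K_n]=[K^\prime_\infty:K_\infty]$ for all $n\geq n_0$ (possible because the sequence is non-increasing and positive), and in the Galois case I would further enlarge $n_0$ so that $K^\prime_{n_0}$ already contains all $\Gal(K^\prime_\infty/K_\infty)$-conjugates of a fixed primitive element, making $K^\prime_{n_0}/K_{n_0}$ Galois with group $\Gal(K^\prime_\infty/K_\infty)$. A dimension count then shows $K^\prime_n\simeq K_n\otimes_{K_{n_0}}K^\prime_{n_0}$ for $n\geq n_0$. Applying the $\Delta$-fold tensor product over $F_0$---which satisfies the base change identity $(A\otimes_BC)^{\otimes_{F_0}\Delta}\simeq A^{\otimes_{F_0}\Delta}\otimes_{B^{\otimes_{F_0}\Delta}}C^{\otimes_{F_0}\Delta}$ by the universal property of tensor products as pushouts in $F_0$-algebras---and taking the colimit over $n\geq n_0$ yields
\[K^\prime_{\Delta,\infty}\simeq K_{\Delta,\infty}\otimes_{K_{n_0,\Delta}}K^\prime_{n_0,\Delta}.\]
The extension $K_{n_0,\Delta}\to K^\prime_{n_0,\Delta}$ is finite \'etale as a $\Delta$-fold tensor over $F_0$ of the finite separable extension $K_{n_0}\to K^\prime_{n_0}$, and Galois with group $\Gal(K^\prime_\infty/K_\infty)^\Delta$ in the Galois case. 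The corresponding properties for $K_{\Delta,\infty}\to K^\prime_{\Delta,\infty}$ then follow by base change.

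For the $L$-part, I would rely on the identification $\mathcal{O}_{L_\Delta}=\widehat{\mathcal{O}_{K_{\Delta,\infty}}}$ (and similarly $\mathcal{O}_{L^\prime_\Delta}=\widehat{\mathcal{O}_{K^\prime_{\Delta,\infty}}}$): this follows from $\mathcal{O}_{K_\infty}/p^r\isomto\mathcal{O}_L/p^r$ combined with the commutation of the $\Delta$-fold tensor product over $W(k)$ with reduction modulo $p^r$. Set $R:=\mathcal{O}_{K_{\Delta,\infty}}\otimes_{\mathcal{O}_{K_{n_0,\Delta}}}\mathcal{O}_{K^\prime_{n_0,\Delta}}$; this is a finite free $\mathcal{O}_{K_{\Delta,\infty}}$-module that injects into $\mathcal{O}_{K^\prime_{\Delta,\infty}}$ with cokernel of bounded $p$-power torsion---the bound being uniform in $n$ because, picking a primitive element $\alpha$ for $K^\prime_{n_0}/K_{n_0}$, the minimal polynomial of $\alpha$ over $K_n$ and hence its discriminant remain constant for all $n\geq n_0$. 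Since $R$ is finite free, one has $\widehat{R}\simeq\mathcal{O}_{L_\Delta}\otimes_{\mathcal{O}_{K_{n_0,\Delta}}}\mathcal{O}_{K^\prime_{n_0,\Delta}}$, and the bounded torsion gives $\widehat{R}[1/p]=\widehat{\mathcal{O}_{K^\prime_{\Delta,\infty}}}[1/p]=L^\prime_\Delta$. Inverting $p$ then yields
\[L^\prime_\Delta\simeq L_\Delta\otimes_{K_{n_0,\Delta}}K^\prime_{n_0,\Delta}\simeq K^\prime_{\Delta,\infty}\otimes_{K_{\Delta,\infty}}L_\Delta,\]
and the finite \'etale (Galois with group $\Gal(K^\prime_\infty/K_\infty)^\Delta$) property of $L_\Delta\to L^\prime_\Delta$ follows by base change from the corresponding property already established for $K_{\Delta,\infty}\to K^\prime_{\Delta,\infty}$.

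The main obstacle is the uniform boundedness of the $p$-power torsion of $\mathcal{O}_{K^\prime_{\Delta,\infty}}/R$, needed because the integral extension $\mathcal{O}_{K_{n_0,\Delta}}\to\mathcal{O}_{K^\prime_{n_0,\Delta}}$ is in general only finite flat---not finite \'etale---due to possible wild ramification above $p$. The control via a fixed discriminant at level $n_0$, as sketched above, resolves this point.
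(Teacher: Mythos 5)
Your proof is correct and structurally parallel to the paper's: both establish the finite \'etale/Galois property of $K_{\Delta,\infty}\to K^\prime_{\Delta,\infty}$ by iterating a one-factor-at-a-time base change of the finite separable extension at a fixed finite level $n_0$, then pass to $L_\Delta\to L^\prime_\Delta$ by bounding the $p$-power torsion of the cokernel of an integral model and completing $p$-adically. The base-change identity for $\Delta$-fold tensor products, the reduction $\mathcal{O}_{L_\Delta}\cong\widehat{\mathcal{O}_{K_{\Delta,\infty}}}$, and the observation that a bounded-torsion cokernel is harmless after completion and inverting $p$ are all sound and match the paper's strategy. The one genuine difference is the source of the torsion bound: you bound the cokernel of $\mathcal{O}_{K_{\Delta,\infty}}\otimes_{\mathcal{O}_{K_{n_0,\Delta}}}\mathcal{O}_{K^\prime_{n_0,\Delta}}\to\mathcal{O}_{K^\prime_{\Delta,\infty}}$ by a fixed power of $p$ using a classical different/discriminant estimate (the minimal polynomial of an integral primitive element of $K^\prime_{n_0}/K_{n_0}$ is unchanged as the base grows to $K_m$ for $m\geq n_0$, so $f^\prime(\alpha)\mathcal{O}_{K^\prime_m}\subset\mathcal{O}_{K_m}[\alpha]$ uniformly), whereas the paper instead cites Andreatta's Corollary 3.10 in \cite{And06} to get the sharper statement that the cokernel of $\mathcal{O}_{K_n^\prime}\otimes_{\mathcal{O}_{K_n}}\mathcal{O}_{K_\infty}\to\mathcal{O}_{K_\infty^\prime}$ is killed by $p$ (indeed by any positive-valuation element) once $n\gg0$, a manifestation of the almost-\'etale property of the cyclotomic tower. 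Your bound can be much cruder (a power of $p$ depending on the ramification of $K^\prime_{n_0}/K_{n_0}$), but it is entirely elementary and self-contained, while the paper's is tighter at the cost of citing a deeper external result; both are amply sufficient for the conclusion since one only needs uniform boundedness before completing and inverting $p$. Two small presentational points worth tightening if you wrote this up fully: the step bounding the cokernel of the $\Delta$-fold tensor $R\to\mathcal{O}_{K^\prime_{\Delta,\infty}}$ from the one-variable bound requires a short d\'evissage (the cokernel of $A^{\otimes\Delta}\to B^{\otimes\Delta}$ for $A\subset B$ with cokernel killed by $p^c$ is killed by $p^{\delta c}$, using flatness of $A$ over $\W(k)$), and the passage from bounded torsion to $\widehat{R}[1/p]\isomto L^\prime_\Delta$ deserves the brief $\Tor/\varprojlim$ bookkeeping that the paper spells out.
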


\begin{proof}
$\bullet$ If $\Delta^\prime\subset\Delta$ and $\alpha\in\Delta\setminus\Delta^\prime$, the finite separable extension $K_n\to K_n^\prime$ tensored with $K_n^{\prime\otimes\Delta^\prime}\otimes_F K_n^{\otimes(\Delta\setminus(\Delta^\prime\cup\{\alpha\}))}$ over $F$ provides a finite \'etale map
$$K_n^{\prime(\Delta\cup\{\alpha\})}\otimes_F K_n^{\otimes(\Delta\setminus(\Delta^\prime\cup\{\alpha\}))}$$
for all $n\in\NN$. the latter is Galois with group $\Gal(K^\prime_\infty/K_\infty)$ when $n\gg0$ if $K^\prime_\infty/K_\infty$ is Galois. The composition of all these maps thus provides a finite \'etale map
$$K_n^{\otimes\Delta}\to K_n^{\prime\otimes\Delta}$$
which is Galois with group $\Gal(K^\prime_\infty/K_\infty)^\Delta$ when $n\gg0$ if $K^\prime_\infty/K_\infty$ is Galois. Put together, this shows that the map $K_{\Delta,\infty}\to K^\prime_{\Delta,\infty}$ is finite \'etale, and Galois with group $\Gal(K^\prime_\infty/K_\infty)^\Delta$ if $K^\prime_\infty/K_\infty$ is Galois.

\noindent
$\bullet$ There exists $n_0\in\NN$ such that $n\geq n_0\Rightarrow[K^\prime_n:K_n]=[K^\prime_\infty:K_\infty]$, so that $K_n^\prime\otimes_{K_n}K_\infty\isomto K^\prime_\infty$. By \cite[Corollary 3.10]{And06}, making $n_0$ larger, we may assume that the cokernel of the map $\mathcal{O}_{K_n^\prime}\otimes_{\mathcal{O}_{K_n}}\mathcal{O}_{K_\infty}\to\mathcal{O}_{K^\prime_\infty}$ is killed by $p$ whenever $n\geq n_0$ (we could replace $p$ by any element on positive valuation in $\mathcal{O}_{K_\infty}$, but we will not use this). This implies that for $n\geq n_0$, there is an exact sequence
$$0\to\mathcal{O}_{K_n^\prime}^{\otimes\Delta}\otimes_{\mathcal{O}_{K_n}^{\otimes\Delta}}\mathcal{O}_{K_\infty}^{\otimes\Delta}\to\mathcal{O}_{K^\prime_\infty}^{\otimes\Delta}\to T\to0$$
where $T$ is a group killed by $p$. If $r\in\NN_{>0}$, we deduce the exact sequence
$$\Tor_1^{\ZZ}(\ZZ/p^r\ZZ,\mathcal{O}_{K^\prime_\infty}^{\otimes\Delta})\to\Tor_1^{\ZZ}(\ZZ/p^r\ZZ,T)\to\mathcal{O}_{K_n^\prime}^{\otimes\Delta}\otimes_{\mathcal{O}_{K_n}^{\otimes\Delta}}\mathcal{O}_{K_\infty}^{\otimes\Delta}/(p^r)\to\mathcal{O}_{K^\prime_\infty}^{\otimes\Delta}/(p^r)\to T/(p^r)\to0.$$
As $\mathcal{O}_{K^\prime_\infty}^{\otimes\Delta}$ has no $p$-torsion (resp. $T$ is killed by $p$), we have $\Tor_1^{\ZZ}(\ZZ/p^r\ZZ,\mathcal{O}_{K^\prime_\infty}^{\otimes\Delta})=\{0\}$ (resp. $T/(p^r)=T$ and $\Tor_1^{\ZZ}(\ZZ/p^r\ZZ,T)\simeq T$), hence an exact sequence
$$0\to T\to\mathcal{O}_{K_n^\prime}^{\otimes\Delta}\otimes_{\mathcal{O}_{K_n}^{\otimes\Delta}}\mathcal{O}_{K_\infty}^{\otimes\Delta}/(p^r)\xrightarrow{f_r}\mathcal{O}_{K^\prime_\infty}^{\otimes\Delta}/(p^r)\to T\to0.$$
It splits into two exact sequences
$$0\to T\to\mathcal{O}_{K_n^\prime}^{\otimes\Delta}\otimes_{\mathcal{O}_{K_n}^{\otimes\Delta}}\mathcal{O}_{K_\infty}^{\otimes\Delta}/(p^r)\to\im(f_r)\to0$$
$$0\to\im(f_r)\to\mathcal{O}_{K^\prime_\infty}^{\otimes\Delta}/(p^r)\to T\to0.$$
Passing to inverse limits gives exact sequences
$$0\to T\to\varprojlim\limits_r\mathcal{O}_{K_n^\prime}^{\otimes\Delta}\otimes_{\mathcal{O}_{K_n}^{\otimes\Delta}}\mathcal{O}_{K_\infty}^{\otimes\Delta}/(p^r)\to\varprojlim\limits_r\im(f_r)\to0$$
$$0\to\varprojlim\limits_r\im(f_r)\to\mathcal{O}_{L^\prime_\Delta}\to T$$
(the exactness on the right in the first sequence follows from the fact that the constant inverse system $(T)_{r\geq1}$ has the Mittag-Leffler property). As $\mathcal{O}_{K^\prime_n}$ is free over $\mathcal{O}_{K_n}$, so is $\mathcal{O}_{K_n^\prime}^{\otimes\Delta}$ over $\mathcal{O}_{K_n}^{\otimes\Delta}$: this implies that
$$\varprojlim\limits_r\mathcal{O}_{K_n^\prime}^{\otimes\Delta}\otimes_{\mathcal{O}_{K_n}^{\otimes\Delta}}\mathcal{O}_{K_\infty}^{\otimes\Delta}/(p^r)\simeq\mathcal{O}_{K_n^\prime}^{\otimes\Delta}\otimes_{\mathcal{O}_{K_n}^{\otimes\Delta}}\varprojlim\limits_r\mathcal{O}_{K_\infty}^{\otimes\Delta}/(p^r)=\mathcal{O}_{K_n^\prime}^{\otimes\Delta}\otimes_{\mathcal{O}_{K_n}^{\otimes\Delta}}\mathcal{O}_{L_\Delta}.$$
The previous exact sequences thus provide an exact sequence
$$0\to T\to\mathcal{O}_{K_n^\prime}^{\otimes\Delta}\otimes_{\mathcal{O}_{K_n}^{\otimes\Delta}}\mathcal{O}_{L_\Delta}\to\mathcal{O}_{L^\prime_\Delta}\to T.$$
Inverting $p$ gives an isomorphism
$$K_n^{\prime\otimes\Delta}\otimes_{K_n^{\otimes\Delta}}L_\Delta\isomto L^\prime_\Delta,$$
hence an isomorphism $K_{\Delta,\infty}^\prime\otimes_{K_{\Delta,\infty}}L_\Delta\isomto L^\prime_\Delta$, showing the last assertion. The statements on the map $L_\Delta\to L^\prime_\Delta$ follow.
\end{proof}

\begin{prop}\label{propLfidplat}
The $K_{\Delta,\infty}$-algebra $L_\Delta$ is faithfully flat.
\end{prop}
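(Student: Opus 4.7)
The plan is to establish flatness and surjectivity of $\Spec L_\Delta\to\Spec K_{\Delta,\infty}$ separately; together they give faithful flatness. Flatness will actually be automatic: for each $n$, $K_{n,\Delta}=K_n^{\otimes\Delta}$ (tensor product over $F_0$) is a finite \'etale $F_0$-algebra, being a tensor product of finite separable extensions, hence a finite product of fields. Every such ring is absolutely flat (von Neumann regular: for every $a$ there exists $b$ with $a=a^2b$), and this condition passes through filtered colimits, so $K_{\Delta,\infty}=\bigcup_n K_{n,\Delta}$ is absolutely flat. Consequently every $K_{\Delta,\infty}$-module---in particular $L_\Delta$---is flat.

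For surjectivity of $\Spec L_\Delta\to\Spec K_{\Delta,\infty}$, the plan is to fix a maximal ideal $\mathfrak{m}\subset K_{\Delta,\infty}$ and exhibit an explicit nonzero quotient of $L_\Delta/\mathfrak{m} L_\Delta$. Set $A=\mathcal{O}_{K_{\Delta,\infty}}/(\mathfrak{m}\cap\mathcal{O}_{K_{\Delta,\infty}})$. Since $K_{\Delta,\infty}$ is a $\QQ_p$-algebra and $\mathfrak{m}$ is proper, $p\notin\mathfrak{m}$, so $A$ is a $p$-torsion-free $\ZZ_p$-algebra with $pA\ne A$ (it embeds in the field $K_{\Delta,\infty}/\mathfrak{m}$). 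Its $p$-adic completion $\widehat A$ is therefore nonzero and still $p$-torsion-free, whence $\widehat A\bigl[\tfrac{1}{p}\bigr]\ne 0$.

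The bridge is the identification $\mathcal{O}_{L_\Delta}=\widehat{\mathcal{O}_{K_{\Delta,\infty}}}$, which I would justify using that $\mathcal{O}_L/p^r=\mathcal{O}_{K_\infty}/p^r$ for all $r$ together with the fact that each $\mathcal{O}_{K_n}^{\otimes\Delta}$ is already $p$-adically complete (since $\mathcal{O}_{K_n}$ is free of finite rank over $\W(k)$, as noted in the notation section). Given this, the surjection $\mathcal{O}_{K_{\Delta,\infty}}\twoheadrightarrow A$ $p$-adically completes to a surjection $\mathcal{O}_{L_\Delta}\twoheadrightarrow\widehat A$ (surjectivity modulo every $p^r$ combined with $p$-adic completeness of the target allows a standard successive-lifting argument). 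Inverting $p$ then yields $L_\Delta\twoheadrightarrow\widehat A\bigl[\tfrac{1}{p}\bigr]$, a surjection whose kernel contains $\mathfrak{m}$ since $\mathfrak{m}$ already dies in $A$; as the target is nonzero, one concludes $\mathfrak{m} L_\Delta\ne L_\Delta$.

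The main obstacle will be verifying the identification $\mathcal{O}_{L_\Delta}=\widehat{\mathcal{O}_{K_{\Delta,\infty}}}$ together with the assertion that $p$-adic completion preserves the surjection $\mathcal{O}_{K_{\Delta,\infty}}\twoheadrightarrow A$; once those are settled, both flatness and faithfulness follow mechanically from the absolute flatness of $K_{\Delta,\infty}$ and the explicit quotient construction.
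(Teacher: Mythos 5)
Your argument is correct and takes a genuinely different route from the paper on both halves of the statement. For flatness the paper localizes $K_{n,\Delta}$ at its maximal ideals, shows each $K_{n,\Delta}\to L_\Delta$ is faithfully flat, and then takes a colimit of ideals $I_n=I\cap K_{n,\Delta}$; you instead observe that each $K_{n,\Delta}=K_n^{\otimes\Delta}$ is a finite product of fields (hence von Neumann regular), that this property passes to the filtered colimit $K_{\Delta,\infty}$, and that over an absolutely flat ring \emph{every} module is flat. This is cleaner and avoids any reduction to the Galois case. For faithfulness the paper reduces to $K/F_0$ Galois and uses the explicit coordinate injection $\widehat{\iota}_{\infty,\underline{\sigma}}\colon K_{\iota_{\underline{\sigma}}}\!\otimes_{K_\Delta}L_\Delta\hookrightarrow\prod L$ to compare localizations of $K_{\Delta,\infty}$ and $L_\Delta$; your completion argument bypasses this entirely. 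The identification $\mathcal{O}_{L_\Delta}=\widehat{\mathcal{O}_{K_{\Delta,\infty}}}$ that you flag as the potential obstacle is in fact harmless: since $\mathcal{O}_L/p^r\simeq\mathcal{O}_{K_\infty}/p^r$, tensoring over $\W(k)/p^r$ gives $\mathcal{O}_L^{\otimes\Delta}/p^r\simeq\mathcal{O}_{K_\infty}^{\otimes\Delta}/p^r$ for all $r$, and $\mathcal{O}_{K_{\Delta,\infty}}=\varinjlim_n\mathcal{O}_{K_n}^{\otimes\Delta}=\mathcal{O}_{K_\infty}^{\otimes\Delta}$, so the two $p$-adic completions agree. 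The surjectivity of the completed map then follows from surjectivity modulo $p$ together with $p$-adic completeness of both sides (or, equivalently, Mittag--Leffler for the system of kernels, whose transition maps are surjective). The one place your writeup is too terse is the assertion $pA\neq A$: the parenthetical remark that $A$ embeds in the field $K_{\Delta,\infty}/\mathfrak{m}$ only gives $p$-torsion-freeness. To see $pA\neq A$ one should note that $A$ is the increasing union of the images $A_n$ of $\mathcal{O}_{K_n}^{\otimes\Delta}$ in the finite extension $E_n:=K_{n,\Delta}/(\mathfrak{m}\cap K_{n,\Delta})$ of $K$; each $\mathcal{O}_{K_n}$ maps into $\mathcal{O}_{E_n}$, so $A_n\subset\mathcal{O}_{E_n}$, where $p$ is not a unit, and a union of rings in which $p$ is a nonunit still has $p$ a nonunit. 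With that supplement the proof is complete, and in my view it is more transparent and uniform (no need for the Galois closure) than the one in the paper.
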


\begin{proof}
$\bullet$ Assume that $K/F_0$ is Galois: so is $K_n/F_0$ for all $n\in\NN$. Recall that the choice of an ordering on $\Delta$ provides an isomorphism $K_{n,\Delta}\simeq K_n^{G_{K_n/F_0}^{\delta-1}}$ (where $G_{K_n/F_0}=\Gal(K_n/F_0)$): the localizations of $K_{n,\Delta}$ at maximal ideals are given by the projections $\iota_{\underline{\sigma}}\colon K_{n,\Delta}\to K_n$ indexed by the elements $\underline{\sigma}\in G_{K_n/F_0}^{\delta-1}$. The corresponding localization $K_n\to K_{n,\iota_{\underline{\sigma}}}\!\otimes_{K_{n,\Delta}}L_\Delta$ is flat (because $K_n$ is a field!): this proves that the map $K_{n,\Delta}\to L_\Delta$ is faithfully flat.

\noindent
Let $I\subset K_{\Delta,\infty}$ be a nonzero ideal. For each $n\in\NN$, put $I_n=I\cap K_{n,\Delta}$ (where $K_{n,\Delta}$ is seen as a subring of $K_{\Delta,\infty}$). By flatness, the natural map $I_n\otimes_{K_{n,\Delta}}L_\Delta\to L_\Delta$ is injective for all $n\in\NN$. The commutative diagram
$$\xymatrix@R=2pt@C=25pt{
I_n\otimes_{K_{n,\Delta}}L_\Delta\ar@{^(->}[rd]\ar[dd]_{i_n} & \\
 & L_\Delta\\
I_{n+1}\otimes_{K_{n+1,\Delta}}L_\Delta\ar@{^(->}[ru] &}$$
thus implies that the natural map $i_n\colon I_n\otimes_{K_{n,\Delta}}L_\Delta\to I_{n+1}\otimes_{K_{n+1,\Delta}}L_\Delta$ is injective. Passing to the inductive limit, this implies that the following composite of the natural maps
$$\varinjlim\limits_n(I_n\otimes_{K_{n,\Delta}}L_\Delta)\to I\otimes_{K_{\Delta,\infty}}L_\Delta\to L_\Delta$$
is injective. As the first map is surjective, it is an isomorphism, so that the natural map $I\otimes_{K_{\Delta,\infty}}L_\Delta\to L_\Delta$ is injective. This proves the flatness in this case.

\noindent
As seen in the proof of proposition \ref{propfond}, if $\underline{\sigma}\in G_{K/F_0}^{\delta-1}$, there is an injection $\widehat{\iota}_{\infty,\underline{\sigma}}\colon K_{\iota_{\underline{\sigma}}}\!\otimes_{K_\Delta}L_\Delta\to\prod\limits_{\substack{\underline{\gamma}\in G_{K_\infty/F_0}^{\delta-1}\\ \underline{\gamma}\mapsto\underline{\sigma}}}L$. It inserts in the commutative diagram
$$\xymatrix{
K_{\iota_{\underline{\sigma}}}\!\otimes_{K_\Delta}K_{\Delta,\infty}\ar[r]\ar@{^(->}[d] & \prod\limits_{\substack{\underline{\gamma}\in G_{K_\infty/F_0}^{\delta-1}\\ \underline{\gamma}\mapsto\underline{\sigma}}}K_\infty\ar@{^(->}[d]\\
K_{\iota_{\underline{\sigma}}}\!\otimes_{K_\Delta}L_\Delta\ar[r]^{\widehat{\iota}_{\infty,\underline{\sigma}}} & \prod\limits_{\substack{\underline{\gamma}\in G_{K_\infty/F_0}^{\delta-1}\\ \underline{\gamma}\mapsto\underline{\sigma}}}L}.$$
As the top horizontal map is faithful (because its components are precisely the localizations of $K_{\iota_{\underline{\sigma}}}\!\otimes_{K_\Delta}K_{\Delta,\infty}$ at its maximal ideals), and as the right vertical map is faithful, so is the composite
$$K_{\iota_{\underline{\sigma}}}\!\otimes_{K_\Delta}K_{\Delta,\infty}\to K_{\iota_{\underline{\sigma}}}\!\otimes_{K_\Delta}L_\Delta\to\prod\limits_{\substack{\underline{\gamma}\in G_{K_\infty/F_0}^{\delta-1}\\ \underline{\gamma}\mapsto\underline{\sigma}}}L$$
hence $K_{\iota_{\underline{\sigma}}}\!\otimes_{K_\Delta}K_{\Delta,\infty}\to K_{\iota_{\underline{\sigma}}}\!\otimes_{K_\Delta}L_\Delta$ is faithful as well. As this holds for all $\underline{\sigma}\in G_{K/F_0}^{\delta-1}$, this implies that $K_{\Delta,\infty}\to L_\Delta$ is faithful.

\noindent
$\bullet$ In general, let $K^\prime\subset\Kbar$ be the Galois closure of $K/F_0$. By what precedes, the map $K^\prime_{\Delta,\infty}\to L^\prime_\Delta$ is faithfully flat. Moreover, $K^\prime_{\Delta,\infty}$ is free as a $K_{\Delta,\infty}$-module, so that $K_{\Delta,\infty}\to L^\prime_\Delta$ is flat as well. Let $I\subset K_{\Delta,\infty}$ be a nonzero ideal. The natural map $I\otimes_{K_{\Delta,\infty}}L^\prime_\Delta\to L^\prime_\Delta$ is thus injective. On the other hand, $L^\prime_\Delta$ is free over $L_\Delta$ since $K^\prime_{\Delta,\infty}$ is over $K_{\Delta,\infty}$ and $K^\prime_{\Delta,\infty}\otimes_{K_{\Delta,\infty}}L_\Delta\isomto L^\prime_\Delta$ by proposition \ref{propextfinet}. This implies that $I\otimes_{K_{\Delta,\infty}}L_\Delta\to I\otimes_{K_{\Delta,\infty}}L^\prime_\Delta$ is injective, so that the composite map $I\otimes_{K_{\Delta,\infty}}L_\Delta\to L^\prime_\Delta$ is injective. As it factors through $L_\Delta$, this implies that $I\otimes_{K_{\Delta,\infty}}L_\Delta\to L_\Delta$ is injective, showing the flatness of $K_{\Delta,\infty}\to L_\Delta$.

\noindent
If $M$ is a $K_{\Delta,\infty}$-module such that $M\otimes_{K_{\Delta,\infty}}L_\Delta=0$, we have
$$(M\otimes_{K_{\Delta,\infty}}K^\prime_{\Delta,\infty})\otimes_{K^\prime_{\Delta,\infty}}L^\prime_\Delta\simeq M\otimes_{K_{\Delta,\infty}}L^\prime_\Delta\simeq(M\otimes_{K_{\Delta,\infty}}L_\Delta)\otimes_{L_\Delta}L^\prime_\Delta=0$$
so that the faithfulness of $L^\prime_\Delta$ over $K^\prime_{\Delta,\infty}$ implies that $M\otimes_{K_{\Delta,\infty}}K^\prime_{\Delta,\infty}=0$, hence $M=0$ since $K^\prime_{\Delta,\infty}$ is free over $K_{\Delta,\infty}$.
\end{proof}

\begin{coro}\label{corocomplf2}
(\cf \cite[Lemme 3.15]{AB2}) Let $X_1$ and $X_2$ be free $L_\Delta$-representations of finite rank of $\Gamma_{K,\Delta}$. The natural maps
\begin{align*}
\Hom_{K_{\infty,\Delta}}(X_{1,\free},X_{2,\free}) &\to \Hom_{L_\Delta}(X_1,X_2)_{\free}\\
\Hom_{\Rep_{K_{\infty,\Delta}}(\Gamma_{K,\Delta})}(X_{1,\free},X_{2,\free}) &\to \Hom_{\Rep_{L_\Delta}(\Gamma_{K,\Delta})}(X_1,X_2)\\
\Ext^1_{\Rep_{K_{\infty,\Delta}}(\Gamma_{K,\Delta})}(X_{1,\free},X_{2,\free}) &\to \Ext^1_{\Rep_{L_\Delta}(\Gamma_{K,\Delta})}(X_1,X_2)
\end{align*}
are bijective.
\end{coro}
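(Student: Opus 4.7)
The strategy is to bootstrap from corollary \ref{corodecomplf} (the decompletion equivalence for free $L_\Delta$-representations) together with the faithful flatness of $L_\Delta$ over $K_{\Delta,\infty}$ (proposition \ref{propLfidplat}). For the first map, I would first check that the natural map $\phi \mapsto \mathrm{id}_{L_\Delta} \otimes \phi$ actually lands in the $\free$-part: picking $K_{\Delta,\infty}$-bases of $X_{1,\free}$ and $X_{2,\free}$ in which the cocycles have entries in some $K_{n,\Delta}$ and expressing $\phi$ by a matrix with entries in $K_{m,\Delta}$, the $\Gamma_{K,\Delta}$-orbit of $\mathrm{id}_{L_\Delta}\otimes\phi$ lies in a free $K_{\max(n,m),\Delta}$-module, hence is $K_\Delta$-finite. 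Both sides are then free $K_{\Delta,\infty}$-modules of rank $d_1d_2$: the source because $X_{i,\free}$ is free of rank $d_i$, and the target because corollary \ref{corodecomplf} applies to the free $L_\Delta$-representation $\Hom_{L_\Delta}(X_1, X_2) \simeq X_1^\vee \otimes_{L_\Delta} X_2$. After tensoring with $L_\Delta$ both sides are canonically $\Hom_{L_\Delta}(X_1, X_2)$ and the induced map is the identity, so by faithful flatness the original map is bijective.

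For the second bijection, I would take $\Gamma_{K,\Delta}$-invariants of the first, using the observation that $\Hom_{L_\Delta}(X_1, X_2)_{\free}^{\Gamma_{K,\Delta}} = \Hom_{L_\Delta}(X_1, X_2)^{\Gamma_{K,\Delta}}$: any $\Gamma_{K,\Delta}$-fixed morphism has a singleton orbit, so the $K_\Delta$-submodule it generates is cyclic, hence automatically finitely generated and placing the morphism inside the $\free$-part.

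For the $\Ext^1$ bijection I would translate into equivalence classes of extensions. Surjectivity: given $0 \to X_2 \to E \to X_1 \to 0$ in $\Rep_{L_\Delta}(\Gamma_{K,\Delta})$, projectivity of $X_1$ splits the sequence as $L_\Delta$-modules so $E$ is free of rank $d_1+d_2$. By corollary \ref{corodecomplf}, $E_{\free}$ is free of rank $d_1+d_2$ over $K_{\Delta,\infty}$ and $L_\Delta \otimes_{K_{\Delta,\infty}} E_{\free} \simeq E$ as $\Gamma_{K,\Delta}$-modules. Functoriality of $(-)_{\free}$ gives $X_{2,\free} \hookrightarrow E_{\free}$ and a map $E_{\free} \to X_{1,\free}$; tensoring with $L_\Delta$ recovers the original sequence, and faithful flatness upgrades this to exactness of $0 \to X_{2,\free} \to E_{\free} \to X_{1,\free} \to 0$. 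For injectivity, given two extensions $F, F'$ over $K_{\Delta,\infty}$ equivalent after tensoring with $L_\Delta$ via some $\phi \colon L_\Delta \otimes F \to L_\Delta \otimes F'$, I would first observe that $F = (L_\Delta \otimes F)_{\free}$: as argued in paragraph 1 the inclusion $F \subset (L_\Delta \otimes F)_{\free}$ is automatic, both are free of the same rank over $K_{\Delta,\infty}$, and they agree after tensoring with the faithfully flat ring $L_\Delta$, so they are equal. The second bijection applied to the pair $(F, F')$ then produces a unique $\phi_0 \colon F \to F'$ in $\Rep_{K_{\Delta,\infty}}(\Gamma_{K,\Delta})$; applying it once more to the pairs $(X_{2,\free}, F')$ and $(F, X_{1,\free})$ descends the commutativity of the two squares expressing that $\phi$ is a morphism of extensions.

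The main obstacle is the injectivity step for $\Ext^1$: the identification $F = (L_\Delta \otimes F)_{\free}$, though not deep, is the pivot on which the rest of the argument turns, and one must be careful that the descended morphism $\phi_0$ really respects the extension structure (rather than just being \emph{some} $\Gamma_{K,\Delta}$-equivariant isomorphism). The cleanest way is to treat the compatibility with $X_{2,\free}$ and $X_{1,\free}$ itself as an instance of the already established Hom bijection, which removes any need to redo descent by hand.
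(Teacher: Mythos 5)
Your proof is correct and the overall machinery (corollary \ref{corodecomplf}, faithful flatness from proposition \ref{propLfidplat}, and the ``orbit lies in a $K_{m,\Delta}$-lattice'' argument) is the same as the paper's. The main divergence is in how the $\Ext^1$-bijection is handled. The paper first translates $\Ext^1$ into $\H^1(\Gamma_{K,\Delta},-)$ of the internal Hom $X=\Hom_{L_\Delta}(X_1,X_2)$, proves injectivity by a direct cocycle computation (if $c=\partial x$ with $x\in X$, then expressing everything in a $K_{\infty,\Delta}$-basis of $X_{\free}$ and taking a $K_{m,\Delta}$-lattice, the sub-$K_{m,\Delta}$-module generated by $1$ and the entries of $x$ is finitely generated and $\Gamma_{K,\Delta}$-stable, so proposition \ref{propfond} forces it into $K_{\infty,\Delta}$), and proves surjectivity by building the auxiliary extension $\widetilde{X}$ of $L_\Delta$ by $X$ and applying corollary \ref{corodecomplf} and faithful flatness. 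You instead stay at the level of extensions: your surjectivity argument is essentially the paper's, applied to the extension $E$ itself rather than to the auxiliary $\widetilde{X}$; but for injectivity you take the cleaner route of noting $F=(L_\Delta\otimes F)_{\free}$ (which is implicit in the proof of corollary \ref{corodecomplf}) and then invoking the already-established Hom-bijection twice more to descend both the equivalence $\phi$ and its compatibility with the two structure maps of the extension. That avoids redoing the proposition \ref{propfond} argument and is a genuine, if modest, simplification. Your explicit check at the start that $\mathrm{id}_{L_\Delta}\otimes\phi$ lands in the $\free$-part, and the observation that $\Gamma_{K,\Delta}$-invariants of $\Hom_{L_\Delta}(X_1,X_2)$ automatically lie in the $\free$-part, are both steps the paper compresses into ``the same proof as in the previous corollary'' and ``taking invariants''; spelling them out is a good habit.
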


\begin{proof}
By corollary \ref{corodecomplf}, we have $L_\Delta\otimes_{K_{\Delta,\infty}}X_{1,\free}\isomto X_1$ and $L_\Delta\otimes_{K_{\infty,\Delta}}X_{2,\free}\isomto X_2$, so that the map
$$L_\Delta\otimes_{K_{\infty,\Delta}}\Hom_{K_{\infty,\Delta}}(X_{1,\free},X_{2,\free})\to\Hom_{L_\Delta}(X_1,X_2)$$
is an isomorphism of $L_\Delta$-representations of $\Gamma_{K,\Delta}$ (since $X_{1,\free}$ and $X_{2,\free}$ are free of finite rank  over $K_{\infty,\Delta}$). If we pick bases of $X_{1,\free}$ and $X_{2,\free}$ over $K_{\infty,\Delta}$, the same proof as in the previous corollary shows that there is a natural isomorphism
$$\Hom_{K_{\infty,\Delta}}(X_{1,\free},X_{2,\free})\isomto\Hom_{L_\Delta}(X_1,X_2)_{\free}$$
of $K_{\infty,\Delta}$-representations of $\Gamma_{K,\Delta}$. Taking invariants under $\Gamma_{K,\Delta}$ gives the $K_\Delta$-linear isomorphism
$$\Hom_{\Rep_{K_{\infty,\Delta}}(\Gamma_{K,\Delta})}(X_{1,\free},X_{2,\free})\isomto\Hom_{\Rep_{L_\Delta}(\Gamma_{K,\Delta})}(X_1,X_2).$$
As $\Ext^1_{\Rep_{K_{\infty,\Delta}}(\Gamma_{K,\Delta})}(X_{1,\free},X_{2,\free})=\H^1(\Gamma_{K,\Delta},X_{\free})$ and $\Ext^1_{\Rep_{L_\Delta}(\Gamma_{K,\Delta})}(X_1,X_2)=\H^1(\Gamma_{K,\Delta},X)$ where we have put $X=\Hom_{L_\Delta}(X_1,X_2)\in\Rep_{L_\Delta}(\Gamma_{K,\Delta})$, the last statement reduces to showing that the natural map
$$\H^1(\Gamma_{K,\Delta},X_{\free})\to\H^1(\Gamma_{K,\Delta},X)$$
is bijective. Let $c\colon\Gamma_{K,\Delta}\to X_{\free}$ be a cocycle whose image in $\H^1(\Gamma_{K,\Delta},X)$ is trivial: there exists $x\in X$ such that $(\forall g\in\Gamma_{K,\Delta})\,c(g)=g(x)-x$. Fix $\mathfrak{B}$ a $K_{\infty,\Delta}$-basis of $X_{\free}$. The action of $\Gamma_{K,\Delta}$ on $X_{\free}$ and $X$ is given, in the basis $\mathfrak{B}$, by a continuous cocycle $U\colon\Gamma_{K,\Delta}\to\GL_d(K_{\infty,\Delta})$ (where $d$ is the rank of $X$ over $L_\Delta$). Let $u_g\in K_{\infty,\Delta}^d$ (resp. $v\in L_\Delta^d$) the column vector whose coefficients are the coordinnates of $c(g)$ (resp. $x$) in the basis $\mathfrak{B}$: we have $u_g=U_gg(v)-v$ \ie $g(v)=U_g^{-1}(u_g+v)$ for all $g\in\Gamma_{K,\Delta}$. Taking $m\in\NN$ large enough, we can assume that $U$ has values in $\GL_d(K_{m,\Delta})$ and $u_g\in K_{m,\Delta}$ for all $g\in\Gamma_{K,\Delta}$. Let $\mathcal{E}$ be the sub-$K_{m,\Delta}$-module of $L_\Delta$ generated by $1$ and the entries of $v$: it is of finite type and stable under the action of $\Gamma_{K,\Delta}$. By proposition \ref{propfond}, we have $\mathcal{E}\subset K_{\infty,\Delta}$, so that $x\in X_{\free}$, which means that the class of $\H^1(\Gamma_{K,\Delta},X_{\free})$ is trivial. This shows the injectivity of the map. To prove the surjectivity, start from a continuous cocycle $c\colon\Gamma_{K,\Delta}\to X$. It defines an extension $\widetilde{X}$ of $L_\Delta$ par $X$. As $L_\Delta$-modules, we have $\widetilde{X}=X\oplus L_\Delta$, the action of $g\in\Gamma_{K,\Delta}$ is given by $g(x,\lambda)=(g(x)+ c(g)g(\lambda),g(\lambda))$. By corollary \ref{corodecomplf}, the $K_{\infty,\Delta}$-module $\widetilde{X}_{\free}$ is free of rank $d+1$ and the map $L_\Delta\otimes_{K_{\infty,\Delta}}\widetilde{X}_{\free}\to X$ is an isomorphism. This shows that the sequence
$$0\to X_{\free}\to\widetilde{X}_{\free}\to K_{\Delta,\infty}\to0$$
is exact when tensored by $L_\Delta$ over $K_{\Delta,\infty}$: as $L_\Delta$ is faithfully flat over $K_{\Delta,\infty}$ by proposition \ref{propLfidplat}, this implies that it is exact, so that $\widetilde{X}_{\free}$ defines an extension of $K_{\infty,\Delta}$ by $X_{\free}$, that corresponds to a cohomology class in $\H^1(\Gamma_{K,\Delta},X_{\free})$ mapping to the class of $c$ in $\H^1(\Gamma_{K,\Delta},X)$.
\end{proof}

\begin{theo}\label{theoequivSenCDelta}
The functors
\begin{center}
\begin{tabular}{lcr}
\begin{minipage}{.35\linewidth}
{\begin{align*}
\Rep^{\free}_{C_\Delta}(G_{K,\Delta}) &\leftrightarrow \Rep^{\free}_{L_\Delta}(\Gamma_{K,\Delta})\\
W &\mapsto W^{H_{K,\Delta}}\\
C_\Delta\otimes_{L_\Delta}X &\mapsfrom X
\end{align*}}
\end{minipage}
&&
\begin{minipage}{.35\linewidth}
{\begin{align*}
\Rep^{\free}_{L_\Delta}(\Gamma_{K,\Delta}) &\leftrightarrow \Rep^{\free}_{K_{\Delta,\infty}}(\Gamma_{K,\Delta})\\
X &\mapsto X_{\free}\\
L_\Delta\otimes_{K_{\Delta,\infty}}Y &\mapsfrom Y
\end{align*}}
\end{minipage}
\end{tabular}
\end{center}
are equivalences of categories.
\end{theo}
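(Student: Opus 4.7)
The plan is to verify, in turn, that each pair of functors is quasi-inverse, using the structural results proved earlier as essentially complete inputs. Throughout, note that the two functors in each pair are well-defined on objects thanks to Corollary \ref{coroSen0} (for the first pair) and Corollary \ref{corodecomplf} (for the second pair), which produce the required free modules of the correct rank; functoriality is formal.

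For the first equivalence, one direction is literally Corollary \ref{coroSen0}: if $W\in\Rep^{\free}_{C_\Delta}(G_{K,\Delta})$, then $W^{H_{K,\Delta}}$ is free of rank $d$ over $L_\Delta$ and the natural map $C_\Delta\otimes_{L_\Delta}W^{H_{K,\Delta}}\to W$ is a $G_{K,\Delta}$-equivariant isomorphism. For the other direction, let $X\in\Rep^{\free}_{L_\Delta}(\Gamma_{K,\Delta})$ with basis $x_1,\ldots,x_d$. Since the $G_{K,\Delta}$-action on $X$ factors through $\Gamma_{K,\Delta}$, the group $H_{K,\Delta}$ acts trivially on the $x_i$, so for $h\in H_{K,\Delta}$ we have $h\bigl(\sum c_i\otimes x_i\bigr)=\sum h(c_i)\otimes x_i$. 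Fixedness thus forces $c_i\in C_\Delta^{H_{K,\Delta}}$, and Theorem \ref{theocohoHC} gives $C_\Delta^{H_{K,\Delta}}=L_\Delta$. Therefore $(C_\Delta\otimes_{L_\Delta}X)^{H_{K,\Delta}}=L_\Delta^d=X$, naturally in $X$. This settles the first equivalence.

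For the second equivalence, Corollary \ref{corodecomplf} already provides the natural isomorphism $L_\Delta\otimes_{K_{\Delta,\infty}}X_{\free}\isomto X$ for $X\in\Rep^{\free}_{L_\Delta}(\Gamma_{K,\Delta})$. It remains to show that for any $Y\in\Rep^{\free}_{K_{\Delta,\infty}}(\Gamma_{K,\Delta})$, the natural inclusion $Y\hookrightarrow\bigl(L_\Delta\otimes_{K_{\Delta,\infty}}Y\bigr)_{\free}$ is an equality. Here the inclusion is obtained by choosing a $K_{\Delta,\infty}$-basis $y_1,\ldots,y_d$ of $Y$: the action of $\Gamma_{K,\Delta}$ is described by a continuous cocycle with values in $\GL_d(K_{\Delta,\infty})$, hence by continuity and topological finite generation of $\Gamma_{K,\Delta}$ there exists $n$ such that the cocycle takes values in $\GL_d(K_{n,\Delta})$. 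The sub-$K_{n,\Delta}$-module $\bigoplus_i K_{n,\Delta}y_i$ is then $\Gamma_{K,\Delta}$-stable and of finite type over $K_\Delta$ (since $K_{n,\Delta}/K_\Delta$ is finite), so it lies in $(L_\Delta\otimes Y)_{\free}$; taking the union over $n$ yields $Y\subseteq(L_\Delta\otimes Y)_{\free}$.

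To upgrade this inclusion to an equality, consider the short exact sequence of $K_{\Delta,\infty}$-modules
\[
0\to Y\to\bigl(L_\Delta\otimes_{K_{\Delta,\infty}}Y\bigr)_{\free}\to Q\to 0.
\]
Both $Y$ and $(L_\Delta\otimes Y)_{\free}$ are free of rank $d$ over $K_{\Delta,\infty}$ (the latter by Corollary \ref{corodecomplf}), and tensoring with $L_\Delta$ over $K_{\Delta,\infty}$ turns the inclusion into the identity of $L_\Delta\otimes Y$ (both sides identify canonically with $X=L_\Delta\otimes Y$, once via the definition of $X$ and once via Corollary \ref{corodecomplf}). Hence $L_\Delta\otimes_{K_{\Delta,\infty}}Q=0$, and the faithful flatness of $L_\Delta$ over $K_{\Delta,\infty}$ established in Proposition \ref{propLfidplat} forces $Q=0$. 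The main obstacle is precisely this last rigidity step: without faithful flatness one only gets an inclusion of two free modules of the same rank, and concluding equality requires Proposition \ref{propLfidplat}. Combining the two equivalences gives the theorem.
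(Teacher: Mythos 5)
Your proof is correct and assembles exactly the ingredients the paper intends: Corollary~\ref{coroSen0} together with $C_\Delta^{H_{K,\Delta}}=L_\Delta$ (Theorem~\ref{theocohoHC}) for the first pair, and Corollary~\ref{corodecomplf} for the second. The paper leaves the theorem unproved as an evident consequence; your only mild deviation is in finishing the check that $(L_\Delta\otimes_{K_{\Delta,\infty}}Y)_{\free}=Y$ via faithful flatness of $L_\Delta$ (Proposition~\ref{propLfidplat}), whereas one could equivalently rerun the Proposition~\ref{propfond} argument from the proof of Corollary~\ref{corodecomplf} directly; both are valid and the choice is a matter of taste.
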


\begin{coro}\label{corocohoHKDW}
If $W\in\Rep_{C_\Delta}^{\free}(G_{K,\Delta})$, we have $\H^i(H_{K,\Delta},W)=0$ for all $i\in\NN_{>0}$.
\end{coro}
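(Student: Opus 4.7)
The plan is to reduce the computation to the scalar case $W = C_\Delta$ already handled by Theorem \ref{theocohoHC}. The equivalence of categories in Theorem \ref{theoequivSenCDelta} provides a $G_{K,\Delta}$-equivariant isomorphism
\[
W \simeq C_\Delta \otimes_{L_\Delta} W^{H_{K,\Delta}},
\]
where $X := W^{H_{K,\Delta}}$ is a free $L_\Delta$-representation of $\Gamma_{K,\Delta}$ of rank $d$. Since $L_\Delta = C_\Delta^{H_{K,\Delta}}$ by Theorem \ref{theocohoHC}, the subgroup $H_{K,\Delta}$ acts trivially on both $L_\Delta$ and $X$.

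The key observation is that when we forget the $\Gamma_{K,\Delta}$-action, fixing an $L_\Delta$-basis of $X$ (which exists since $X$ is free of finite rank) yields an $H_{K,\Delta}$-equivariant isomorphism of topological $C_\Delta$-modules
\[
W \simeq C_\Delta \otimes_{L_\Delta} L_\Delta^d \simeq C_\Delta^d,
\]
where $H_{K,\Delta}$ acts diagonally on the right-hand side. Since continuous group cohomology commutes with finite direct sums, Theorem \ref{theocohoHC} then gives
\[
\H^i(H_{K,\Delta}, W) \simeq \H^i(H_{K,\Delta}, C_\Delta)^d = 0 \qquad \text{for all } i > 0,
\]
as desired.

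There is essentially no obstacle here: the statement is an immediate consequence of the Sen-style equivalence of Theorem \ref{theoequivSenCDelta} combined with the vanishing in Theorem \ref{theocohoHC}, once one observes that an $L_\Delta$-basis of $W^{H_{K,\Delta}}$ trivializes the $H_{K,\Delta}$-action on $W$. The only minor point to verify is that the basis gives a \emph{topological} (continuous) isomorphism, which is automatic for free modules of finite rank over $L_\Delta$ with the natural topology.
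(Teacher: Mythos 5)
Your proof is correct and is essentially the same argument as the paper's: exhibit a $C_\Delta$-basis of $W$ that is fixed by $H_{K,\Delta}$, thereby reducing the cohomology to the scalar case handled by Theorem \ref{theocohoHC}. The only cosmetic difference is that you take the basis inside $X = W^{H_{K,\Delta}}$ (a free $L_\Delta$-module, via Corollary \ref{coroSen0}), whereas the paper goes one step further to $Y = (W^{H_{K,\Delta}})_{\free}$ (free over $K_{\Delta,\infty}$) and writes $\H^i(H_{K,\Delta},W)\simeq\H^i(H_{K,\Delta},C_\Delta)\otimes_{K_{\infty,\Delta}}Y$; since both $L_\Delta$ and $K_{\Delta,\infty}$ are pointwise fixed by $H_{K,\Delta}$, the two routes are interchangeable here.
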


\begin{proof}
By what precedes, we have $W\simeq C_\Delta\otimes_{K_{\infty,\Delta}}Y$ where $Y=\big(W^{H_{K,\Delta}}\big)_{\free}\in\Rep_{K_{\infty,\Delta}}^{\free}(\Gamma_{K,\Delta})$. As $Y$ is free of finite rank over $K_{\infty,\Delta}$, we have
$$\H^i(H_{K,\Delta},W)\simeq\H^i(H_{K,\Delta},C_\Delta)\otimes_{K_{\infty,\Delta}}Y=0$$
for all $i\in\NN_{>0}$ by theorem \ref{theocohoHC}.
\end{proof}


\subsection{Generalized Sen operators}\label{sectSenoperators}

Let $Y$ be a free $K_{\Delta,\infty}$-representation of rank $d$ of $\Gamma_{K,\Delta}$.

\begin{defi}
The Sen operators of $Y$ are the maps $(\varphi_\alpha)_{\alpha\in\Delta}$ given by
$$\varphi_\alpha(y)=\lim\limits_{\substack{\gamma\in\Gamma_{K_\alpha}\\ \gamma\to\Id}}\frac{\gamma(y)-y}{\log(\chi(\gamma))}.$$
Note that $\varphi_\alpha\in\End_{K_{\Delta,\infty}}(Y)$ for all $\alpha\in\Delta$. As $\Gamma_{K,\Delta}$ is commutative (since $\Gamma_K$ is), the operators $\varphi_\alpha$ commute in $\End_{K_{\Delta,\infty}}(Y)$. Also, each $\varphi_\alpha$ commutes with the action of $\Gamma_{K,\Delta}$ by construction.
\end{defi}

These operators describe the infinitesimal action of $\Gamma_{K,\Delta}$ on $Y$. More precisely, we have

\begin{prop}\label{propSenop}
(\cf \cite[Theorem 4]{Sen}). For all $y\in Y$, there is an open subgroup $\Gamma_{K,\Delta,y}\lhd\Gamma_{K,\Delta}$ such that for all $\alpha\in\Delta$ and $\gamma\in\Gamma_{K,\alpha}\cap\Gamma_{K,\Delta,y}$, we have
$$\gamma(y)=\exp\big(\log(\chi(\gamma))\varphi_\alpha\big)y.$$
\end{prop}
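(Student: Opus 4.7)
The plan is to reduce the multivariable statement, for each index $\alpha\in\Delta$ separately, to the classical one-variable situation on a finite-rank sub-$K_{m,\Delta}$-module of $Y$ that contains $y$.

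First I would fix a $K_{\Delta,\infty}$-basis $\mathfrak{B}=(e_1,\ldots,e_d)$ of $Y$, giving a continuous cocycle $U\colon\Gamma_{K,\Delta}\to\GL_d(K_{\Delta,\infty})$. By continuity of $U$ and since $Y=\bigcup_m\bigoplus_iK_{m,\Delta}e_i$, I can choose $m\in\NN$ large enough so that (i) $U$ has values in $\GL_d(K_{m,\Delta})$, (ii) the coordinates of $y$ in $\mathfrak{B}$ lie in $K_{m,\Delta}$, and (iii) the coordinates of each $\varphi_\alpha(e_i)$ lie in $K_{m,\Delta}$ for every $\alpha\in\Delta$. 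Then $Y_m:=\bigoplus_iK_{m,\Delta}e_i$ is a finite free $K_{m,\Delta}$-module containing $y$, stable under $\Gamma_{K,\Delta}$ and under each $\varphi_\alpha$ (using the $K_{\Delta,\infty}$-linearity of the $\varphi_\alpha$).

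Now fix $\alpha\in\Delta$ and consider the open subgroup $\Gamma_{K_m,\alpha}\leq\Gamma_{K,\alpha}$: its elements fix $K_m$, hence act trivially on $K_{m,\Delta}$, so the restriction of the semi-linear action to $\Gamma_{K_m,\alpha}$ is $K_{m,\Delta}$-linear on $Y_m$ and defines an honest continuous group homomorphism $\rho_\alpha\colon\Gamma_{K_m,\alpha}\to\GL_d(K_{m,\Delta})$. Pick a topological generator $\gamma_{0,\alpha}$ of a small enough open subgroup $\Gamma_\alpha\leq\Gamma_{K_m,\alpha}$ such that $\rho_\alpha(\gamma_{0,\alpha})-\I_d$ has entries of sufficiently large $p$-adic valuation for the logarithm series to converge in $\Mat_d(K_{m,\Delta})$ (possible by $p$-adic completeness of $K_{m,\Delta}$ and continuity of $\rho_\alpha$). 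Set
\[
\Psi_\alpha:=\frac{\log\rho_\alpha(\gamma_{0,\alpha})}{\log\chi(\gamma_{0,\alpha})}\in\Mat_d(K_{m,\Delta}).
\]
Commutativity of $\Gamma_{K,\Delta}$ together with the density of $\ZZ$ in $\ZZ_p$ and the continuity of $\rho_\alpha$ yield $\rho_\alpha(\gamma_{0,\alpha}^t)=\exp\bigl(t\log\rho_\alpha(\gamma_{0,\alpha})\bigr)=\exp\bigl(\log\chi(\gamma_{0,\alpha}^t)\Psi_\alpha\bigr)$ for every $t\in\ZZ_p$.

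It remains to identify $\Psi_\alpha$ with the matrix of $\varphi_\alpha|_{Y_m}$ in $\mathfrak{B}$: differentiating the previous identity at $t=0$ shows that $\Psi_\alpha=\lim_{t\to0}\frac{\rho_\alpha(\gamma_{0,\alpha}^t)-\I_d}{\log\chi(\gamma_{0,\alpha}^t)}$, which by the defining formula for $\varphi_\alpha$ is precisely its matrix on $Y_m$. Setting $\Gamma_{K,\Delta,y}:=\prod_{\alpha\in\Delta}\Gamma_\alpha$ (open and automatically normal since $\Gamma_{K,\Delta}$ is commutative), one obtains the asserted equality $\gamma(y)=\exp(\log\chi(\gamma)\varphi_\alpha)(y)$ for every $\gamma\in\Gamma_{K,\alpha}\cap\Gamma_{K,\Delta,y}=\Gamma_\alpha$. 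The main technical obstacle is the convergence of $\log\rho_\alpha(\gamma_{0,\alpha})$ in $\Mat_d(K_{m,\Delta})$: since $K_{m,\Delta}$ is not a field but only a finite product of fields, one has to rely on $p$-adic completeness rather than on a valuation, which forces the choice of $\Gamma_\alpha$ to be made after $m$ is fixed, so that the continuity modulus of $\rho_\alpha$ can be used uniformly.
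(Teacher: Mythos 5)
Your proof is correct, and it is exactly the argument the paper's citation to Sen's Theorem~4 is pointing to: the paper gives no proof of this proposition, relying on the reader to run Sen's one-variable argument after reducing to a finite level. You carry out that reduction cleanly: pass to a finite-rank $K_{m,\Delta}$-lattice $Y_m$ containing $y$ on which the cocycle takes values, restrict to $\Gamma_{K_m,\alpha}$ so that the action becomes $K_{m,\Delta}$-linear, take the $p$-adic logarithm of a topological generator, and identify the normalized logarithm with $\varphi_\alpha$ via the limit defining the Sen operator. One small remark: the appeal to commutativity of $\Gamma_{K,\Delta}$ in establishing $\rho_\alpha(\gamma_{0,\alpha}^t)=\exp\bigl(t\log\rho_\alpha(\gamma_{0,\alpha})\bigr)$ is not needed there (that step only uses that $\rho_\alpha$ is a continuous homomorphism, which holds because $\Gamma_{K_m,\alpha}$ fixes $K_{m,\Delta}$, turning the cocycle relation into multiplicativity), and your hypothesis (iii) is in fact a consequence of (i) rather than an independent assumption, since you prove $\Psi_\alpha\in\Mat_d(K_{m,\Delta})$ \emph{is} the matrix of $\varphi_\alpha$; but neither point affects the validity of the argument.
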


\begin{coro}\label{coroSenop}
The set of elements in $Y$ on which the action of $\Gamma_{K,\Delta}$ is finite (\ie factors through a finite quotient) is $\bigcap\limits_{\alpha\in\Delta}\Ker(\varphi_\alpha)$.
\end{coro}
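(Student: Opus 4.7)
The corollary is essentially the infinitesimal translation of Proposition~\ref{propSenop}, and my plan is to prove the two inclusions separately.

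For $\bigcap_\alpha \Ker(\varphi_\alpha) \subseteq \{y : \Gamma_{K,\Delta}\text{-orbit finite}\}$, I would take $y$ with $\varphi_\alpha y = 0$ for every $\alpha \in \Delta$, and let $\Gamma_{K,\Delta,y}$ denote the open subgroup furnished by Proposition~\ref{propSenop}. Since $\Gamma_{K,\Delta} = \prod_{\alpha \in \Delta} \Gamma_{K,\alpha}$, this open subgroup must contain a product $\prod_\alpha H_\alpha$ of open subgroups $H_\alpha \subseteq \Gamma_{K,\alpha}$. For each $\alpha$ and each $\gamma \in H_\alpha$, Proposition~\ref{propSenop} then gives $\iota_\alpha(\gamma)(y) = \exp(\log\chi(\gamma)\,\varphi_\alpha)\,y = y$, so $\prod_\alpha H_\alpha$ fixes $y$; being open in the compact profinite group $\Gamma_{K,\Delta}$, it has finite index, and the orbit of $y$ is finite.

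For the reverse inclusion I would assume the orbit of $y$ is finite, so $\mathrm{Stab}(y)$ is open. Fixing $\alpha \in \Delta$, the subgroup $H := \iota_\alpha^{-1}\bigl(\mathrm{Stab}(y)\cap\Gamma_{K,\Delta,y}\bigr)$ is open in $\Gamma_K$, and I would pick $\gamma_0 \in H$ with $a := \log\chi(\gamma_0) \neq 0$ (possible since $\chi$ identifies $\Gamma_K$ with an open subgroup of $\ZZ_p^\times$). All integer powers $\gamma_0^k$ still lie in $H$, so Proposition~\ref{propSenop} yields
\[
y = \iota_\alpha(\gamma_0^k)(y) = \exp(ka\,\varphi_\alpha)\,y \qquad (k \in \ZZ).
\]
Specialising to $k = p^m$ and rearranging the exponential series gives
\[
-p^m a\,\varphi_\alpha y = \sum_{n \geq 2} \frac{(p^m a)^n}{n!}\,\varphi_\alpha^n y.
\]
Viewing $Y \hookrightarrow X := L_\Delta \otimes_{K_{\Delta,\infty}} Y$ (via Proposition~\ref{propLfidplat} and Corollary~\ref{corodecomplf}), and using the $p$-adic valuation inherited from $L_\Delta$, the right-hand side has valuation $\geq 2(m + v(a)) - O(1)$ while the left-hand side has valuation $m + v(a) + v(\varphi_\alpha y)$; letting $m\to\infty$ forces $\varphi_\alpha y = 0$.

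The main technical point I anticipate is equipping $Y$ with a topology that makes the above valuation estimate rigorous; this is handled by the embedding $Y \hookrightarrow X$ together with the standard convergence bounds for the $p$-adic exponential on a $p$-adic Banach $L_\Delta$-module.
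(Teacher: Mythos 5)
Your forward inclusion is exactly the paper's argument: invoke Proposition~\ref{propSenop}, note that $\exp(\log\chi(\gamma)\varphi_\alpha)y=y$ for $\gamma$ in the relevant open subgroup, and conclude the orbit is finite.

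For the reverse inclusion you take a genuinely more roundabout route than the paper. The paper's proof is a one-liner: if an open normal subgroup $\Gamma_y$ fixes $y$, then the difference quotients $\dfrac{\gamma(y)-y}{\log\chi(\gamma)}$ vanish identically for $\gamma\in\Gamma_{K,\alpha}\cap\Gamma_y$, so $\varphi_\alpha(y)=0$ \emph{by the very definition} of $\varphi_\alpha$ as a limit of these quotients. You instead reuse Proposition~\ref{propSenop} to write $y=\exp(p^m a\,\varphi_\alpha)y$, isolate the linear term, embed $Y$ into the $L_\Delta$-Banach module $X=L_\Delta\otimes_{K_{\Delta,\infty}}Y$ (via Corollary~\ref{corodecomplf} and Proposition~\ref{propLfidplat}), and drive $\varphi_\alpha y$ to zero by a valuation estimate. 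Your argument is correct: the estimate $v(n!)\le\frac{n-1}{p-1}$, the boundedness of $\varphi_\alpha$ on the Banach module, and the choice $k=p^m$ together make the higher-order terms dominate. But it reproves, in a disguised form, the fact that the exponential of a nilpotent-like perturbation has a nonzero linear term — when the statement you want is simply that the derivative of a locally constant function is zero. Going back to the definition of $\varphi_\alpha$ avoids the exponential, the Banach-module topology, and the decompletion machinery entirely, and is the route the paper takes.
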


\begin{proof}
Let $y\in\bigcap\limits_{\alpha\in\Delta}\Ker(\varphi_\alpha)$. By proposition \ref{propSenop}, there exists an open subgroup $\Gamma_{K,\Delta,y}\lhd\Gamma_{K,\Delta}$ such that for all $\alpha\in\Delta$ and $\gamma\in\Gamma_{K,\alpha}\cap\Gamma_{K,\Delta,y}$, we have $\gamma(y)=\exp\big(\log(\chi(\gamma))\varphi_\alpha\big)y=y$, so that the action of $\Gamma_{K,\Delta}$ on $y$ factors through the finite quotient $\Gamma_{K,\Delta}/\Gamma_{K,\Delta,y}$. Conversely, if the action of $\Gamma_{K,\Delta}$ on $y\in Y$ is finite, there exists a open normal subgroup $\Gamma_y\lhd\Gamma_{K,\Delta}$ such that $\gamma(y)=y$ for all $\gamma\in\Gamma_y$. This implies in particular that $\varphi_\alpha(y)=0$ for all $\alpha\in\Delta$.
\end{proof}

\begin{prop}\label{propSenop2}
If $\underline{n}\in\ZZ^\Delta$, the $K_\Delta$-module $Y(\underline{n})^{\Gamma_{K,\Delta}}$ is of finite type, and vanishes for all but finitely many values of $\underline{n}$. Moreover, the map $K_{\Delta,\infty}\otimes_{K_\Delta}Y^{\Gamma_{K,\Delta}}\to Y$ is injective, and its image is precisely the set of elements in $Y$ on which the action of $\Gamma_{K,\Delta}$ is finite.
\end{prop}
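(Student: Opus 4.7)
The plan is to reduce the statement to a computation on a suitable $K_{m,\Delta}$-form of $Y$, governed by the twisted Sen operators. By corollary \ref{corodecomplf}, $Y$ is free over $K_{\Delta,\infty}$, and in a suitable basis the $\Gamma_{K,\Delta}$-action is given by a continuous cocycle with values in $\GL_d(K_{\Delta,\infty})$. By continuity of this cocycle and of the $K_{\Delta,\infty}$-linear operators $\varphi_\alpha$, for $m$ large enough both the matrices of the cocycle and those of the $\varphi_\alpha$'s in this basis have entries in $K_{m,\Delta}$. Denote by $Y_m\subseteq Y$ the resulting free $K_{m,\Delta}$-submodule of rank $d$: it is stable under $\Gamma_{K,\Delta}$ and under every $\varphi_\alpha$. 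For $\underline n\in\ZZ^\Delta$ the Sen operator on $Y(\underline n)$ in the $\alpha$-direction is $\varphi_\alpha+n_\alpha$ (the twist contributes the derivative $n_\alpha$ of $\chi_\alpha^{n_\alpha}$ at the identity), and I set
$$N_m^{(\underline n)}:=\bigcap_{\alpha\in\Delta}\Ker\bigl((\varphi_\alpha+n_\alpha)|_{Y_m}\bigr)\subseteq Y_m,$$
so that flatness of $K_{\Delta,\infty}$ over $K_{m,\Delta}$ gives $K_{\Delta,\infty}\otimes_{K_{m,\Delta}}N_m^{(\underline n)}=\bigcap_\alpha\Ker(\varphi_\alpha+n_\alpha)\subseteq Y$.

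Applying proposition \ref{propSenop} to $Y(\underline n)$ and expanding $\gamma\cdot y=y$ for $\gamma\in\Gamma_{K,\alpha}$ close to the identity shows $Y(\underline n)^{\Gamma_{K,\Delta}}\subseteq N_m^{(\underline n)}$. Since $Y_m$ is a free $K_{m,\Delta}$-module of finite rank and the twisted Sen operators are $K_{m,\Delta}$-linear, $N_m^{(\underline n)}$ is finitely generated over $K_{m,\Delta}$, hence over $K_\Delta$ (because $K_{m,\Delta}/K_\Delta$ is finite); as $K_\Delta$ is a finite-dimensional $F_0$-algebra, hence Noetherian, it follows that $Y(\underline n)^{\Gamma_{K,\Delta}}$ is a $K_\Delta$-module of finite type. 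For the vanishing assertion, decompose $K_{m,\Delta}=\prod_iF_i$ into a finite product of fields and accordingly $Y_m=\prod_iY_{m,i}$ with each $Y_{m,i}$ a $d$-dimensional $F_i$-vector space; each $\varphi_{\alpha,i}$ has only finitely many eigenvalues in $F_i$, and $N_m^{(\underline n)}\ne0$ forces, for every $\alpha\in\Delta$, the integer $-n_\alpha$ to be an eigenvalue of some $\varphi_{\alpha,i}$. This confines $\underline n$ to a finite subset of $\ZZ^\Delta$.

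For the final claim, take $\underline n=\underline0$ and write $N_m=N_m^{(\underline 0)}$. I first enlarge $m$ so that $\Gamma_{K_m,\Delta}$ acts trivially on $N_m$: by corollary \ref{coroSenop} each generator of the finitely generated $K_{m,\Delta}$-module $N_m$ has finite $\Gamma_{K,\Delta}$-orbit, so a common open subgroup of $\Gamma_{K,\Delta}$ fixes them all, and by cofinality of the $\Gamma_{K_n,\Delta}$'s I can replace $m$ by a suitable $n$ and $N_m$ by $K_{n,\Delta}\otimes_{K_{m,\Delta}}N_m$ to achieve this. Since $N_m$ is projective over $K_{m,\Delta}$ (being a kernel of linear maps on a free module over the product of fields $K_{m,\Delta}$) and $\Gamma_{K_m,\Delta}$ acts on $K_{\Delta,\infty}\otimes_{K_{m,\Delta}}N_m$ only through the first factor, with fixed subring $K_{m,\Delta}$, the invariants compute to $Y^{\Gamma_{K_m,\Delta}}=N_m$. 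The finite group $G=\Gamma_{K,\Delta}/\Gamma_{K_m,\Delta}$ then acts semilinearly on the projective $K_{m,\Delta}$-module $N_m$, and Galois descent along the finite étale Galois extension $K_\Delta\to K_{m,\Delta}$ with group $G$ produces an isomorphism $K_{m,\Delta}\otimes_{K_\Delta}N_m^G\xrightarrow{\sim}N_m$ with $N_m^G=Y^{\Gamma_{K,\Delta}}$. Tensoring with $K_{\Delta,\infty}$ over $K_{m,\Delta}$ yields
$$K_{\Delta,\infty}\otimes_{K_\Delta}Y^{\Gamma_{K,\Delta}}\xrightarrow{\sim}K_{\Delta,\infty}\otimes_{K_{m,\Delta}}N_m=\bigcap_\alpha\Ker\varphi_\alpha\subseteq Y,$$
which simultaneously gives injectivity and the identification of the image with the elements on which $\Gamma_{K,\Delta}$ acts through a finite quotient (using corollary \ref{coroSenop}).

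The main obstacle is the descent in the last paragraph: one must enlarge $m$ carefully so that $\Gamma_{K_m,\Delta}$ acts trivially on the (enlarged) $N_m$ (the subtlety being that controlling orbits of a finite set of generators is not obviously enough, and one has to use the semilinearity of the action and the cofinality of the $\Gamma_{K_n,\Delta}$'s) and then verify that the standard descent formula $K_{m,\Delta}\otimes_{K_\Delta}N_m^G\cong N_m$ applies to the projective $K_{m,\Delta}$-module $N_m$. The finite type statement and the vanishing for all but finitely many $\underline n$, by contrast, reduce to elementary linear algebra over a finite product of fields.
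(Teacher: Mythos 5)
There is a genuine gap at the very first use of $N_m^{(\underline n)}$. Expanding $\gamma\cdot y=y$ via Proposition~\ref{propSenop} only shows
$Y(\underline n)^{\Gamma_{K,\Delta}}\subseteq\bigcap_{\alpha}\Ker\bigl(\varphi_\alpha+n_\alpha\bigr)$
\emph{inside $Y$}, and by your own flat base change this intersection is $K_{\Delta,\infty}\otimes_{K_{m,\Delta}}N_m^{(\underline n)}$, which is emphatically \emph{not} $N_m^{(\underline n)}$ and is not of finite type over $K_\Delta$. The inclusion $Y(\underline n)^{\Gamma_{K,\Delta}}\subseteq N_m^{(\underline n)}$ is precisely the decompletion statement --- that invariants actually live at a finite level $K_{m,\Delta}$ --- and this is the nontrivial content of the first assertion of the proposition. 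In the paper this step is handled by invoking Sen's one-variable theorem [Sen, Theorem~6] componentwise (each $\iota_\alpha(\Gamma_K)$-invariant space is finite dimensional over $K$ and, after enlarging $m$, sits inside $Y_m(\underline n)$), then iterating over $\Delta'\subseteq\Delta$. Your sentence ``expanding $\gamma\cdot y=y$ \dots\ shows $Y(\underline n)^{\Gamma_{K,\Delta}}\subseteq N_m^{(\underline n)}$'' does no such thing, and your concluding ``obstacle'' paragraph even claims the finite-type statement is elementary linear algebra over a finite product of fields, which understates where the real difficulty is. The vanishing claim, by contrast, is fine as you argue it: faithful flatness of $K_{\Delta,\infty}$ over $K_{m,\Delta}$ gives $Y(\underline n)^{\Gamma_{K,\Delta}}\neq 0\Rightarrow N_m^{(\underline n)}\neq 0$, and then your eigenvalue count works without needing the sharper inclusion.

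The machinery to close the gap is already in your last paragraph, just applied in the wrong place and only for $\underline n=\underline 0$. If you first enlarge $m$ so that $\Gamma_{K_m,\Delta}$ acts trivially on $N_m^{(\underline n)}(\underline n)$, then use $K_{\Delta,\infty}^{\Gamma_{K_m,\Delta}}=K_{m,\Delta}$ together with flatness of the projective $K_{m,\Delta}$-module $N_m^{(\underline n)}$ to compute
$\bigl(K_{\Delta,\infty}\otimes_{K_{m,\Delta}}N_m^{(\underline n)}(\underline n)\bigr)^{\Gamma_{K_m,\Delta}}=N_m^{(\underline n)}(\underline n)$,
you obtain $Y(\underline n)^{\Gamma_{K,\Delta}}=\bigl(N_m^{(\underline n)}(\underline n)\bigr)^{\Gamma_{K,\Delta}/\Gamma_{K_m,\Delta}}$, which Galois descent along $K_\Delta\to K_{m,\Delta}$ exhibits as a finitely generated $K_\Delta$-module. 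So your strategy is salvageable, and it is a genuinely different route from the paper's: where the paper appeals to Sen's classical univariate decompletion and bootstraps by a finite induction over subsets of $\Delta$, you descend in one step through a finite Galois layer, at the price of needing the identity $K_{\Delta,\infty}^{\Gamma_{K_m,\Delta}}=K_{m,\Delta}$ and the careful invariants computation which you should then spell out before (not only after) the finite-type claim. The second half of the proposition (the case $\underline n=\underline 0$) is treated by you essentially the same way the paper does, via the same Galois-descent argument on $Y_m'=N_m$.
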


\begin{proof}
There exists $m_0\in\NN$ and a basis $\mathfrak{B}=(e_1,\ldots,e_d)$ of $Y$ over $K_{\Delta,\infty}$ such that the cocycle describing the action of $\Gamma_{K,\Delta}$ on $Y$ in $\mathfrak{B}$ has values in $\GL_d(K_{m_0,\Delta})$. If $m\in\NN_{\geq m_0}$, put $Y_m=\bigoplus\limits_{i=1}^dK_{m,\Delta}e_i$. If $\Delta^\prime\subset\Delta$ and $\alpha\in\Delta\setminus\Delta^\prime$, we have a semi-linear action of $\Gamma_{K,\Delta}$ on $\big(K_{m,\Delta\setminus\{\alpha\}}\otimes_{F_0}K_{\infty,\{\alpha\}}\big)\otimes_{K_{m,\Delta}}Y_m$: restricting the action to $\iota_\alpha(\Gamma_K)\subset\Gamma_{K,\Delta}$ provides a $K_\infty$-representation of $\Gamma_K$. By \cite[Theorem 6]{Sen} (\cf also \cite[Proposition 2.6]{Font04}), the $K$-vector space
$$\Big(\big(K_{m,\Delta\setminus\{\alpha\}}\otimes_{F_0}K_{\infty,\{\alpha\}}\big)\otimes_{K_{m,\Delta}}Y_m(\underline{n})\Big)^{\iota_\alpha(\Gamma_K)}$$
is finite dimensional, and vanishes for all but finitely many values of $n_\alpha$ (the other components of $\underline{n}$ being fixed). Making $m$ larger if necessary, we thus may assume that it lies in $Y_m(\underline{n})$. Then we have
\begin{multline*}
\Big(\big(K_{m,\Delta^\prime}\otimes_{F_0}K_{\infty,\Delta\setminus\Delta^\prime}\big)\otimes_{K_{m,\Delta}}Y_m(\underline{n})\Big)^{\iota_\alpha(\Gamma_K)}\\
=\big(K_{\Delta^\prime\cup\{\alpha\}}\otimes_{F_0}K_{\infty,\Delta\setminus(\Delta^\prime\cup\{\alpha\})}\big)\otimes_{K_{\Delta^\prime\cup\{\alpha\}}\otimes_{F_0}K_{m,\Delta\setminus\Delta^\prime\cup\{\alpha\}}}\Big(\big(K_{m,\Delta\setminus\{\alpha\}}\otimes_{F_0}K_{\infty,\{\alpha\}}\big)\otimes_{K_{m,\Delta}}Y_m(\underline{n})\Big)^{\iota_\alpha(\Gamma_K)}\\
\subset \big(K_{m,\Delta^\prime\cup\{\alpha\}}\otimes_{F_0}K_{\infty,\Delta\setminus(\Delta^\prime\cup\{\alpha\})}\big)\otimes_{K_{m,\Delta}}Y_m(\underline{n}).
\end{multline*}
A finite induction thus proves that $Y(\underline{n})^{\Gamma_{K,\Delta}}$ lies in $Y_m(\underline{n})$ if $m\gg0$ (in particular it is of finite rank over $K_\Delta$), and vanishes for all but finitely many values of $\underline{n}$.

Put $Y_m^\prime=\bigcap\limits_{\alpha\in\Delta}\Ker(\varphi_{\alpha|Y_m})\subset Y_m$. This is a $K_{m,\Delta}$-module of finite type endowed with a \emph{discrete} semi-linear action of $\Gamma_{K,\Delta}$. By flatness of $K_{m,\Delta}$ over $K_{m_0,\Delta}$, we have $Y_m^\prime\simeq K_{m,\Delta}\otimes_{K_{m_0,\Delta}}Y_{m_0}^\prime$. Take a $K$-basis of $Y_{m_0}^\prime$: for $m\gg0$, it is fixed by $\Gamma_{K_m,\Delta}$, so that the action of $\Gamma_{K,\Delta}$ on $Y_m^\prime$ factors through $\Gamma_{K,\Delta}/\Gamma_{K_m,\Delta}\simeq\Gal(K_m/K)^\Delta$. By Galois descent, the natural map $K_{m,\Delta}\otimes_{K_\Delta}Y_m^{\prime\Gamma_{K,\Delta}}\to Y_m^\prime$ is an isomorphism. Tensoring with $K_{\Delta,\infty}$ thus shows that
$$K_{\Delta,\infty}\otimes_{K_\Delta}Y_m^{\prime\Gamma_{K,\Delta}}\to Y^\prime:=\bigcap\limits_{\alpha\in\Delta}\Ker(\varphi_\alpha)$$
is an isomorphism when $m\gg0$, implying that $Y_m^{\prime\Gamma_{K,\Delta}}=Y^{\prime\Gamma_{K,\Delta}}$ when $m\gg0$. As $Y^{\Gamma_{K,\Delta}}\subset Y^\prime\subset Y$, we have $Y^{\Gamma_{K,\Delta}}=Y^{\prime,\Gamma_{K,\Delta}}$, hence an isomorphism
$$K_{\Delta,\infty}\otimes_{K_\Delta}Y^{\Gamma_{K,\Delta}}\isomto Y^\prime\subset Y.$$
\end{proof}

The previous proposition can be further refined in order to include all \emph{Hodge-Tate} weights:

\begin{prop}\label{propSenHT}
The natural map
$$\bigoplus\limits_{\underline{n}\in\ZZ^\Delta}K_{\Delta,\infty}(-\underline{n})\otimes_{K_\Delta}\big(Y(\underline{n})\big)^{\Gamma_{K,\Delta}}\to Y$$
is injective.
\end{prop}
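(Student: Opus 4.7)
The plan is to prove this by applying Proposition \ref{propSenop2} to each twisted representation $Y(\underline{n})$ and then separating the resulting images inside $Y$ by the eigenvalues of the Sen operators.

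First, I would establish that each summand injects. For any $\underline{n} \in \ZZ^\Delta$, the twist $Y(\underline{n})$ is again a free $K_{\Delta,\infty}$-representation of $\Gamma_{K,\Delta}$ of rank $d$, so Proposition \ref{propSenop2} applied to $Y(\underline{n})$ gives an injection $K_{\Delta,\infty}\otimes_{K_\Delta}(Y(\underline{n}))^{\Gamma_{K,\Delta}}\hookrightarrow Y(\underline{n})$. Untwisting (i.e.\ tensoring by $\ZZ_p(-\underline{n})$), this is exactly the injectivity of the $\underline{n}$-th summand of the map in the statement into $Y$. In particular, by Proposition \ref{propSenop2} only finitely many $\underline{n}$ contribute nontrivially, so the sum in the statement is finite.

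Next, I would use the Sen operators to show that the images of distinct summands are linearly independent in $Y$. The key observation is that each $\varphi_\alpha$ is $K_{\Delta,\infty}$-linear on $Y$: given $\lambda\in K_{m,\Delta}$, every $\gamma\in\Gamma_{K,\alpha}$ close enough to the identity (specifically in $\iota_\alpha(\Gamma_{K_m})$) fixes $\lambda$, so $\gamma(\lambda y)-\lambda y=\lambda(\gamma(y)-y)$ for such $\gamma$, and dividing by $\log\chi(\gamma)$ and passing to the limit gives $\varphi_\alpha(\lambda y)=\lambda\varphi_\alpha(y)$. On the other hand, if $y\in(Y(\underline{n}))^{\Gamma_{K,\Delta}}$ is viewed as an element of $Y$, then $\gamma(y)=\chi(\gamma)^{-n_\alpha}y$ for every $\gamma\in\Gamma_{K,\alpha}$, so
$$\varphi_\alpha(y)=\lim_{\substack{\gamma\in\Gamma_{K,\alpha}\\ \gamma\to\Id}}\frac{\chi(\gamma)^{-n_\alpha}-1}{\log\chi(\gamma)}\,y=-n_\alpha\,y.$$
Combining the two statements, $\varphi_\alpha$ acts as the scalar $-n_\alpha\cdot\Id$ on the image of the $\underline{n}$-th summand in $Y$.

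Finally, assume a finite sum $\sum_{\underline{n}\in S}z_{\underline{n}}=0$ in $Y$, with $z_{\underline{n}}$ in the image of the $\underline{n}$-th summand. For any two distinct $\underline{n},\underline{n}^\prime\in S$ there exists some $\alpha\in\Delta$ with $n_\alpha\neq n^\prime_\alpha$, so by applying suitable polynomials in the commuting $K_{\Delta,\infty}$-linear operators $\varphi_\alpha$ (a standard Vandermonde/eigenspace separation argument using that $\varphi_\alpha$ acts as the scalar $-n_\alpha$ on each $z_{\underline{n}}$) one isolates each $z_{\underline{n}_0}$ as a $K_{\Delta,\infty}$-linear combination of the terms of the original sum, forcing $z_{\underline{n}_0}=0$. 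By the injectivity of each summand established in the first step, this forces every preimage to vanish, proving the proposition. The only nontrivial point in the argument is verifying the $K_{\Delta,\infty}$-linearity of the Sen operators, which crucially uses that every element of $K_{\Delta,\infty}$ lies in some $K_{m,\Delta}$ whose elements have finite $\Gamma_{K,\Delta}$-orbit; the rest is formal.
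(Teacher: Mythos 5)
Your argument is correct, and it takes a genuinely different and more self-contained route than the paper. The paper proceeds by induction on subsets $\Delta'\subset\Delta$, reducing to the univariate Hodge--Tate injectivity statement (cited as \cite[Lemma 2.3.1]{Hawaii}) one factor at a time; this requires working at finite levels $Y_m$, carefully bookkeeping twists of the form $(\underline{n},q,\underline{0})$, and then passing to the inductive limit over $m$. You instead argue directly via the Sen operators: the image of the $\underline n$-th summand lies in the simultaneous eigenspace of the commuting $K_{\Delta,\infty}$-linear operators $\varphi_\alpha$ for the eigenvalue tuple $(-n_\alpha)_{\alpha\in\Delta}$ (your computation $\varphi_\alpha(y)=-n_\alpha y$ for $y$ coming from $(Y(\underline{n}))^{\Gamma_{K,\Delta}}$ is correct, and the $K_{\Delta,\infty}$-linearity of $\varphi_\alpha$ is already recorded in the definition of the Sen operators in \S\ref{sectSenoperators}); since the eigenvalue tuples are pairwise distinct integers and one is working over a $\QQ_p$-algebra, the Vandermonde polynomial $P(\varphi)$ you describe has coefficients in $F_0\subset K_\Delta$, hence makes sense and isolates each $z_{\underline{n}_0}$, forcing it to vanish. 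Combined with proposition~\ref{propSenop2} applied to each twist (which gives injectivity of each individual summand and finiteness of the support), this yields the statement. Your approach avoids the reduction to the univariate lemma and the induction over $\Delta'$, replacing them by eigenspace separation; the paper's approach has the advantage of formally bootstrapping from the known univariate theory and not requiring one to re-examine the Sen operators, but yours is arguably cleaner and more transparent.
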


\begin{proof}
Keep notations of the previous proof. Let $N\in\NN$ and $E_N=\{-N,\ldots,N\}\subset\ZZ$. Take $N$ large enough so that $\big(Y(\underline{n})\big)^{\Gamma_{K,\Delta}}$ vanishes when $\underline{n}\notin E_N^\Delta$ (\cf proposition \ref{propSenop2}). If $\Delta^\prime\subset\Delta$ and $\underline{n}=(n_\alpha)_{\alpha\in\Delta^\prime}\in\ZZ^{\Delta^\prime}$, we denote $(\underline{n},\underline{0}_{\Delta\setminus\Delta^\prime})\in\ZZ^\Delta$ the element whose component of index $\alpha$ is $n_\alpha$ if $\alpha\in\Delta^\prime$ and $0$ otherwise. Assume $\alpha\in\Delta\setminus\Delta^\prime$. We can see $\big(Y_m(\underline{n},\underline{0}_{\Delta\setminus\Delta^\prime})\big)^{\Gamma_{K,\Delta^\prime}}$ (where we identify $\Gamma_{K,\Delta^\prime}$ with $\Gamma_{K,\Delta^\prime}\times\{\Id\}_{\Delta\setminus\Delta^\prime}\subset\Gamma_{K,\Delta}$)
 as a $K_{\Delta^\prime}\otimes_{F_0}K_{m,\Delta\setminus\Delta^\prime}$-module endowed with a semi-linear action of $\Gamma_{K,\Delta\setminus\Delta^\prime}$. We can view it as a $K_m$-vector space (via the map $i_\alpha\colon K_m\to 
K_{\Delta^\prime}\otimes_{F_0}K_{m,\Delta\setminus\Delta^\prime}$ given by $x\mapsto1\otimes\cdots\otimes1\otimes x\otimes1\otimes\cdots\otimes1$, where $x$ is the factor of index $\alpha$) endowed with a semi-linear action of $\Gamma_K$ (via $\iota_\alpha\colon\Gamma_K\to\Gamma_{K,\Delta\setminus\Delta^\prime}$). By \cite[Lemma 2.3.1]{Hawaii}, the map
$$\bigoplus\limits_{q=-N}^NK_m(-q)\otimes_{K_{i_\alpha}}\big(\big(Y_m(\underline{n},\underline{0}_{\Delta\setminus\Delta^\prime})\big)^{\Gamma_{K,\Delta^\prime}}(q)\big)^{\iota_\alpha(\Gamma_K)}\to\big(Y_m(\underline{n},\underline{0}_{\Delta\setminus\Delta^\prime})\big)^{\Gamma_{K,\Delta^\prime}}$$
is injective. For all $q\in\ZZ$, let $(\underline{n},q,\underline{0}_{\Delta\setminus(\Delta^\prime\cup\{\alpha\})})\in\ZZ^\Delta$ be the element whose components are all equal to those of $(\underline{n},\underline{0}_{\Delta\setminus\Delta^\prime})$, except that of index $\alpha$ which is equal to $q$. As
$$\big(Y_m(\underline{n},q,\underline{0}_{\Delta\setminus(\Delta^\prime\cup\{\alpha\})})\big)^{\Gamma_{K,\Delta^\prime\cup\{\alpha\}}}=\Big(\big(Y_m(\underline{n},\underline{0}_{\Delta\setminus\Delta^\prime})\big)^{\Gamma_{K,\Delta^\prime}}(q)\Big)^{\iota_\alpha(\Gamma_K)},$$
if we tensor with $K_{m,\Delta^\prime}(\underline{n})\otimes_{F_0}K_{\Delta\setminus\Delta^\prime}$ over $K_\Delta$ and sum over all the $\underline{n}\in E_N^{\Delta^\prime}$, we deduce that the map
$$\bigoplus\limits_{\underline{\ell}\in E_N^{\Delta^\prime\cup\{\alpha\}}}K_{m,\Delta^\prime\cup\{\alpha\}}(-\underline{\ell})\otimes_{K_{\Delta^\prime\cup\{\alpha\}}}\big(Y_m(\underline{\ell},\underline{0}_{\Delta\setminus(\Delta^\prime\cup\{\alpha\})})\big)^{\Gamma_{K,\Delta^\prime\cup\{\alpha\}}}\to\bigoplus\limits_{\underline{n}\in E_N^{\Delta^\prime}}\big(Y_m(\underline{n},\underline{0}_{\Delta\setminus\Delta^\prime})\big)^{\Gamma_{K,\Delta^\prime}}$$
is injective. The composition of these maps for growing $\Delta^\prime$ gives the natural map
$$\bigoplus\limits_{\underline{n}\in E_N^\Delta}K_{m,\Delta}(-\underline{n})\otimes_{K_\Delta}\big(Y_m(\underline{n})\big)^{\Gamma_{K,\Delta}}\to Y_m$$
which is thus injective. The inductive limit (as $m\to\infty$) of these is the natural map
$$\bigoplus\limits_{\underline{n}\in\ZZ^\Delta}K_{\Delta,\infty}(-\underline{n})\otimes_{K_\Delta}\big(Y(\underline{n})\big)^{\Gamma_{K,\Delta}}=\bigoplus\limits_{\underline{n}\in E_N^\Delta}K_{\Delta,\infty}(-\underline{n})\otimes_{K_\Delta}\big(Y(\underline{n})\big)^{\Gamma_{K,\Delta}}\to Y$$
which is injective as well.
\end{proof}

\begin{coro}\label{coroSenop2}
If $W\in\Rep_{C_\Delta}^{\free}(G_{K,\Delta})$, there are finitely many $\underline{n}\in\ZZ^\Delta$ such that $\big(W(\underline{n})\big)^{G_{K,\Delta}}\neq\{0\}$, and the natural map
$$\alpha_{\HT,0}(W)\colon\bigoplus\limits_{\underline{n}\in\ZZ^\Delta}C_\Delta(-\underline{n})\otimes_{K_\Delta}\big(W(\underline{n})\big)^{G_{K,\Delta}}\to W$$
is injective.
\end{coro}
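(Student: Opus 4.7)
The plan is to reduce the statement to Proposition \ref{propSenHT} via the Sen equivalence of Theorem \ref{theoequivSenCDelta}. First, apply this equivalence to write $W\simeq C_\Delta\otimes_{K_{\Delta,\infty}}Y$ for the unique free $K_{\Delta,\infty}$-representation $Y$ of $\Gamma_{K,\Delta}$ attached to $W$, and set $X=W^{H_{K,\Delta}}=L_\Delta\otimes_{K_{\Delta,\infty}}Y$. The strategy then has three parts: (i) identify $W(\underline{n})^{G_{K,\Delta}}$ with $Y(\underline{n})^{\Gamma_{K,\Delta}}$; (ii) invoke Proposition \ref{propSenop2} to deduce the finite-type property over $K_\Delta$ and the vanishing for all but finitely many $\underline{n}$; (iii) recover $\alpha_{\HT,0}(W)$ from the injection of Proposition \ref{propSenHT} by base change from $K_{\Delta,\infty}$ to $C_\Delta$.

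For step (i), since $\chi_\Delta^{\underline{n}}$ vanishes on $H_{K,\Delta}$, the twist $\ZZ_p(\underline{n})$ carries a trivial $H_{K,\Delta}$-action, and hence $W(\underline{n})^{H_{K,\Delta}}=W^{H_{K,\Delta}}\otimes_{\ZZ_p}\ZZ_p(\underline{n})=X(\underline{n})=L_\Delta\otimes_{K_{\Delta,\infty}}Y(\underline{n})$. Any $\Gamma_{K,\Delta}$-fixed vector in $X(\underline{n})$ is tautologically $K_\Delta$-finite (its orbit spans the $K_\Delta$-line it generates), hence lies in $X(\underline{n})_{\free}$; by the uniqueness of the Sen decompletion (Corollary \ref{corodecomplf}) applied to the free $L_\Delta$-representation $X(\underline{n})$, this equals $Y(\underline{n})$. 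Therefore $W(\underline{n})^{G_{K,\Delta}}=X(\underline{n})^{\Gamma_{K,\Delta}}=Y(\underline{n})^{\Gamma_{K,\Delta}}$, and Proposition \ref{propSenop2} supplies step (ii). For (iii), after these identifications the map $\alpha_{\HT,0}(W)$ is precisely the base change along $K_{\Delta,\infty}\to C_\Delta$ of the injection
\[\bigoplus_{\underline{n}\in\ZZ^\Delta}K_{\Delta,\infty}(-\underline{n})\otimes_{K_\Delta}Y(\underline{n})^{\Gamma_{K,\Delta}}\hookrightarrow Y\]
provided by Proposition \ref{propSenHT}.

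The main obstacle is thus to check that this last base change preserves injectivity, \ie that $C_\Delta$ is flat over $K_{\Delta,\infty}$. I expect the cleanest route to be the observation that $K_{\Delta,\infty}$ is absolutely flat (von Neumann regular): for each $n\in\NN$, $K_{n,\Delta}=K_n^{\otimes\Delta}$ is a finite \'etale $F_0$-algebra (separability being automatic in characteristic zero), hence a finite product of fields, and the filtered union $K_{\Delta,\infty}=\bigcup\limits_{n=0}^\infty K_{n,\Delta}$ inherits this property from the finite stages. Since every module over a von Neumann regular ring is flat, $C_\Delta$ is automatically flat over $K_{\Delta,\infty}$ and the injectivity of $\alpha_{\HT,0}(W)$ follows.
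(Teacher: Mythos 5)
Your proof is correct and takes exactly the route the paper implicitly intends: base-change the injection of Proposition \ref{propSenHT} along $K_{\Delta,\infty}\to C_\Delta$ after identifying $\big(W(\underline{n})\big)^{G_{K,\Delta}}$ with $\big(Y(\underline{n})\big)^{\Gamma_{K,\Delta}}$ via Theorem \ref{theoequivSenCDelta} and Corollary \ref{corodecomplf}. The identification in step (i) is sound: the twist is trivial on $H_{K,\Delta}$, $\Gamma_{K,\Delta}$-fixed vectors are tautologically $K_\Delta$-finite, and $Y(\underline{n})=X(\underline{n})_{\free}$ follows because $Y(\underline{n})$ is a free $K_{\Delta,\infty}$-representation sitting inside $X(\underline{n})_{\free}$ with $L_\Delta\otimes_{K_{\Delta,\infty}}Y(\underline{n})\isomto X(\underline{n})$, so equality follows from faithful flatness (Proposition \ref{propLfidplat}). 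Your observation that $K_{\Delta,\infty}$ is von Neumann regular (as a filtered union of the finite \'etale $F_0$-algebras $K_{n,\Delta}$, each a finite product of fields) is a particularly clean way to settle the flatness of $C_\Delta$ over $K_{\Delta,\infty}$ that the paper leaves unaddressed; it is a strictly simpler argument than the one used in Proposition \ref{propLfidplat} for $L_\Delta$, and it gives flatness of \emph{any} $K_{\Delta,\infty}$-module for free. The only thing worth flagging is that von Neumann regularity gives flatness but not faithfulness, so it cannot replace Proposition \ref{propLfidplat} wholesale; fortunately only flatness is needed here.
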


\begin{nota}
If $V\in\Rep_{\QQ_p}(G_{K,\Delta})$, we put $\D_{\Sen}(V)=\big((C_\Delta\otimes_{\QQ_p}V)^{H_{K,\Delta}}\big)_{\free}\in\Rep_{K_{\Delta,\infty}}^{\free}(\Gamma_{K,\Delta})$. By what precedes, the infinitesimal action of $\Gamma_{K,\Delta}$ on $\D_{\Sen}(V)$ endows it with $K_{\Delta,\infty}$-linear operators $\varphi_\alpha$ for $\alpha\in\Delta$. Similarly, we put $\D_{C_\Delta}(V)=(C_\Delta\otimes_{\QQ_p}V)^{G_{K,\Delta}}$: this is a $K_\Delta$-module.
\end{nota}

\begin{coro}\label{coroSenop3}
If $V\in\Rep_{\QQ_p}(G_{K,\Delta})$ and $\underline{n}\in\ZZ^\Delta$, the $K_\Delta$-module $\D_{C_\Delta}(V(\underline{n}))$ is of finite type and vanishes for all but finitely many $\underline{n}$. The natural map $K_{\Delta,\infty}\otimes_{K_\Delta}\D_{C_\Delta}(V)\to\D_{\Sen}(V)$ is injective, with image $\bigcap\limits_{\alpha\in\Delta}\Ker(\varphi_\alpha)$. Moreover, the natural map
$$\alpha_{\HT,0}(V)\colon\bigoplus\limits_{\underline{n}\in\ZZ^\Delta}C_\Delta(-\underline{n})\otimes_{K_\Delta}\D_{C_\Delta}(V(\underline{n}))\to C_\Delta\otimes_{\QQ_p}V$$
is injective.
\end{coro}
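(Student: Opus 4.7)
The plan is to reduce everything to the results already established for free $C_\Delta$-representations of $G_{K,\Delta}$ and for free $K_{\Delta,\infty}$-representations of $\Gamma_{K,\Delta}$, applied to $W := C_\Delta \otimes_{\QQ_p} V$. By construction $W$ is a free $C_\Delta$-representation of $G_{K,\Delta}$ of rank $d := \dim_{\QQ_p}(V)$, and theorem \ref{theoequivSenCDelta} ensures that $Y := \D_{\Sen}(V) = (W^{H_{K,\Delta}})_{\free}$ is a free $K_{\Delta,\infty}$-representation of $\Gamma_{K,\Delta}$ of the same rank, with $W \simeq C_\Delta \otimes_{K_{\Delta,\infty}} Y$ as $C_\Delta$-representations of $G_{K,\Delta}$.

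The key point is then to identify $\D_{C_\Delta}(V(\underline{n}))$ with $Y(\underline{n})^{\Gamma_{K,\Delta}}$ for every $\underline{n} \in \ZZ^\Delta$. Since $\chi_{|H_K} = 1$, the group $H_{K,\Delta}$ acts trivially on $\QQ_p(\underline{n})$, so $(W(\underline{n}))^{H_{K,\Delta}} = W^{H_{K,\Delta}}(\underline{n})$ as $\Gamma_{K,\Delta}$-modules. Any element $x$ fixed by $\Gamma_{K,\Delta}$ generates a cyclic $K_\Delta$-submodule that is (trivially) stable under $\Gamma_{K,\Delta}$ and of finite type, hence belongs to the finite-type part; since twisting by a character preserves this property, $(W^{H_{K,\Delta}}(\underline{n}))_{\free} = Y(\underline{n})$. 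Consequently
$$\D_{C_\Delta}(V(\underline{n})) = (W(\underline{n}))^{G_{K,\Delta}} = (W^{H_{K,\Delta}}(\underline{n}))^{\Gamma_{K,\Delta}} = Y(\underline{n})^{\Gamma_{K,\Delta}}.$$

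With this identification in hand, the three assertions follow immediately. Proposition \ref{propSenop2} applied to $Y$ yields that $\D_{C_\Delta}(V(\underline{n}))$ is of finite type over $K_\Delta$ and vanishes for all but finitely many $\underline{n}$, and that the natural map $K_{\Delta,\infty} \otimes_{K_\Delta} \D_{C_\Delta}(V) \to \D_{\Sen}(V)$ is injective with image equal to the set of elements on which $\Gamma_{K,\Delta}$ acts finitely; corollary \ref{coroSenop} identifies this image with $\bigcap_{\alpha \in \Delta} \Ker(\varphi_\alpha)$. The injectivity of $\alpha_{\HT,0}(V)$ is then exactly corollary \ref{coroSenop2} applied to $W$, since $\alpha_{\HT,0}(V)$ coincides with $\alpha_{\HT,0}(W)$ via the identification of the $G_{K,\Delta}$-invariant modules. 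The only real bookkeeping lies in the identification of the two flavours of invariants in the second paragraph; the arithmetic content is already contained in the previously established results, so no substantial new obstacle arises.
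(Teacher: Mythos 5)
Your proof is correct and is the argument the paper implicitly intends: the corollary carries no explicit proof precisely because it follows by applying Proposition \ref{propSenop2}, Corollary \ref{coroSenop}, and Corollary \ref{coroSenop2} to $W = C_\Delta\otimes_{\QQ_p}V$ and $Y = \D_{\Sen}(V)$. The one step worth spelling out carefully, and which you handle correctly, is the identification $\D_{C_\Delta}(V(\underline{n})) = Y(\underline{n})^{\Gamma_{K,\Delta}}$, resting on the observation that twisting by $\chi_\Delta^{\underline{n}}$ preserves the finite-type stable sub-$K_\Delta$-modules (since the cocycle values lie in $\ZZ_p^\times \subset K_\Delta^\times$), so that $(W^{H_{K,\Delta}}(\underline{n}))_{\free} = Y(\underline{n})$.
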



\section{Multivariable period rings, de Rham and Hodge-Tate representations}

\subsection{Construction and first properties of \texorpdfstring{$\B_{\dR,\Delta}$}{B\unichar{"005F}\unichar{"007B}dR,\unichar{"0394}\unichar{"007D}}}

Let $C^\flat=\varprojlim\limits_{x\mapsto x^p}C$ be the tilt of $C$: this is an algebraically closed complete valued field of characteristic $p$, endowed with a continuous action of $G_K$. Denote by $\mathcal{O}_{C^\flat}$ its ring of integers. Recall there is a surjective map
$$\theta\colon\W\big(\mathcal{O}_{C^\flat}\big)\to\mathcal{O}_C$$
whose kernel is principal, generated by $\xi=p-[\widetilde{p}]$ where $\widetilde{p}=(p,p^{1/p},\ldots)\in\mathcal{O}_{C^\flat}$. It extends into a surjective ring homomorphism $\theta\colon\W\big(\mathcal{O}_{C^\flat}\big)[p^{-1}]\to C$. We denote by $\B_{\dR}^+$ the completion of $\W\big(\mathcal{O}_{C^\flat}\big)[p^{-1}]$ with respect to the $\Ker(\theta)=(\xi)$-adic topology. The map $\theta$ extends into a surjective ring homomorphism $\theta\colon\B_{\dR}^+\to C$. The ring $\B_{\dR}^+$ is a DVR with uniformizer $\xi$ and residue field $C$. Another unifomizer is given by
$$t=\log[\varepsilon]=-\sum\limits_{n=1}^\infty\frac{(1-[\varepsilon])^n}{n}$$
where $\varepsilon=(1,\zeta_p,\zeta_{p^2},\ldots)\in\mathcal{O}_{C^\flat}$ is a compatible sequence of primitive $p^n$-th roots of unity. The natural map $k\to\mathcal{O}_{C^\flat}$ gives rise to a ring homomorphisms $\W(k)[p^{-1}]\to\W\big(\mathcal{O}_{C^\flat}\big)[p^{-1}]\to\B_{\dR}^+$. It extends into a field extension $\Kbar\to\B_{\dR}^+$.

\medskip

\begin{nota}
$\bullet$ We have $\mathcal{O}_{C_\Delta}/(p)\simeq(\mathcal{O}_C/(p))^{\otimes\Delta}$ (the tensor product is taken over $k$). As the Frobenius map on $\mathcal{O}_C/(p)$ is surjective, the same holds on $\mathcal{O}_{C_\Delta}/(p)$. We can thus define the \emph{tilt} of $\mathcal{O}_{C_\Delta}$ as
$$\mathcal{O}_{C_\Delta}^\flat=\varprojlim\limits_{x\mapsto x^p}\mathcal{O}_{C_\Delta}=\Big\{(x^{(n)})_{n\in\NN}\in\mathcal{O}_{C_\Delta}^{\NN}\,;\,(\forall n\in\NN)\,(x^{(n+1)})^p=x^{(n)}\Big\}.$$
This is a perfect $k$-algebra (the map $k\to\mathcal{O}_{C_\Delta}^\flat$ being given by $x\mapsto([x],[x^{1/p}],[x^{1/p^2}],\ldots)$), endowed with an action of $G_{K,\Delta}$, induced by its action on $\mathcal{O}_{C_\Delta}$. Moreover, the map
\begin{align*}
\theta_\Delta\colon\W(\mathcal{O}_{C_\Delta}^\flat) &\to \mathcal{O}_{C_\Delta}\\
(a_n)_{n\in\NN} &\mapsto \sum\limits_{n=0}^\infty p^na_n^{(n)}
\end{align*}
is a $G_{K,\Delta}$-equivariant surjective morphism of $\W(k)$-algebras (\cf \cite[\S5.1]{perfectoid}). By localization, it induces a $G_{K,\Delta}$-equivariant surjective morphisms of $F_0$-algebras
$$\theta_\Delta\colon\W(\mathcal{O}_{C_\Delta}^\flat)\big[\tfrac{1}{p}\big]\to C_\Delta.$$

\noindent
$\bullet$ Put $\big(\mathcal{O}_{C^\flat}\big)^{\otimes\Delta}=\mathcal{O}_{C^\flat}\otimes_k\cdots\otimes_k\mathcal{O}_{C^\flat}$ (where the copies of $\mathcal{O}_{C^\flat}$ are indexed by $\Delta$). If $\alpha\in\Delta$, put
$$\widetilde{p}_\alpha=1\otimes\cdots\otimes1\otimes\widetilde{p}\otimes1\otimes\cdots\otimes1$$
(where $\widetilde{p}$ is the factor of index $\alpha$).
\end{nota}

Denote by $I_{\widetilde{p}}\subset\big(\mathcal{O}_{C^\flat}\big)^{\otimes\Delta}$ the ideal generated by $\{\widetilde{p}_\alpha\}_{\alpha\in\Delta}$.

\begin{lemm}\label{lemmtiltOCDelta}
The ring $\big(\mathcal{O}_{C^\flat}\big)^{\otimes\Delta}$ is $I_{\widetilde{p}}$-adically separated.
\end{lemm}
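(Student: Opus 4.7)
The plan is to prove $\bigcap_{n\geq 1}I_{\widetilde p}^{\,n}=0$ in $R:=(\mathcal{O}_{C^\flat})^{\otimes\Delta}$ by first simplifying the filtration, then reducing to a finitely generated subring, and finally invoking Krull's intersection theorem in an auxiliary Noetherian ring.

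First, I would replace $I_{\widetilde p}^{\,n}$ by the ideals $J_n:=(\widetilde p_\alpha^{\,n}:\alpha\in\Delta)R$. Trivially $J_n\subseteq I_{\widetilde p}^{\,n}$, and pigeonhole on monomials $\widetilde p^{\,\underline m}$ with $|\underline m|\geq n\delta$ shows $I_{\widetilde p}^{\,n\delta}\subseteq J_n$, so the two topologies agree and it suffices to show $\bigcap_n J_n=0$. Since $k$ is a field, $\mathcal{O}_{C^\flat}$ is $k$-flat; iterating right-exactness of $\otimes_k$ gives $R/J_n\cong(\mathcal{O}_{C^\flat}/\widetilde p^{\,n})^{\otimes\Delta}$, and $\bigcap_n J_n$ is the kernel of $R\to\varprojlim_n(\mathcal{O}_{C^\flat}/\widetilde p^{\,n})^{\otimes\Delta}$.

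Second, for $x\in\bigcap_nJ_n$, each defining identity $x=\sum_\alpha\widetilde p_\alpha^{\,n}y_{\alpha,n}$ involves only finitely many elements of $\mathcal{O}_{C^\flat}$. Collecting witnesses as $n$ varies, one produces a countable subset generating a $k$-subalgebra $A\subseteq\mathcal{O}_{C^\flat}$ containing $\widetilde p$, which is the filtered union of finitely generated $k$-subalgebras $A_\ell$. By $k$-flatness $A_\ell^{\otimes\Delta}\hookrightarrow R$, and for $\ell$ large (depending on $n$) one has $x\in(\widetilde p_\alpha^{\,n})_\alpha A_\ell^{\otimes\Delta}$. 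It is therefore enough to show that each Noetherian $k$-algebra $A_\ell^{\otimes\Delta}$ is $(\widetilde p_\alpha)$-adically separated.

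Finally, to verify this last separation, my approach would embed $A_\ell^{\otimes\Delta}$ into a Noetherian complete ring of the shape $B[\![\widetilde p_\alpha:\alpha\in\Delta]\!]$, where $B$ is a suitable finitely generated $k$-algebra (built from the residues modulo $\widetilde p$); in such a target $(\widetilde p_\alpha)$ lies in the Jacobson radical and Krull's intersection theorem yields separation, which then restricts to the subring. The template is the transparent case $A_\ell=k[\widetilde p,y_1,\dots,y_m]$, where $A_\ell^{\otimes\Delta}$ is a polynomial ring that embeds naturally into $k[y_{i,\alpha}][\![\widetilde p_\alpha]\!]$. The chief obstacle is extending this embedding to general $A_\ell=k[\widetilde p,y_1,\dots,y_m]/P$: when $P$ involves $\widetilde p$ nontrivially (say, when $A_\ell$ is ramified over $k[\widetilde p]$), the relations do not lift to the power-series ring directly. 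The way around is a preliminary Noether normalization realizing $A_\ell$ as finite over a polynomial subring $k[\widetilde p,z_1,\dots,z_r]\subseteq A_\ell$, reducing the separation of $A_\ell^{\otimes\Delta}$ to that of the polynomial analogue (where the embedding above applies), and then transferring separation across the finite extension.
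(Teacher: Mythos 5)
The chief difficulty is in the reduction to finitely generated subalgebras, and I don't think the ``therefore'' in your third step is justified. You establish that for each $n$ there is some $\ell(n)$ (depending on $n$) with $x\in(\widetilde p_\alpha^{\,n})_\alpha\,A_{\ell(n)}^{\otimes\Delta}$, and then claim it suffices to know that each $A_\ell^{\otimes\Delta}$ is $(\widetilde p_\alpha)$-adically separated. But separation of $A_\ell^{\otimes\Delta}$ controls $\bigcap_n (\widetilde p_\alpha^{\,n})A_\ell^{\otimes\Delta}$ for a \emph{fixed} $\ell$; it says nothing when the index $\ell(n)$ grows with $n$. In general a filtered union of $\widetilde p$-adically separated rings is not separated (think of $B_\ell=k[\widetilde p,\,y/\widetilde p^{\ell}]$, each isomorphic to a polynomial ring, with $y\in\widetilde p^{\ell}B_\ell$ for every $\ell$). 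To make your reduction work you would need a compatibility such as $(\widetilde p_\alpha^{\,n})A_{\ell}^{\otimes\Delta}\cap A_{\ell_0}^{\otimes\Delta}\subset(\widetilde p_\alpha^{\,m(n)})A_{\ell_0}^{\otimes\Delta}$ with $m(n)\to\infty$, for a fixed $\ell_0$ containing $x$. This fails already in the one-variable situation: if $y\in A_{\ell_0}\subset\mathcal{O}_{C^\flat}$ has $\widetilde p$-adic valuation $N$, then $y\in\widetilde p^{N}\mathcal{O}_{C^\flat}\cap A_{\ell_0}$ but typically $y\notin\widetilde p\,A_{\ell_0}$; so the topology induced on $A_{\ell_0}$ by $\mathcal{O}_{C^\flat}$ can be strictly coarser than its intrinsic $\widetilde p$-adic topology, and Krull's theorem in $A_{\ell_0}^{\otimes\Delta}$ addresses the wrong filtration. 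The final step (Noether normalisation of $A_\ell$ over $k[\widetilde p]$, embedding $A_\ell^{\otimes\Delta}$ into a power series ring, transferring separation across a finite extension) is plausible but would also need to be carried out in detail; you rightly flag it as the obstacle.

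For comparison, the paper's proof avoids ring theory altogether: it chooses a set $\{e_\lambda\}\subset\mathcal{O}_{C^\flat}$ whose image mod $\widetilde p$ is a $k$-basis of $\mathcal{O}_{C^\flat}/(\widetilde p)$, observes that $\{\widetilde p^{\,i}e_\lambda\}_{i,\lambda}$ is a topological $k$-basis, and hence gets a $k$-linear embedding $\mathcal{O}_{C^\flat}\hookrightarrow k^{\NN\times\Lambda}$ carrying $\widetilde p^{\,n}\mathcal{O}_{C^\flat}$ into $k^{\NN_{\ge n}\times\Lambda}$. Tensoring over $k$ (a field, so tensor preserves injectivity into products) gives a $k$-linear embedding of $R$ into $k^{\NN^\Delta\times\Lambda^\Delta}$ under which $I_{\widetilde p}^{\,n}$ lands in coordinates indexed by $\{\underline i:\sum i_\alpha\geq n\}$, whose intersection is empty. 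This is pure linear algebra over $k$ and requires no Noetherian reduction, no choice of finitely generated subalgebras, and no control of how the $\widetilde p$-adic filtration restricts to subrings -- precisely the points where your argument is incomplete.
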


\begin{proof}
Let $\{e_\lambda\}_{\lambda\in\Lambda}\subset\mathcal{O}_{C^\flat}$ be a subset whose image modulo $\widetilde{p}$ is a $k$-basis of $\mathcal{O}_{C^\flat}/(\widetilde{p})$. Then the familly $\{\widetilde{p}^ie_\lambda\}_{\substack{i\in\NN\\ \lambda\in\Lambda}}$ is linearly independent over $k$, and generates a dense subspace of $\mathcal{O}_{C^\flat}$. In particular, there is an injective $k$-linear map $f\colon\mathcal{O}_{C^\flat}\to k^{\NN\times\Lambda}$ such that $f\big(\widetilde{p}^n\mathcal{O}_{C^\flat}\big)\subset k^{\NN_{\geq n}\times\Lambda}$. The tensor product of these provides an injective $k$-linear map $f_\Delta\colon\big(\mathcal{O}_{C^\flat}\big)^{\otimes\Delta}\to k^{\NN^\Delta\times\Lambda^{\Delta}}$, and $f_\Delta(I_{\widetilde{p}}^n)\subset k^{E_n\times\Lambda^\Delta}$, where $E_n=\Big\{(i_\alpha)_{\alpha\in\Delta}\in\NN^\Delta\,;\,\sum\limits_{\alpha\in\Delta}i_\alpha\geq n\Big\}$. In particular, we have $f_\Delta\Big(\bigcap\limits_{n=0}^\infty I_{\widetilde{p}}^n\Big)\subset\bigcap\limits_{n=0}^\infty k^{E_n\times\Lambda^\Delta}=\{0\}$ (since $\bigcap\limits_{n=0}^\infty E_n=\varnothing$), so that $\bigcap\limits_{n=0}^\infty I_{\widetilde{p}}^n=\{0\}$.
\end{proof}

\begin{prop}\label{proptiltOCDelta}
There is a natural injective morphism of $k$-algebras $\big(\mathcal{O}_{C^\flat}\big)^{\otimes\Delta}\to\mathcal{O}_{C_\Delta}^\flat$, that induces an isomorphism $\widetilde{\pi}_{1,\Delta}\colon\big(\mathcal{O}_{C^\flat}\big)^{\otimes\Delta}/I_{\widetilde{p}}\isomto\mathcal{O}_{C_\Delta}/(p)$. Moreover, $\mathcal{O}_{C_\Delta}^\flat$ is isomorphic to the $I_{\widetilde{p}}$-adic completion of $\big(\mathcal{O}_{C^\flat}\big)^{\otimes\Delta}$.
\end{prop}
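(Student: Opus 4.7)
The approach is to build $\iota^\flat$ by tilting each of the $\delta$ factor inclusions $\mathcal{O}_C\to\mathcal{O}_{C_\Delta}$ and assembling them via the tensor product over $k$. For each $\alpha\in\Delta$, the $\W(k)$-algebra inclusion $\iota_\alpha\colon\mathcal{O}_C\to\mathcal{O}_{C_\Delta}$ of the $\alpha$-th factor is continuous, so by functoriality of the tilt (applying $\iota_\alpha$ componentwise to a compatible sequence, which preserves both multiplication and the limit formula for addition) it induces a $k$-algebra morphism $\iota_\alpha^\flat\colon\mathcal{O}_{C^\flat}\to\mathcal{O}_{C_\Delta}^\flat$. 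All these maps restrict to the same structure map $k\to\mathcal{O}_{C_\Delta}^\flat$ (because $\mathcal{O}_{C_\Delta}$ carries a \emph{single} $\W(k)$-algebra structure, used to form the tensor product), so the universal property of the tensor product over $k$ yields a $k$-algebra morphism
$$\iota^\flat\colon\big(\mathcal{O}_{C^\flat}\big)^{\otimes\Delta}\to\mathcal{O}_{C_\Delta}^\flat.$$
By construction $\iota^\flat(\widetilde{p}_\alpha)=\iota_\alpha^\flat(\widetilde{p})$ has zeroth component $p\in\mathcal{O}_{C_\Delta}$.

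Next I would extract $\widetilde{\pi}_{1,\Delta}$ by reducing modulo the appropriate ideals. On the target, the zeroth projection $\mathcal{O}_{C_\Delta}^\flat\to\mathcal{O}_{C_\Delta}/(p)$, $(x^{(n)})_n\mapsto x^{(0)}$, is surjective because Frobenius is surjective on $\mathcal{O}_{C_\Delta}/(p)\simeq(\mathcal{O}_C/(p))^{\otimes\Delta}$; its kernel is the ideal denoted $(p)$ in the statement. On the source, the standard perfectoid isomorphism $\mathcal{O}_{C^\flat}/(\widetilde{p})\isomto\mathcal{O}_C/(p)$ tensored $\delta$ times over $k$ gives $\big(\mathcal{O}_{C^\flat}\big)^{\otimes\Delta}/I_{\widetilde{p}}\isomto\big(\mathcal{O}_C/(p)\big)^{\otimes\Delta}\simeq\mathcal{O}_{C_\Delta}/(p)$. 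Since $\iota^\flat$ sends $I_{\widetilde{p}}$ into the kernel of the zeroth projection, it induces a map $\widetilde{\pi}_{1,\Delta}$, and checking on elementary tensors shows it coincides with the composition of the two isomorphisms just described; hence it is itself an isomorphism.

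The third step is to bootstrap to the completion statement and deduce injectivity. The analogue of the first argument applied to the $n$-th component of the inverse system (\ie replacing $\widetilde{p}$ by its $p^n$-th root via Frobenius) yields compatible isomorphisms modulo the successive ideals in the Frobenius tower defining $\mathcal{O}_{C_\Delta}^\flat$. Passing to the inverse limit identifies $\mathcal{O}_{C_\Delta}^\flat$ with the $I_{\widetilde{p}}$-adic completion of $\big(\mathcal{O}_{C^\flat}\big)^{\otimes\Delta}$: here I mirror the standard argument that the tilt of a $p$-adically complete ring is $\varpi^\flat$-adically complete. With this identification, $\iota^\flat$ factors as
$$\big(\mathcal{O}_{C^\flat}\big)^{\otimes\Delta}\to\varprojlim\limits_n\big(\mathcal{O}_{C^\flat}\big)^{\otimes\Delta}/I_{\widetilde{p}}^n\isomto\mathcal{O}_{C_\Delta}^\flat,$$
and injectivity of $\iota^\flat$ follows immediately from Lemma \ref{lemmtiltOCDelta}, which states that $(\mathcal{O}_{C^\flat})^{\otimes\Delta}$ is $I_{\widetilde{p}}$-adically separated.

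The principal obstacle lies in the bootstrap, because $\mathcal{O}_{C_\Delta}^\flat$ is defined as an inverse limit under Frobenius rather than ideal-adically, so the two topologies must be shown to agree. This amounts to careful bookkeeping of which $p^n$-th roots of $p$ occur in each component, together with verifying that products $\widetilde{p}_{\alpha_1}^{k_1}\cdots\widetilde{p}_{\alpha_\delta}^{k_\delta}$ generating $I_{\widetilde{p}}^n$ correspond precisely to the vanishing condition imposed by $n$ applications of the inverse Frobenius on the target side. Once this matching is in place, the remainder is formal.
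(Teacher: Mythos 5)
Your overall route matches the paper's: realize the factor isomorphisms $\mathcal{O}_{C^\flat}/(\widetilde{p}^{p^m})\isomto\mathcal{O}_C/(p)$, tensor over $k$ to obtain $\big(\mathcal{O}_{C^\flat}\big)^{\otimes\Delta}/I_{\widetilde{p}}^{(m)}\isomto\mathcal{O}_{C_\Delta}/(p)$ where $I_{\widetilde{p}}^{(m)}=\big(\widetilde{p}_\alpha^{p^m}\big)_{\alpha\in\Delta}$, pass to the inverse limit along Frobenius, and feed the result into Lemma \ref{lemmtiltOCDelta} for injectivity. Constructing $\iota^\flat$ by tilting each $\iota_\alpha$ and invoking the universal property of the tensor product over $k$ is a perfectly good variant of the paper's construction (which instead assembles $\iota^\flat$ as the inverse limit of the compatible surjections $\pi_{m,\Delta}$); the two give the same morphism.

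There is, however, a genuine gap in your bootstrap step, which you flag but mischaracterize. A small slip first: the kernel of the $m$-th Frobenius-tower projection is generated by $p^m$-th \emph{powers} $\widetilde{p}_\alpha^{p^m}$, not $p^m$-th roots. More importantly, the ideals $I_{\widetilde{p}}^{(m)}$ do \emph{not} ``correspond precisely'' to the powers $I_{\widetilde{p}}^n$: as soon as $\delta\geq2$ one has $I_{\widetilde{p}}^{(m)}\subsetneq I_{\widetilde{p}}^{p^m}$, since the class of $\widetilde{p}_{\alpha_1}\widetilde{p}_{\alpha_2}^{p^m-1}$ is a nonzero elementary tensor in $\big(\mathcal{O}_{C^\flat}\big)^{\otimes\Delta}/I_{\widetilde{p}}^{(m)}\simeq\mathcal{O}_{C_\Delta}/(p)$, so an exact ideal matching is bound to fail. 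What you actually need, and what the paper supplies in one line, is the weaker but sufficient statement that the two filtrations are cofinal, hence define the same topology; this is the sandwich $I_{\widetilde{p}}^{p^m\delta}\subset I_{\widetilde{p}}^{(m)}\subset I_{\widetilde{p}}^{p^m}$, the left inclusion because any monomial $\prod_\alpha\widetilde{p}_\alpha^{k_\alpha}$ with $\sum_\alpha k_\alpha\geq p^m\delta$ must have some $k_\alpha\geq p^m$, the right inclusion being trivial. Without this cofinality argument, the identification of $\mathcal{O}_{C_\Delta}^\flat=\varprojlim_m\big(\mathcal{O}_{C^\flat}\big)^{\otimes\Delta}/I_{\widetilde{p}}^{(m)}$ with the $I_{\widetilde{p}}$-adic completion does not follow from the compatible isomorphisms you produced, and the last assertion of the proposition is left unproved.
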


\begin{proof}
Recall that for each $m\in\NN$, we have a surjective morphism of $k$-algebras
\begin{align*}
\pi_m\colon\mathcal{O}_{C^\flat} &\to \mathcal{O}_C/(p)\\
(x^{(n)})_{n\in\NN} &\mapsto \overline{x^{(m)}}
\end{align*}
(where the $k$-algebra structure on $\mathcal{O}_C/(p)$ is given by $x\mapsto x^{1/p^m}$), and $\Ker(\pi_m)=\widetilde{p}^{p^m}\mathcal{O}_{C^\flat}$, hence an isomorphism $\widetilde{\pi}_m\colon\mathcal{O}_{C^\flat}/(\widetilde{p}^{p^m})\isomto\mathcal{O}_C/(p)$. Taking the tensor product of these, we get an isomorphism of $k$-algebras $\widetilde{\pi}_{m,\Delta}\colon\big(\mathcal{O}_{C^\flat}/(\widetilde{p}^{p^m})\big)^{\otimes\Delta}\isomto\big(\mathcal{O}_C/(p)\big)^{\otimes\Delta}\simeq\mathcal{O}_{C_\Delta}/(p)$. This means that there is a natural surjective morphism of $k$-algebras $\pi_{m,\Delta}\colon\big(\mathcal{O}_{C^\flat}\big)^{\otimes\Delta}\to\mathcal{O}_{C_\Delta}/(p)$, whose kernel is the ideal $I_{\widetilde{p}}^{(m)}$ generated by $\big\{\widetilde{p}^{p^m}_\alpha\big\}_{\alpha\in\Delta}$. The diagrams
$$\xymatrix@C=35pt@R=15pt{0\ar[r] &I_{\widetilde{p}}^{(m)}\ar[r] & \big(\mathcal{O}_{C^\flat}\big)^{\otimes\Delta}\ar[r]^-{\pi_{m,\Delta}} & \mathcal{O}_{C_\Delta}/(p)\ar[r] &0\\
0\ar[r] &I_{\widetilde{p}}^{(m+1)}\ar[r]\ar@{^(->}[u] & \big(\mathcal{O}_{C^\flat}\big)^{\otimes\Delta}\ar[r]^-{\pi_{m+1,\Delta}}\ar@{=}[u] & \mathcal{O}_{C_\Delta}/(p)\ar[r]\ar[u]_F &0}$$
(where $F$ is the Frobenius map) commutative. Passing to inverse limits provides the exact sequence
$$0\to\varprojlim\limits_m I_{\widetilde{p}}^{(m)}\to\big(\mathcal{O}_{C^\flat}\big)^{\otimes\Delta}\to\mathcal{O}_{C_\Delta}^\flat.$$
As $I_{\widetilde{p}}^{(m)}\subset I_{\widetilde{p}}^{p^m}$, we have $\varprojlim\limits_m I^{(m)}=\bigcap\limits_{m=1}^\infty I_{\widetilde{p}}^{(m)}\subset\bigcap\limits_{m=1}^\infty I_{\widetilde{p}}^{p^m}=\{0\}$ (\cf lemma \ref{lemmtiltOCDelta}), \ie $\big(\mathcal{O}_{C^\flat}\big)^{\otimes\Delta}$ is separated for the $I_{\widetilde{p}}$-adic topology. This provides the injective morphism $\big(\mathcal{O}_{C^\flat}\big)^{\otimes\Delta}\to\mathcal{O}_{C_\Delta}^\flat$ and the isomorphism $\widetilde{\pi}_{1,\Delta}$.

As seen above, the isomorphisms $\widetilde{\pi}_{m,\Delta}$ insert in the commutative diagrams
$$\xymatrix@C=30pt{\big(\mathcal{O}_{C^\flat}\big)^{\otimes\Delta}/I_{\widetilde{p}}^{(m)}\ar[r]^-{\widetilde{\pi}_{m,\Delta}} & \mathcal{O}_{C_\Delta}/(p)\\
\big(\mathcal{O}_{C^\flat}\big)^{\otimes\Delta}/I_{\widetilde{p}}^{(m+1)}\ar[r]^-{\widetilde{\pi}_{m+1,\Delta}}\ar@{->>}[u] & \mathcal{O}_{C_\Delta}/(p)\ar[u]_F}$$
Passing to inverse limits gives an isomorphism $\varprojlim\limits_m\big(\mathcal{O}_{C^\flat}\big)^{\otimes\Delta}/I_{\widetilde{p}}^{(m)}\isomto\mathcal{O}_{C_\Delta}^\flat$, so that $\mathcal{O}_{C_\Delta}^\flat$ is the completion of $\mathcal{O}_{C_\Delta}$ with respect to the topology associated to the family of ideals $(I_{\widetilde{p}}^{(m)})_{m\in\NN_{>0}}$. As $I_{\widetilde{p}}^{p^m\delta}\subset I_{\widetilde{p}}^{(m)}\subset I_{\widetilde{p}}^{p^m}$ for all $m\in\NN_{>0}$, this topology coincides with the $I_{\widetilde{p}}$-adic topology, proving the last part of the proposition.
\end{proof}

\begin{nota}
For $\alpha\in\Delta$, put $\xi_\alpha=p-[\widetilde{p}_\alpha]\in\W(\mathcal{O}_{C_\Delta}^\flat)$. We have $\xi_\alpha\in\Ker(\theta_\Delta)$.
\end{nota}

\begin{coro}\label{coroKerthetaDelta}
The ideal $\Ker(\theta_\Delta)$ is generated by $\{\xi_\alpha\}_{\alpha\in\Delta}$.
\end{coro}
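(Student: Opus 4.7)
My plan is to mimic the classical proof that $\Ker(\theta)=(\xi)$ in the single-variable case, via successive approximation modulo powers of $p$. Setting $J=(\{\xi_\alpha\}_{\alpha\in\Delta})$, the inclusion $J\subseteq\Ker(\theta_\Delta)$ is immediate, since $\theta_\Delta([\widetilde{p}_\alpha])=\widetilde{p}_\alpha^{(0)}=p$ gives $\theta_\Delta(\xi_\alpha)=0$ for each $\alpha\in\Delta$.

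The first substantive step will be to analyze $\theta_\Delta$ modulo $p$. Since $\mathcal{O}_{C_\Delta}^\flat$ is a perfect $k$-algebra, one has $\W(\mathcal{O}_{C_\Delta}^\flat)/(p)\simeq\mathcal{O}_{C_\Delta}^\flat$, and $\theta_\Delta$ reduces to the map $\bar\theta_\Delta\colon\mathcal{O}_{C_\Delta}^\flat\to\mathcal{O}_{C_\Delta}/(p)$ sending $(x^{(n)})_n$ to $\overline{x^{(0)}}$. Precomposing $\bar\theta_\Delta$ with the inclusion $(\mathcal{O}_{C^\flat})^{\otimes\Delta}\hookrightarrow\mathcal{O}_{C_\Delta}^\flat$ of Proposition \ref{proptiltOCDelta} recovers the map $\pi_{0,\Delta}$, whose kernel is $I_{\widetilde p}$; combined with the description of $\mathcal{O}_{C_\Delta}^\flat$ as the $I_{\widetilde p}$-adic completion of $(\mathcal{O}_{C^\flat})^{\otimes\Delta}$, this identifies $\Ker(\bar\theta_\Delta)$ with the ideal of $\mathcal{O}_{C_\Delta}^\flat$ generated by the $\widetilde p_\alpha$. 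On the other hand, $\xi_\alpha\equiv-[\widetilde{p}_\alpha]\pmod{p}$, and $[\widetilde{p}_\alpha]$ reduces to $\widetilde{p}_\alpha$ in $\W(\mathcal{O}_{C_\Delta}^\flat)/(p)$, so the image of $J$ modulo $p$ coincides with $\Ker(\bar\theta_\Delta)$. Equivalently, $\Ker(\theta_\Delta)\subseteq J+p\,\W(\mathcal{O}_{C_\Delta}^\flat)$.

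Given $x\in\Ker(\theta_\Delta)$, the mod-$p$ comparison lets me write $x=\sum_{\alpha\in\Delta}a_\alpha^{(0)}\xi_\alpha+p\,x_1$ with $a_\alpha^{(0)},x_1\in\W(\mathcal{O}_{C_\Delta}^\flat)$; applying $\theta_\Delta$ gives $p\,\theta_\Delta(x_1)=0$, and since $\mathcal{O}_{C_\Delta}$ is the $p$-adic completion of the flat (hence $p$-torsion free) $\W(k)$-module $\mathcal{O}_C^{\otimes\Delta}$, it is itself $p$-torsion free, so $x_1\in\Ker(\theta_\Delta)$. Iterating produces sequences $(a_\alpha^{(n)})_{n\geq0}$ in $\W(\mathcal{O}_{C_\Delta}^\flat)$ and $x_n\in\Ker(\theta_\Delta)$ with $x_n=\sum_\alpha a_\alpha^{(n)}\xi_\alpha+p\,x_{n+1}$. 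The $p$-adic completeness of $\W(\mathcal{O}_{C_\Delta}^\flat)$ will then make each $b_\alpha:=\sum_{n\geq0}p^n a_\alpha^{(n)}$ converge, and $p$-adic separatedness will give $p^N x_N\to0$, so $x=\sum_{\alpha\in\Delta}b_\alpha\xi_\alpha\in J$.

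The main obstacle is the mod-$p$ computation in the second paragraph, namely the identification of $\Ker(\bar\theta_\Delta)$ with the extension of $I_{\widetilde p}$ to $\mathcal{O}_{C_\Delta}^\flat$. Once this is secured via Proposition \ref{proptiltOCDelta}, the rest is a direct transcription of Fontaine's single-variable proof, with the only additional input being the $p$-torsion freeness of $\mathcal{O}_{C_\Delta}$.
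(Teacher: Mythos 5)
Your proof is correct and follows the same strategy as the paper: both reduce to identifying $\Ker(\bar\theta_\Delta)$ modulo $p$ with the ideal generated by the $\widetilde p_\alpha$ and conclude via Proposition~\ref{proptiltOCDelta}. The paper merely asserts that checking modulo $p$ suffices because source and target are $p$-adically separated and complete; your successive-approximation argument spells this step out and, usefully, makes explicit the further input the paper uses silently, namely the $p$-torsion-freeness of $\mathcal{O}_{C_\Delta}$, without which ``$J+p\W(\mathcal{O}_{C_\Delta}^\flat)=\Ker(\theta_\Delta)+p\W(\mathcal{O}_{C_\Delta}^\flat)$ implies $J=\Ker(\theta_\Delta)$'' would not follow.
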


\begin{proof}
We have $\theta_\Delta(\xi_\alpha)=0$ for all $\alpha\in\Delta$, so that the ideal generated by $\{\xi_\alpha\}_{\alpha\in\Delta}$ lies in $\Ker(\theta_\Delta)$. To prove that this inclusion is an equality, it is enough to check it modulo $p$ (because the source and the target of $\theta_\Delta$ are $p$-adically separated and complete), \ie that the kernel of the map
$$\mathcal{O}_{C_\Delta}^\flat\to\mathcal{O}_{C_\Delta}/(p)$$
(given by $x\mapsto x^{(0)}$) is the ideal generated by $I_{\widetilde{p}}$. This kernel contains $I_{\widetilde{p}}$: passing to the quotient, we get a morphism $\mathcal{O}_{C_\Delta}/I_{\widetilde{p}}\to\mathcal{O}_{C_\Delta}$, which is nothing but $\widetilde{\pi}_{1,\Delta}$. We conclude using proposition \ref{proptiltOCDelta}.
\end{proof}

\begin{defi}
$\bullet$ Let $\A_{\inf,\Delta}$ the completion of $\W(\mathcal{O}_{C_\Delta}^\flat)$ with respect to the $\theta_\Delta^{-1}\big(p\mathcal{O}_{C_\Delta}\big)=\langle p,\Ker(\theta_\Delta)\rangle$-adic topology (\cf \cite[D\'efinition 2.2]{Bri2006}). This is a $\W(k)$-algebra endowed with an action of $G_{K,\Delta}$ (because $\theta_\Delta$ is equivariant). The map $\theta_\Delta$ extends to a surjective $G_{K,\Delta}$-equivariant morphism of $\W(k)$-algebras
$$\theta_\Delta\colon\A_{\inf,\Delta}\to\mathcal{O}_{C_\Delta}$$
(because $\mathcal{O}_{C_\Delta}$ is $p$-adically complete).

\noindent
$\bullet$ Let $\B_{\dR,\Delta}^+$ the completion of $\A_{\inf,\Delta}\big[\frac{1}{p}\big]$ with respect to the $\Ker(\theta_\Delta)$-adic topology. This is a $F_0$-algebra endowed with an action of $G_{K,\Delta}$, and the map $\theta_\Delta$ extends to a surjective $\theta_\Delta$-equivariant morphism of $F_0$-algebras
$$\theta_\Delta\colon\B_{\dR,\Delta}^+\to C_\Delta.$$
By definition, we have $\B_{\dR,\Delta}^+=\varprojlim\limits_{m\geq1}\A_{\inf,\Delta}\big[\frac{1}{p}\big]/\Ker(\theta_\Delta)^m$. We endow each quotient $\A_{\inf,\Delta}\big[\frac{1}{p}\big]/\Ker(\theta_\Delta)^m$ with the Banach space topology (induced by the $p$-adic topology on $\A_{\inf,\Delta}$), and $\B_{\dR,\Delta}^+$ with the inverse limit (\ie product) topology, which we call the \emph{canonical topology}. Otherwise mentioned, $\B_{\dR,\Delta}^+$ is considered as a topological ring with this topology.
\end{defi}

\begin{rema}
In contrast with $\B_{\dR}^+$, the ring $\B_{\dR,\Delta}^+$ depends of $k$ when $\delta>1$.
\end{rema}

\begin{prop}
If $F$ is a finite subextension of $\Kbar/F_0$, the $F_0$-algebra structure of $\B_{\dR,\Delta}^+$ extends uniquely to a $F_\Delta$-algebra structure.
\end{prop}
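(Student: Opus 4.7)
The plan is to construct the $F_\Delta$-algebra structure on $\B_{\dR,\Delta}^+$ factor by factor, using natural embeddings $\B_{\dR}^+\to\B_{\dR,\Delta}^+$ coming from the tensor factors of $\mathcal{O}_{C_\Delta}^\flat$, and to handle uniqueness by a Hensel-style argument. Concretely, for each $\alpha\in\Delta$ the $\alpha$-th factor inclusion $\mathcal{O}_{C^\flat}\hookrightarrow\mathcal{O}_{C_\Delta}^\flat$ of Proposition~\ref{proptiltOCDelta} lifts through the Witt functor to a $\W(k)$-algebra map $\W(\mathcal{O}_{C^\flat})\to\W(\mathcal{O}_{C_\Delta}^\flat)$ sending $\xi$ to $\xi_\alpha$ and fitting into a commutative square with $\theta$, $\theta_\Delta$ and the factor map $j_\alpha\colon\mathcal{O}_C\to\mathcal{O}_{C_\Delta}$. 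Continuity extends it to $\A_{\inf}\to\A_{\inf,\Delta}$; inverting $p$ and passing to $\Ker(\theta_\Delta)$-adic completion yields a continuous $F_0$-algebra morphism $\iota_\alpha\colon\B_{\dR}^+\to\B_{\dR,\Delta}^+$ with $\theta_\Delta\circ\iota_\alpha=j_\alpha\circ\theta$.

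Composing the $\iota_\alpha$ with the classical embedding $F\hookrightarrow\B_{\dR}^+$ --- the unique $F_0$-algebra section of $\B_{\dR}^+\twoheadrightarrow C$ over $F\subset C$, supplied by Hensel's lemma in the complete DVR $\B_{\dR}^+$ applied to the minimal polynomial of a primitive element --- gives $\delta$ $F_0$-algebra maps $F\to\B_{\dR,\Delta}^+$ that agree on $F_0$. Since $F/F_0$ is finite, the excerpt identifies $F_\Delta$ with the $\delta$-fold tensor product $F^{\otimes\Delta}$ over $F_0$, so by the universal property these maps assemble into a single $F_0$-algebra morphism $F_\Delta\to\B_{\dR,\Delta}^+$. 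Taking $F$ to contain $K$ recovers the $K$-algebra structure of $\B_{\dR,\Delta}^+$, so what we have built is genuinely an extension of it.

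For uniqueness, any $F_\Delta$-algebra structure extending the $F_0$-structure is determined by its $\delta$ restrictions $f_\alpha\colon F\to\B_{\dR,\Delta}^+$ to the tensor factors, each a lift of the fixed composite $F\hookrightarrow C\xrightarrow{j_\alpha}C_\Delta$. Let $a\in F$ be a primitive element with minimal polynomial $P\in F_0[T]$. Writing $P(X)-P(Y)=(X-Y)\,R(X,Y)$ with $R(Y,Y)=P'(Y)$, two lifts $b_1,b_2$ of $j_\alpha(a)$ with $P(b_i)=0$ satisfy $0=(b_1-b_2)\,R(b_1,b_2)$, and $R(b_1,b_2)\equiv P'(b_2)\pmod{\Ker(\theta_\Delta)}$. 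Separability of $F/F_0$ forces $P'(a)\in F^\times$, so its image $\theta_\Delta(P'(b_2))=j_\alpha(P'(a))$ is a unit in $C_\Delta$ (ring maps preserve units); by $\Ker(\theta_\Delta)$-adic completeness of $\B_{\dR,\Delta}^+$ this lifts to invertibility of $R(b_1,b_2)$ in $\B_{\dR,\Delta}^+$ via geometric series, forcing $b_1=b_2$.

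The delicate point is this last one: since $\B_{\dR,\Delta}^+$ is not a DVR, $\Ker(\theta_\Delta)$ is not maximal, and $C_\Delta$ is not a field, the usual Hensel's lemma does not apply verbatim. The rescue is that $C$ \emph{is} a field, so $P'(a)$ is automatically invertible there, and its image through the ring map $j_\alpha$ remains a unit in $C_\Delta$ despite the zero divisors; that suffices to run the Hensel-type argument in $\B_{\dR,\Delta}^+$.
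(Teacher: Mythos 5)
Your proof is correct, and its essential content---lifting an \'etale $F_0$-algebra through the $\Ker(\theta_\Delta)$-adic completion of $\B_{\dR,\Delta}^+$---is the same as the paper's intended argument, which the paper condenses to a one-line citation of the fact that $F_\Delta$ is finite \'etale over $F_0$. The difference is packaging: the paper appeals abstractly to the infinitesimal lifting property of \'etale algebras, while you unpack it by writing $F_\Delta = F^{\otimes\Delta}$, building the map factor by factor through the embeddings $\iota_\alpha\colon\B_{\dR}^+\to\B_{\dR,\Delta}^+$ together with the classical Hensel lift $F\hookrightarrow\B_{\dR}^+$, and proving uniqueness by the $P(X)-P(Y)=(X-Y)R(X,Y)$ device in each coordinate. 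Your closing observation---that the failure of $C_\Delta$ to be a field is harmless because $P'(a)$ is a unit already in the field $C$ and ring maps send units to units, so $\theta_\Delta(R(b_1,b_2))\in C_\Delta^\times$ and completeness upgrades this to $R(b_1,b_2)\in(\B_{\dR,\Delta}^+)^\times$---is precisely the point that makes the Hensel-type step work outside the DVR setting. The factorization through the $\iota_\alpha$ also makes visible the $\B_{\dR}^{+\otimes\Delta}$-algebra structure the paper only introduces afterward.

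One thing you should say out loud: in the uniqueness step, the phrase ``each a lift of the fixed composite $F\hookrightarrow C\xrightarrow{j_\alpha}C_\Delta$'' silently imposes that the candidate $F_\Delta$-algebra structure makes $\theta_\Delta$ an $F_\Delta$-algebra map for the canonical $F_\Delta$-structure on $C_\Delta$. This compatibility is genuinely needed---already for $\delta=1$ the field $F$ embeds into the DVR $\B_{\dR}^+$ in as many $F_0$-linear ways as the minimal polynomial of a primitive element has roots in $C$, so without fixing the reduction there is no uniqueness---and it is equally implicit in the paper's appeal to formal \'etaleness, which yields uniqueness of lifts \emph{over a given reduction}. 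So this is not a defect relative to the paper, but it deserves to be stated as the normalization under which the extension is unique.
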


\begin{proof}
This follows from the fact that $F_\Delta$ is a finite \'etale $F_0$-algebra.
\end{proof}

\begin{nota}
If $\alpha\in\Delta$, the $\mathcal{O}_{C^\flat}$-algebra structure of $\mathcal{O}_{C_\Delta}^\flat$ (induced by the map $x\mapsto1\otimes\cdots\otimes1\otimes x\otimes1\otimes\cdots\otimes1$, where $x$ is the factor of index $\alpha$) induces a $\B_{\dR}^+$-algebra structure on $\B_{\dR,\Delta}^+$. All together, this provides a $\B_{\dR}^{+\otimes\Delta}$-algebra structure on $\B_{\dR,\Delta}^+$. In particular, any element $b\in\B_{\dR}^+$ provides an element $b_\alpha\in\B_{\dR,\Delta}^+$ (which is nothing but the image of $1\otimes\cdots\otimes1\otimes b\otimes1\otimes\cdots\otimes1$). For instance, we have the elements $t_\alpha$ and $(p-[\widetilde{p}])_\alpha=\xi_\alpha$.
\end{nota}

\begin{prop}
The ideal $\Ker(\theta_\Delta)\subset\B_{\dR,\Delta}^+$ is generated by $\{t_\alpha\}_{\alpha\in\Delta}$.
\end{prop}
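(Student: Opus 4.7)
The plan is to upgrade the analogue in $\A_{\inf,\Delta}$ (Corollary \ref{coroKerthetaDelta}) along the $\Ker(\theta_\Delta)$-adic completion that defines $\B_{\dR,\Delta}^+$, and then swap each $\xi_\alpha$ for $t_\alpha$ using the classical single-variable fact that $t$ and $\xi$ differ by a unit in $\B_{\dR}^+$.

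First I would observe that by Corollary \ref{coroKerthetaDelta} the kernel of $\theta_\Delta\colon\W(\mathcal{O}_{C_\Delta}^\flat)\to\mathcal{O}_{C_\Delta}$ is generated by the finite family $\{\xi_\alpha\}_{\alpha\in\Delta}$, hence so is the kernel of its extension to $\A_{\inf,\Delta}\big[\tfrac{1}{p}\big]$. Since $\B_{\dR,\Delta}^+$ is by construction the $\Ker(\theta_\Delta)$-adic completion of $\A_{\inf,\Delta}\big[\tfrac{1}{p}\big]$ and this kernel is finitely generated, the ideal it generates in $\B_{\dR,\Delta}^+$ coincides with the kernel of the extended map $\theta_\Delta\colon\B_{\dR,\Delta}^+\to C_\Delta$, and is still generated by $\{\xi_\alpha\}_{\alpha\in\Delta}$.

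Next, in the classical $\B_{\dR}^+$ both $t=\log[\varepsilon]$ and $\xi=p-[\widetilde p]$ are uniformizers of the discrete valuation ring $\B_{\dR}^+$, so there exists a unit $u\in(\B_{\dR}^+)^\times$ with $t=u\xi$. For each $\alpha\in\Delta$, the ring map $\B_{\dR}^+\to\B_{\dR,\Delta}^+$ attached to the $\alpha$-th tensor factor sends $u$ to a unit $u_\alpha\in(\B_{\dR,\Delta}^+)^\times$ (a ring homomorphism carries units to units) and is compatible with $\theta$ and $\theta_\Delta$ by construction. Applying it to $t=u\xi$ yields $t_\alpha=u_\alpha\xi_\alpha$ in $\B_{\dR,\Delta}^+$, so that $\B_{\dR,\Delta}^+ t_\alpha=\B_{\dR,\Delta}^+ \xi_\alpha$ as principal ideals.

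Combining these, one gets $\Ker(\theta_\Delta)=\sum\limits_{\alpha\in\Delta}\B_{\dR,\Delta}^+\xi_\alpha=\sum\limits_{\alpha\in\Delta}\B_{\dR,\Delta}^+ t_\alpha$, as required. The only point requiring a little care is the transfer from the kernel in $\A_{\inf,\Delta}\big[\tfrac{1}{p}\big]$ to its closure in $\B_{\dR,\Delta}^+$; this is handled by the standard fact that a finitely generated ideal in a ring $R$ extends to the ideal with the same generators in the $I$-adic completion $\widehat R$, and cuts out there the kernel of the induced map to $R/I$.
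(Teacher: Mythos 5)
Your argument is correct and is essentially the same as the paper's (terse) proof: reduce to the statement for $\xi_\alpha$ via Corollary~\ref{coroKerthetaDelta} and the finite-generation transfer across the completions $\W(\mathcal{O}_{C_\Delta}^\flat)\to\A_{\inf,\Delta}\big[\frac1p\big]\to\B_{\dR,\Delta}^+$, then replace $\xi_\alpha$ by $t_\alpha$ using the unit relating $t$ and $\xi$ in $\B_{\dR}^+$ pushed forward along the $\alpha$-th structure map $\B_{\dR}^+\to\B_{\dR,\Delta}^+$. Nothing more to add.
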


\begin{proof}
This follows from corollary \ref{coroKerthetaDelta} and the fact that $t$ and $\xi$ both generate $\Ker(\theta)$ in $\B_{\dR}^+$, so that they differ by a unit in $\B_{\dR}^+$.
\end{proof}

We endow $\B_{\dR,\Delta}^+$ with the filtration defined by
$$\Fil^i\B_{\dR,\Delta}^+=\Ker(\theta_\Delta)^i$$
for all $i\in\NN$. This filtration is decreasing and exhaustive. Let $\gr\B_{\dR,\Delta}^+=\bigoplus\limits_{i=0}^\infty\gr^i\B_{\dR,\Delta}^+$ be the associated graded ring.

\begin{prop}\label{propsuitereg}
Let $R$ be a ring, $x_1,\ldots,x_n$ a regular sequence in $R$ and $\widehat{R}=\varprojlim\limits_mR/I^m$ the $I$-adic completion of $R$, where $I\subset R$ the ideal generated by $\{x_1,\ldots,x_n\}$. Then the sequence $x_1,\ldots,x_n$ is regular in $\widehat{R}$.
\end{prop}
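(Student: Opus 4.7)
The plan is to prove this by exploiting the associated graded ring. The key classical input is that, since $x_1,\dots,x_n$ is a regular sequence in $R$ generating $I$, the natural map of graded $R/I$-algebras
\[
(R/I)[T_1,\dots,T_n]\longrightarrow\gr_I(R)=\bigoplus_{m\geq0}I^m/I^{m+1},\qquad T_i\mapsto x_i\bmod I^2
\]
is an isomorphism; in particular, being a regular sequence of indeterminates in a polynomial ring, $T_1,\dots,T_n$ is a regular sequence in $\gr_I(R)$.

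Transferring this to $\widehat{R}$ is straightforward. Equip $\widehat{R}$ with the filtration $J_m:=\Ker(\widehat{R}\to R/I^m)$. By construction $\widehat{R}/J_m=R/I^m$, so $\widehat{R}$ is complete and separated for $\{J_m\}_{m\geq0}$; the short exact sequence $0\to J_m/J_{m+1}\to R/I^{m+1}\to R/I^m\to0$ identifies $J_m/J_{m+1}$ with $I^m/I^{m+1}$, hence $\bigoplus_{m\geq0}J_m/J_{m+1}\simeq(R/I)[T_1,\dots,T_n]$, with $T_i$ corresponding to the class of $x_i$ in $J_1/J_2$.

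The conclusion now follows by induction on $i$: I claim that $x_i$ is a nonzerodivisor in $\widehat{R}/(x_1,\dots,x_{i-1})\widehat{R}$. For $i=1$, if $x_1\widehat{r}=0$ with $\widehat{r}\neq 0$, separatedness provides a largest $m$ with $\widehat{r}\in J_m$; the leading form $\bar r\in J_m/J_{m+1}$ is nonzero, yet $T_1\bar r\in J_{m+1}/J_{m+2}$ equals the image of $0$, contradicting that $T_1$ is a nonzerodivisor in the polynomial ring. For $i\geq2$, given an equation $x_i\widehat{r}=\sum_{j<i}x_j\widehat{s}_j$ in $\widehat{R}$, I would construct iteratively, for $k\geq1$, elements $v_j^{(k)}\in J_{k-1}$ and $\widehat{r}^{(k)}\in J_{k+1}$ such that
\[
\widehat{r}=\sum_{l=1}^k\sum_{j<i}x_jv_j^{(l)}+\widehat{r}^{(k)}
\quad\text{and}\quad
x_i\widehat{r}^{(k)}\in(x_1,\dots,x_{i-1})\widehat{R}.
\]
The passage $k\to k+1$ proceeds in two sub-steps: (a) first, clean up the syzygy coefficients so that each $\widehat{s}_j^{(k)}$ in the updated equation $x_i\widehat{r}^{(k)}=\sum_{j<i}x_j\widehat{s}_j^{(k)}$ lies in $J_k$, using that every graded relation $\sum T_j\bar s_j=0$ in the polynomial ring comes from the antisymmetric Koszul syzygies of the regular sequence $T_1,\dots,T_{i-1}$, and using antisymmetric lifts so that the cleanup preserves the value of the sum $\sum_j x_j\widehat{s}_j^{(k)}$; (b) use regularity of $T_i$ modulo $(T_1,\dots,T_{i-1})$ to write the leading form $\bar r^{(k)}=\sum_{j<i}T_j\bar u_j$, and lift each $\bar u_j$ to $v_j^{(k+1)}\in J_{k-1}$. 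Completeness of $\widehat{R}$ for $\{J_m\}$ guarantees convergence of $U_j:=\sum_{l\geq1}v_j^{(l)}$, and passing to the limit yields $\widehat{r}=\sum_{j<i}x_jU_j\in(x_1,\dots,x_{i-1})\widehat{R}$.

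The main obstacle I expect is this iterative syzygy cleanup: the filtration degrees of the $\widehat{s}_j^{(k)}$ must be raised step by step so that the graded regularity argument applies at each stage. This is the step in which the \emph{full} regularity of $T_1,\dots,T_{i-1}$ in the polynomial ring (not just the nonzerodivisor property of $T_i$) enters essentially, and where the antisymmetric lifting of Koszul syzygies is needed to ensure the cleanup does not alter the ambient equation.
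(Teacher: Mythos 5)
Your argument is correct, and it takes a genuinely different route from the paper, which disposes of the statement by citing Matlis's Theorem~39\,(2) without further comment. You instead give a self-contained proof via the associated graded ring: the regular sequence makes $\gr_I(R)$ the polynomial ring $(R/I)[T_1,\dots,T_n]$ (Rees), this identification survives $I$-adic completion because $J_m/J_{m+1}\simeq I^m/I^{m+1}$, and regularity is then transported from the graded ring to the complete separated filtered ring $\widehat R$ by iterated approximation, using that the syzygies of a regular sequence of indeterminates are Koszul and hence admit antisymmetric lifts that do not perturb the ambient relation $x_i\widehat r=\sum_{j<i}x_j\widehat s_j$. What this buys is transparency --- one sees exactly which structural facts enter, and that no Noetherian hypothesis on $R$ is needed --- at the price of the filtration bookkeeping you flag as the delicate point. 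Your sketch does leave the base step implicit (that $\widehat r\in J_1$ to begin with, which one gets because the degree-zero leading form of $\widehat r$ would land in $(T_1,\dots,T_{i-1})\cap\gr^0=\{0\}$ by regularity of $T_i$ modulo $(T_1,\dots,T_{i-1})$), and the degree tracking can be a notch tighter than stated ($v_j^{(k+1)}$ may be taken in $J_k$ rather than $J_{k-1}$), but these are routine once the strategy is fixed, and the convergence of $\sum_l v_j^{(l)}$ and the identity $\widehat r=\sum_{j<i}x_jU_j$ both follow correctly from completeness and separatedness for the filtration $\{J_m\}$.
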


\begin{proof}
This follows from \cite[Theorem 39 (2)]{Matlis}.
\end{proof}

\begin{lemm}\label{lemmsuitereg}
The sequence $\big(\widetilde{p}_\alpha\big)_{\alpha\in\Delta}$ is regular in $\mathcal{O}_{C_\Delta}^\flat$.
\end{lemm}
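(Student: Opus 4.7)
My plan is to reduce to the analogous statement in the (uncompleted) tensor product $(\mathcal{O}_{C^\flat})^{\otimes \Delta}$ via Proposition \ref{propsuitereg}, and then verify regularity there by a direct computation using flatness over the field $k$.

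\textbf{Step 1 (reduction to the uncompleted ring).} By Proposition \ref{proptiltOCDelta}, $\mathcal{O}_{C_\Delta}^\flat$ is precisely the $I_{\widetilde{p}}$-adic completion of $R:=(\mathcal{O}_{C^\flat})^{\otimes \Delta}$, where $I_{\widetilde{p}}$ is the ideal generated by $\{\widetilde{p}_\alpha\}_{\alpha\in\Delta}$. Hence Proposition \ref{propsuitereg}, applied with $x_\alpha=\widetilde{p}_\alpha$, says: if $(\widetilde{p}_\alpha)_{\alpha\in\Delta}$ is a regular sequence in $R$, then it remains regular in $\mathcal{O}_{C_\Delta}^\flat$. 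Thus it suffices to prove regularity in $R$.

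\textbf{Step 2 (regularity in the tensor product).} Choose an ordering $\Delta=\{\alpha_1,\dots,\alpha_\delta\}$. For $0\leq i\leq\delta$, set
$$R_i = (\mathcal{O}_{C^\flat}/(\widetilde{p}))^{\otimes\{\alpha_1,\dots,\alpha_i\}}\otimes_k(\mathcal{O}_{C^\flat})^{\otimes\{\alpha_{i+1},\dots,\alpha_\delta\}},$$
so that $R_0=R$, and we need to check that the quotient $R/(\widetilde{p}_{\alpha_1},\dots,\widetilde{p}_{\alpha_i})$ is indeed identified with $R_i$ and that multiplication by $\widetilde{p}_{\alpha_{i+1}}$ is injective on $R_i$ for $i=0,\dots,\delta-1$. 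The first point is immediate by right-exactness of tensor product: quotienting $R$ by $(\widetilde{p}_{\alpha_1},\dots,\widetilde{p}_{\alpha_i})$ amounts to imposing $\widetilde{p}=0$ on the first $i$ tensor factors. For the second point, note that $k$ is a field so every $k$-module is flat. Since $\mathcal{O}_{C^\flat}$ is a valuation domain and $\widetilde{p}\neq 0$, multiplication by $\widetilde{p}$ is injective on $\mathcal{O}_{C^\flat}$. Tensoring this injection with the remaining factors (all of which are flat $k$-modules) yields that multiplication by $\widetilde{p}_{\alpha_{i+1}}=1\otimes\cdots\otimes1\otimes\widetilde{p}\otimes1\otimes\cdots\otimes1$ is injective on $R_i$.

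\textbf{Main obstacle.} There is no serious obstacle: once one has Proposition \ref{propsuitereg} and the identification of $\mathcal{O}_{C_\Delta}^\flat$ with a completion (Proposition \ref{proptiltOCDelta}), the argument is a formal consequence of the flatness of every $k$-vector space. The only subtle point worth stating explicitly is the compatibility of successive quotients with the tensor-product decomposition $R_i$, which one can package once and for all by viewing $R$ as an iterated tensor product and using that $-\otimes_k M$ is exact for every $k$-module $M$.
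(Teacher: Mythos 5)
Your proposal is correct and follows essentially the same route as the paper's own proof: both reduce to $\big(\mathcal{O}_{C^\flat}\big)^{\otimes\Delta}$ via Proposition \ref{proptiltOCDelta} and Proposition \ref{propsuitereg}, identify successive quotients with tensor products using right-exactness, and conclude from the injectivity of multiplication by $\widetilde{p}$ on the domain $\mathcal{O}_{C^\flat}$ together with flatness of all modules over the field $k$. The only difference is presentational: you fix an ordering of $\Delta$ and introduce the chain $R_0,\dots,R_\delta$, whereas the paper phrases the same check in terms of an arbitrary subset $\Delta'\subset\Delta$ and an $\alpha\in\Delta\setminus\Delta'$.
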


\begin{proof}
By proposition \ref{proptiltOCDelta}, $\mathcal{O}_{C_\Delta}^\flat$ is isomorphic to the $I_{\widetilde{p}}$-adic completion of $\big(\mathcal{O}_{C^\flat}\big)^{\otimes\Delta}$: lemma \ref{propsuitereg} shows that it is enough to check that the sequence $\big(\widetilde{p}_\alpha\big)_{\alpha\in\Delta}$ is regular in $\big(\mathcal{O}_{C^\flat}\big)^{\otimes\Delta}$, \ie that for each $\Delta^\prime\subset\Delta$ and $\alpha\in\Delta\setminus\Delta^\prime$, the element $\widetilde{p}_\alpha$ is regular in the quotient 
$\big(\mathcal{O}_{C^\flat}\big)^{\otimes\Delta}/\langle\widetilde{p}_\beta\rangle_{\beta\in\Delta^\prime}\simeq\big(\mathcal{O}_{C^\flat}/\langle\widetilde{p}\rangle\big)^{\otimes\Delta^\prime}\otimes_k\big(\mathcal{O}_{C^\flat}\big)^{\otimes(\Delta\setminus\Delta^\prime)}$. This follows from the regularity of $\widetilde{p}$ in the domain $\mathcal{O}_{C^\flat}$, by taking the tensor product with $\big(\mathcal{O}_{C^\flat}/\langle\widetilde{p}\rangle\big)^{\otimes\Delta^\prime}\otimes_k\big(\mathcal{O}_{C^\flat}\big)^{\otimes(\Delta\setminus(\Delta^\prime\cup\{\alpha\})}$ above the field $k$.
\end{proof}

\begin{prop}\label{propsuiteregBdR}
The sequence $\big(\xi_\alpha\big)_{\alpha\in\Delta}$ is regular in $\B_{\dR,\Delta}^+$. Similarly, the sequence $(t_\alpha)_{\alpha\in\Delta}$ is regular in $\B_{\dR,\Delta}^+$.
\end{prop}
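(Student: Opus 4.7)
The strategy is to bootstrap the regularity of $(\widetilde p_\alpha)_{\alpha\in\Delta}$ in $\mathcal{O}_{C_\Delta}^\flat$ (lemma \ref{lemmsuitereg}) up the chain
$$\W(\mathcal{O}_{C_\Delta}^\flat)\;\to\;\A_{\inf,\Delta}\;\to\;\A_{\inf,\Delta}[\tfrac{1}{p}]\;\to\;\B_{\dR,\Delta}^+,$$
using proposition \ref{propsuitereg} at each of the two completion steps, and to deduce the statement for $(t_\alpha)_\alpha$ from the one for $(\xi_\alpha)_\alpha$ by a unit comparison.

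First I would check that the extended sequence $(p,\xi_\alpha)_{\alpha\in\Delta}$ is regular in $\W(\mathcal{O}_{C_\Delta}^\flat)$. The element $p$ is a nonzerodivisor because $\mathcal{O}_{C_\Delta}^\flat$ is a perfect $\FF_p$-algebra; and the natural identification $\W(\mathcal{O}_{C_\Delta}^\flat)/p\simeq\mathcal{O}_{C_\Delta}^\flat$ sends $\xi_\alpha=p-[\widetilde p_\alpha]$ to $-\widetilde p_\alpha$, so the mod-$p$ regularity is exactly lemma \ref{lemmsuitereg} (which, by the symmetry in $\Delta$ of its proof, holds in any order of the indices). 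Since $\A_{\inf,\Delta}$ is by definition the completion of $\W(\mathcal{O}_{C_\Delta}^\flat)$ at the ideal $J=\langle p,\xi_\alpha\rangle_{\alpha\in\Delta}$, proposition \ref{propsuitereg} then yields that $(p,\xi_\alpha)_{\alpha\in\Delta}$ remains regular in $\A_{\inf,\Delta}$; in particular $p$ is a nonzerodivisor on $\A_{\inf,\Delta}$, and the latter is $p$-adically separated (its $p$-adic topology being coarser than the Hausdorff $J$-adic one).

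The main work, which I expect to be the chief obstacle, is to ``reorder'' this regular sequence in order to invert $p$. Concretely, I would prove by induction on $|\Delta'|$ that for each $\Delta'\subseteq\Delta$ the quotient $M_{\Delta'}:=\A_{\inf,\Delta}/\langle\xi_\beta\rangle_{\beta\in\Delta'}$ is $p$-torsion-free and $p$-adically separated, and that for each $\alpha\in\Delta\setminus\Delta'$ the element $\xi_\alpha$ is a nonzerodivisor on $M_{\Delta'}$. The inductive step exploits the key identity $\xi_\alpha=p-[\widetilde p_\alpha]$: given $p x=\xi_\alpha y$ in $M_{\Delta'}$, reducing modulo $p$ and invoking the nonzerodivisor property of $\widetilde p_\alpha$ on $\mathcal{O}_{C_\Delta}^\flat/\langle\widetilde p_\beta\rangle_{\beta\in\Delta'}$ forces $y\in pM_{\Delta'}$, and then the $p$-torsion-freeness of $M_{\Delta'}$ yields $x\in\xi_\alpha M_{\Delta'}$, establishing $p$-torsion-freeness of $M_{\Delta'\cup\{\alpha\}}$; the nonzerodivisor property of $\xi_\alpha$ on $M_{\Delta'}$ follows by the same mod-$p$ reduction combined with $p$-adic successive approximation and the separatedness provided by the induction. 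Consequently $(\xi_\alpha)_{\alpha\in\Delta}$ is a regular sequence in $\A_{\inf,\Delta}$, and since $p$ is a nonzerodivisor on each $M_{\Delta'}$, localization at $p$ preserves these nonzerodivisor statements, so $(\xi_\alpha)_{\alpha\in\Delta}$ is regular in $\A_{\inf,\Delta}[\tfrac{1}{p}]$.

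Applying proposition \ref{propsuitereg} to the $\langle\xi_\alpha\rangle$-adic completion then gives the desired regularity of $(\xi_\alpha)_{\alpha\in\Delta}$ in $\B_{\dR,\Delta}^+$. For the second assertion, since $t$ and $\xi$ both generate $\Ker(\theta)$ in $\B_{\dR}^+$, the quotient $t/\xi$ lies in $(\B_{\dR}^+)^\times$; pulling back via the $\alpha$-factor map $\B_{\dR}^+\to\B_{\dR,\Delta}^+$ exhibits $t_\alpha$ as $u_\alpha\xi_\alpha$ for a unit $u_\alpha\in(\B_{\dR,\Delta}^+)^\times$. Since multiplying the generators of a regular sequence by units preserves regularity, $(t_\alpha)_{\alpha\in\Delta}$ is also regular in $\B_{\dR,\Delta}^+$.
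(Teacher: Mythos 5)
Your plan is structurally the same as the paper's: lift the regularity of $(\widetilde p_\alpha)_\alpha$ in $\mathcal{O}_{C_\Delta}^\flat$ to that of $(p,\xi_\alpha)_\alpha$ in $\W(\mathcal{O}_{C_\Delta}^\flat)$, pass through the $J$-adic completion to $\A_{\inf,\Delta}$ via proposition~\ref{propsuitereg}, reorder so that $(\xi_\alpha)_\alpha$ alone is regular, invert $p$ by exactness of localization, pass to $\B_{\dR,\Delta}^+$ by a second application of proposition~\ref{propsuitereg}, and deduce the $(t_\alpha)_\alpha$ case by replacing $\xi_\alpha$ with a unit multiple. The single point of genuine divergence is the reordering step: the paper invokes \cite[Th\'eor\`eme 1]{Algebre10} after asserting that the quotients $\A_{\inf,\Delta}/\langle p,\xi_\alpha\rangle_{\alpha\in\Delta'}$ are $J$-adically separated, whereas you unpack this into an explicit induction. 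The advantage of your version is that it lays bare the elementary mechanism (the identity $\xi_\alpha=p-[\widetilde p_\alpha]$ together with mod-$p$ regularity of $\widetilde p_\alpha$) hidden in the black-box citation; the cost is that you must verify the same separatedness hypotheses by hand, and this is where your write-up falls short.

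Concretely: you put ``$M_{\Delta'}$ is $p$-adically separated'' into the inductive statement and use it to get the nonzerodivisor property of $\xi_\alpha$ on $M_{\Delta'}$, but you never establish the $p$-adic separatedness of $M_{\Delta'\cup\{\alpha\}}$. This is not merely cosmetic: the naive iteration ($x=\xi_\alpha b_1+px_1$, $x_1=\xi_\alpha b_1'+px_2$, \ldots) produces $x\in\xi_\alpha\sum_np^nb_n+\lim p^nx_n$ and therefore requires $p$-adic \emph{completeness} of $M_{\Delta'}$, not just separatedness, to close the loop. One does get out of this, e.g.\ by arguing that each $\W(\mathcal{O}_{C_\Delta}^\flat)/J^n$ is $p$-power-torsion hence trivially $p$-complete, so $\A_{\inf,\Delta}$ is $p$-adically complete, and then propagating $J$-adic separatedness of the successive quotients through the induction --- but as written your proposal silently assumes this. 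In fairness, the paper's parenthetical claim that the quotients $\A_{\inf,\Delta}/\langle p,\xi_\alpha\rangle_{\alpha\in\Delta'}$ are separated plays exactly the same role and is also given without proof, so the two arguments are at essentially the same level of rigor; you should just be aware that this is the nontrivial input both routes lean on.
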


\begin{proof}
By lemma \ref{lemmsuitereg}, the sequence $\big(\widetilde{p}_\alpha\big)_{\alpha\in\Delta}$ is regular in $\mathcal{O}_{C_\Delta}^\flat$. As $p$ is regular in $\W(\mathcal{O}_{C_\Delta}^\flat)$, this implies that $\big(p,\xi_\alpha\big)_{\alpha\in\Delta}$ is regular in $\W(\mathcal{O}_{C_\Delta}^\flat)$. By proposition \ref{propsuitereg}, the same holds in $\A_{\inf,\Delta}$. As $\A_{\inf,\Delta}$ is separated with respect to the $\langle p,\xi_\alpha\rangle_{\alpha\in\Delta}$-adic topology (the same holds for $\A_{\inf,\Delta}/\langle p,\xi_\alpha\rangle_{\alpha\in\Delta^\prime}$ for any subset $\Delta^\prime\subset\Delta$), \cite[Th\'eor\`eme 1]{Algebre10} implies that any permutation of the sequence $(p,\xi_\alpha)_{\alpha\in\Delta}$ is regular in $\A_{\inf,\Delta}$ as well. This shows in particular that $\big(\xi_\alpha\big)_{\alpha\in\Delta}$ is regular in $\A_{\inf,\Delta}$. As localization is an exact functor, this implies that $\big(\xi_\alpha\big)_{\alpha\in\Delta}$ is also regular in $\A_{\inf,\Delta}\big[\frac{1}{p}\big]$. Lemma \ref{propsuitereg} then implies that it is also a regular sequence in $\B_{\dR,\Delta}^+$. The second part of the proposition follows.
\end{proof}

Note that $\gr^0\B_{\dR,\Delta}^+=\B_{\dR,\Delta}^+/\Ker(\theta_\Delta)\simeq C_\Delta$. By an abuse of notation, we still denote by $t_\alpha$ its image in $\gr^1\B_{\dR,\Delta}^+$.

\begin{coro}\label{corosuiteregBdR}
The morphism of $C_\Delta$-algebras $C_\Delta[X_\alpha]_{\alpha\in\Delta}\to\gr\B_{\dR,\Delta}^+$ mapping $X_\alpha$ to $t_\alpha$ for all $\alpha\in\Delta$ is an isomorphism.
\end{coro}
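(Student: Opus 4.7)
The plan is to deduce this from the regularity of the sequence $(t_\alpha)_{\alpha\in\Delta}$ established in proposition \ref{propsuiteregBdR}, via the standard commutative algebra principle that for a ring $R$ and an ideal $I\subset R$ generated by a regular sequence $(f_1,\ldots,f_n)$, the natural morphism of graded $(R/I)$-algebras $(R/I)[X_1,\ldots,X_n]\to\gr_I R$ sending $X_i$ to the class of $f_i$ is an isomorphism.

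First, I would deal with surjectivity, which is the easy direction. Since $\Fil^1\B_{\dR,\Delta}^+=\Ker(\theta_\Delta)$ is generated by $\{t_\alpha\}_{\alpha\in\Delta}$, the ideal $\Fil^i\B_{\dR,\Delta}^+=\Ker(\theta_\Delta)^i$ is generated by the degree-$i$ monomials in the $t_\alpha$. Reducing modulo $\Fil^{i+1}$, this shows that $\gr^i\B_{\dR,\Delta}^+$ is spanned over $C_\Delta=\gr^0\B_{\dR,\Delta}^+$ by the images of such monomials, so the map in question is surjective in each degree.

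For injectivity, which is the crux of the matter, I would argue by induction on $\delta=\#\Delta$. The case $\delta=1$ reduces to showing that if $t$ is a regular element of a ring $R$, then $\gr_{(t)}R=(R/(t))[X]$, which is a direct check: a relation $\sum_{j=0}^n c_jt^j\in t^{n+1}R$ with $c_j\in R$ can be rewritten $c_n=t\cdot r-\sum_{j<n}c_jt^{j-n}$... the regularity of $t$ gives $c_n\in tR$, then by downward induction $c_j\in tR$ for all $j$. For the inductive step, fix $\beta\in\Delta$ and set $\Delta'=\Delta\setminus\{\beta\}$. By proposition \ref{propsuiteregBdR} the sequence $(t_\alpha)_{\alpha\in\Delta'},t_\beta$ (in any order with $t_\beta$ last) is regular, so $t_\beta$ is regular in the quotient $R':=\B_{\dR,\Delta}^+/(t_\alpha)_{\alpha\in\Delta'}$, and the residue sequence $(t_\alpha)_{\alpha\in\Delta'}$ is regular in $\B_{\dR,\Delta}^+/(t_\beta)$. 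A relation expressing that a homogeneous polynomial $P=\sum_{|\underline{i}|=n}c_{\underline{i}}\prod_\alpha t_\alpha^{i_\alpha}$ with $c_{\underline i}\in C_\Delta$ maps to zero in $\gr^n\B_{\dR,\Delta}^+$ means $P\in\Fil^{n+1}$. Grouping by the exponent of $t_\beta$ and using the $\delta=1$ case applied to $t_\beta$ in $\B_{\dR,\Delta}^+$, I can push each coefficient of $t_\beta^j$ (a polynomial of degree $n-j$ in the $(t_\alpha)_{\alpha\in\Delta'}$) into $\Fil^{n-j+1}$ of $\B_{\dR,\Delta}^+/(t_\beta)$; the induction hypothesis applied to $\Delta'$ then yields vanishing of all coefficients $c_{\underline i}$.

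The main technical obstacle is bookkeeping in the inductive step: one must be careful that the filtration by powers of $\Ker(\theta_\Delta)$ interacts cleanly with the quotient by $(t_\beta)$ and with the subring generated by $(t_\alpha)_{\alpha\in\Delta'}$. This is exactly what the regularity of the sequence and \cite[Th\'eor\`eme 1]{Algebre10} guarantee, since the quoted result implies that any permutation of $(t_\alpha)_{\alpha\in\Delta}$ remains regular, so the filtration $\Ker(\theta_\Delta)^\bullet$ is the sum of the expected pieces. Once this is set up, the argument is formal and the isomorphism follows.
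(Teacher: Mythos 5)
Your proposal takes exactly the route the paper takes: surjectivity from the fact that $\Ker(\theta_\Delta)$ is generated by the $t_\alpha$, injectivity from the regularity of the sequence $(t_\alpha)_{\alpha\in\Delta}$ established in proposition \ref{propsuiteregBdR}, fed into the standard commutative-algebra theorem \cite[Th\'eor\`eme 1]{Algebre10} on the associated graded ring of an ideal generated by a regular sequence. The paper's proof is precisely the one-line citation of that theorem.

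One caveat on your attempted re-derivation of the general principle: in the inductive step you invoke the ``$\delta=1$ case applied to $t_\beta$'' to place coefficients in $\Fil^{n-j+1}$ of $\B_{\dR,\Delta}^+/(t_\beta)$, but the hypothesis $P\in\Fil^{n+1}$ concerns the filtration by powers of the \emph{full} ideal $\Ker(\theta_\Delta)=(t_\alpha)_{\alpha\in\Delta}$, not by powers of $(t_\beta)$ alone, so the one-variable case does not directly extract the $t_\beta$-coefficients as written. Since you explicitly defer this bookkeeping to \cite[Th\'eor\`eme 1]{Algebre10} (which is what the paper does anyway), the overall argument stands, but the inline sketch would need the sharper Bourbaki/Matsumura formulation of the induction to be self-contained.
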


\begin{proof}
This follows from proposition \ref{propsuiteregBdR} and \cite[Th\'eor\`eme 1]{Algebre10}.
\end{proof}

\begin{coro}\label{coroinvariantsBdR}
We have $\H^0(G_{K,\Delta},\B_{\dR,\Delta}^+)\simeq K_\Delta$.
\end{coro}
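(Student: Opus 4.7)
The plan is to reduce the computation of $\H^0(G_{K,\Delta},\B_{\dR,\Delta}^+)$ to the invariants of $C_\Delta$-twists via the filtration $\Fil^i\B_{\dR,\Delta}^+=\Ker(\theta_\Delta)^i$, and then pass to the inverse limit. The key point is that, by corollary \ref{corosuiteregBdR}, the graded ring is isomorphic to $C_\Delta[X_\alpha]_{\alpha\in\Delta}$ with $X_\alpha$ sent to $t_\alpha$, and since the action of $G_{K,\Delta}$ on the distinguished element $t_\alpha$ is via the character $\chi_\alpha$ (this is inherited from the classical one-variable relation $g(t)=\chi(g)t$ through the natural map $\B_{\dR}^+\to\B_{\dR,\Delta}^+$ indexed by $\alpha$), we obtain a $G_{K,\Delta}$-equivariant identification
\[\gr^i\B_{\dR,\Delta}^+\simeq\bigoplus_{\substack{\underline{n}\in\NN^\Delta\\ |\underline{n}|=i}}C_\Delta(\underline{n}).\]

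Next, I would combine theorem \ref{theocohoGC} for the trivial twist and theorem \ref{theocohoGCn} for the nontrivial ones. The former yields $\H^0(G_{K,\Delta},C_\Delta)=K_\Delta$, while the latter gives $\H^0(G_{K,\Delta},C_\Delta(\underline{n}))=\H^1(G_{K,\Delta},C_\Delta(\underline{n}))=0$ whenever $\underline{n}\neq\underline{0}$. Consequently $\H^0(G_{K,\Delta},\gr^i\B_{\dR,\Delta}^+)=0$ and $\H^1(G_{K,\Delta},\gr^i\B_{\dR,\Delta}^+)=0$ for all $i\geq1$ (the direct sum in each graded piece is finite, so cohomology commutes with it).

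Now I would proceed by dévissage on $i\geq0$. Consider the short exact sequence
\[0\to\gr^i\B_{\dR,\Delta}^+\to\B_{\dR,\Delta}^+/\Fil^{i+1}\to\B_{\dR,\Delta}^+/\Fil^i\to0\]
and its long exact cohomology sequence. For $i\geq1$, both the $\H^0$ and $\H^1$ of the leftmost term vanish by the previous paragraph, hence the middle and right invariants are naturally identified. Since $\B_{\dR,\Delta}^+/\Fil^1\simeq C_\Delta$ has $K_\Delta$ as invariants, an immediate induction yields
\[(\B_{\dR,\Delta}^+/\Fil^i)^{G_{K,\Delta}}=K_\Delta\qquad(i\geq1).\]

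Finally, because the functor of $G_{K,\Delta}$-invariants is a right adjoint (it is defined as a kernel), it commutes with inverse limits. As $\B_{\dR,\Delta}^+=\varprojlim_i\B_{\dR,\Delta}^+/\Fil^i$ by definition, I conclude that
\[\H^0(G_{K,\Delta},\B_{\dR,\Delta}^+)=\varprojlim_i(\B_{\dR,\Delta}^+/\Fil^i)^{G_{K,\Delta}}=\varprojlim_iK_\Delta=K_\Delta,\]
the transition maps being the identity. The only mildly delicate point is the verification that the Galois action on each $t_\alpha$ is truly the character $\chi_\alpha$, but this is immediate from the $\B_{\dR}^{+\otimes\Delta}$-algebra structure on $\B_{\dR,\Delta}^+$ noted just before the proposition, so no serious obstacle arises.
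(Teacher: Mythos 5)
Your proof is correct and rests on exactly the same ingredients as the paper's: the identification of the graded pieces via Corollary~\ref{corosuiteregBdR}, the vanishing of $\H^0(G_{K,\Delta},C_\Delta(\underline{n}))$ for $\underline{n}\neq\underline{0}$ from Theorem~\ref{theocohoGCn}, and the completeness/separatedness of the filtration. The paper packages these more economically: given an invariant $x$, it sets $z=x-\theta_\Delta(x)$ and argues that if $z\neq0$ then by separatedness $z$ has a nonzero image in some $\gr^i\B_{\dR,\Delta}^+$ with $i\geq1$, contradicting the vanishing of $\H^0$ of that graded piece. Your route — inductive dévissage on the quotients $\B_{\dR,\Delta}^+/\Fil^i$ followed by passage to the inverse limit — is equally valid, but note that the $\H^1$-vanishing you invoke for surjectivity is actually dispensable: once $\H^0(\gr^i)=0$ gives the injection $\H^0(\B_{\dR,\Delta}^+/\Fil^{i+1})\hookrightarrow\H^0(\B_{\dR,\Delta}^+/\Fil^i)=K_\Delta$, the obvious reverse inclusion $K_\Delta\subset\H^0(\B_{\dR,\Delta}^+/\Fil^{i+1})$ forces equality. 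Stripped of that extra step, the two proofs are essentially the same argument.
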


\begin{proof}
Let $x\in\H^0(G_{K,\Delta},\B_{\dR,\Delta}^+)$: we have $y:=\theta_\Delta(x)\in C_\Delta^{G_{K,\Delta}}=K_\Delta$ since $\theta_\Delta$ is equivariant. This implies that $z:=x-y\in\H^0(G_{K,\Delta},\Fil^1\B_{\dR,\Delta})$. Assume $z\neq0$: as the filtration is separated, there exists $i\in\NN_{>0}$ such that $z\in\Fil^i\B_{\dR,\Delta}^+\setminus\Fil^{i+1}\B_{\dR,\Delta}^+$, so that the image $\overline{z}$ of $z$ in $\gr^i\B_{\dR,\Delta}^+$ is not zero. This implies in particular that $\H^0(G_{K,\Delta},\gr^i\B_{\dR,\Delta}^+)\neq0$. By corollary \ref{corosuiteregBdR}, we have $\gr^i\B_{\dR,\Delta}^+\simeq\bigoplus\limits_{\substack{\underline{n}\in\NN^\Delta\\ \abs{\underline{n}}=i}}C_\Delta(\underline{n})$: by theorem \ref{theocohoGCn}, this implies that $\H^0(G_{K,\Delta},\gr^i\B_{\dR,\Delta}^+)=0$: contradiction. This shows that $z=0$ \ie $x=y\in K_\Delta$. The reverse inclusion is obvious.
\end{proof}

\begin{coro}\label{coroinvariantsBdR2}
For all $r\in\NN$, we have $\H^1(H_{K,\Delta},\gr^r\B_{\dR,\Delta}^+)=\H^1(H_{K,\Delta},\Fil^r\B_{\dR,\Delta}^+)=\{0\}$.
\end{coro}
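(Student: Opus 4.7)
The plan is to deduce both vanishings from theorem \ref{theocohoHC}, first for the graded pieces $\gr^r\B_{\dR,\Delta}^+$, then by induction for all finite-length quotients $\Fil^r\B_{\dR,\Delta}^+/\Fil^n\B_{\dR,\Delta}^+$ (with $n>r$), and finally by a passage to the limit for $\Fil^r\B_{\dR,\Delta}^+$ itself.

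By corollary \ref{corosuiteregBdR}, we have a $G_{K,\Delta}$-equivariant decomposition
$$\gr^r\B_{\dR,\Delta}^+\simeq\bigoplus\limits_{\substack{\underline{n}\in\NN^\Delta\\ \abs{\underline{n}}=r}}C_\Delta(\underline{n}).$$
Since $H_{K,\Delta}\subset\Ker(\chi_\Delta^{\underline{n}})$ for every $\underline{n}\in\ZZ^\Delta$, each Tate twist $C_\Delta(\underline{n})$ is simply $C_\Delta$ as an $H_{K,\Delta}$-module. Theorem \ref{theocohoHC} then yields $\H^1(H_{K,\Delta},\gr^r\B_{\dR,\Delta}^+)=0$ immediately, settling the first vanishing. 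For the finite-length quotients I would argue by induction on $n\geq r$ using the short exact sequences
$$0\to\gr^n\B_{\dR,\Delta}^+\to\Fil^r\B_{\dR,\Delta}^+/\Fil^{n+1}\B_{\dR,\Delta}^+\to\Fil^r\B_{\dR,\Delta}^+/\Fil^n\B_{\dR,\Delta}^+\to0.$$
The base case $n=r$ reduces to $\Fil^r/\Fil^{r+1}=\gr^r\B_{\dR,\Delta}^+$, already handled. The inductive step is read off the associated long exact cohomology sequence: the outer $\H^1$-terms vanish by the graded case and by the induction hypothesis, so the middle term does as well.

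To pass to the limit, recall that $\Fil^r\B_{\dR,\Delta}^+=\varprojlim\limits_n\Fil^r\B_{\dR,\Delta}^+/\Fil^n\B_{\dR,\Delta}^+$ in the canonical topology. Invoking \cite[Theorem 2.7.5]{NSW} exactly as in the proof of theorem \ref{theocohoHC}, one gets an exact sequence
$$0\to\varprojlim\limits_n{}^{(1)}\H^0(H_{K,\Delta},\Fil^r/\Fil^n)\to\H^1(H_{K,\Delta},\Fil^r\B_{\dR,\Delta}^+)\to\varprojlim\limits_n\H^1(H_{K,\Delta},\Fil^r/\Fil^n).$$
The right-hand term is zero by the previous step. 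As a byproduct of the same long exact sequences, the transition maps $\H^0(\Fil^r/\Fil^{n+1})\to\H^0(\Fil^r/\Fil^n)$ are surjective (their cokernel embeds into $\H^1(\gr^n\B_{\dR,\Delta}^+)=0$), so the inverse system $\{\H^0(\Fil^r/\Fil^n)\}_n$ satisfies Mittag-Leffler and the $\varprojlim^{(1)}$-term vanishes as well. The only delicate point is ensuring that each $\Fil^r/\Fil^n$ carries a topology making the $H_{K,\Delta}$-action continuous and compatible with taking inverse limits; this is exactly the content of the canonical topology introduced with $\B_{\dR,\Delta}^+$, so the framework applies without further work.
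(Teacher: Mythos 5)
Your proof is correct and follows essentially the same route as the paper's: reduce to the graded pieces via corollary~\ref{corosuiteregBdR} and theorem~\ref{theocohoHC}, then induct on the finite-length quotients $\Fil^r/\Fil^n$ via the short exact sequences, and finally pass to the inverse limit using \cite[Theorem 2.7.5]{NSW} together with the Mittag--Leffler property of $\{\H^0(H_{K,\Delta},\Fil^r/\Fil^n)\}_n$. Your extra remark that the Tate twists $C_\Delta(\underline{n})$ are trivial as $H_{K,\Delta}$-modules makes explicit what the paper leaves implicit, but the argument is the same.
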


\begin{proof}
The equalities $\H^1(H_{K,\Delta},\gr^r\B_{\dR,\Delta}^+)=\{0\}$ follow from the fact that $\gr^r\B_{\dR,\Delta}^+\simeq\Sym^r_{C_\Delta}\big(\bigoplus\limits_{\alpha\in\Delta}C_\Delta t_\alpha\big)$(as $G_{K,\Delta}$-modules) by corollary \ref{corosuiteregBdR} and theorem \ref{theocohoHC}.

\noindent
If $s>r$, we prove by induction on $s-r$ that $\H^1(H_{K,\Delta},B_{r,s})=\{0\}$, where $B_{r,s}:=\Fil^r\B_{\dR,\Delta}^+/\Fil^s\B_{\dR,\Delta}^+)=\{0\}$. This is obvious if $s-r=1$ since $B_{r,r+1}=\gr^r\B_{\dR,\Delta}^+$. Assuming that $\H^1(H_{K,\Delta},B_{r,s})=\{0\}$, we have the exact sequence of $G_{K,\Delta}$-modules
$$0\to\gr^s\B_{\dR,\Delta}^+\to B_{r,s+1}\to B_{r,s}\to0$$
so that $\H^1(H_{K,\Delta},\gr^s\B_{\dR,\Delta}^+)\to\H^1(H_{K,\Delta},B_{r,s+1})\to\H^1(H_{K,\Delta},B_{r,s})$ is exact, hence $\H^1(H_{K,\Delta},B_{r,s+1})=0$. To conclude we use the exact sequence
$$0\to\varprojlim_s{}^{(1)}\H^0(\H_{K,\Delta},B_{r,s})\to\H^1(H_{K,\Delta},\Fil^r\B_{dR,\Delta}^+)\to\varprojlim\limits_s\H^1(H_{K,\Delta},B_{r,s})\to0$$
and the fact that the sequence $\big(\H^0(\H_{K,\Delta},B_{r,s})\big)_{s>r}$ has the Mittag-Leffler property (the transition maps are surjective, which follows from the vanishing of the cohomology of $\gr^s\B_{\dR,\Delta}^+$ for all $s\geq r$).
\end{proof}

\begin{defi}
Put $t_\Delta=\prod\limits_{\alpha\in\Delta}t_\alpha\in\B_{\dR,\Delta}^+$, and $\B_{\dR,\Delta}=\B_{\dR,\Delta}^+\big[\frac{1}{t_\Delta}\big]$. As $G_K$ acts on $t$ by multiplication by the cyclotomic character, the group $G_{K,\Delta}$ acts on $t_\Delta$ by multiplication by $\chi_\Delta^{\underline{1}}$, where $\underline{1}$ it the element in $\ZZ^\Delta$ whose components are all equal to $1$. In particular, the action of $G_{K,\Delta}$ on $\B_{\dR,\Delta}^+$ extends into an action on $\B_{\dR,\Delta}$. Also, we endow $\B_{\dR,\Delta}=\varinjlim\limits_it_\Delta^{-i}\B_{\dR,\Delta}^+$ with the inductive limit topology. Finally, we endow $\B_{\dR,\Delta}$ with a filtration indexed by $\ZZ$ by putting
$$\Fil^r\B_{\dR,\Delta}=\varinjlim\limits_{i\geq r}t_\Delta^{-i}\Fil^{i\delta+r}\B_{\dR,\Delta}^+$$
for all $r\in\ZZ$. This defines a decreasing separated and exhaustive filtration on $\B_{\dR,\Delta}$. Note that $\Fil^r\B_{\dR,\Delta}$ is stable under the action of $G_{K,\Delta}$ for all $r\in\ZZ$.
\end{defi}

\begin{prop}\label{propgradBdRD}
We have $\gr^r\B_{\dR,\Delta}\simeq\bigoplus\limits_{\substack{\underline{n}=(n_\alpha)_{\alpha\in\Delta}\in\ZZ^\Delta\\ \sum_\alpha n_\alpha=r}}C_\Delta t_\Delta^{\underline{n}}$, where $t_\Delta^{\underline{n}}=\prod\limits_{\alpha\in\Delta}t_\alpha^{n_\alpha}$ if $\underline{n}=(n_\alpha)_{\alpha\in\Delta}$. In particular, we have
$$\H^0(G_{K,\Delta},\gr^r\B_{\dR,\Delta})=\begin{cases}K_\Delta&\text{ if }r=0\\ 0&\text{ if }r\neq0\end{cases}.$$
\end{prop}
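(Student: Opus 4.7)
My approach is to derive both assertions directly from Corollary \ref{corosuiteregBdR} (identifying $\gr\B_{\dR,\Delta}^+$ with the polynomial ring $C_\Delta[X_\alpha]_{\alpha\in\Delta}$ with $X_\alpha\leftrightarrow t_\alpha$) and Theorem \ref{theocohoGCn} (vanishing of $\H^0(G_{K,\Delta},C_\Delta(\underline{n}))$ for $\underline{n}\neq\underline{0}$). The essential point is to compute $\gr^r\B_{\dR,\Delta}$ from the inductive limit definition of the filtration, and then recognize the result as a direct sum of twists $C_\Delta(\underline{n})$ on which the previous cohomological vanishing applies.

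For the first assertion, observe that $t_\Delta\in\Fil^\delta\B_{\dR,\Delta}^+\setminus\Fil^{\delta+1}\B_{\dR,\Delta}^+$ (its image in $\gr^\delta\B_{\dR,\Delta}^+$ is the nonzero monomial $\prod_{\alpha\in\Delta}X_\alpha$), and it is a non-zero-divisor by Proposition \ref{propsuiteregBdR}. Hence multiplication by $t_\Delta^{-i}$ is a strict filtered isomorphism $\B_{\dR,\Delta}^+\isomto t_\Delta^{-i}\B_{\dR,\Delta}^+$ shifting degree by $-i\delta$. Since $\varinjlim$ is exact, passing to graded pieces in the definition of $\Fil^r\B_{\dR,\Delta}$ gives
$$\gr^r\B_{\dR,\Delta}\simeq\varinjlim\limits_{i}t_\Delta^{-i}\gr^{i\delta+r}\B_{\dR,\Delta}^+.$$
By Corollary \ref{corosuiteregBdR}, the right-hand side at level $i$ equals $\bigoplus_{\underline{m}\in\NN^\Delta,\,|\underline{m}|=i\delta+r}C_\Delta t_\Delta^{-i+\underline{m}}=\bigoplus_{\underline{n}\in\ZZ^\Delta,\,|\underline{n}|=r,\,n_\alpha\geq-i}C_\Delta t_\Delta^{\underline{n}}$, with transition maps being the obvious inclusions. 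The colimit exhausts all $\underline{n}\in\ZZ^\Delta$ with $\sum_\alpha n_\alpha=r$, which yields the claimed description.

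For the second assertion, recall that $g\cdot t_\alpha=\chi(g_\alpha)t_\alpha$ for $g=(g_\alpha)_{\alpha\in\Delta}\in G_{K,\Delta}$, so $g\cdot t_\Delta^{\underline{n}}=\chi_\Delta^{\underline{n}}(g)t_\Delta^{\underline{n}}$; hence $C_\Delta t_\Delta^{\underline{n}}\simeq C_\Delta(\underline{n})$ as $G_{K,\Delta}$-modules. Because direct sums commute with $G_{K,\Delta}$-invariants,
$$\H^0(G_{K,\Delta},\gr^r\B_{\dR,\Delta})\simeq\bigoplus\limits_{\substack{\underline{n}\in\ZZ^\Delta\\ |\underline{n}|=r}}\H^0\big(G_{K,\Delta},C_\Delta(\underline{n})\big).$$
By Theorem \ref{theocohoGCn}, each summand with $\underline{n}\neq\underline{0}$ vanishes. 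When $r\neq0$, the tuple $\underline{0}$ does not appear in the index set, so the whole sum is zero. When $r=0$, the only contribution is from $\underline{n}=\underline{0}$, which gives $C_\Delta^{G_{K,\Delta}}=K_\Delta$ by Theorem \ref{theocohoGC} in degree $0$.

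The only slightly delicate point is the strictness used above, namely that multiplication by $t_\Delta$ satisfies $t_\Delta\B_{\dR,\Delta}^+\cap\Fil^{s+\delta+1}\B_{\dR,\Delta}^+=t_\Delta\Fil^{s+1}\B_{\dR,\Delta}^+$ for all $s$; this is immediate from the regularity of $(t_\alpha)_{\alpha\in\Delta}$ in $\B_{\dR,\Delta}^+$ (Proposition \ref{propsuiteregBdR}) together with the polynomial-ring identification of $\gr\B_{\dR,\Delta}^+$, in which $\prod_\alpha X_\alpha$ is a non-zero-divisor. Everything else is bookkeeping.
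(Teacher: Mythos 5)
Your proof is correct and follows essentially the same approach as the paper: compute $\gr^r\B_{\dR,\Delta}$ as a colimit of shifted graded pieces of $\B_{\dR,\Delta}^+$ using Corollary~\ref{corosuiteregBdR}, then apply Theorems~\ref{theocohoGC} and~\ref{theocohoGCn}. The only difference is that you explicitly justify the strictness of multiplication by $t_\Delta$ (so that $\gr$ commutes with the colimit), a step the paper leaves implicit as following ``by definition.''
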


\begin{proof}
By definition we have
$$\gr^r\B_{\dR,\Delta}=\varinjlim\limits_{i\geq r}t_\Delta^{-i}\gr^{i\delta+r}\B_{\dR,\Delta}^+\simeq\varinjlim\limits_{i\geq r}\bigoplus\limits_{\substack{\underline{m}=(m_\alpha)_{\alpha\in\Delta}\in\NN^\Delta\\ \sum_\alpha m_\alpha=i\delta+r}}C_\Delta t_\Delta^{\underline{m}-i\underline{1}}=\bigoplus\limits_{\substack{\underline{n}=(n_\alpha)_{\alpha\in\Delta}\in\ZZ^\Delta\\ \sum_\alpha n_\alpha=r}}C_\Delta t_\Delta^{\underline{n}}.$$
The second part follows from theorems \ref{theocohoGC} and \ref{theocohoGCn}.
\end{proof}

\begin{defi}
Put $\B_{\HT,\Delta}=\gr\B_{\dR,\Delta}$. By what precedes, this is a graded $C_\Delta$-algebra endowed with an action of $G_{K,\Delta}$, and $\B_{\HT,\Delta}\simeq C_\Delta[t_\alpha,t_\alpha^{-1}]_{\alpha\in\Delta}\simeq\bigoplus\limits_{\underline{n}\in\ZZ^\Delta}C_\Delta(\underline{n})$ (as $G_{K,\Delta}$-modules).
\end{defi}

\begin{coro}\label{coroinvBdRD}
We have $\H^0(G_{K,\Delta},\B_{\dR,\Delta})=\H^0(G_{K,\Delta},\B_{\HT,\Delta})=K_\Delta$.
\end{coro}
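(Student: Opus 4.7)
The plan is to reduce both equalities to Proposition \ref{propgradBdRD}, using the filtration structure on $\B_{\dR,\Delta}$ and an argument nearly identical to that of Corollary \ref{coroinvariantsBdR}.

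For $\B_{\HT,\Delta}$, the identification is essentially tautological: by definition $\B_{\HT,\Delta}=\gr\B_{\dR,\Delta}=\bigoplus_{r\in\ZZ}\gr^r\B_{\dR,\Delta}$, and taking $G_{K,\Delta}$-invariants commutes with the direct sum decomposition. By Proposition \ref{propgradBdRD}, the only nonzero contribution comes from $r=0$, where the invariants are $K_\Delta$. This gives $\H^0(G_{K,\Delta},\B_{\HT,\Delta})=K_\Delta$.

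For $\B_{\dR,\Delta}$, I would argue as follows. Let $x\in\H^0(G_{K,\Delta},\B_{\dR,\Delta})$ be nonzero. Since the filtration $(\Fil^r\B_{\dR,\Delta})_{r\in\ZZ}$ is exhaustive and separated, there exists a unique integer $r\in\ZZ$ with $x\in\Fil^r\B_{\dR,\Delta}\setminus\Fil^{r+1}\B_{\dR,\Delta}$. Since each $\Fil^r\B_{\dR,\Delta}$ is $G_{K,\Delta}$-stable, the image $\bar{x}\in\gr^r\B_{\dR,\Delta}$ is a nonzero invariant. By Proposition \ref{propgradBdRD}, this forces $r=0$ and $\bar{x}\in K_\Delta$ (viewed inside the $\underline{n}=\underline{0}$ component $C_\Delta$ of $\gr^0\B_{\dR,\Delta}$). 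Pick a lift $y\in K_\Delta\subset\B_{\dR,\Delta}^+\subset\Fil^0\B_{\dR,\Delta}$ of $\bar{x}$; then $x-y\in\Fil^1\B_{\dR,\Delta}$ is again $G_{K,\Delta}$-invariant. If $x-y\neq 0$, the same reasoning applied to $x-y$ would produce some $r'\geq 1$ and a nonzero invariant in $\gr^{r'}\B_{\dR,\Delta}$, contradicting Proposition \ref{propgradBdRD}. Hence $x=y\in K_\Delta$, and the reverse inclusion $K_\Delta\subset\H^0(G_{K,\Delta},\B_{\dR,\Delta})$ is clear from the definition of the $F_\Delta$-algebra structure.

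There is no real obstacle here; the only mild point to verify is that the natural map $K_\Delta\hookrightarrow\B_{\dR,\Delta}^+\to\gr^0\B_{\dR,\Delta}$ (induced by $\theta_\Delta$ on the $\underline{n}=\underline{0}$ component) is injective and surjects onto $(\gr^0\B_{\dR,\Delta})^{G_{K,\Delta}}=K_\Delta$, which allows the lift $y$ to exist. This follows directly from the identification in Proposition \ref{propgradBdRD} and the fact that $\theta_\Delta$ restricts to the identity on $K_\Delta$.
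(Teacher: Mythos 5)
Your proof is correct and follows essentially the route the paper intends: the corollary is stated without proof as an immediate consequence of Proposition~\ref{propgradBdRD}, and your ``leading term'' argument for $\B_{\dR,\Delta}$ is precisely the one used in the proof of Corollary~\ref{coroinvariantsBdR}, now run with $\ZZ$-indexed filtration and Proposition~\ref{propgradBdRD} in place of Corollary~\ref{corosuiteregBdR}. The Hodge--Tate case is, as you say, immediate from the graded decomposition together with theorems~\ref{theocohoGC} and~\ref{theocohoGCn}.
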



\subsection{De Rham and Hodge-Tate representations}\label{sectdRHTrepresentations}

Put $G_{F_0}=\Gal(\Kbar/F_0)$. Similarly as we have seen in the proof of proposition \ref{propfond}, the choice of an ordering $\alpha_1<\cdots<\alpha_{\delta}$ of $\Delta$ provides an injective and $G_{K,\Delta}$-equivariant ring homomorphism
$$\widehat{\iota}\colon C_\Delta\to\prod\limits_{\underline{\gamma}\in G_{F_0}^{\delta-1}}C$$
(induced by the map sending $x_1\otimes\cdots\otimes x_{\delta}$ to the element whose component of index $\underline{\gamma}=(\gamma_2,\ldots,\gamma_{\delta})\in G_{F_0}^{\delta-1}$ is $x_1\gamma_2(x_2\gamma_3(x_3\cdots \gamma_{\delta}(x_{\delta})\cdots))$), where the action of $\underline{g}=(g_1,\ldots,g_{\delta})\in G_{K,\Delta}$ on the LHS is induced by the action defined by $\underline{g}\cdot(x_1\otimes\cdots x_{\delta})=g_1(x_1)\otimes\cdots\otimes g_{\delta}(x_{\delta})$ on $\Kbar^{\otimes\Delta}$, and that on the RHS is given by $\underline{g}\cdot(x_{\underline{\gamma}})_{\underline{\gamma}\in G_K^{\delta-1}}=\big(g_1x_{\underline{g}\cdot\underline{\gamma}}\big)_{\underline{\gamma}}$, where $\underline{g}\cdot\underline{\gamma}=(g_1^{-1}\gamma_2g_2,g_2^{-1}\gamma_3g_3,\ldots,g_{\delta-1}^{-1}\gamma_{\delta}g_{\delta})$ if $\underline{\gamma}=(\gamma_2,\ldots,\gamma_{\delta})$. The tilt of this map gives an injective $G_{K,\Delta}$-equivariant ring homomorphism $\widehat{\iota}^\flat\colon C_\Delta^\flat\to\prod\limits_{\underline{\gamma}\in G_{F_0}^{\delta-1}}C^\flat$. The diagram
$$\xymatrix{
\W(\mathcal{O}_{C_\Delta}^\flat)\ar@{^(->}[r]^-{\W(\widehat{\iota}^\flat)}\ar[d]_{\theta_\Delta} & \prod\limits_{\underline{\gamma}\in G_{F_0}^{\delta-1}}\W(\mathcal{O}_{C^\flat})\ar[d]^{\prod_{\underline{\gamma}}\theta}\ar@{^(->}[r] & \prod\limits_{\underline{\gamma}\in G_{F_0}^{\delta-1}}\B_{\dR}^+\ar[ld]\\
\mathcal{O}_{C_\Delta}\ar[r]^-{\widehat{\iota}} & \prod\limits_{\underline{\gamma}\in G_{F_0}^{\delta-1}}\mathcal{O}_C & }$$
is commutative. This induces an injective and $G_{K,\Delta}$-equivariant ring homomorphism
$$\iota_{\dR}\colon\B_{\dR,\Delta}^+\to\prod\limits_{\underline{\gamma}\in G_{F_0}^{\delta-1}}\B_{\dR}^+.$$
The component of index $\underline{\gamma}$ of the image of $t_{\alpha_i}$ by the previous map is $\chi(\gamma_2\cdots\gamma_i)t$: this shows that it extends into an injective and $G_{K,\Delta}$-equivariant map
$$\iota_{\dR}\colon\B_{\dR,\Delta}\to\prod\limits_{\underline{\gamma}\in G_{F_0}^{\delta-1}}\B_{\dR}$$
(which shows in particular that $\B_{\dR,\Delta}$ is reduced).

\begin{nota}
If $V\in\Rep_{\QQ_p}(G_{K,\Delta})$, we put $\D_{\dR}(V)=(\B_{\dR,\Delta}\otimes_{\QQ_p}V)^{G_{K,\Delta}}$. This is a $K_\Delta$-module which is endowed with the filtration induced by that on $\B_{\dR,\Delta}$ (\ie $\Fil^r\D_{\dR}(V)=(\Fil^r\B_{\dR,\Delta}\otimes_{\QQ_p}V)^{G_{K,\Delta}}$). By $\B_{\dR,\Delta}$-linearity, the inclusion $\D_{\dR}(V)\subset\B_{\dR,\Delta}\otimes_{\QQ_p}V$ extends into a $\B_{\dR,\Delta}$-linear and $G_{K,\Delta}$-equivariant map
$$\alpha_{\dR}(V)\colon\B_{\dR,\Delta}\otimes_{K_\Delta}\D_{\dR}(V)\to\B_{\dR,\Delta}\otimes_{\QQ_p}V.$$
We define $\D_{\HT}(V)$ and $\alpha_{\HT}(V)$ similarly.
\end{nota}

\begin{prop}\label{propalphainj}
The $K_\Delta$-modules $\D_{\dR}(V)$ and $\D_{\HT}(V)$ are of finite type, and the maps $\alpha_{\dR}(V)$ and $\alpha_{\HT}(V)$ are injective.
\end{prop}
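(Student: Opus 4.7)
The strategy is to handle the Hodge--Tate case first using the grading on $\B_{\HT,\Delta}$, then deduce the de Rham case by passing to associated graded objects via the filtration on $\B_{\dR,\Delta}$. The new input needed for the multivariable setting is Corollary~\ref{coroSenop3}, which provides both the finiteness and the injectivity in grade zero.

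For the Hodge--Tate case, the decomposition $\B_{\HT,\Delta}\simeq\bigoplus_{\underline{n}\in\ZZ^\Delta}C_\Delta(\underline{n})$ from Proposition~\ref{propgradBdRD} is $G_{K,\Delta}$-equivariant and compatible with the grading. Taking invariants yields a $K_\Delta$-linear identification
\[
\D_{\HT}(V)\simeq\bigoplus_{\underline{n}\in\ZZ^\Delta}\D_{C_\Delta}(V(\underline{n})),
\]
and Corollary~\ref{coroSenop3} ensures each summand is of finite type over $K_\Delta$ and that all but finitely many vanish; hence $\D_{\HT}(V)$ is of finite type over $K_\Delta$, which is Noetherian since it is a finite product of fields by Remark~\ref{remaRR}. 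For $\alpha_{\HT}(V)$, both sides carry a natural $\ZZ^\Delta$-grading, and $\alpha_{\HT}(V)$ is $\B_{\HT,\Delta}$-linear and homogeneous. Multiplication by the unit $t_\Delta^{\underline{k}}$ identifies the grade-$\underline{k}$ component with the grade-$\underline{0}$ component as a $K_\Delta$-linear map, and tracing through the identifications shows that the latter is exactly $\alpha_{\HT,0}(V)$, whose injectivity is Corollary~\ref{coroSenop3}.

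For the de Rham case, endow $\D_{\dR}(V)$ with the induced filtration $\Fil^r\D_{\dR}(V)=(\Fil^r\B_{\dR,\Delta}\otimes_{\QQ_p}V)^{G_{K,\Delta}}$, which is separated because $\B_{\dR,\Delta}\otimes V\simeq\B_{\dR,\Delta}^{\dim V}$ is separated. Reduction mod $\Fil^{r+1}$ yields a natural injection
\[
\gr^r\D_{\dR}(V)\hookrightarrow(\gr^r\B_{\dR,\Delta}\otimes_{\QQ_p}V)^{G_{K,\Delta}},
\]
and the target is precisely the grade-$r$ piece of $\D_{\HT}(V)$ for the total-degree grading. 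Combined with the previous paragraph, each $\gr^r\D_{\dR}(V)$ is of finite type and vanishes for $\abs{r}\gg0$, so the filtration on $\D_{\dR}(V)$ is finite, and $\D_{\dR}(V)$ itself is of finite type over $K_\Delta$.

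Finally, equip $\B_{\dR,\Delta}\otimes_{K_\Delta}\D_{\dR}(V)$ with the tensor product filtration; separatedness follows from the separatedness of the filtration on $\B_{\dR,\Delta}$ combined with the finite generation of $\D_{\dR}(V)$ just obtained. There is a natural surjection $\B_{\HT,\Delta}\otimes_{K_\Delta}\gr\D_{\dR}(V)\twoheadrightarrow\gr(\B_{\dR,\Delta}\otimes_{K_\Delta}\D_{\dR}(V))$, and composing with $\gr\alpha_{\dR}(V)$ gives
\[
\B_{\HT,\Delta}\otimes_{K_\Delta}\gr\D_{\dR}(V)\hookrightarrow\B_{\HT,\Delta}\otimes_{K_\Delta}\D_{\HT}(V)\xrightarrow{\alpha_{\HT}(V)}\B_{\HT,\Delta}\otimes_{\QQ_p}V,
\]
where the first arrow is injective because $\gr\D_{\dR}(V)\hookrightarrow\D_{\HT}(V)$ and $\B_{\HT,\Delta}$ is flat over $K_\Delta$ (as $K_\Delta$ is a finite product of fields). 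Since this composite is injective and lifts every element of the source, any element of $\ker\gr\alpha_{\dR}(V)$ must vanish, so $\gr\alpha_{\dR}(V)$ is injective; by separatedness of the filtration on the source, $\alpha_{\dR}(V)$ is injective too. The main bookkeeping obstacle is the careful identification of graded pieces with the maps $\alpha_{\HT,0}(V)$ and $\alpha_{\HT}(V)$ through the relevant twists; beyond this, only flatness of $\B_{\HT,\Delta}$ over $K_\Delta$ and the separatedness of the various filtrations are required.
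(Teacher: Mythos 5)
Your proof is correct, but your route for the de Rham case is genuinely different from the paper's; the Hodge--Tate case is essentially identical (both reduce $\alpha_{\HT}(V)$ to $\alpha_{\HT,0}(V)$ via the $\ZZ^\Delta$-grading and invoke Corollary~\ref{coroSenop3}). For de Rham, the paper works directly: assuming first $K/F_0$ Galois, it uses the $G_{K,\Delta}$-equivariant embedding $\iota_{\dR}\colon\B_{\dR,\Delta}\hookrightarrow\prod_{\underline{\gamma}}\B_{\dR}$ from \S\ref{sectdRHTrepresentations}, identifies $\big(\prod_{\underline{\gamma}}\B_{\dR}\otimes_{\QQ_p}V\big)^{G_{K,\Delta}}$ explicitly as a finite product of copies of the classical one-variable $\D_{\dR}$ of $V$ restricted to one factor of $G_{K,\Delta}$, deduces finiteness and injectivity of $\alpha_{\dR}(V)$ from the classical theory via a commutative diagram, and handles general $K$ by Galois descent to the Galois closure of $K/F_0$. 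Your approach instead bootstraps de Rham from Hodge--Tate through the filtration: you bound $\gr^r\D_{\dR}(V)$ by the degree-$r$ piece of $\D_{\HT}(V)$ to get finiteness, then prove injectivity of $\gr\alpha_{\dR}(V)$ by factoring through $\alpha_{\HT}(V)$, and conclude by separatedness. This effectively folds the content of Proposition~\ref{propdRHT} into the present proof, which is legitimate since that proposition is proved afterwards and its own proof cites the finiteness established here, so no circularity arises. The paper's route is more concrete and sidesteps the tensor-product filtration entirely; yours stays internal to the multivariable rings and avoids any case distinction on $K/F_0$. The one step worth making fully explicit in your write-up is the separatedness of $\Fil^\bullet\big(\B_{\dR,\Delta}\otimes_{K_\Delta}\D_{\dR}(V)\big)$: once $\D_{\dR}(V)$ is known to be finite projective over $K_\Delta$ with filtration bounded by, say, $\Fil^{M+1}\D_{\dR}(V)=0$, the tensor-product filtration in degree $r$ is contained in $\Fil^{r-M}\B_{\dR,\Delta}\otimes_{K_\Delta}\D_{\dR}(V)$, and the intersection over $r$ vanishes because $\Fil^\bullet\B_{\dR,\Delta}$ is separated and $\D_{\dR}(V)$ is a direct summand of a finite free $K_\Delta$-module.
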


\begin{proof}
$\bullet$ Assume that $K/F_0$ is Galois with group $G_{K/F_0}$, and put $\widetilde{\B}_{\dR,\Delta}=\prod\limits_{\underline{\gamma}\in G_{F_0}^{\delta-1}}\B_{\dR}$: as recalled above, there is an injective $G_{K,\Delta}$-equivariant ring homomorphism $\iota_{\dR}\colon\B_{\dR,\Delta}\to\widetilde{\B}_{\dR,\Delta}$ (that depends on the choice of an ordering $\alpha_1<\cdots<\alpha_{\delta}$ on $\Delta$). Put $\widetilde{\D}_{\dR}(V)=\big(\widetilde{\B}_{\dR,\Delta}\otimes_{\QQ_p}V\big)^{G_{K,\Delta}}\subset\prod\limits_{\underline{\gamma}\in G_{F_0}^{\delta-1}}\B_{\dR}\otimes_{\QQ_p}V$:  the map $\iota_{\dR}$ induces a $K_\Delta$-linear injective map $\iota\colon\D_{\dR}(V)\to\widetilde{\D}_{\dR}(V)$. If $(x_{\underline{\gamma}})_{\underline{\gamma}}\in\prod\limits_{\underline{\gamma}\in G_{F_0}^{\delta-1}}\B_{\dR}\otimes_{\QQ_p}V$ and $\underline{g}=(g_1,\ldots,g_{\delta})\in G_{K,\Delta}$, we have $\underline{g}\cdot(x_{\underline{\gamma}})_{\underline{\gamma}}=(g_1(x_{\underline{g}\cdot\underline{\gamma}}))_{\underline{\gamma}}$, thus $(x_{\underline{\gamma}})_{\underline{\gamma}}\in\widetilde{\D}_{\dR}(V)$ if and only if $g_1(x_{\underline{g}\cdot\underline{\gamma}})=x_{\underline{\gamma}}$ for all $\underline{g}\in G_{K,\Delta}$ and $\underline{\gamma}\in G_{F_0}^{\delta-1}$. If $g\in G_K$ and $\underline{\gamma}_0=(\gamma_2,\ldots,\gamma_{\delta})\in G_{F_0}^{\delta-1}$, define the element $\underline{g}=(g_1,\ldots,g_{\delta})\in G_{K,\Delta}$ by $g_1=g$ and $g_i=\gamma_i^{-1}g_{i-1}\gamma_i$ for all $i\in\{2,\ldots,\delta\}$ (we have indeed $\gamma_i^{-1}g_{i-1}\gamma_i\in G_K$ since $G_K$ is normal in $G_{F_0}$ because $K/F_0$ is Galois). Then we have $\underline{g}\cdot\underline{\gamma}_0=\underline{\gamma}_0$, and the component of index $\underline{\gamma}_0$ of
$\underline{g}\big(x_{\underline{\gamma}}\big)_{\underline{\gamma}}$ is $g(x_{\underline{\gamma}_0})$: this shows that $x_{\underline{\gamma}_0}\in\B_{\dR}\otimes_{\QQ_p}V$ is fixed under $G_K$, \ie that $x_{\underline{\gamma}_0}\in\D_{\dR,\alpha_1}(V)=(\B_{\dR}\otimes_{\QQ_p}V)^{G_K}$ (where the action of $G_K$ on $V$ is via the map $\iota_{\alpha_1}\colon G_K\to G_{K,\Delta}$). This implies that $(x_{\underline{\gamma}})_{\underline{\gamma}}$ is fixed by $G_{K,\Delta}$ if and only if its components all belong to $\D_{\dR,\alpha_1}(V)$ and $x_{\underline{g}\cdot\underline{\gamma}}=x_{\underline{\gamma}}$ for all $\underline{\gamma}\in G_{F_0}^{\delta-1}$ and $\underline{g}=(\Id_{\Kbar},g_2,\ldots,g_{\delta})\in\{\Id_{\Kbar}\}\times G_K^{\delta-1}$. As $G_K^{\delta-1}$ acts transitively on those $\underline{\gamma}$ that map to a fixed $\underline{\sigma}\in G_{K/F_0}^{\delta-1}$, this shows that
$$\widetilde{\D}_{\dR}(V)=\prod\limits_{\underline{\sigma}\in G_{K/F_0}^{\delta-1}}\D_{\dR,\alpha_1}(V)\subset\prod\limits_{\underline{\gamma}\in G_{F_0}^{\delta-1}}\D_{\dR,\alpha_1}(V)$$
(where we embed the factor $\D_{\dR,\alpha_1}(V)$ of index $\underline{\sigma}$ diagonally in $\prod\limits_{\substack{\underline{\gamma}\in G_{F_0}^{\delta-1}\\ \underline{\gamma}\mapsto\underline{\sigma}}}\D_{\dR,\alpha_1}(V)$). As $\D_{\dR,\alpha_1}(V)$ is a finite dimensional $K$-vector space, this shows in particular that $\widetilde{\D}_{\dR}(V)$ is a $K_\Delta$-module of finite type.

The natural map $\B_{\dR}\otimes_K\D_{\dR,\alpha_1}(V)\to\B_{\dR}\otimes_{\QQ_p}V$ is injective: so are the maps
$$\Big(\prod\limits_{\substack{\underline{\gamma}\in G_{F_0}^{\delta-1}\\ \underline{\gamma}\mapsto\underline{\sigma}}}\B_{\dR}\Big)\otimes_K\D_{\dR,\alpha_1}(V)\to\Big(\prod\limits_{\substack{\underline{\gamma}\in G_{F_0}^{\delta-1}\\ \underline{\gamma}\mapsto\underline{\sigma}}}\B_{\dR}\Big)\otimes_{\QQ_p}V$$
for all $\underline{\sigma}\in G_{K/F_0}^{\delta-1}$, so that the map
$$\widetilde{\alpha}_{\dR}(V)\colon\widetilde{\B}_{\dR,\Delta}\otimes_{K_\Delta}\widetilde{\D}_{\dR}(V)\to\widetilde{\B}_{\dR,\Delta}\otimes_{\QQ_p}V$$
is injective (since its localizations via the projection maps $\iota_{\underline{\sigma}}\colon K_\Delta\to K$ are precisely the maps above). The diagram
$$\xymatrix@C=35pt{
\B_{\dR,\Delta}\otimes_{K_\Delta}\D_{\dR}(V)\ar[r]^-{\alpha_{\dR}(V)}\ar@{^(->}[d]_{\iota_{\dR}\otimes1} & \B_{\dR,\Delta}\otimes_{\QQ_p}V\ar@{^(->}[dd]^{\iota_{\dR}\otimes1}\\
\widetilde{\B}_{\dR,\Delta}\otimes_{K_\Delta}\D_{\dR}(V)\ar@{^(->}[d]_{1\otimes \iota} & \\
\widetilde{\B}_{\dR,\Delta}\otimes_{K_\Delta}\widetilde{\D}_{\dR}(V)\ar@{^(->}[r]^-{\widetilde{\alpha}_{\dR}(V)} & \widetilde{\B}_{\dR,\Delta}\otimes_{\QQ_p}V}$$
is commutative: this implies that $\alpha_{\dR}(V)$ is injective.

\medskip

\noindent
$\bullet$ If $K/F_0$ is not assumed to be Galois, let $K^\prime\subset\Kbar$ the Galois closure of $K$ over $F_0$: what precedes shows that $\D_{\dR,K^\prime}(V):=(\B_{\dR,\Delta}\otimes_{\QQ_p}V)^{G_{K^\prime,\Delta}}$ is a $K^\prime_\Delta$-module of finite type (hence projective of finite rank) endowed with a semi-linear action of $\Gal(K^\prime/K)^\Delta$, and that the natural map
$$\alpha_{\dR,K^\prime}(V)\colon\B_{\dR,\Delta}\otimes_{K^\prime_\Delta}\D_{\dR,K^\prime}(V)\to\B_{\dR,\Delta}\otimes_{\QQ_p}V$$
is injective. By Galois descent, we have $\D_{\dR,K^\prime}(V)\simeq K^\prime_\Delta\otimes_{K_\Delta}\D_{\dR}(V)$ as $\Gal(K^\prime/K)^\Delta$-modules, and $\D_{\dR}(V)$ is of finite type over $K_\Delta$. The commutative diagram
$$\xymatrix@R=10pt@C=40pt{
\B_{\dR,\Delta}\otimes_{K_\Delta}\D_{\dR}(V)\ar[rd]^-{\alpha_{\dR}(V)}\eq[dd] & \\
& \B_{\dR,\Delta}\otimes_{\QQ_p}V\\
\B_{\dR,\Delta}\otimes_{K^\prime_\Delta}\D_{\dR,K^\prime}(V)\ar[ru]_-{\alpha_{\dR,K^\prime}(V)} & }$$
this implies that $\alpha_{\dR}(V)$ is injective.

\medskip

\noindent
$\bullet$ Consider now the Hodge-Tate side. By corollary \ref{coroSenop3}, the $K_\Delta$-module
$$\D_{\HT}(V)=\Big(\bigoplus\limits_{\underline{n}\in\ZZ^\Delta}C_\Delta(\underline{n})\otimes_{\QQ_p}V\Big)^{G_{K,\Delta}}=\bigoplus\limits_{\underline{n}\in\ZZ^\Delta}(C_\Delta(\underline{n})\otimes_{\QQ_p}V)^{G_{K,\Delta}}$$
is of finite type (the sum is finite and all but finitely many factors are zero). Moreover, the natural map
$$\alpha_{\HT,0}(V)\colon\bigoplus\limits_{\underline{n}\in\ZZ^\Delta}C_\Delta(-\underline{n})\otimes_{K_\Delta}(C_\Delta(\underline{n})\otimes_{\QQ_p}V)^{G_{K,\Delta}}\to C_\Delta\otimes_{\QQ_p}V$$
is injective and $G_{K,\Delta}$-equivariant. If we tensor with $C_\Delta(\underline{m})$ over $C_\Delta$ and sum over all $\underline{m}\in\ZZ^\Delta$, this shows that the maps in the diagram
$$\xymatrix{\bigoplus\limits_{\underline{n},\underline{m}\in\ZZ^\Delta}C_\Delta(\underline{m}-\underline{n})\otimes_{K_\Delta}(C_\Delta(\underline{n})\otimes_{\QQ_p}V)^{G_{K,\Delta}}\ar[r]\eq[d] & \bigoplus\limits_{\underline{m}\in\ZZ^\Delta}C_\Delta(\underline{m})\otimes_{\QQ_p}V\eq[d]\\
\bigoplus\limits_{\underline{\ell}\in\ZZ^\Delta}C_\Delta(\underline{\ell})\otimes_{K_\Delta}\D_{\HT}(V)\ar[r]^-{\alpha_{\HT}(V)} & \B_{\HT}\otimes_{\QQ_p}V}$$
are injective.
\end{proof}

\begin{defi}
A $p$-adic representation $V$ of $G_{K,\Delta}$ is said \emph{de Rham} (resp. \emph{Hodge-Tate}) when the map $\alpha_{\dR}(V)$ (resp. $\alpha_{\HT}(V)$) is bijective. We denote by $\Rep_{\dR}(G_{K,\Delta})$ (resp. $\Rep_{\HT}(G_{K,\Delta})$) the full subcategory of $\Rep_{\QQ_p}(G_{K,\Delta})$ whose objects are de Rham (resp. Hodge-Tate) representations.
\end{defi}

Recall that $\D_{\dR}(V)$ is equipped with a decreasing filtration $\Fil^\bullet\D_{\dR}(V)$: denote by $\gr\D_{\dR}(V)$ the corresponding graded module. Note that the map $\alpha_{\dR}(V)$ is compatible with filtrations ($\B_{\dR,\Delta}\otimes_{K_\Delta}\D_{\dR}(V)$ being endowed with the tensor product filtration, given by $\Fil^r(\B_{\dR,\Delta}\otimes_{K_\Delta}\D_{\dR}(V))=\sum\limits_{i\in\ZZ}\Fil^i\B_{\dR,\Delta}\otimes_{K_\Delta}\Fil^{r-i}\D_{\dR}(V)$ for all $i\in\ZZ$).

\begin{prop}\label{propdRHT}
The filtration $\Fil^\bullet\D_{\dR}(V)$ is separated and exhaustive. There is a canonical injective map $\gr\D_{\dR}(V)\to\D_{\HT}(V)$, and $\alpha_{\dR}(V)$ is strictly compatible with filtrations. Moreover, if $V$ is de Rham, then it is Hodge-Tate and the map $\gr\D_{\dR}(V)\to\D_{\HT}(V)$ is an isomorphism.
\end{prop}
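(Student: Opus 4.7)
The plan is to handle the four assertions in turn. The filtration on $\B_{\dR,\Delta}^+$ is separated by construction, and inverting $t_\Delta$ preserves separatedness: an element written $x = t_\Delta^{-i_0} y$ with $y \in \Fil^{r_0} \B_{\dR,\Delta}^+ \setminus \Fil^{r_0+1} \B_{\dR,\Delta}^+$ lies in $\Fil^r \B_{\dR,\Delta}$ only when $r \leq r_0 - i_0\delta$, so $\Fil^\bullet \B_{\dR,\Delta}$ is separated, and a fortiori so is $\Fil^\bullet \D_{\dR}(V)$. Exhaustiveness of $\Fil^\bullet \D_{\dR}(V)$ follows from finite generation of $\D_{\dR}(V)$ over $K_\Delta$ (proposition \ref{propalphainj}): a finite generating set lies in some $\Fil^{r_0}(\B_{\dR,\Delta} \otimes_{\QQ_p} V)$, and since $K_\Delta \subset \Fil^0 \B_{\dR,\Delta}$, the $K_\Delta$-submodule it generates is contained in $\Fil^{r_0} \D_{\dR}(V)$, forcing $\D_{\dR}(V) = \Fil^{r_0} \D_{\dR}(V)$.

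For the canonical injection $\gr \D_{\dR}(V) \to \D_{\HT}(V)$, I would compose the inclusion $\Fil^r \D_{\dR}(V) \subset \Fil^r \B_{\dR,\Delta} \otimes_{\QQ_p} V$ with the projection to $\gr^r \B_{\dR,\Delta} \otimes_{\QQ_p} V$. The image lies in $G_{K,\Delta}$-invariants, and the kernel is precisely $\Fil^{r+1} \D_{\dR}(V)$ by definition of the induced filtration; this yields an injection $j^r\colon \gr^r \D_{\dR}(V) \hookrightarrow (\gr^r \B_{\dR,\Delta} \otimes_{\QQ_p} V)^{G_{K,\Delta}} = \gr^r \D_{\HT}(V)$, and summing over $r$ gives $j\colon \gr \D_{\dR}(V) \hookrightarrow \D_{\HT}(V)$.

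The key step is strict compatibility of $\alpha_{\dR}(V)$. By remark \ref{remaRR}, $K_\Delta$ is a product of finite extensions of $K$, so every finite-type $K_\Delta$-module is projective, and any finite-type filtered $K_\Delta$-module with separated and exhaustive filtration admits a filtered splitting. Choosing such a splitting $\D_{\dR}(V) = \bigoplus_{s \in \ZZ} N_s$ with $\Fil^r \D_{\dR}(V) = \bigoplus_{s \geq r} N_s$ (constructed component-wise over each field factor of $K_\Delta$), one computes
\[
\Fil^r \bigl(\B_{\dR,\Delta} \otimes_{K_\Delta} \D_{\dR}(V)\bigr) = \bigoplus_{s \in \ZZ} \Fil^{r-s} \B_{\dR,\Delta} \otimes_{K_\Delta} N_s,
\]
so $\gr \bigl(\B_{\dR,\Delta} \otimes_{K_\Delta} \D_{\dR}(V)\bigr) \simeq \B_{\HT,\Delta} \otimes_{K_\Delta} \gr \D_{\dR}(V)$, and $\gr \alpha_{\dR}(V)$ factors as
\[
\B_{\HT,\Delta} \otimes_{K_\Delta} \gr \D_{\dR}(V) \xrightarrow{\,1 \otimes j\,} \B_{\HT,\Delta} \otimes_{K_\Delta} \D_{\HT}(V) \xrightarrow{\alpha_{\HT}(V)} \B_{\HT,\Delta} \otimes_{\QQ_p} V,
\]
in which the first arrow is injective ($j$ is injective and $\B_{\HT,\Delta}$ is $K_\Delta$-flat, automatic over a product of fields) and the second by proposition \ref{propalphainj}. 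Hence $\gr \alpha_{\dR}(V)$ is injective; together with injectivity of $\alpha_{\dR}(V)$ itself and separatedness of both filtrations, this yields strict compatibility by the standard argument: if $a$ lies in $\Fil^{r_0} \setminus \Fil^{r_0+1}$ of the source while $\alpha_{\dR}(V)(a) \in \Fil^r$ with $r_0 < r$, the class of $a$ in $\gr^{r_0}$ would be a nonzero element killed by $\gr \alpha_{\dR}(V)$.

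Finally, if $V$ is de Rham then $\alpha_{\dR}(V)$ is a strict isomorphism of filtered modules, so $\gr \alpha_{\dR}(V)$ is bijective. The factorization above then forces both $\alpha_{\HT}(V)$ and $1 \otimes j$ to be bijective: the former shows that $V$ is Hodge-Tate, while faithful flatness of $K_\Delta \hookrightarrow \B_{\HT,\Delta}$ (faithful because each idempotent of $K_\Delta \simeq \prod_i E_i$ remains nonzero in $\B_{\HT,\Delta}$, so each factor of $\Spec K_\Delta$ is hit) descends the latter to the desired isomorphism $\gr \D_{\dR}(V) \isomto \D_{\HT}(V)$. The main obstacle is the third step: constructing the filtered splitting of $\D_{\dR}(V)$ globally (piece by piece over the field factors of $K_\Delta$) and verifying the compatibility of the resulting tensor-product filtration with $\gr$; once these are in place, the remaining arguments are formal.
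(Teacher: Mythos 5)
Your proof is correct and follows essentially the same route as the paper's: you build the injection $\gr\D_{\dR}(V)\hookrightarrow\D_{\HT}(V)$ from the kernel of the reduction map on each filtration step, factor $\gr\alpha_{\dR}(V)=\alpha_{\HT}(V)\circ(1\otimes j)$ through that injection to obtain injectivity of $\gr\alpha_{\dR}(V)$ (hence strictness), and in the de Rham case read off bijectivity of $\alpha_{\HT}(V)$ and then descend $1\otimes j$ through the faithfully flat map $K_\Delta\to\B_{\HT,\Delta}$. The only difference is one of expository detail: you spell out the separatedness of $\Fil^\bullet\B_{\dR,\Delta}$ via the explicit formula involving $t_\Delta^{-i_0}$, and you make explicit the filtered splitting of $\D_{\dR}(V)$ over the semisimple ring $K_\Delta$ that underlies the identification $\gr(\B_{\dR,\Delta}\otimes_{K_\Delta}\D_{\dR}(V))\simeq\B_{\HT,\Delta}\otimes_{K_\Delta}\gr\D_{\dR}(V)$, both of which the paper asserts without elaboration.
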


\begin{proof}
The first assertion follows from the corresponding fact on $\B_{\dR,\Delta}$ and the finiteness of $\D_{\dR}(V)$ as a $K_\Delta$-module. If $r\in\ZZ$, the exact sequence
$$0\to\Fil^{r+1}\B_{\dR,\Delta}\to\Fil^r\B_{\dR,\Delta}\to\gr^r\B_{\dR,\Delta}\to0$$
tensored with $V$ induces the exact sequence
$$0\to\Fil^{r+1}\D_{\dR}(V)\to\Fil^r\D_{\dR}(V)\to(\gr^r\B_{\dR,\Delta}\otimes_{\QQ_p}V)^{G_{K,\Delta}}$$
\ie an injective map $\gr^r\D_{\dR}(V)\to\bigoplus\limits_{\substack{\underline{n}=(n_\alpha)_{\alpha\in\Delta}\in\ZZ^\Delta\\ \sum_\alpha n_\alpha=r}}(C_\Delta(\underline{n})\otimes_{\QQ_p}V)^{G_{K,\Delta}}$ (\cf proposition \ref{propgradBdRD}). Summing over $r\in\ZZ$ provides an injective $K_\Delta$-linear map $i_V\colon\gr\D_{\dR}(V)\to\D_{\HT}(V)$.

As $\alpha_{\dR}(V)$ is compatible with filtrations, it induces a $K_\Delta$-linear map
$$\gr\alpha_{\dR}(V)\colon\gr(\B_{\dR,\Delta}\otimes_{K_\Delta}\D_{\dR}(V))\to\gr\B_{\dR,\Delta}\otimes_{K_\Delta}V.$$
We have $\gr\B_{\dR,\Delta}=\B_{\HT,\Delta}$ and $\gr(\B_{\dR,\Delta}\otimes_{K_\Delta}\D_{\dR}(V))\simeq\gr\B_{\dR,\Delta}\otimes_{K_\Delta}\gr\D_{\dR}(V)\simeq\B_{\HT}\otimes_{K_\Delta}\gr\D_{\dR}(V)$ (as $K_\Delta$ is a product of fields, the proof of this isomorphism reduces to the case of tensor products of filtered vector spaces). Via these isomorphisms, we have the commutative diagram
$$\xymatrix@C=50pt{
\B_{\HT}\otimes_{K_\Delta}\gr\D_{\dR}(V)\ar[r]^-{\gr\alpha_{\dR}(V)}\ar@{^(->}[d]_{1\otimes i_V} & \B_{\HT,\Delta}\otimes_{\QQ_p}V\\
\B_{\HT,\Delta}\otimes_{K_\Delta}\D_{\HT}(V)\ar@{^(->}[ru]_-{\alpha_{\HT}(V)} & }$$
which implies that $\gr\alpha_{\dR}(V)$ is injective, so that $\alpha_{\dR}(V)$ is strictly compatible with filtrations.

Assume $V$ is de Rham, so that $\alpha_{\dR}(V)$ is an isomorphism. Then $\gr\alpha_{\dR}(V)$ is an isomorphism: the preceding diagram shows that $\alpha_{\HT}(V)$ is surjective: it is an isomorphism by proposition \ref{propalphainj}, and $V$ is Hodge-Tate. This implies that $1\otimes i_V$ is an isomorphism. As $\B_{\HT,\Delta}$ is faithfully flat over $K_\Delta$ (because $C_\Delta$ is), this shows that $i_V$ is an isomorphism.
\end{proof}

\begin{prop}\label{propdRHT2}
If $V\in\Rep_{\QQ_p}(G_{K,\Delta})$ is de Rham (resp. Hodge-Tate), then $\D_{\dR}(V)$ (resp. $\D_{\HT}(V)$) is free of rank $\dim_{\QQ_p}(V)$ over $K_\Delta$.
\end{prop}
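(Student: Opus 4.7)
The plan is to reduce the problem to invariant-basis-number for commutative rings by exploiting the product-of-fields structure of $K_\Delta$. Remark \ref{remaRR} gives an isomorphism of $K$-algebras $K_\Delta\simeq\prod_{i\in I}E_i$ for finitely many finite extensions $E_i/K$; let $(e_i)_{i\in I}$ be the associated orthogonal idempotents summing to $1$. A finitely generated $K_\Delta$-module $M$ is free of rank $d$ if and only if the localization $M_i:=E_i\otimes_{K_\Delta}M$ has $\dim_{E_i}M_i=d$ for every $i\in I$. Since the $K_\Delta$-algebra structure on $\B_{\dR,\Delta}$ (resp. $\B_{\HT,\Delta}$) transports the $e_i$ to orthogonal idempotents summing to $1$, there are ring decompositions $\B_{\dR,\Delta}\simeq\prod_{i\in I}\B_{\dR,\Delta,i}$ and $\B_{\HT,\Delta}\simeq\prod_{i\in I}\B_{\HT,\Delta,i}$, where $\B_{\dR,\Delta,i}:=e_i\B_{\dR,\Delta}$ (similarly for $\B_{\HT,\Delta,i}$) is an $E_i$-algebra.

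Assume $V$ is de Rham. By Proposition \ref{propalphainj}, $\D_{\dR}(V)$ is of finite type over $K_\Delta$, so $\D_{\dR}(V)\simeq\prod_{i}M_i$ with $d_i:=\dim_{E_i}M_i$ finite. Set $d=\dim_{\QQ_p}V$. Since $V\simeq\QQ_p^d$ as an abstract $\QQ_p$-vector space, the de Rham hypothesis gives a $\B_{\dR,\Delta}$-linear isomorphism
$$\B_{\dR,\Delta}\otimes_{K_\Delta}\D_{\dR}(V)\simeq\B_{\dR,\Delta}\otimes_{\QQ_p}V\simeq\B_{\dR,\Delta}^d.$$
Decomposing both sides along the $e_i$ yields, for each $i\in I$, an isomorphism of $\B_{\dR,\Delta,i}$-modules $\B_{\dR,\Delta,i}^{d_i}\simeq\B_{\dR,\Delta,i}^{d}$.

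To conclude we must check that each $\B_{\dR,\Delta,i}$ is a nonzero (commutative) ring, so that invariant basis number applies and forces $d_i=d$. This is the only nontrivial point in the argument, and it follows from the fact that the surjection $\theta_\Delta\colon\B_{\dR,\Delta}\twoheadrightarrow C_\Delta$ is $K_\Delta$-linear, hence induces a surjection $\B_{\dR,\Delta,i}\twoheadrightarrow e_iC_\Delta$ whose image contains $e_iK_\Delta\simeq E_i\neq0$. Invariant basis number for nonzero commutative rings (obtained by further localizing at a maximal ideal of $\B_{\dR,\Delta,i}$ and comparing dimensions of residue vector spaces) then gives $d_i=d$ for every $i$, so $\D_{\dR}(V)$ is free of rank $d$ over $K_\Delta$. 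The Hodge-Tate case is formally identical, replacing $\B_{\dR,\Delta}$ by $\B_{\HT,\Delta}$, $\alpha_{\dR}(V)$ by $\alpha_{\HT}(V)$, and using that the degree-$0$ projection $\B_{\HT,\Delta}\to C_\Delta$ is a $K_\Delta$-linear surjection, so that $\B_{\HT,\Delta,i}$ surjects onto $e_iC_\Delta\supset E_i$ and is in particular nonzero.
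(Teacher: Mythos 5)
Your argument is correct and follows essentially the same route as the paper: localize along the decomposition $K_\Delta\simeq\prod_{i\in I}E_i$ from Remark \ref{remaRR}, note that the localized $\alpha_{\dR}(V)$ (resp. $\alpha_{\HT}(V)$) is an isomorphism of free modules over the nonzero commutative ring $E_i\otimes_{K_\Delta}\B_{\dR,\Delta}$ (resp. $E_i\otimes_{K_\Delta}\B_{\HT,\Delta}$), and conclude by invariant basis number that $\dim_{E_i}(E_i\otimes_{K_\Delta}\D_{\dR}(V))=\dim_{\QQ_p}V$. You spell out two points the paper leaves implicit — the nonvanishing of the idempotent pieces via $\theta_\Delta$, and the residue-field comparison behind IBN — but there is no substantive difference in method.
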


\begin{proof}
Assume $V$ is de Rham: the map $\alpha_{\dR}(V)\colon\B_{\dR,\Delta}\otimes_{K_\Delta}\D_{\dR}(V)\to\B_{\dR,\Delta}\otimes_{\QQ_p}V$ is an isomorphism. Use remark \ref{remaRR} and its notations: for each $i\in I$, its localization
$$E_i\otimes\alpha_{\dR}(V)\colon(E_i\otimes_{K_\Delta}\B_{\dR,\Delta})\otimes_{E_i}(E_i\otimes_{K_\Delta}\D_{\dR}(V))\to(E_i\otimes_{K_\Delta}\B_{\dR,\Delta})\otimes_{\QQ_p}V$$
is a $E_i\otimes_{K_\Delta}\B_{\dR,\Delta}$-linear isomorphism. This implies that
$$\dim_{E_i}(E_i\otimes_{K_\Delta}\D_{\dR}(V))=\dim_{\QQ_p}(V).$$
As it holds for all $i\in I$, this shows that $\D_{\dR}(V)$ is free as a $K_\Delta$-module. The proof of the Hodge-Tate case is the same.
\end{proof}

\medskip

\noindent
Question : Is the converse true, \ie is it true that if $\D_{\dR}(V)$ is free of rank $\dim_{\QQ_p}(V)$ over $K_\Delta$, then $V$ is de Rham?


\section{Sen theory for \texorpdfstring{$\B_{\dR,\Delta}^+$}{B\unichar{"005F}\unichar{"007B}dR,\unichar{"0394}\unichar{"007D}\unichar{"207A}-representations}-representations}

\subsection{Almost \'etale descent}

Put $\L_{\dR,\Delta}^+=\H^0(H_{K,\Delta},\B_{\dR,\Delta}^+)$. We have
$$\l_{\dR,\Delta}^+:=K_{\Delta,\infty}[\![t_\alpha]\!]_{\alpha\in\Delta}\subset\L_{\dR,\Delta}^+.$$
These subrings of $\B_{\dR,\Delta}^+$ are endowed with the filtration induced by the latter. In the sequel, we follow rather closely \cite{AB2}. 

\begin{lemm}\label{lemmgrL}
For all $r\in\NN$, we have natural isomorphisms
$$\xymatrix@R=15pt{
\Sym^r_{K_{\Delta,\infty}}\Big(\bigoplus\limits_{\alpha\in\Delta}K_{\Delta,\infty}t_\alpha\Big)\ar[r]^-\sim\ar@{^(->}[d] & \gr^r\l_{\dR,\Delta}^+\ar@{^(->}[d]\\
\Sym^r_{L_\Delta}\Big(\bigoplus\limits_{\alpha\in\Delta}L_\Delta t_\alpha\Big)\ar[r]^-\sim\ar@{^(->}[d] & \gr^r\L_{\dR,\Delta}^+\ar@{^(->}[d]\\
\Sym^r_{C_\Delta}\Big(\bigoplus\limits_{\alpha\in\Delta}C_\Delta t_\alpha\Big)\ar[r]^-\sim & \gr^r\B_{\dR,\Delta}^+.}$$
Moreover, the map $\l_{\dR,\Delta}^+/\Fil^r\l_{\dR,\Delta}^+\to\L_{\dR,\Delta}^+/\Fil^r\L_{\dR,\Delta}^+$ is faithfully flat.
\end{lemm}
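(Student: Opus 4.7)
The three isomorphisms in the symmetric-algebra column follow from structural results already in hand, while the faithful flatness claim requires an extra comparison argument. The plan is to establish the isomorphisms from the bottom upward, deduce the vertical compatibilities from base change along $K_{\Delta,\infty}\hookrightarrow L_\Delta\hookrightarrow C_\Delta$, and then reduce the flatness assertion to Proposition \ref{propLfidplat}.

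The bottom isomorphism is immediate from Corollary \ref{corosuiteregBdR}: the isomorphism $C_\Delta[X_\alpha]_{\alpha\in\Delta}\isomto\gr\B_{\dR,\Delta}^+$ sending $X_\alpha\mapsto t_\alpha$, taken in degree $r$, is exactly $\Sym^r_{C_\Delta}\big(\bigoplus_\alpha C_\Delta t_\alpha\big)\isomto\gr^r\B_{\dR,\Delta}^+$. For the middle isomorphism, note that each $t_\alpha$ is fixed by $H_{K,\Delta}$ (since $t$ is fixed by $H_K$), so $H_{K,\Delta}$ acts on $\gr^r\B_{\dR,\Delta}^+=\bigoplus_{|\underline{n}|=r}C_\Delta t_\Delta^{\underline{n}}$ only through its action on $C_\Delta$; Theorem \ref{theocohoHC} then gives $(\gr^r\B_{\dR,\Delta}^+)^{H_{K,\Delta}}=\bigoplus_{|\underline{n}|=r}L_\Delta t_\Delta^{\underline{n}}=\Sym^r_{L_\Delta}\big(\bigoplus_\alpha L_\Delta t_\alpha\big)$. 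To identify this with $\gr^r\L_{\dR,\Delta}^+$, I apply $\H^0(H_{K,\Delta},-)$ to $0\to\Fil^{r+1}\B_{\dR,\Delta}^+\to\Fil^r\B_{\dR,\Delta}^+\to\gr^r\B_{\dR,\Delta}^+\to0$ and invoke the vanishing $\H^1(H_{K,\Delta},\Fil^{r+1}\B_{\dR,\Delta}^+)=0$ from Corollary \ref{coroinvariantsBdR2}, which yields the required surjection $\Fil^r\L_{\dR,\Delta}^+\twoheadrightarrow(\gr^r\B_{\dR,\Delta}^+)^{H_{K,\Delta}}$.

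For the top isomorphism, I use the definition $\l_{\dR,\Delta}^+=K_{\Delta,\infty}[\![t_\alpha]\!]_{\alpha\in\Delta}$. Obviously $(t_\alpha)_{\alpha\in\Delta}^r\subset\Fil^r\l_{\dR,\Delta}^+$; for the converse, any $x=\sum_{\underline{n}}a_{\underline{n}}t_\Delta^{\underline{n}}\in\Fil^r\B_{\dR,\Delta}^+$ with $a_{\underline{n}}\in K_{\Delta,\infty}$ satisfies $a_{\underline{n}}=0$ for $|\underline{n}|<r$, as one sees inductively by reducing modulo $\Fil^{s+1}\B_{\dR,\Delta}^+$ and using that the natural map $\bigoplus_{|\underline{n}|=s}K_{\Delta,\infty}t_\Delta^{\underline{n}}\hookrightarrow\gr^s\B_{\dR,\Delta}^+=\bigoplus_{|\underline{n}|=s}C_\Delta t_\Delta^{\underline{n}}$ is injective (because $K_{\Delta,\infty}\hookrightarrow C_\Delta$ is). Hence $\Fil^r\l_{\dR,\Delta}^+=(t_\alpha)^r$ and $\gr^r\l_{\dR,\Delta}^+\simeq\Sym^r_{K_{\Delta,\infty}}\big(\bigoplus_\alpha K_{\Delta,\infty}t_\alpha\big)$. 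The vertical inclusions in the diagram are then the natural base-change maps along $K_{\Delta,\infty}\hookrightarrow L_\Delta\hookrightarrow C_\Delta$; their injectivity follows from Proposition \ref{propLfidplat} on the first, and from $L_\Delta\hookrightarrow C_\Delta$ together with freeness of each symmetric algebra for the second.

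For the faithful flatness, the plan is to produce a natural isomorphism of $L_\Delta$-algebras
$$L_\Delta\otimes_{K_{\Delta,\infty}}\l_{\dR,\Delta}^+/\Fil^r\l_{\dR,\Delta}^+\isomto\L_{\dR,\Delta}^+/\Fil^r\L_{\dR,\Delta}^+,$$
which reduces the claim to the faithful flatness of $K_{\Delta,\infty}\to L_\Delta$ in Proposition \ref{propLfidplat}. Both sides carry the filtration by the images of $\Fil^s$ for $s=0,\ldots,r-1$; on the left this is well-behaved because $\l_{\dR,\Delta}^+/\Fil^r$ is a free $K_{\Delta,\infty}$-module of finite rank with basis the monomials $\{t_\Delta^{\underline{n}}\}_{|\underline{n}|<r}$, so base change to $L_\Delta$ is exact on the filtration. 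The natural comparison map is strictly filtered and induces on each graded piece the canonical base-change isomorphism $L_\Delta\otimes_{K_{\Delta,\infty}}\gr^s\l_{\dR,\Delta}^+\isomto\gr^s\L_{\dR,\Delta}^+$ provided by the isomorphisms already established; a short five-lemma induction along the finite filtration then gives the desired comparison isomorphism. The delicate point — and the reason this part requires the explicit power-series description rather than a purely abstract argument — is precisely in justifying that both filtrations, and in particular the left-hand side under base change, behave as expected on the subquotients.
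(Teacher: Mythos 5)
Your proof is correct and follows essentially the same path as the paper's: the bottom isomorphism from Corollary \ref{corosuiteregBdR}, the middle one via the long exact sequence in $H_{K,\Delta}$-cohomology and the vanishing from Corollary \ref{coroinvariantsBdR2}, the top one by identifying the induced filtration on $\l_{\dR,\Delta}^+$ with the $(t_\alpha)$-adic one, and the faithful flatness via Proposition \ref{propLfidplat}. You spell out the last step (constructing the isomorphism $L_\Delta\otimes_{K_{\Delta,\infty}}\l_{\dR,\Delta}^+/\Fil^r\isomto\L_{\dR,\Delta}^+/\Fil^r$ by a finite filtered-module comparison) in more detail than the paper, which simply says it follows from that proposition, but the underlying mechanism is identical.
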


\begin{proof}
The bottom map is an isomorphism by corollary \ref{corosuiteregBdR}. This implies that the filtration on $\l_{\dR,\Delta}^+$ is given by the powers of the ideal generated by $(t_\alpha)_{\alpha\in\Delta}$, showing that the top map is an isomorphism as well. To check that the middle map is also an isomorphism, we start from the exact sequence
$$0\to\Fil^{r+1}\B_{\dR,\Delta}^+\to\Fil^r\B_{\dR,\Delta}^+\to\gr^r\B_{\dR,\Delta}^+\to0.$$
Taking invariants under $H_{K,\Delta}$ gives the exact sequence
$$0\to\Fil^{r+1}\L_{\dR,\Delta}^+\to\Fil^r\L_{\dR,\Delta}^+\to\H^0(H_{K,\Delta},\gr^r\B_{\dR,\Delta}^+)\to\H^1(H_{K,\Delta},\Fil^{r+1}\B_{\dR,\Delta}^+).$$
By corollary \ref{coroinvariantsBdR2}, we have $\H^1(H_{K,\Delta},\Fil^{r+1}\B_{\dR,\Delta}^+)=\{0\}$, so that the natural map
$$\gr^r\L_{\dR,\Delta}^+\to\H^0(H_{K,\Delta},\gr^r\B_{\dR,\Delta}^+)\simeq\Sym^r_{L_\Delta}\Big(\bigoplus\limits_{\alpha\in\Delta}L_\Delta t_\alpha\Big)$$
is an isomorphism. The last statement follows from proposition \ref{propLfidplat}.
\end{proof}

\begin{coro}\label{coroextfinet}
Under the assumptions of proposition \ref{propextfinet}, put
$$\L_{\dR,\Delta}^{\prime+}=\H^0(H_{K^\prime,\Delta},\B_{\dR,\Delta}^+)\text{ and }\l_{\dR,\Delta}^{\prime+}=K_{\Delta,\infty}^\prime[\![t_\alpha]\!]_{\alpha\in\Delta}.$$
The natural maps $\L_{\dR,\Delta}^+\to\L_{\dR,\Delta}^{\prime+}$ and $\l_{\dR,\Delta}^+\to\l_{\dR,\Delta}^{\prime+}$ are finite \'etale, and Galois with group $\Gal(K_\infty^\prime/K_\infty)^\Delta$ when $K_\infty^\prime/K_\infty$ is Galois.
\end{coro}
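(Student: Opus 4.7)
The plan is to derive both assertions as base-change consequences of proposition \ref{propextfinet}. The $\l$-case follows almost immediately from base change along $K_{\Delta,\infty}\to K_{\Delta,\infty}^\prime$, while the $\L$-case requires an extra filtration-theoretic step, since $\L_{\dR,\Delta}^+$ is not literally a formal power series ring over $L_\Delta$.

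For $\l_{\dR,\Delta}^+\to\l_{\dR,\Delta}^{\prime+}$: by proposition \ref{propextfinet}, $K_{\Delta,\infty}^\prime$ is a finite étale (hence finitely generated projective) $K_{\Delta,\infty}$-module, so tensoring by it commutes with the inverse limit defining the formal power series ring and yields a canonical isomorphism of rings
$$K_{\Delta,\infty}^\prime\otimes_{K_{\Delta,\infty}}\l_{\dR,\Delta}^+\isomto\l_{\dR,\Delta}^{\prime+}.$$
Since being finite étale (resp. Galois with a prescribed group) is stable under base change, the map $\l_{\dR,\Delta}^+\to\l_{\dR,\Delta}^{\prime+}$ inherits these properties from proposition \ref{propextfinet}.

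For $\L_{\dR,\Delta}^+\to\L_{\dR,\Delta}^{\prime+}$: I would first establish the analogous base-change isomorphism $L_\Delta^\prime\otimes_{L_\Delta}\L_{\dR,\Delta}^+\isomto\L_{\dR,\Delta}^{\prime+}$ and then conclude identically. By lemma \ref{lemmgrL}, for each $r\in\NN_{>0}$ the quotient $\L_{\dR,\Delta}^+/\Fil^r\L_{\dR,\Delta}^+$ is a successive extension of the free $L_\Delta$-modules $\gr^s\L_{\dR,\Delta}^+\simeq\Sym^s_{L_\Delta}\bigl(\bigoplus_\alpha L_\Delta t_\alpha\bigr)$ for $s<r$, hence is itself free of finite rank over $L_\Delta$, admitting as basis a lift of the monomials $\prod_\alpha t_\alpha^{n_\alpha}$ with $\sum_\alpha n_\alpha<r$; the same discussion applies to the primed version over $L_\Delta^\prime$. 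The natural map
$$L_\Delta^\prime\otimes_{L_\Delta}\L_{\dR,\Delta}^+/\Fil^r\L_{\dR,\Delta}^+\to\L_{\dR,\Delta}^{\prime+}/\Fil^r\L_{\dR,\Delta}^{\prime+}$$
is filtration-preserving and induces an isomorphism on associated graded modules (since $\Sym^s$ commutes with scalar extension), so it is itself an isomorphism. Taking inverse limits, and using that $L_\Delta^\prime$ is finitely generated projective over $L_\Delta$ by proposition \ref{propextfinet}, one obtains the desired identification of rings $L_\Delta^\prime\otimes_{L_\Delta}\L_{\dR,\Delta}^+\isomto\L_{\dR,\Delta}^{\prime+}$, and the finite étale (resp. Galois) property again descends from proposition \ref{propextfinet} by base change.

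The Galois statement, when $K_\infty^\prime/K_\infty$ is Galois, is then clear: the group $\Gal(K_\infty^\prime/K_\infty)^\Delta$ acts trivially on each $t_\alpha$ (which belongs to $\l_{\dR,\Delta}^+\subset\L_{\dR,\Delta}^+$), so its action on the primed ring factors through the action on $K_{\Delta,\infty}^\prime$ (resp. $L_\Delta^\prime$) as in proposition \ref{propextfinet}, and invariants are computed factor by factor using the explicit basis above. The main obstacle is precisely the identification $L_\Delta^\prime\otimes_{L_\Delta}\L_{\dR,\Delta}^+\isomto\L_{\dR,\Delta}^{\prime+}$; unlike the $\l$-case, where the ring is by definition a formal power series, $\L_{\dR,\Delta}^+$ is defined cohomologically, and it is lemma \ref{lemmgrL} (together with the faithfully flat descent from the $\l$-layer) that carries the weight of the argument.
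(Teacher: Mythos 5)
Your argument for the $\l$-case is correct and matches the paper's. The $\L$-case, however, has a genuine gap: it is built on the base change $L_\Delta^\prime\otimes_{L_\Delta}\L_{\dR,\Delta}^+$, but $\L_{\dR,\Delta}^+$ is not an $L_\Delta$-algebra. There is no natural map $L_\Delta\to\L_{\dR,\Delta}^+$: the ring $L_\Delta$ occurs only as the residue ring $\L_{\dR,\Delta}^+/\Fil^1\L_{\dR,\Delta}^+=\gr^0\L_{\dR,\Delta}^+$, i.e.\ as a \emph{quotient}, exactly as $C$ is the residue field, not a subfield, of $\B_{\dR}^+$. For the same reason the step ``$\L_{\dR,\Delta}^+/\Fil^r\L_{\dR,\Delta}^+$ is a successive extension of free $L_\Delta$-modules, hence is itself free of finite rank over $L_\Delta$'' is a non sequitur: that its graded pieces are $L_\Delta$-modules does not confer any $L_\Delta$-module structure on the unfiltered quotient, and indeed the natural ring of scalars over which $\L_{\dR,\Delta}^+/\Fil^r$ is free is $\l_{\dR,\Delta}^+/\Fil^r\l_{\dR,\Delta}^+$, by the faithful flatness statement in lemma \ref{lemmgrL}.

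The repair is to run the $\L$-case along the \emph{same} base change you used for $\l$: both $\l_{\dR,\Delta}^+$ and $\L_{\dR,\Delta}^+$ are $K_{\Delta,\infty}$-algebras (via $K_{\Delta,\infty}\subset\l_{\dR,\Delta}^+\subset\L_{\dR,\Delta}^+$). One then shows that the natural filtered map
$$K_\infty^{\prime\otimes\Delta}\otimes_{K_\infty^{\otimes\Delta}}\L_{\dR,\Delta}^+\to\L_{\dR,\Delta}^{\prime+}$$
is an isomorphism by passing to graded rings: $K_{\Delta,\infty}^\prime$ is flat over $K_{\Delta,\infty}$, so the graded of the left side is $K_{\Delta,\infty}^\prime\otimes_{K_{\Delta,\infty}}\gr\L_{\dR,\Delta}^+$, and by lemma \ref{lemmgrL} together with the identification $K_{\Delta,\infty}^\prime\otimes_{K_{\Delta,\infty}}L_\Delta\isomto L_\Delta^\prime$ from proposition \ref{propextfinet} this matches $\gr\L_{\dR,\Delta}^{\prime+}$. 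The finite \'etale and Galois statements then fall out by base change of the finite \'etale map $K_{\Delta,\infty}\to K_{\Delta,\infty}^\prime$, exactly as you argued for $\l$. This is the route the paper takes.
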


\begin{proof}
By lemma \ref{lemmgrL} and proposition \ref{propextfinet}, the natural maps $K_\infty^{\prime\otimes\Delta}\otimes_{K_\infty^{\otimes\Delta}}\L_{\dR,\Delta}^+\to\L_{\dR,\Delta}^{\prime+}$ and $K_\infty^{\prime\otimes\Delta}\otimes_{K_\infty^{\otimes\Delta}}\l_{\dR,\Delta}^+\to\l_{\dR,\Delta}^{\prime+}$ induce isomorphisms on graded rings: these are isomorphisms. The statements thus follow from proposition \ref{propextfinet}.
\end{proof}

\begin{prop}\label{propSenBdRH}
(\cf \cite[Proposition 3.4]{AB2}) The maps
\begin{align*}
\varinjlim\limits_{\substack{H\lhd H_{K,\Delta}\\ H\text{ \emph{open}}}}\H^1(H_{K,\Delta}/H,\GL_d(\B_{\dR,\Delta}^{+H})) &\to \H^1(H_{K,\Delta},\GL_d(\B_{\dR,\Delta}^+))\\
\H^1(\Gamma_{K,\Delta},\GL_d(\L_{\dR,\Delta}^+)) &\to \H^1(G_{K,\Delta},\GL_d(\B_{\dR,\Delta}^+))
\end{align*}
induced by inflation maps are bijective.
\end{prop}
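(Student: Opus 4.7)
The plan is to follow \cite[Proposition 3.4]{AB2}, performing a Sen-type descent along the filtration $(\Fil^r\B_{\dR,\Delta}^+)_{r\geq 0}$, lifting the $C_\Delta$-level result of Corollary \ref{coroSenH} one graded piece at a time. The first bijection contains the essential content; the second will follow from it by Galois descent through the finite \'etale extensions $\L_{\dR,\Delta}^{\prime+}/\L_{\dR,\Delta}^+$ of Corollary \ref{coroextfinet}, using that any $G_{K,\Delta}$-cocycle, after the change of basis coming from the first bijection, becomes valued in $\GL_d(\L_{\dR,\Delta}^{\prime+})$ for some finite Galois $K'/K$ and factors through $G_{K,\Delta}/H_{K',\Delta}$.

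For surjectivity of the first bijection, I start with a continuous cocycle $U\colon H_{K,\Delta} \to \GL_d(\B_{\dR,\Delta}^+)$. Reducing modulo $\Fil^1$ yields a cocycle $\overline{U}$ valued in $\GL_d(C_\Delta)$; by Corollary \ref{coroSenH} combined with Proposition \ref{propSenH}, there exist an open normal subgroup $H^{(0)} \leq H_{K,\Delta}$ of the product form $\prod_\alpha H_0$ and $\overline{B}^{(0)} \in \GL_d(C_\Delta)$ with $(\overline{B}^{(0)})^{-1}\overline{U}_h h(\overline{B}^{(0)}) = \I_d$ for all $h \in H^{(0)}$. I lift $\overline{B}^{(0)}$ to $\GL_d(\B_{\dR,\Delta}^+)$ (possible because $\Fil^1$ lies in the Jacobson radical by $\Fil$-adic completeness, so any matrix lifting an invertible one is invertible) and conjugate $U$ by this lift, arranging $U_h \in \I_d + \Fil^1\Mat_d(\B_{\dR,\Delta}^+)$ for all $h \in H^{(0)}$. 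Shrinking $H^{(0)}$ to a subgroup of the form $H_{K',\Delta}$ for a finite extension $K'/K$, I proceed by successive approximation: if $U_h \equiv \I_d \pmod{\Fil^r}$ on $H^{(0)}$, the map $h \mapsto U_h - \I_d$ modulo $\Fil^{r+1}$ defines a continuous $1$-cocycle $V\colon H^{(0)} \to \Mat_d(\gr^r\B_{\dR,\Delta}^+)$. By Corollary \ref{corosuiteregBdR}, $\gr^r\B_{\dR,\Delta}^+ \simeq \bigoplus_{|\underline{n}|=r}C_\Delta(\underline{n})$ with $H^{(0)}$ acting trivially on the twist, so Theorem \ref{theocohoHC} gives $\H^1(H^{(0)},\gr^r\B_{\dR,\Delta}^+) = 0$ and $V$ is a continuous coboundary $h \mapsto h(\overline{B}_r) - \overline{B}_r$ for some $\overline{B}_r \in \Mat_d(\gr^r\B_{\dR,\Delta}^+)$. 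Lifting $\overline{B}_r$ to $B_r \in \Fil^r\Mat_d(\B_{\dR,\Delta}^+)$ and conjugating by $\I_d + B_r$ improves the approximation to $\Fil^{r+1}$. The infinite product $B = B^{(0)}\prod_{r\geq 1}(\I_d+B_r)$ converges in $\GL_d(\B_{\dR,\Delta}^+)$ for the canonical topology (modulo any $\Fil^m$ only finitely many factors contribute), $B^{-1}U_h h(B) = \I_d$ for every $h \in H^{(0)}$, and the cocycle relation combined with the normality of $H^{(0)}$ then forces $B^{-1}U_g g(B) \in \GL_d(\B_{\dR,\Delta}^{+\,H^{(0)}})$ for every $g \in H_{K,\Delta}$, yielding the desired inflation class.

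Injectivity of the first map is handled by the same filtered technique applied to the conjugating matrix: if two cocycles with values in $\GL_d(\B_{\dR,\Delta}^{+H})$ are conjugate via $B \in \GL_d(\B_{\dR,\Delta}^+)$, the mod-$\Fil^1$ reduction can be adjusted to lie in $\GL_d(C_\Delta^{H'})$ for some smaller open $H' \leq H$ by the injectivity half of Corollary \ref{coroSen}, and the successive lifts use the same vanishing $\H^1(H',\gr^r\B_{\dR,\Delta}^+) = 0$ together with the $\H^0$ computation of Corollary \ref{coroinvariantsBdR} to control the ambiguity in the coboundary equations. The main obstacle I anticipate is that, a priori, the inductive construction could require shrinking $H^{(0)}$ at every stage of the filtered induction; this is circumvented by choosing $H^{(0)} = H_{K',\Delta}$ at the outset, so that Theorem \ref{theocohoHC} applies uniformly with $K'$ in place of $K$ and yields the required vanishing for all $r$ simultaneously. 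Convergence of the infinite product then poses no difficulty, since each $B_r$ lies in $\Fil^r$ and conjugation is strictly compatible with the filtration.
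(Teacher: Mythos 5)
Your treatment of the first bijection is essentially the paper's proof: initialize at the $\gr^0 = C_\Delta$ level via Corollary \ref{coroSenH}, then kill the obstruction one graded piece at a time using the vanishing $\H^1(H_{K',\Delta},\gr^r\B_{\dR,\Delta}^+)=0$ (which the paper packages as Corollary \ref{coroinvariantsBdR2}), and let the infinite product of correction matrices converge $\Fil$-adically. The injectivity discussion is superfluous, though: a direct limit of inflation maps in nonabelian $\H^1$ is automatically injective (this is the first half of the inflation-restriction exact sequence, \cite[I, \S5.8]{Serre97}), so no filtered argument is required and the paper disposes of it in one line.

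The real issue is your claim that the second bijection ``will follow from [the first] by Galois descent.'' This does not go through as stated. After applying the first bijection to a cocycle $U\colon G_{K,\Delta}\to\GL_d(\B_{\dR,\Delta}^+)$ you obtain, after conjugation, a cocycle valued in $\GL_d(\L_{\dR,\Delta}^{\prime+})$ and factoring through $G_{K,\Delta}/H_{K',\Delta}$; to land in the image of $\H^1(\Gamma_{K,\Delta},\GL_d(\L_{\dR,\Delta}^+))$ you must still trivialize the restriction to the finite group $H_{K,\Delta}/H_{K',\Delta}$. This is \emph{not} automatic: the general $\H^1\big(H_{K,\Delta}/H_{K',\Delta},\GL_d(\L_{\dR,\Delta}^{\prime+})\big)$ need not vanish, which is precisely why Corollary \ref{coroSenBdRH1} delivers a projective (not a free) $\L_{\dR,\Delta}^+$-module. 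The way out requires new input at the $\gr^0$ stage: Corollary \ref{coroSen1} says that a $G_{K,\Delta}$-cocycle valued in $\GL_d(C_\Delta)$ can be trivialized on \emph{all} of $H_{K,\Delta}$ (not merely on some $H_{K',\Delta}$), and this is what the paper uses — it re-runs the same filtered induction with $G_{K,\Delta}$-cocycles, taking $H=H_{K,\Delta}$ from the outset and invoking Corollary \ref{coroSen1} rather than Corollary \ref{coroSenH} at the initialization step. So the second bijection does not reduce to the first via descent; it is a parallel argument with a strictly stronger input at the bottom of the filtration. Your sketch as written leaves this step unjustified.
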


\begin{proof}
Being inductive limits of inflation maps, they are injective. Let $U\colon H_{K,\Delta}\to\GL_d(\B_{\dR,\Delta}^+)$ be a continuous cocycle. The composite $\theta_\Delta\circ U\colon H_{K,\Delta}\to\GL_d(C_\Delta)$ is a continuous cocycle (by definition of the canonical topology on $\B_{\dR,\Delta}^+$): by corollary \ref{coroSenH}, there exists a normal open subgroup $H$ of $H_{K,\Delta}$ (which we may and will assume of the form $H_{K^\prime,\Delta}$ for some finite Galois extension $K^\prime$ of $K$ in $\Kbar$) such that the restriction of $\theta_\Delta\circ U$ to $H$ has a trivial cohomology class. This implies that there exists $B_0\in\GL_d(\B_{\dR,\Delta}^+)$ such that the cocycle $U_0\colon g\mapsto B_0^{-1}U_gg(B_0)$ is such that $\theta_\Delta(U_{0,g})=\I_d$ for all $g\in H$.

\noindent
Let $M\in\NN$ and assume sequences $(B_m)_{0\leq m<M}$ and $(U_m)_{0\leq m<M}$ have been constructed such that:
\begin{itemize}
\item[(i)] $B_m\in\I_d+\Mat_d(\Fil^m\B_{\dR,\Delta}^+)$ and $U_m\colon H_{K,\Delta}\to\GL_d(\B_{\dR,\Delta}^+)$ is a continuous cocycle;
\item[(ii)] $U_{m,g}=B_m^{-1}U_{m-1,g}g(B_m)$ for all $g\in H_{K,\Delta}$ and $1\leq m<M$;
\item[(iii)] $U_{m,g}\in\Id_d+\Mat_d(\Fil^{m+1}\B_{\dR,\Delta}^+)$ for all $g\in H$ and $0\leq m<M$.
\end{itemize}
Denote by $V_{M,g}$ the image of $U_{M-1,g}$ in
$$\Mat_d(\Fil^M\B_{\dR,\Delta}^+)/\Mat_d(\Fil^{M+1}\B_{\dR,\Delta}^+)\simeq\Mat_d(\gr^M\B_{\dR,\Delta}^+).$$
If $g,h\in H$, we have
$$U_{M-1,gh}-\I_d=U_{M-1,g}g(U_{M-1,h})-\I_d=(U_{M-1,g}-\I_d)g(U_{M-1,h})+g(U_{M-1,h}-\I_d)$$
so that
$$U_{M-1,gh}-\I_d\equiv U_{M-1,g}-\I_d+g(U_{M-1,h}-\I_d)\mod\Mat_d(\Fil^{M+1}\B_{\dR,\Delta}^+)$$
since $g(U_{M-1,h})\equiv\I_d\mod\Mat_d(\Fil^M\B_{\dR,\Delta}^+)$. Reducing modulo $\Mat_d(\Fil^{M+1}\B_{\dR,\Delta}^+)$ gives
$$V_{M,gh}=V_{M,g}+g(V_{M,h}),$$
so that $g\mapsto V_{M,g}$ is a continuous cocycle $H=H_{K^\prime,\Delta}\to\Mat_d(\gr^M\B_{\dR,\Delta}^+)$. In particular, the entries of $V_M$ are cocycles $H\to\gr^M\B_{\dR,\Delta}^+$. By corollary \ref{coroinvariantsBdR2}, these are trivial: there exists $B_M\in\I_d+\Mat_d(\Fil^M\B_{\dR,\Delta}^+)$ such that if we put $U_{M,g}=B_M^{-1}U_{M-1,g}g(B_M)$ for all $g\in H_{K,\Delta}$, then $U_{M,g}\in\Id_d+\Mat_d(\Fil^{M+1}\B_{\dR,\Delta}^+)$ for all $g\in H$.

\noindent
By induction, we thus construct an infinite sequence $(B_m)_{m\in\NN}$. Property (i) ensures that the infinite product $B=B_0B_1B_2\cdots$ converges in $\GL_d(\B_{\dR,\Delta}^+)$, and condition (ii) and (iii) imply that $B^{-1}U_gg(B)=\I_d$ for all $g\in H$. This shows that the image of $U$ in $\H^1(H,\GL_d(\B_{\dR,\Delta}^+))$ is trivial. The inflation-restriction exact sequence
$$\{1\}\to\H^1(H_{K,\Delta}/H,\GL_d(\B_{\dR,\Delta}^{+H}))\to\H^1(H_{K,\Delta},\GL_d(\B_{\dR,\Delta}^+))\to\H^1(H,\GL_d(\B_{\dR,\Delta}^+))$$
thus implies that the class of $U$ in $\H^1(H_{K,\Delta},\GL_d(\B_{\dR,\Delta}^+))$ lies in the image of $\H^1(H_{K,\Delta}/H,\GL_d(\B_{\dR,\Delta}^{+H}))$, showing the surjectivity of the first map.

\noindent
The proof of the bijectivity of the second map is identical, replacing $H_{K,\Delta}$ by $G_{K,\Delta}$. Note that in that case, we can take $H=H_{K,\Delta}$ by corollary \ref{coroSen1}.
\end{proof}

\begin{coro}\label{coroSenBdRH1}
If $W$ is a free $\B_{\dR,\Delta}^+$-representation of rank $d$ of $H_{K,\Delta}$, then $W^{H_{K,\Delta}}$ is a projective $\L_{\dR,\Delta}^+$-module of rank $d$, and the natural map
$$\B_{\dR,\Delta}^+\otimes_{\L_{\dR,\Delta}^+}W^{H_{K,\Delta}}\to W$$
is an isomorphism of $\B_{\dR,\Delta}^+$-representations of $H_{K,\Delta}$.
\end{coro}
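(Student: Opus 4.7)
The plan is to combine the almost étale descent for $H_{K,\Delta}$ given by proposition \ref{propSenBdRH} with finite étale Galois descent along $\L_{\dR,\Delta}^+\to\L_{\dR,\Delta}^{\prime+}$ from corollary \ref{coroextfinet}. This mirrors the passage from corollary \ref{coroSenH} to theorem \ref{theoSen} in the $C_\Delta$ setting, but now at the level of the period ring $\B_{\dR,\Delta}^+$.

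First I would pick a $\B_{\dR,\Delta}^+$-basis of $W$ and translate the representation into a continuous cocycle $H_{K,\Delta}\to\GL_d(\B_{\dR,\Delta}^+)$. The first assertion of proposition \ref{propSenBdRH} then yields an open normal subgroup of $H_{K,\Delta}$, which up to shrinking may be taken of the form $H=H_{K^\prime,\Delta}$ for a finite Galois subextension $K^\prime/K$ of $\Kbar/K$ (enlarging $K^\prime$ further if necessary so that $K_\infty^\prime/K_\infty$ is Galois), together with a matrix $B\in\GL_d(\B_{\dR,\Delta}^+)$ after conjugation by which the cocycle takes values in $\GL_d(\B_{\dR,\Delta}^{+H})=\GL_d(\L_{\dR,\Delta}^{\prime+})$. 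Translated back, this means that $W^H$ is a free $\L_{\dR,\Delta}^{\prime+}$-module of rank $d$ carrying a compatible semi-linear action of $H_{K,\Delta}/H\simeq\Gal(K_\infty^\prime/K_\infty)^\Delta$, and that the canonical map $\B_{\dR,\Delta}^+\otimes_{\L_{\dR,\Delta}^{\prime+}}W^H\to W$ is an $H_{K,\Delta}$-equivariant isomorphism.

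Next I would apply Galois descent to the finite étale Galois extension $\L_{\dR,\Delta}^+\to\L_{\dR,\Delta}^{\prime+}$ with group $\Gal(K_\infty^\prime/K_\infty)^\Delta$ provided by corollary \ref{coroextfinet}. The standard faithfully flat descent for finite étale Galois covers then gives that $(W^H)^{H_{K,\Delta}/H}$ is a projective $\L_{\dR,\Delta}^+$-module of rank $d$ and that the natural map
$$\L_{\dR,\Delta}^{\prime+}\otimes_{\L_{\dR,\Delta}^+}(W^H)^{H_{K,\Delta}/H}\isomto W^H$$
is an equivariant isomorphism. Using the identification $(W^H)^{H_{K,\Delta}/H}=W^{H_{K,\Delta}}$ obtained by taking invariants in stages and tensoring with $\B_{\dR,\Delta}^+$ over $\L_{\dR,\Delta}^{\prime+}$, one arrives at the desired isomorphism $\B_{\dR,\Delta}^+\otimes_{\L_{\dR,\Delta}^+}W^{H_{K,\Delta}}\isomto W$.

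No genuinely new obstacle should appear: both the almost étale descent step (proposition \ref{propSenBdRH}) and the finite étale Galois description of $\L_{\dR,\Delta}^+\to\L_{\dR,\Delta}^{\prime+}$ (corollary \ref{coroextfinet}) have already been established, so the corollary is essentially a formal assembly of these two ingredients. The only point requiring minimal verification is the identification $\B_{\dR,\Delta}^{+H}=\L_{\dR,\Delta}^{\prime+}$, which holds by definition of $\L_{\dR,\Delta}^{\prime+}$.
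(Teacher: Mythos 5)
Your proposal is correct and follows essentially the same route as the paper's own proof: apply Proposition \ref{propSenBdRH} to produce a finite Galois $K^\prime/K$ and an $H_{K^\prime,\Delta}$-fixed basis, so that $W^{H_{K^\prime,\Delta}}$ is free of rank $d$ over $\L_{\dR,\Delta}^{\prime+}$ with a residual $\Gal(K^\prime_\infty/K_\infty)^\Delta$-action, and then conclude by finite \'etale Galois descent along $\L_{\dR,\Delta}^+\to\L_{\dR,\Delta}^{\prime+}$ using Corollary \ref{coroextfinet}. The cocycle reformulation you spell out is implicit in the paper's choice of basis, but the two arguments are identical in substance.
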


\begin{proof}
By proposition \ref{propSenBdRH}, there exists a finite Galois extension $K^\prime$ of $K$ in $\Kbar$ and a basis $\mathfrak{B}$ of $W$ over $\B_{\dR,\Delta}^+$ fixed under the action of $H_{K^\prime,\Delta}$. The $\L_{\dR,\Delta}^{\prime+}$-module $W^\prime$ generated by $\mathfrak{B}$ coincides with $W^{H_{K^\prime,\Delta}}$(it is thus stable under the action of $H_{K,\Delta}/H_{K^\prime,\Delta}\simeq\Gal(K^\prime_\infty/K_\infty)^\Delta$) and is free of rank $d$. Moreover, $\B_{\dR,\Delta}^+\otimes_{\L_{\dR,\Delta}^{\prime+}}W^\prime\to W$ is an isomorphism of $\B_{\dR,\Delta}^+$-representations of $H_{K,\Delta}$. As $\L_{\dR,\Delta}^+\to\L_{\dR,\Delta}^{\prime+}$ is finite Galois \'etale with Galois group $H_{K,\Delta}/H_{K^\prime,\Delta}\simeq\Gal(K^\prime_\infty/K_\infty)^\Delta$ (\cf corolary \ref{coroextfinet}), we have $\L_{\dR,\Delta}^{\prime+}\otimes_{\L_{\dR,\Delta}^+}W^{\prime H_{K,\Delta}/H_{K^\prime,\Delta}}\isomto W^\prime$ by Galois descent, and $W^{\prime H_{K,\Delta}/H_{K^\prime,\Delta}}=W^{H_{K,\Delta}}$ is projective of rank $d$ over $\L_{\dR,\Delta}^+$. Extending the scalars to $\B_{\dR,\Delta}^+$ provides the isomorphism $\B_{\dR,\Delta}^+\otimes_{\L_{\dR,\Delta}^+}W^{H_{K,\Delta}}\to W$ of $\B_{\dR,\Delta}^+$-representations of $H_{K,\Delta}$.
\end{proof}

\begin{coro}\label{coroSenBdRH2}
(\cf \cite{Font04}) Let $W_1,W_2$ be free $\B_{\dR,\Delta}^+$-representations of $H_{K,\Delta}$. Then
$$\Hom_{\Rep_{\B_{\dR,\Delta}^+}(H_{K,\Delta})}(W_1,W_2)\simeq\Hom_{\L_{\dR,\Delta}^+}\big(W_1^{H_{K,\Delta}},W_2^{H_{K,\Delta}}\big)\text{ and }\Ext^1_{\Rep_{\B_{\dR,\Delta}^+}(H_{K,\Delta})}(W_1,W_2)=\{0\}.$$
\end{coro}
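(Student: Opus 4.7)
For the Hom identification, the plan is to invoke Corollary \ref{coroSenBdRH1} directly. Any $H_{K,\Delta}$-equivariant $\B_{\dR,\Delta}^+$-linear map $f \colon W_1 \to W_2$ restricts on invariants to an $\L_{\dR,\Delta}^+$-linear map $W_1^{H_{K,\Delta}} \to W_2^{H_{K,\Delta}}$. Conversely, any $\L_{\dR,\Delta}^+$-linear map $\varphi$ between the modules of invariants extends by $\B_{\dR,\Delta}^+$-linear base change to an equivariant map $\B_{\dR,\Delta}^+ \otimes_{\L_{\dR,\Delta}^+} W_1^{H_{K,\Delta}} \to \B_{\dR,\Delta}^+ \otimes_{\L_{\dR,\Delta}^+} W_2^{H_{K,\Delta}}$, which via the isomorphisms of Corollary \ref{coroSenBdRH1} identifies with a map $W_1 \to W_2$. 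These constructions are evidently mutually inverse.

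For the vanishing of $\Ext^1$, given an extension $0 \to W_2 \to W \to W_1 \to 0$, the freeness of $W_1$ provides a $\B_{\dR,\Delta}^+$-linear section $s \colon W_1 \to W$, and the failure of $s$ to be $H_{K,\Delta}$-equivariant is measured by the continuous $1$-cocycle $c(g) = g \circ s \circ g^{-1} - s$ with values in the conjugation representation $M := \Hom_{\B_{\dR,\Delta}^+}(W_1, W_2)$. The extension splits as a representation if and only if $c$ is a coboundary, so the problem reduces to showing that $\H^1(H_{K,\Delta}, M) = 0$ for every free $\B_{\dR,\Delta}^+$-representation $M$ of finite rank.

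To prove this last point, I would equip $M$ with the filtration $\Fil^r M := (\Fil^r \B_{\dR,\Delta}^+) \cdot M$, which is $H_{K,\Delta}$-stable; since $M$ is free over $\B_{\dR,\Delta}^+$, the graded pieces satisfy $\gr^r M \simeq \gr^r \B_{\dR,\Delta}^+ \otimes_{C_\Delta} (M/\Fil^1 M)$ as $H_{K,\Delta}$-modules. By Corollary \ref{corosuiteregBdR} and the triviality of the cyclotomic character on $H_{K,\Delta}$, each $\gr^r M$ is a finite direct sum of copies of the free $C_\Delta$-representation $M/\Fil^1 M$, whose $\H^1$ vanishes by Corollary \ref{corocohoHKDW}. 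I would then mimic the successive approximation argument of Proposition \ref{propSenBdRH}: starting from a continuous cocycle $c$, construct inductively $m_r \in \Fil^r M$ such that $c - \partial(m_0 + \cdots + m_r)$ takes values in $\Fil^{r+1} M$, the existence of $m_r$ at each step being precisely the vanishing of $\H^1(H_{K,\Delta}, \gr^r M)$. The main obstacle, as already encountered in Corollary \ref{coroinvariantsBdR2}, is the passage from the graded vanishing to vanishing on $M$; I would handle it by combining the $\varprojlim$/$\varprojlim^{(1)}$ exact sequence with a Mittag-Leffler verification on the cohomology of the truncations $M/\Fil^r M$, after which $m = \sum_r m_r$ converges in $M$ (complete for its filtration because $\B_{\dR,\Delta}^+$ is) and satisfies $\partial m = c$, yielding the sought coboundary.
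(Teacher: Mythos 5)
Your Hom identification is essentially the paper's argument, phrased by restricting and extending individual morphisms rather than via the internal Hom module; both reduce to $\big(\B_{\dR,\Delta}^+\big)^{H_{K,\Delta}}=\L_{\dR,\Delta}^+$ and Corollary \ref{coroSenBdRH1}, so that part is fine.

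For the $\Ext^1$ vanishing, the reduction to showing $\H^1(H_{K,\Delta},M)=0$ for $M:=\Hom_{\B_{\dR,\Delta}^+}(W_1,W_2)$ is correct, but from there you take a longer route than the paper, and it has a gap. The paper simply observes that $M$ is itself a free $\B_{\dR,\Delta}^+$-representation of $H_{K,\Delta}$, so Corollary \ref{coroSenBdRH1} applies to $M$ directly: $M\simeq\B_{\dR,\Delta}^+\otimes_{\L_{\dR,\Delta}^+}M^{H_{K,\Delta}}$ with $M^{H_{K,\Delta}}$ finitely generated projective, whence $\H^1(H_{K,\Delta},M)\simeq\H^1(H_{K,\Delta},\B_{\dR,\Delta}^+)\otimes_{\L_{\dR,\Delta}^+}M^{H_{K,\Delta}}=0$ by Corollary \ref{coroinvariantsBdR2} with $r=0$. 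Your filtration-and-successive-approximation plan on $M$ essentially re-proves, in an abelian setting, a special case of Proposition \ref{propSenBdRH}, which is redundant once Corollary \ref{coroSenBdRH1} is in hand.

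The concrete gap is the citation of Corollary \ref{corocohoHKDW} to get $\H^1(H_{K,\Delta},M/\Fil^1 M)=0$. That corollary concerns free $C_\Delta$-representations of $G_{K,\Delta}$: its proof passes through Theorem \ref{theoSen}, which requires the $\Gamma_{K,\Delta}$-action. But $W_1$ and $W_2$ are given only as $H_{K,\Delta}$-representations, so $M/\Fil^1 M$ is only an $H_{K,\Delta}$-representation, and Corollary \ref{corocohoHKDW} does not apply. What you need is the $H_{K,\Delta}$-only analogue: for a free $C_\Delta$-representation $N$ of $H_{K,\Delta}$, $\H^1(H_{K,\Delta},N)=0$. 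This is true and can be extracted from Corollary \ref{coroSenH} together with Theorem \ref{theocohoHC}, but it is not a stated result in the paper, and as written your step does not follow from the cited corollary.
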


\begin{proof}
We have $\B_{\dR,\Delta}^+\otimes_{\L_{\dR,\Delta}^+}W_1^{H_{K,\Delta}}\simeq W_1$ and $\B_{\dR,\Delta}^+\otimes_{\L_{\dR,\Delta}^+}W_2^{H_{K,\Delta}}\simeq W_2$, thus
\begin{align*}
\Hom_{\B_{\dR,\Delta}^+}(W_1,W_2) &\simeq \Hom_{\B_{\dR,\Delta}^+}\big(\B_{\dR,\Delta}^+\otimes_{\L_{\dR,\Delta}^+}W_1^{H_{K,\Delta}},\B_{\dR,\Delta}^+\otimes_{\L_{\dR,\Delta}^+}W_2^{H_{K,\Delta}}\big)\\
&\simeq \B_{\dR,\Delta}^+\otimes_{\L_{\dR,\Delta}^+}\Hom_{\L_{\dR,\Delta}^+}\big(W_1^{H_{K,\Delta}},W_2^{H_{K,\Delta}}\big)
\end{align*}
as $\B_{\dR,\Delta}^+$-representations of $H_{K,\Delta}$. Taking invariants under $H_{K,\Delta}$ provides an isomorphism
$$\Hom_{\Rep_{\B_{\dR,\Delta}^+}(H_{K,\Delta})}(W_1,W_2)\simeq\Hom_{\L_{\dR,\Delta}^+}\big(W_1^{H_{K,\Delta}},W_2^{H_{K,\Delta}}\big)$$
(recall that $\Hom_{\L_{\dR,\Delta}^+}\big(W_1^{H_{K,\Delta}},W_2^{H_{K,\Delta}}\big)$ is projective over $\L_{\dR,\Delta}^+$ since $W_1^{H_{K,\Delta}}$ and $W_2^{H_{K,\Delta}}$ are). Moreover, we have
\begin{align*}
\Ext^1_{\Rep_{\B_{\dR,\Delta}^+}(H_{K,\Delta})}(W_1,W_2) &\simeq \Ext^1_{\Rep_{\B_{\dR,\Delta}^+}(H_{K,\Delta})}\big(\B_{\dR,\Delta}^+,\Hom_{\B_{\dR,\Delta}^+}(W_1,W_2)\big)\\
&\simeq \H^1\big(H_{K,\Delta},\B_{\dR,\Delta}^+\otimes_{\L_{\dR,\Delta}^+}\Hom_{\L_{\dR,\Delta}^+}\big(W_1^{H_{K,\Delta}},W_2^{H_{K,\Delta}}\big)\big)\\
&\simeq \H^1\big(H_{K,\Delta},\B_{\dR,\Delta}^+\big)\otimes_{\L_{\dR,\Delta}^+}\Hom_{\L_{\dR,\Delta}^+}\big(W_1^{H_{K,\Delta}},W_2^{H_{K,\Delta}}\big)\\
&= \{0\}
\end{align*}
since $\Hom_{\L_{\dR,\Delta}^+}\big(W_1^{H_{K,\Delta}},W_2^{H_{K,\Delta}}\big)$ is a projective $\L_{\dR,\Delta}^+$-module and $\H^1\big(H_{K,\Delta},\B_{\dR,\Delta}^+\big)=\{0\}$ by corollary \ref{coroinvariantsBdR2}.
\end{proof}

\begin{defi}
We say a $\L_{\dR,\Delta}^+$-module $X$ is \emph{potentially free} if there exists a finite extension $K^\prime$ of $K$ in $\Kbar$ such that $\L_{\dR,\Delta}^{\prime+}\otimes_{\L_{\dR,\Delta}^+}X$ is a free $\L_{\dR,\Delta}^{\prime+}$-module of finite rank. We denote by $\Mod^{\pfree}(\L_{\dR,\Delta}^+)$ the corresponding category.
\end{defi}

\begin{coro}\label{coroequivBdRDH}
The functors
\begin{center}
\begin{tabular}{ccc}
\begin{minipage}{.35\linewidth}
{\begin{align*}
\Rep_{\B_{\dR,\Delta}^+}^{\free}(H_{K,\Delta}) &\to \Mod^{\pfree}(\L_{\dR,\Delta}^+)\\
W &\mapsto W^{H_{K,\Delta}}
\end{align*}}%
\end{minipage}
&
\text{ and }
&
\begin{minipage}{.35\linewidth}
{\begin{align*}
\Rep_{\B_{\dR,\Delta}^+}^{\free}(G_{K,\Delta}) &\to \Rep_{\L_{\dR,\Delta}^+}^{\free}(\Gamma_{K,\Delta})\\
W &\mapsto W^{H_{K,\Delta}}
\end{align*}}%
\end{minipage}
\end{tabular}
\end{center}
are equivalences of categories.
\end{coro}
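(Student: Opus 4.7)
I will verify, for each of the two functors, that it is well-defined, fully faithful, and essentially surjective, leveraging Corollaries \ref{coroSenBdRH1} and \ref{coroSenBdRH2} to do most of the work.

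Fully faithfulness is immediate from Corollary \ref{coroSenBdRH2}, which gives the Hom isomorphism and the vanishing of $\Ext^1$; in the $G_{K,\Delta}$-equivariant version I take $\Gamma_{K,\Delta}$-invariants on both sides. Well-definedness of the first functor follows from Corollary \ref{coroSenBdRH1}: $W^{H_{K,\Delta}}$ is projective of rank $d$ over $\L_{\dR,\Delta}^+$, and the proof of that corollary, which trivialises $W$ over $\L_{\dR,\Delta}^{\prime+}$ for a suitable finite Galois $K'/K$, also shows that $W^{H_{K,\Delta}}$ is potentially free. For essential surjectivity, given $X$ in the target category, I set $W := \B_{\dR,\Delta}^+ \otimes_{\L_{\dR,\Delta}^+} X$: in the first case the factorisation $W \simeq \B_{\dR,\Delta}^+ \otimes_{\L_{\dR,\Delta}^{\prime+}} \bigl(\L_{\dR,\Delta}^{\prime+} \otimes_{\L_{\dR,\Delta}^+} X\bigr)$ makes $W$ a free $\B_{\dR,\Delta}^+$-module of the correct rank, and in the second case $W$ is manifestly free. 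The counit $\B_{\dR,\Delta}^+ \otimes_{\L_{\dR,\Delta}^+} W^{H_{K,\Delta}} \isomto W$ from Corollary \ref{coroSenBdRH1}, composed with the base change of the unit $X \to W^{H_{K,\Delta}}$, is the identity on $W$; so the unit becomes an isomorphism after $\B_{\dR,\Delta}^+ \otimes_{\L_{\dR,\Delta}^+}(-)$. The faithful flatness of $\B_{\dR,\Delta}^+$ over $\L_{\dR,\Delta}^+$, which I will check by passing to $\Fil$-adic graded rings using Lemma \ref{lemmgrL} and invoking Proposition \ref{propLfidplat}, then forces the unit itself to be an isomorphism.

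The main obstacle lies in the $G_{K,\Delta}$-equivariant version: one must show that $W^{H_{K,\Delta}}$ is actually \emph{free} (not merely projective) of rank $d$ over $\L_{\dR,\Delta}^+$ when $W$ carries a $G_{K,\Delta}$-action. My plan is to exploit the filtration and descend via classical Sen theory. The vanishing $\H^1(H_{K,\Delta}, \Fil^1 W) = 0$, a consequence of Corollary \ref{coroinvariantsBdR2} applied after trivialising $W$ by a change of basis, makes the sequence
$$0 \to \Fil^1 W^{H_{K,\Delta}} \to W^{H_{K,\Delta}} \to (W/\Fil^1 W)^{H_{K,\Delta}} \to 0$$
exact. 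The quotient is a free $C_\Delta$-representation of $G_{K,\Delta}$, so by Theorem \ref{theoequivSenCDelta} it is of the form $L_\Delta \otimes_{K_{\Delta,\infty}} Y$ for a free $K_{\Delta,\infty}$-representation $Y$ of $\Gamma_{K,\Delta}$. Lifting a $K_{\Delta,\infty}$-basis of $Y$ to elements $\widetilde{e}_1, \ldots, \widetilde{e}_d \in W^{H_{K,\Delta}}$ produces a filtration-compatible map $\phi\colon (\L_{\dR,\Delta}^+)^d \to W^{H_{K,\Delta}}$ that is an isomorphism modulo $\Fil^1$. Using Lemma \ref{lemmgrL} to identify the successive graded pieces of source and target, together with the $\Fil$-adic completeness of $W^{H_{K,\Delta}}$ induced from that of $W$, a graded-Nakayama argument on each $\gr^r$ upgrades $\phi$ to an isomorphism and delivers the required freeness. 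Essential surjectivity in this case follows then exactly as before, $W := \B_{\dR,\Delta}^+ \otimes_{\L_{\dR,\Delta}^+} X$ being free over $\B_{\dR,\Delta}^+$ and the diagonal $G_{K,\Delta}$-action being well-defined because $H_{K,\Delta}$ is normal in $G_{K,\Delta}$.
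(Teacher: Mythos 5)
Your overall architecture (well-definedness from \ref{coroSenBdRH1}, full faithfulness from \ref{coroSenBdRH2}, essential surjectivity via $W:=\B_{\dR,\Delta}^+\otimes_{\L_{\dR,\Delta}^+}X$) is sound, but two steps do not hold up as written.

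First, the appeal to Proposition \ref{propLfidplat} to justify faithful flatness of $\B_{\dR,\Delta}^+$ over $\L_{\dR,\Delta}^+$ is misplaced: that proposition concerns the extension $K_{\Delta,\infty}\to L_\Delta$, whereas the graded extension of $\L_{\dR,\Delta}^+\to\B_{\dR,\Delta}^+$ produced by Lemma \ref{lemmgrL} is $L_\Delta[X_\alpha]\to C_\Delta[X_\alpha]$, so what you would actually need is faithful flatness of $C_\Delta$ over $L_\Delta$, which the paper never establishes. You do not in fact need it. For $X$ free over $\L_{\dR,\Delta}^+$ the unit $X\to(\B_{\dR,\Delta}^+\otimes_{\L_{\dR,\Delta}^+}X)^{H_{K,\Delta}}$ is an isomorphism by direct inspection in a basis, since then $(\B_{\dR,\Delta}^+\otimes X)^{H_{K,\Delta}}\simeq(\B_{\dR,\Delta}^+)^{H_{K,\Delta}}\otimes X=\L_{\dR,\Delta}^+\otimes X$; for $X$ only potentially free one base changes along the finite Galois \'etale $\L_{\dR,\Delta}^+\to\L_{\dR,\Delta}^{\prime+}$ from Corollary \ref{coroextfinet} and applies finite Galois descent. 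Second, the vanishing $\H^1(H_{K,\Delta},\Fil^1 W)=0$ is not ``Corollary \ref{coroinvariantsBdR2} after trivialising $W$'': a change of basis can only trivialise the action of $H_{K',\Delta}$ for some finite $K'$, and one then still owes a Hochschild--Serre step for the finite quotient $H_{K,\Delta}/H_{K',\Delta}$. A cleaner route: each $\gr^r W$ is a free $C_\Delta$-representation of $G_{K,\Delta}$, so Corollary \ref{corocohoHKDW} gives $\H^1(H_{K,\Delta},\gr^r W)=0$, and the argument of Corollary \ref{coroinvariantsBdR2} (d\'evissage plus $\varprojlim^{(1)}$) carries over verbatim.

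Beyond these repairs, your route to the freeness of $W^{H_{K,\Delta}}$ in the $G_{K,\Delta}$-equivariant case (via the filtration, Sen theory mod $\Fil^1$, and a graded Nakayama lift) is genuinely different from the route the paper is set up for. The paper's second map in Proposition \ref{propSenBdRH} already says $\H^1(\Gamma_{K,\Delta},\GL_d(\L_{\dR,\Delta}^+))\to\H^1(G_{K,\Delta},\GL_d(\B_{\dR,\Delta}^+))$ is a bijection, so one can choose a $\B_{\dR,\Delta}^+$-basis of $W$ on which the $G_{K,\Delta}$-cocycle is inflated from $\Gamma_{K,\Delta}$ with values in $\GL_d(\L_{\dR,\Delta}^+)$: the $\L_{\dR,\Delta}^+$-span of that basis consists of $H_{K,\Delta}$-invariant elements and is exactly $W^{H_{K,\Delta}}$, giving freeness in one line. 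Your filtration argument works once the two issues above are fixed (and once you observe, via projectivity of $W^{H_{K,\Delta}}$ from Corollary \ref{coroSenBdRH1}, that $\Fil^r W^{H_{K,\Delta}}=(\Fil^r\L_{\dR,\Delta}^+)W^{H_{K,\Delta}}$ so the graded pieces match), but it is considerably longer than using Proposition \ref{propSenBdRH} directly, which is the input the paper places immediately before this corollary precisely for this purpose.
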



\subsection{Decompletion along \texorpdfstring{$K_{\Delta,\infty}/K_\Delta$}{K\unichar{"005F}\unichar{"007B}\unichar{"0394},\unichar{"221E}\unichar{"007D}/K\unichar{"005F}\unichar{"0394}}}

\begin{defi}
Let $X$ be a $\L_{\dR,\Delta}^+$-representation of $\Gamma_{K,\Delta}$.
\begin{itemize}
\item[(i)] If $X$ is killed by $\Fil^{r+1}\L_{\dR,\Delta}^+$ for some $r\in\NN$, we denote by $X_{\free}$ the union of its sub-$K_\Delta$-modules of finite type that are stable under the action of $\Gamma_{K,\Delta}$ (this is the set of elements in $X$ whose orbit under $\Gamma_{K,\Delta}$ generates a $K_\Delta$-module of finite type). Note that this matches with definition of \S \ref{SenC} in the case $r=0$.

\item[(ii)] In the general case, we put $X_{\free}=\varprojlim\limits_r\big(X/\Fil^{r+1}\L_{\dR,\Delta}^+X\big)_{\free}$.
\end{itemize}
Note that $X_{\free}$ is a sub-$\l_{\dR,\Delta}^+$-module of $X$ and is stable by the action of $\Gamma_{K,\Delta}$.
\end{defi}

\begin{prop}\label{propdecompletionLdRr}
Let $r\in\NN$ and $X$ a free $\L_{\dR,\Delta}^+/\Fil^{r+1}\L_{\dR,\Delta}^+$-representation of rank $d$ of $\Gamma_{K,\Delta}$. Then $X_{\free}$ is free of rank $d$ over $\l_{\dR,\Delta}^+/\Fil^{r+1}\l_{\dR,\Delta}^+$, and the natural map
$$\L_{\dR,\Delta}^+\otimes_{\l_{\dR,\Delta}^+}X_{\free}\to X$$
is an isomorphism of $\L_{\dR,\Delta}^+$-representations of $\Gamma_{K,\Delta}^+$.
\end{prop}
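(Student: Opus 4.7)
The plan is to proceed by induction on $r$. The base case $r=0$ is precisely corollary \ref{corodecomplf}, since $\L_{\dR,\Delta}^+/\Fil^1\L_{\dR,\Delta}^+=L_\Delta$ and $\l_{\dR,\Delta}^+/\Fil^1\l_{\dR,\Delta}^+=K_{\Delta,\infty}$. For the inductive step (assuming the statement at level $r-1$) I would consider the short exact sequence of $\L_{\dR,\Delta}^+$-representations of $\Gamma_{K,\Delta}$
$$0\to\Fil^rX\to X\to X/\Fil^rX\to0,$$
whose two ends are tractable: the quotient is a free $\L_{\dR,\Delta}^+/\Fil^r$-representation of rank $d$, to which the inductive hypothesis applies; and $\Fil^rX$ is killed by $\Fil^1\L_{\dR,\Delta}^+$, so falls under the base case once one observes the canonical $\Gamma_{K,\Delta}$-equivariant isomorphism $\Fil^rX\cong\gr^r\L_{\dR,\Delta}^+\otimes_{L_\Delta}(X/\Fil^1X)$ coming from the freeness of $X$. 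Combining this with lemma \ref{lemmgrL} and the uniqueness of the Sen decompletion (theorem \ref{theoequivSenCDelta}) yields a canonical identification $(\Fil^rX)_{\free}\cong\gr^r\l_{\dR,\Delta}^+\otimes_{K_{\Delta,\infty}}(X/\Fil^1X)_{\free}$, free of rank $d$ over $\gr^r\l_{\dR,\Delta}^+$.

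The heart of the argument is to produce a short exact sequence
$$0\to(\Fil^rX)_{\free}\to X_{\free}\to(X/\Fil^rX)_{\free}\to0$$
of $\l_{\dR,\Delta}^+$-representations of $\Gamma_{K,\Delta}$. The injectivity and the identification of the kernel are immediate from the definitions: if $y\in X_{\free}\cap\Fil^rX$, then its $\Gamma_{K,\Delta}$-orbit generates a finitely generated $K_\Delta$-submodule of $\Fil^rX$, hence $y\in(\Fil^rX)_{\free}$. Granting surjectivity, the freeness of $X_{\free}$ of rank $d$ over $\l_{\dR,\Delta}^+/\Fil^{r+1}$ follows by lifting a basis of $(X/\Fil^rX)_{\free}$ and completing with one of $(\Fil^rX)_{\free}$; the isomorphism $\L_{\dR,\Delta}^+\otimes_{\l_{\dR,\Delta}^+}X_{\free}\isomto X$ then follows by the five-lemma from the corresponding ones on the two ends, using the faithful flatness in lemma \ref{lemmgrL}.

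The main obstacle is therefore the surjectivity of $X_{\free}\to(X/\Fil^rX)_{\free}$. Choosing a basis of $(X/\Fil^rX)_{\free}$ over $\l_{\dR,\Delta}^+/\Fil^r$ and lifting it to a basis of $X$ over $\L_{\dR,\Delta}^+/\Fil^{r+1}$, the action of $\Gamma_{K,\Delta}$ is described by a cocycle $U\colon\Gamma_{K,\Delta}\to\GL_d(\L_{\dR,\Delta}^+/\Fil^{r+1})$ whose reduction mod $\Fil^r$ takes values in $\GL_d(\l_{\dR,\Delta}^+/\Fil^r)$. Fixing a lift $\tilde U$ to $\GL_d(\l_{\dR,\Delta}^+/\Fil^{r+1})$ of this reduction, the difference $c:=U-\tilde U$ is a continuous cocycle with values in $\Mat_d(\gr^r\L_{\dR,\Delta}^+)$ (for an appropriately twisted action), and the surjectivity amounts to showing that $c$ can be made to lie in $\Mat_d(\gr^r\l_{\dR,\Delta}^+)$ after modifying the lifted basis by elements of $\Fil^rX$. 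Using the decomposition $\gr^r\L_{\dR,\Delta}^+=\bigoplus_{|\underline{n}|=r}L_\Delta t^{\underline{n}}$, on which $\Gamma_{K,\Delta}$ acts on $t^{\underline{n}}$ via $\chi_\Delta^{\underline{n}}$, the problem decomposes into finitely many instances of a twisted analog of corollary \ref{corocomplf2} for representations twisted by the characters $\chi_\Delta^{\underline{n}}$; the argument proceeds as in \loccit, ultimately resting on proposition \ref{propfond}.
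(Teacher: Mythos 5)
The overall strategy of your proposal matches the paper's: induct on $r$, split $X$ via $\Fil^r X$ and $X/\Fil^r X$, treat the two ends by the base case and the inductive hypothesis respectively, and handle the extension by a cocycle argument resting on corollary \ref{corocomplf2} and proposition \ref{propfond}. The genuine gap is in the surjectivity step. You assert that for an arbitrary lift $\tilde{U}\colon\Gamma_{K,\Delta}\to\GL_d(\l_{\dR,\Delta}^+/\Fil^{r+1})$ of the mod-$\Fil^r$ cocycle, the difference $c := U - \tilde{U}$ is a (twisted) $1$-cocycle with values in $\Mat_d(\gr^r\L_{\dR,\Delta}^+)$. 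This is false in general: a direct computation gives
$$c_{gh} = \tilde{U}_g\, g(c_h) + c_g\, g(\tilde{U}_h) + e(g,h),\qquad e(g,h):=\tilde{U}_{gh}-\tilde{U}_g\,g(\tilde{U}_h)\in\Mat_d(\gr^r\l_{\dR,\Delta}^+),$$
where $e$ is the $2$-cocycle measuring the failure of the lift $\tilde{U}$ to be multiplicative, and it need not vanish. Hence $c$ does not define a class in $\H^1$ and corollary \ref{corocomplf2} cannot be applied to it; the obstruction to the \emph{existence} of a cocycle-preserving lift lives in an $\H^2$-group, which neither corollary \ref{corocomplf2} (an $\H^1$-statement) nor proposition \ref{propfond} controls.

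The paper resolves precisely this point by producing a specific cocycle-preserving lift using Tate's normalized traces. It first writes $U_g=\widehat{U}''_g+\widetilde{U}_g$, with $\widehat{U}''_g$ the degree-$<r$ polynomial part (which can be arranged to have coefficients in a fixed finite level $K_{n,\Delta}$) and $\widetilde{U}_g\in\Mat_d(\gr^r\L_{\dR,\Delta}^+)$, then applies the projector $R_{n,\Delta}$ built from the normalized traces $R_{n,\alpha}$ of proposition \ref{propRn}, extended to $\gr^r\L_{\dR,\Delta}^+$. Because $R_{n,\Delta}$ is $K_{n,\Delta}$-linear and $\Gamma_{K,\Delta}$-equivariant, applying it to the cocycle relation for $U$ shows that $U^{(n)}_g:=\widehat{U}''_g+R_{n,\Delta}(\widetilde{U}_g)$ is a genuine cocycle valued in $\GL_d(\l_{\dR,\Delta}^+/\Fil^{r+1})$. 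Only then is the difference $\widetilde{U}^{(n)}:=U-U^{(n)}$ an honest twisted cocycle, and corollary \ref{corocomplf2} applies. This normalized-trace construction is the crux of the de-completion step, and your proposal cannot conclude without it.
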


\begin{proof}
The proof is similar to that of \cite[Proposition 3.17]{AB2}. We use induction on $r$, the case $r=0$ being corollary \ref{corodecomplf}: assume $r>0$. Put $X^\prime=\Fil^r\L_{\dR,\Delta}^+ X$: this is a free $L_\Delta$-representation of rank $d\binom{r+\delta-1}{\delta-1}$ of $\Gamma_{K,\Delta}$; and $X^{\prime\prime}=X/X^\prime\simeq(\L_{\dR,\Delta}^+/\Fil^r\L_{\dR,\Delta}^+)\otimes_{\L_{\dR,\Delta}^+}X$: this is a free $\L_{\dR,\Delta}^+/\Fil^r\L_{\dR,\Delta}^+$-representation of rank $d$ of $\Gamma_{K,\Delta}$. By the induction hypothesis and corollary \ref{corodecomplf}, the natural maps
\begin{align*}
L_\Delta\otimes_{K_{\infty,\Delta}}X^\prime_{\free} &\to X^\prime\\
\L_{\dR,\Delta}^+\otimes_{\l_{\dR,\Delta}^+}X^{\prime\prime}_{\free} &\to X^{\prime\prime}
\end{align*} 
are isomorphisms. In particular, we can find a basis $\mathfrak{B}^{\prime\prime}$ of $X^{\prime\prime}$ over $\L_{\dR,\Delta}^+/\Fil^r\L_{\dR,\Delta}^+$ such that the cocycle $U^{\prime\prime}$ giving the action of $\Gamma_{K,\Delta}$ on $X^{\prime\prime}$ in the basis $\mathfrak{B}^{\prime\prime}$ has values in $\GL_d(\l_{\dR,\Delta}^+/\Fil^r\l_{\dR,\Delta}^+)$. Let $\mathfrak{B}$ be a basis of $X$ over $\L_{\dR,\Delta}^+/\Fil^{r+1}\L_{\dR,\Delta}^+$ lifting $\mathfrak{B}^{\prime\prime}$: by construction, the cocycle $U$ giving the action of $\Gamma_{K,\Delta}$ on $X$ in the basis $\mathfrak{B}$ has values in $\GL_d\big(\pr_r^{-1}(\l_{\dR,\Delta}^+/\Fil^r\l_{\dR,\Delta}^+)\big)$, where $\pr_r\colon\L_{\dR,\Delta}^+/\Fil^{r+1}\L_{\dR,\Delta}^+\to\L_{\dR,\Delta}^+/\Fil^r\L_{\dR,\Delta}^+$ is the canonical surjection. By lemma \ref{lemmgrL}, we have
\begin{align*}
\pr_r^{-1}(\l_{\dR,\Delta}^+/\Fil^r\l_{\dR,\Delta}^+) &= \l_{\dR,\Delta}^+/\Fil^{r+1}\l_{\dR,\Delta}^++\gr^r(\L_{\dR,\Delta}^+)\\
&= K_{\infty,\Delta,<r}[T_\alpha]_{\alpha\in\Delta}\oplus\gr^r(\L_{\dR,\Delta}^+)
\end{align*}
where $K_{\infty,\Delta,<r}[T_\alpha]_{\alpha\in\Delta}$ is the sub-$K_{\infty,\Delta}$-module of elements in $K_{\infty,\Delta}[T_\alpha]_{\alpha\in\Delta}$ of total degree $<r$. For all $g\in\Gamma_{K,\Delta}$, we thus can write uniquely $U_g=\widehat{U}^{\prime\prime}_g+\widetilde{U}_g$ where $\widehat{U}_g\in\GL_d(K_{\infty,\Delta,<r}[T_\alpha]_{\alpha\in\Delta})$ and $\widetilde{U}_g\in\Mat_d(\gr^r\L_{\dR,\Delta}^+)$. Note that $\widehat{U}^{\prime\prime}$ is a lift of $U^{\prime\prime}$, but it is not a cocycle. As $\Gamma_{K,\Delta}$ is finitely generated, we can find $n\geq n_K$ (\cf proposition \ref{propRn}) such that $\widehat{U}^{\prime\prime}_g\in\GL_d(K_{n,\Delta,<r}[T_\alpha]_{\alpha\in\Delta})$ for all $g\in\Gamma_{K,\Delta}$.

Recall that if $\alpha\in\Delta$, there is a $L_{\Delta,\Delta\setminus\{\alpha\},n}$-linear and $\Gamma_{K,\Delta}$-equivariant projector $R_{n,\alpha}\colon L_\Delta\to L_{\Delta,\Delta\setminus\{\alpha\},n}$ (\cf notations before theorem \ref{theocohoGC}). They commute to each other: their composite provides a $K_{n,\Delta}$-linear projector $R_{n,\Delta}\colon L_\Delta\to K_{n,\Delta}$. We extend $R_{n,\Delta}$ to $\gr^r(\L_{\dR,\Delta}^+)$ by putting $R_{n,\Delta}(t_\alpha)=t_\alpha$ for all $\alpha\in\Delta$ (\cf lemma \ref{lemmgrL}). As $U$ is a cocycle, we have $U_{g\gamma}=U_gg(U_\gamma)$ \ie
\begin{align*}
\widehat{U}^{\prime\prime}_{g\gamma}+\widetilde{U}_{g\gamma} &= \big(\widehat{U}^{\prime\prime}_g+\widetilde{U}_g\big)g\big(\widehat{U}^{\prime\prime}_\gamma+\widetilde{U}_\gamma\big)\\
&= \widehat{U}^{\prime\prime}_gg\big(\widehat{U}^{\prime\prime}_\gamma\big)+\widehat{U}^{\prime\prime}_gg\big(\widetilde{U}_\gamma\big)+\widetilde{U}_gg\big(\widehat{U}^{\prime\prime}_\gamma\big)
\end{align*}
for all $g,\gamma\in\Gamma_{K,\Delta}$ (since $\widetilde{U}_gg\big(\widetilde{U}_\gamma\big)=0$ in $\Mat_d(\L_{\dR,\Delta}^+/\Fil^{r+1}\L_{\dR,\Delta}^+)$ because $\Fil^{2r}\L_{\dR,\Delta}^+\subset\Fil^{r+1}\L_{\dR,\Delta}^+$). Applying $R_{n,\Delta}$, we get
\begin{align*}
\widehat{U}^{\prime\prime}_{g\gamma}+R_{n,\Delta}\big(\widetilde{U}_{g\gamma}\big) &= \widehat{U}^{\prime\prime}_gg\big(\widehat{U}^{\prime\prime}_\gamma\big)+\widehat{U}^{\prime\prime}_gg\big(R_{n,\Delta}\big(\widetilde{U}_\gamma\big)\big)+R_{n,\Delta}\big(\widetilde{U}_g\big)g\big(\widehat{U}^{\prime\prime}_\gamma\big)\\
&= \big(\widehat{U}^{\prime\prime}_g+R_{n,\Delta}\big(\widetilde{U}_g\big)\big)g\big(\widehat{U}^{\prime\prime}_\gamma+R_{n,\Delta}\big(\widetilde{U}_\gamma\big)\big)
\end{align*}
since $R_{n,\Delta}$ is $K_{n,\Delta}$-linear and $\Gamma_{K,\Delta}$-equivariant. If we put $U_g^{(n)}=\widehat{U}^{\prime\prime}_g+R_{n,\Delta}\big(\widetilde{U}_g\big)$ for all $g\in\Gamma_{K,\Delta}$, this means that $U^{(n)}\colon\Gamma_{K,\Delta}\to\GL_d\big(\l_{\dR,\Delta}^+/\Fil^{r+1}\l_{\dR,\Delta}^+\big)$ is a cocycle. For $g\in\Gamma_{K,\Delta}$, put $\widetilde{U}^{(n)}_g=U_g-U^{(n)}_g\in\Mat_d\big(\gr^r\L_{\dR,\Delta}^+\big)$. By the computation above (with $U^{(n)}$ and $\widetilde{U}^{(n)}$ instead of $\widehat{U}^{\prime\prime}$ and $\widetilde{U}$), we have
$$U^{(n)}_{g\gamma}+\widetilde{U}^{(n)}_{g\gamma}=U^{(n)}_gg\big(U^{(n)}_\gamma\big)+U^{(n)}_gg\big(\widetilde{U}^{(n)}_\gamma\big)+\widetilde{U}^{(n)}_gg\big(U^{(n)}_\gamma\big)$$
so that
$$\widetilde{U}^{(n)}_{g\gamma}=U^{(n)}_gg\big(\widetilde{U}^{(n)}_\gamma\big)+\widetilde{U}^{(n)}_gg\big(U^{(n)}_\gamma\big)=U_gg\big(\widetilde{U}^{(n)}_\gamma\big)+\widetilde{U}^{(n)}_gg\big(U_\gamma\big)$$
since $U^{(n)}$ is a cocycle. This means that $\widetilde{U}^{(n)}$ is a cocycle that defines an extension of $L_\Delta\otimes_{K_{\infty,\Delta}}X^{\prime\prime}_{\free}$ by $L_\Delta\otimes_{K_{\infty,\Delta}}X^\prime_{\free}$ as $L_\Delta$-representations of $\Gamma_{K,\Delta}$. By corollary \ref{corocomplf2}, this extension comes from an extension of $X^{\prime\prime}_{\free}$ by $X^\prime_{\free}$ as $K_{\infty,\Delta}$-representations of $\Gamma_{K,\Delta}$, by extension of scalars: there exists $N\in\Mat_d(\gr^r\L_{\dR,\Delta}^+)$ such that
$$\widetilde{U}^{(n)}_g+U_gg(N)-NU_g\in\Mat_d(\gr^r\l_{\dR,\Delta}^+)$$
for all $g\in\Gamma_{K,\Delta}$. Now put $B=\I_d+N\in\GL_d(\L_{\dR,\Delta}^+)$: for all $g\in\Gamma_{K,\Delta}$ we have
\begin{align*}
B^{-1}U_gg(B) &= (\I_d-N)\big(U^{(n)}_g+\widetilde{U}^{(n)}_g\big)(\I_d+g(N))\\
&= U^{(n)}_g+\widetilde{U}^{(n)}_g-NU^{(n)}_g+U^{(n)}_gg(N)\in\GL_d(\l_{\dR,\Delta}^+/\Fil^{r+1}\l_{\dR,\Delta}^+).
\end{align*}
This means that if we make a change of basis from $\mathfrak{B}$ using $B$, we reduce to the case where the cocycle $U$ takes values in $\GL_d(\l_{\dR,\Delta}^+/\Fil^{r+1}\l_{\dR,\Delta}^+)$. Proposition \ref{propfond} then shows that $X_{\free}$ is nothing but the $\l_{\dR,\Delta}^+/\Fil^{r+1}\l_{\dR,\Delta}^+$-span of the basis just constructed: in particular, it is free, and the map
$$\L_{\dR,\Delta}^+\otimes_{\l_{\dR,\Delta}^+}X_{\free}\to X$$
is an isomorphism of $\L_{\dR,\Delta}^+$-representations of $\Gamma_{K,\Delta}^+$.
\end{proof}

\begin{prop}\label{propdecompletionLdR}
Let $X$ le a free $\L_{\dR,\Delta}^+$-representation of rank $d$ of $\Gamma_{K,\Delta}$. Then $X_{\free}$ is free of rank $d$ over $\l_{\dR,\Delta}^+$, and the natural map
$$\L_{\dR,\Delta}^+\otimes_{\l_{\dR,\Delta}^+}X_{\free}\to X$$
is an isomorphism of $\L_{\dR,\Delta}^+$-representations of $\Gamma_{K,\Delta}^+$. Moreover, $X_{\free}$ is the union of the sub-$\l_{\dR,\Delta}^+$-modules of $X$ that are of finite type and stable under the action of $\Gamma_{K,\Delta}$.
\end{prop}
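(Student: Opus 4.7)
The plan is to reduce to proposition \ref{propdecompletionLdRr} by passing to the inverse limit along the quotients $X_r := X/\Fil^{r+1}\L_{\dR,\Delta}^+X$. For each $r\in\NN$, the module $X_r$ is a free $\L_{\dR,\Delta}^+/\Fil^{r+1}\L_{\dR,\Delta}^+$-representation of rank $d$ of $\Gamma_{K,\Delta}$, so proposition \ref{propdecompletionLdRr} applies and yields that $(X_r)_{\free}$ is free of rank $d$ over $\l_{\dR,\Delta}^+/\Fil^{r+1}\l_{\dR,\Delta}^+$ with an isomorphism $\L_{\dR,\Delta}^+/\Fil^{r+1}\otimes_{\l_{\dR,\Delta}^+/\Fil^{r+1}}(X_r)_{\free}\isomto X_r$.

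Next I will analyze the transition maps $(X_{r+1})_{\free}\to(X_r)_{\free}$ induced by $X_{r+1}\to X_r$. A basis of $(X_{r+1})_{\free}$ over $\l_{\dR,\Delta}^+/\Fil^{r+2}$ is, via the isomorphism above, a basis of $X_{r+1}$ over $\L_{\dR,\Delta}^+/\Fil^{r+2}$; its reduction mod $\Fil^{r+1}$ is a basis of $X_r$ whose associated cocycle still has values in $\GL_d(\l_{\dR,\Delta}^+/\Fil^{r+1})$, hence — by the uniqueness part of the rank-$r$ case — lies in $(X_r)_{\free}$ as a basis. Conversely, any basis of $(X_r)_{\free}$ lifts to a basis of $(X_{r+1})_{\free}$ by Nakayama applied to the square-zero ideal $\Fil^{r+1}\l_{\dR,\Delta}^+/\Fil^{r+2}\l_{\dR,\Delta}^+$ in $\l_{\dR,\Delta}^+/\Fil^{r+2}\l_{\dR,\Delta}^+$. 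Inductively I construct compatible bases $\mathfrak{B}_r=(e_1^{(r)},\ldots,e_d^{(r)})$ and set $e_i=(e_i^{(r)})_r\in \varprojlim_r(X_r)_{\free}=X_{\free}$. Since $X=\varprojlim_r X_r$ ($X$ being $\Fil$-adically complete), $X_{\free}$ embeds into $X$, and $(e_1,\ldots,e_d)$ is an $\l_{\dR,\Delta}^+$-basis of $X_{\free}$. The cocycle describing the action on $X$ in this basis is the inverse limit of cocycles with values in $\GL_d(\l_{\dR,\Delta}^+/\Fil^{r+1})$, hence takes values in $\GL_d(\l_{\dR,\Delta}^+)$; the natural map $\L_{\dR,\Delta}^+\otimes_{\l_{\dR,\Delta}^+}X_{\free}\to X$ then matches bases on both sides, so is an isomorphism of $\L_{\dR,\Delta}^+$-representations of $\Gamma_{K,\Delta}$.

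For the \emph{moreover} statement, set $X_{\free}^{\mathsf{st}}$ to be the union of sub-$\l_{\dR,\Delta}^+$-modules of $X$ that are of finite type and stable under $\Gamma_{K,\Delta}$. The inclusion $X_{\free}\subset X_{\free}^{\mathsf{st}}$ is immediate since $X_{\free}$ is itself such a submodule. For the reverse inclusion, let $Y\subset X$ be stable and $\l_{\dR,\Delta}^+$-finite type; it suffices to show that the image $Y_r$ of $Y$ in $X_r$ lies in $(X_r)_{\free}$ for every $r$. Here I will argue as in the proof of corollary \ref{corodecomplf} using proposition \ref{propfond}: fixing generators $y_1,\ldots,y_s$ of $Y_r$ and topological generators of $\Gamma_{K,\Delta}$, the action is described by finitely many matrices with entries in $\l_{\dR,\Delta}^+/\Fil^{r+1}$, which lie in $K_{n,\Delta}[T_\alpha]_{\alpha\in\Delta}/(\mathrm{deg}\leq r)$ for some $n$; iterating shows that the $K_\Delta$-span of the $\Gamma_{K,\Delta}$-orbit of each $y_j$ is contained in the $K_{n,\Delta}$-module generated by $y_1,\ldots,y_s$, which is $K_\Delta$-finite type (since $K_{n,\Delta}/K_\Delta$ is finite). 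Thus each $y_j$ lies in $(X_r)_{\free}$, and $Y_r\subset(X_r)_{\free}$ follows from the fact that $(X_r)_{\free}$ is a $\l_{\dR,\Delta}^+/\Fil^{r+1}$-module. Passing to the inverse limit gives $Y\subset X_{\free}$.

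The main obstacle I anticipate is the last finiteness argument: one must carefully propagate the fact that a cocycle taking values in matrices with entries in some $K_{n,\Delta}[T_\alpha]/(\mathrm{deg}\leq r)$ forces the $K_\Delta$-orbit of a single vector to be finite type, using the stability of $K_{n,\Delta}$ under $\Gamma_{K,\Delta}$ — this is the direct analog of proposition \ref{propfond} in the truncated filtered setting, but requires unwinding the semi-linearity.
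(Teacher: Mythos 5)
Your proof is correct and follows essentially the same route as the paper: reduce to the truncated case (proposition \ref{propdecompletionLdRr}) by passing to the inverse limit over $r$, construct a compatible system of bases of the $(X_r)_{\free}$, and verify the \emph{moreover} statement modulo $\Fil^{r+1}$ for each $r$ using the finite-type/stable argument of proposition \ref{propfond}. The only cosmetic difference is in how you justify lifting bases along $(X_{r+1})_{\free}\to(X_r)_{\free}$: you first prove that reduction of a basis lands in $(X_r)_{\free}$ as a basis (via the characterization from the truncated case), hence the transition map is surjective, and then lift by Nakayama along the square-zero ideal; the paper obtains surjectivity of these transition maps directly by faithful flatness of $\L_{\dR,\Delta}^+/\Fil^{r+1}$ over $\l_{\dR,\Delta}^+/\Fil^{r+1}$ (lemma \ref{lemmgrL}), which is slightly shorter but lands in the same place. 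Both are valid, and the rest of the argument—including the treatment of the \emph{moreover} statement—matches the paper's.
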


\begin{proof}
For $r\in\NN$, put $X_r=(\L_{\dR,\Delta}^+/\Fil^{r+1}\L_{\dR,\Delta}^+)\otimes_{\L_{\dR,\Delta}^+}X$. This is a free $\L_{\dR,\Delta}^+/\Fil^{r+1}\L_{\dR,\Delta}^+$-representation of rank $d$ of $\Gamma_{K,\Delta}$. By proposition \ref{propdecompletionLdRr}, $X_{r,\free}$ is a free $\l_{\dR,\Delta}^+/\Fil^{r+1}\l_{\dR,\Delta}^+$-representation of rank $d$ of $\Gamma_{K,\Delta}$, and the natural map $\L_{\dR,\Delta}^+\otimes_{\l_{\dR,\Delta}^+}X_{r,\free}\to X_r$ is an isomorphism of $\L_{\dR,\Delta}^+/\Fil^{r+1}\L_{\dR,\Delta}^+$-representations of $\Gamma_{K,\Delta}$. The maps $X_{r+1}\to X_r$ are surjective: so are the maps $X_{r+1,\free}\to X_{r,\free}$ by faithful flatness of $\L_{\dR,\Delta}^+/\Fil^{r+1}\L_{\dR,\Delta}^+$ over $\l_{\dR,\Delta}^+/\Fil^{r+1}\l_{\dR,\Delta}^+$ (\cf lemma \ref{lemmgrL}). This implies that any basis of $X_{r,\free}$ can be lifted to a basis of $X_{r+1,\free}$: by induction, we can construct a sequence $(\mathfrak{B}_r)_{r\in\NN}$ such that $\mathfrak{B}_r$ is a basis of $X_{r,\free}$ over $\l_{\dR,\Delta}^+/\Fil^{r+1}\l_{\dR,\Delta}^+$ and is the image of $\mathfrak{B}_{r+1}$ in $(\l_{\dR,\Delta}^+/\Fil^{r+1}\l_{\dR,\Delta}^+)\otimes_{\l_{\dR,\Delta}^+}X_{r+1,\free}$ for all $r\in\NN$. This sequence defines a basis $\mathfrak{B}$ of $X_{\free}=\varprojlim\limits_rX_{r,\free}$ over $\l_{\dR,\Delta}^+$. By extension of scalars, $\mathfrak{B}$ is a basis of $X$ over $\L_{\dR,\Delta}^+$ as well, so that the natural map
$$\L_{\dR,\Delta}^+\otimes_{\l_{\dR,\Delta}^+}X_{\free}\to X$$
is an isomorphism of $\L_{\dR,\Delta}^+$-representations of $\Gamma_{K,\Delta}$.

Let $X_{\free}^\prime$ be the union of the sub-$\l_{\dR,\Delta}^+$-modules of $X$ that are of finite type and stable under the action of $\Gamma_{K,\Delta}$: we have $X_{\free}\subset X_{\free}^\prime$. The reverse inclusion is checked modulo $\Fil^{r+1}\l_{\dR,\Delta}^+$ for all $r\in\NN$. Let $Y$ be a finite type  sub-$\l_{\dR,\Delta}^+$-module of $X$ that is stable under the action of $\Gamma_{K,\Delta}$, and $Y_r=(\l_{\dR,\Delta}^+/\Fil^{r+1}\l_{\dR,\Delta}^+)\otimes_{\L_{\dR,\Delta}^+}Y\subset X_r$ (\cf lemma \ref{lemmgrL}). Let $y_1,\ldots,y_s$ be a generating family of $Y_r$ over $\l_{\dR,\Delta}^+$ and $g_1,\ldots,g_u$ a finite set of topological generators of $\Gamma_{K,\Delta}$. There exists $n\in\NN$ and coefficients $(c_{i,j}^{(a)})_{\substack{1\leq i,j\leq s\\ 1\leq a\leq u}}$ in $K_{n,\Delta}$ such that $g_a(y_i)=\sum\limits_{j=1}^sc_{i,j}^{(a)}y_j$. This implies that for all $m\geq n$, the sub-$K_{m,\Delta}$-module $Y_{r,m}$ of $Y_r$ generated by $y_1,\ldots,y_s$ is stable under the action of $\Gamma_{K,\Delta}$: we have $Y_{r,m}\subset X_{r,\free}$. As $Y_r=\bigcup\limits_{m\geq n}Y_{r,m}$, we thus have $Y_r\subset X_{r,\free}$.
\end{proof}

\begin{theo}\label{theoSenBdR}
The functor
\begin{align*}
\Rep_{\B_{\dR,\Delta}^+}^{\free}(G_{K,\Delta}) &\to \Rep_{\l_{\dR,\Delta}^+}^{\free}(\Gamma_{K,\Delta})\\
W &\mapsto \big(W^{H_{K,\Delta}}\big)_{\free}
\end{align*}
is an equivalence of categories.
\end{theo}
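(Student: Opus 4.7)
The plan is to assemble the theorem as a composition of two equivalences of categories that have already essentially been established. The target equivalence factors as
\[
\Rep_{\B_{\dR,\Delta}^+}^{\free}(G_{K,\Delta})\;\xrightarrow{\,W\mapsto W^{H_{K,\Delta}}\,}\;\Rep_{\L_{\dR,\Delta}^+}^{\free}(\Gamma_{K,\Delta})\;\xrightarrow{\,X\mapsto X_{\free}\,}\;\Rep_{\l_{\dR,\Delta}^+}^{\free}(\Gamma_{K,\Delta}),
\]
and the first arrow is already known to be an equivalence by corollary \ref{coroequivBdRDH} (with quasi-inverse $X\mapsto\B_{\dR,\Delta}^+\otimes_{\L_{\dR,\Delta}^+}X$). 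So the essential point is that the second arrow is an equivalence.

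For the second arrow, essential surjectivity and the fact that the target is well-defined are contained in proposition \ref{propdecompletionLdR}: if $X$ is a free $\L_{\dR,\Delta}^+$-representation of $\Gamma_{K,\Delta}$ of rank $d$, then $X_{\free}$ is a free $\l_{\dR,\Delta}^+$-representation of rank $d$, and the natural map
\[
\L_{\dR,\Delta}^+\otimes_{\l_{\dR,\Delta}^+}X_{\free}\to X
\]
is an isomorphism. This already gives a candidate quasi-inverse $Y\mapsto\L_{\dR,\Delta}^+\otimes_{\l_{\dR,\Delta}^+}Y$, and shows that $X\simeq\L_{\dR,\Delta}^+\otimes_{\l_{\dR,\Delta}^+}X_{\free}$ as $\L_{\dR,\Delta}^+$-representations of $\Gamma_{K,\Delta}$, so the composition one way round is the identity up to a natural isomorphism.

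For the other composition, starting from $Y\in\Rep_{\l_{\dR,\Delta}^+}^{\free}(\Gamma_{K,\Delta})$, one forms $X=\L_{\dR,\Delta}^+\otimes_{\l_{\dR,\Delta}^+}Y$ and must check that $X_{\free}=Y$ as sub-$\l_{\dR,\Delta}^+$-representations of $X$. By the final characterization in proposition \ref{propdecompletionLdR}, $X_{\free}$ is the union of all sub-$\l_{\dR,\Delta}^+$-modules of $X$ of finite type that are stable under $\Gamma_{K,\Delta}$; since $Y$ is itself such a module, we obtain $Y\subseteq X_{\free}$. Both $Y$ and $X_{\free}$ are free of rank $d$ over $\l_{\dR,\Delta}^+$, and applying $\L_{\dR,\Delta}^+\otimes_{\l_{\dR,\Delta}^+}(-)$ to the inclusion $Y\hookrightarrow X_{\free}$ recovers the identity on $X$; the faithful flatness of $\L_{\dR,\Delta}^+$ over $\l_{\dR,\Delta}^+$ (a consequence of lemma \ref{lemmgrL} combined with proposition \ref{propLfidplat}) then forces $Y=X_{\free}$.

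It remains to check that both functors are fully faithful on morphisms, which is automatic from the isomorphism $\L_{\dR,\Delta}^+\otimes_{\l_{\dR,\Delta}^+}X_{\free}\isomto X$: any $\Gamma_{K,\Delta}$-equivariant $\L_{\dR,\Delta}^+$-linear map $X_1\to X_2$ sends $X_{1,\free}$ into $X_{2,\free}$ by the characterization recalled above, so passes down; conversely, a $\l_{\dR,\Delta}^+$-linear $\Gamma_{K,\Delta}$-equivariant map $X_{1,\free}\to X_{2,\free}$ extends uniquely by the tensor product $\L_{\dR,\Delta}^+\otimes_{\l_{\dR,\Delta}^+}(-)$. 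Since everything genuinely difficult has been absorbed into proposition \ref{propdecompletionLdR} (the effective decompletion along $K_{\Delta,\infty}/K_\Delta$ in the presence of the filtration), there is no substantial obstacle left at this stage; the only point requiring a bit of vigilance is the use of faithful flatness to compare $Y$ with $X_{\free}$, since $\l_{\dR,\Delta}^+$ and $\L_{\dR,\Delta}^+$ are not fields but complete filtered rings.
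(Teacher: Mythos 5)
Your decomposition is exactly the one the paper has in mind: the theorem is the composite of the equivalence $W\mapsto W^{H_{K,\Delta}}$ from corollary \ref{coroequivBdRDH} and the equivalence $X\mapsto X_{\free}$ coming from proposition \ref{propdecompletionLdR}, and indeed the paper's own proof is a one-line citation of these two results. Most of the elaboration is correct; the check of $X\simeq\L_{\dR,\Delta}^+\otimes_{\l_{\dR,\Delta}^+}X_{\free}$ and of full faithfulness is fine.

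The one point that is not quite justified as written is the step forcing $Y=X_{\free}$. You invoke ``faithful flatness of $\L_{\dR,\Delta}^+$ over $\l_{\dR,\Delta}^+$, a consequence of lemma \ref{lemmgrL} combined with proposition \ref{propLfidplat}''; but lemma \ref{lemmgrL} only establishes faithful flatness of the \emph{truncated} map $\l_{\dR,\Delta}^+/\Fil^{r+1}\l_{\dR,\Delta}^+\to\L_{\dR,\Delta}^+/\Fil^{r+1}\L_{\dR,\Delta}^+$ for each $r$, and faithful flatness does not formally pass to the inverse limit of such truncations (the coefficient ring $K_{\Delta,\infty}$ is not Noetherian, so the usual completion arguments are not available off the shelf). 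Two ways to close this without invoking the global faithful flatness: (a) reduce modulo $\Fil^{r+1}$, where the truncated faithful flatness of lemma \ref{lemmgrL} shows the injection $Y_r\hookrightarrow X_{r,\free}$ of free $\l_{\dR,\Delta}^+/\Fil^{r+1}\l_{\dR,\Delta}^+$-modules of the same rank becomes an isomorphism after $\otimes\,\L_{\dR,\Delta}^+/\Fil^{r+1}\L_{\dR,\Delta}^+$, hence is one; then take the inverse limit using the very definition $X_{\free}=\varprojlim_rX_{r,\free}$; or (b) note that the transition matrix $A\in\Mat_d(\l_{\dR,\Delta}^+)$ from a basis of $Y$ to a basis of $X_{\free}$ has $\det A$ a unit in $\L_{\dR,\Delta}^+$, hence its image in $\gr^0\L_{\dR,\Delta}^+=L_\Delta$ is a unit; this image lies in $K_{\Delta,\infty}$, and by the faithful flatness of $L_\Delta$ over $K_{\Delta,\infty}$ (proposition \ref{propLfidplat}) it is already a unit of $K_{\Delta,\infty}$, whence $\det A$ is a unit of the $\Fil$-complete ring $\l_{\dR,\Delta}^+$ and $Y=X_{\free}$.
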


\begin{proof}
This follows from corollary \ref{coroequivBdRDH} and proposition \ref{propdecompletionLdR}.
\end{proof}

\begin{nota}
Put $\L_{\dR,\Delta}=\L_{\dR,\Delta}^+\big[\frac{1}{t_\Delta}\big]$ and $\l_{\dR,\Delta}=\l_{\dR,\Delta}^+\big[\frac{1}{t_\Delta}\big]=K_{\Delta,\infty}[\![t_\alpha]\!]\big[\frac{1}{t_\alpha}\big]_{\alpha\in\Delta}$. We have $\l_{\dR,\Delta}\subset\L_{\dR,\Delta}=\H^0(H_{K,\Delta},\B_{\dR,\Delta})$.
\end{nota}

\begin{defi}
\begin{itemize}
\item[(1)] A \emph{lattice} of a free $\B_{\dR,\Delta}$-module (resp. $\l_{\dR,\Delta}$-module) $M$ of finite rank, is a sub-$\B_{\dR,\Delta}^+$-module (resp. sub-$\l_{\dR,\Delta}^+$-module) generated by a basis of $M$.
\item[(2)] We denote by $\Rep_{\B_{\dR,\Delta}}^{\reg}(G_{K,\Delta})$ (resp. $\Rep_{\l_{\dR,\Delta}}^{\reg}(\Gamma_{K,\Delta})$) the isogeny category of $\Rep_{\B_{\dR,\Delta}^+}^{\free}(G_{K,\Delta})$ (resp. $\Rep_{\l_{\dR,\Delta}^+}^{\free}(\Gamma_{K,\Delta})$), \ie the category whose objects, which are called \emph{regular representations}, are free $\B_{\dR,\Delta}$-modules (resp. $\l_{\dR,\Delta}$-modules) of finite rank endowed with a semi-linear action of $G_{K,\Delta}$ (resp. $\Gamma_{K,\Delta}$), and admitting a $G_{K,\Delta}$-stable (resp. $\Gamma_{K,\Delta}$-stable) lattice which defines an object of $\Rep_{\B_{\dR,\Delta}^+}(G_{K,\Delta})$ (resp. $\Rep_{\l_{\dR,\Delta}^+}(\Gamma_{K,\Delta})$).
\end{itemize}
\end{defi}

\begin{theo}\label{theoSenBdR2}
The functor
$$\Rep_{\B_{\dR,\Delta}}^{\reg}(G_{K,\Delta})\to\Rep_{\l_{\dR,\Delta}}^{\reg}(\Gamma_{K,\Delta})$$
induced by that of theorem \ref{theoSenBdR} is an equivalence of categories.
\end{theo}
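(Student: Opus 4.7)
The strategy is to construct the quasi-inverse explicitly by ``spreading out to a lattice, applying Theorem \ref{theoSenBdR}, then inverting $t_\Delta$,'' and to reduce all verifications to Theorem \ref{theoSenBdR}.

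First I would define the functor on objects. Given $W\in\Rep_{\B_{\dR,\Delta}}^{\reg}(G_{K,\Delta})$, choose a $G_{K,\Delta}$-stable lattice $W_0\subset W$, which by hypothesis is an object of $\Rep_{\B_{\dR,\Delta}^+}^{\free}(G_{K,\Delta})$. Theorem \ref{theoSenBdR} attaches to it $Y_0:=\bigl(W_0^{H_{K,\Delta}}\bigr)_{\free}\in\Rep_{\l_{\dR,\Delta}^+}^{\free}(\Gamma_{K,\Delta})$. Define the image to be $Y:=\l_{\dR,\Delta}\otimes_{\l_{\dR,\Delta}^+}Y_0$, which is a free $\l_{\dR,\Delta}$-module of finite rank with a semilinear $\Gamma_{K,\Delta}$-action and admits $Y_0$ as a $\Gamma_{K,\Delta}$-stable lattice, hence lies in $\Rep_{\l_{\dR,\Delta}}^{\reg}(\Gamma_{K,\Delta})$. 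The key point is that this construction agrees with the functor induced from Theorem \ref{theoSenBdR} on the isogeny categories: the functor $W_0\mapsto Y_0$ is additive and commutes with multiplication by $t_\Delta$ since the latter is an internal operation in $\B_{\dR,\Delta}^+$ (respectively $\l_{\dR,\Delta}^+$), fixed by $G_{K,\Delta}^1:=\ker\chi_\Delta^{\underline{1}}$ only up to cocycle but commuting with the taking of $H_{K,\Delta}$-invariants and of $K_\Delta$-finite vectors.

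The second step is to check the functor is well defined, i.e., independent of the chosen lattice. If $W_0^\prime$ is another $G_{K,\Delta}$-stable $\B_{\dR,\Delta}^+$-lattice in $W$, then because both generate $W$ over $\B_{\dR,\Delta}$ there exist $m,n\in\NN$ with $t_\Delta^m W_0\subset W_0^\prime\subset t_\Delta^{-n}W_0$. Applying $(\cdot^{H_{K,\Delta}})_{\free}$ term by term (using compatibility with multiplication by the unit $t_\Delta$) gives $t_\Delta^m Y_0\subset Y_0^\prime\subset t_\Delta^{-n}Y_0$ inside the common $\l_{\dR,\Delta}$-module obtained after inverting $t_\Delta$. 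Inverting $t_\Delta$ yields a canonical identification $\l_{\dR,\Delta}\otimes Y_0\isomto\l_{\dR,\Delta}\otimes Y_0^\prime$.

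Third, full faithfulness and essential surjectivity. For full faithfulness, a morphism $f\colon W\to W^\prime$ is $\B_{\dR,\Delta}$-linear and $G_{K,\Delta}$-equivariant; given lattices $W_0,W_0^\prime$ there is $n\in\NN$ such that $t_\Delta^n f$ maps $W_0$ into $W_0^\prime$. By Theorem \ref{theoSenBdR} it corresponds bijectively to a morphism $Y_0\to t_\Delta^{-n}Y_0^\prime$ of $\l_{\dR,\Delta}^+$-representations, i.e. after inverting $t_\Delta$, to a unique $\l_{\dR,\Delta}$-linear $\Gamma_{K,\Delta}$-equivariant map $Y\to Y^\prime$; reading this backwards (again using Theorem \ref{theoSenBdR}) gives the bijection on Hom-sets. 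For essential surjectivity, given $Y\in\Rep_{\l_{\dR,\Delta}}^{\reg}(\Gamma_{K,\Delta})$ with stable lattice $Y_0$, apply the quasi-inverse of Theorem \ref{theoSenBdR} to $Y_0$ to produce $W_0\in\Rep_{\B_{\dR,\Delta}^+}^{\free}(G_{K,\Delta})$, and set $W:=\B_{\dR,\Delta}\otimes_{\B_{\dR,\Delta}^+}W_0$; then $W$ is regular with lattice $W_0$, and its image under our functor is canonically isomorphic to $Y$.

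The one place that requires genuine care — and the only conceptual obstacle — is checking that the choice of lattice does not matter and that lattices can always be rescaled by powers of $t_\Delta$; this rests on the fact that any two $G_{K,\Delta}$-stable $\B_{\dR,\Delta}^+$-lattices in a common $\B_{\dR,\Delta}$-module are commensurable up to multiplication by a power of $t_\Delta$, because $t_\Delta$ is $G_{K,\Delta}$-invariant up to the character $\chi_\Delta^{\underline{1}}$ (so $t_\Delta$-multiples of stable lattices are again stable) and $t_\Delta$ generates the maximal element of the ideal lattice after localization. The remaining verifications then reduce mechanically to Theorem \ref{theoSenBdR}.
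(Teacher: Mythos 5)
Your proof is correct, and in fact the paper gives no proof of this theorem at all — it is stated without justification, presumably as an immediate consequence of Theorem \ref{theoSenBdR} and the definition of the isogeny categories. Your write-up fills in exactly the details one would expect: the commensurability of $G_{K,\Delta}$-stable $\B_{\dR,\Delta}^+$-lattices via powers of $t_\Delta$ (which is elementary, as each lattice basis expands in the other with coefficients in $\B_{\dR,\Delta}^+[t_\Delta^{-1}]$), the compatibility of the functor $W\mapsto(W^{H_{K,\Delta}})_{\free}$ with multiplication by $t_\Delta$ (clear since $t_\Delta\in\l_{\dR,\Delta}^+$ is $H_{K,\Delta}$-invariant and transforms under $\Gamma_{K,\Delta}$ through a $\ZZ_p^\times$-valued character, so $K_\Delta$-finiteness is preserved), and the reduction of full faithfulness and essential surjectivity to the already-proved integral equivalence. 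The only place one should be a little careful — and you navigate it adequately by implicitly appealing to the characterization of $X_{\free}$ given at the end of Proposition \ref{propdecompletionLdR} as a union of finite-type stable $\l_{\dR,\Delta}^+$-submodules — is that the inclusion $t_\Delta^m W_0^{H_{K,\Delta}}\subset W_0'^{H_{K,\Delta}}$ passes to the $(\cdot)_{\free}$ level: this is transparent from the ``union of finite-type stable submodules'' description but would require a small argument from the a priori definition of $X_{\free}$ as an inverse limit.
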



\subsection{The module with connection associated with a \texorpdfstring{$\l_{\dR,\Delta}^+$}{l\unichar{"005F}\unichar{"007B}dR,\unichar{"0394}\unichar{"007D}\unichar{"207A}}-representation}\label{sectSenconnection}

\begin{nota}
Let $\Omega^+=\Omega_{\l_{\dR,\Delta}^+/K_{\Delta,\infty}}^+$ (resp. $\Omega=\Omega_{\l_{\dR,\Delta}/K_{\Delta,\infty}}^+$) be the $\l_{\dR,\Delta}^+$-module of continuous $K_{\Delta,\infty}$-differentials of $\l_{\dR,\Delta}^+$ (resp. $\l_{\dR,\Delta}$) having poles of order $\leq1$ on the divisor $(t_\Delta=0)$. This is the free $\l_{\dR,\Delta}^+$-module (resp. $\l_{\dR,\Delta}$-module) with basis $\big(\frac{\dd t_\alpha}{t_\alpha}\big)_{\alpha\in\Delta}$.
\end{nota}

\begin{defi}
A \emph{module with connection} over $\l_{\dR,\Delta}^+$ is a free $\l_{\dR,\Delta}^+$-module $Y$ of finite rank equipped with a $K_{\Delta,\infty}$-linear map
$$\nabla_Y\colon Y\to Y\otimes_{\l_{\dR,\Delta}^+}\Omega^+$$
satisfying the Leibniz rule: $\nabla_Y(\lambda y)=y\otimes\dd y+\lambda\nabla_Y(y)$ for all $\lambda\in\l_{\dR,\Delta}^+$ and $y\in Y$. If $Y_1$ and $Y_2$ are two modules with connection over $\l_{\dR,\Delta}^+$, we endow $Y_1\otimes_{\l_{\dR,\Delta}^+}Y_2$ (resp. $\Hom_{\l_{\dR,\Delta}^+}(Y_1,Y_2)$) with the connection $\nabla_{Y_1}\otimes\Id_{Y_2}+\Id_{Y_1}\otimes\nabla_{Y_2}$ (resp. $f\mapsto\nabla_{Y_2}\circ f-(f\otimes\Id_{\Omega^+})\circ\nabla_{Y_1}$). A morphism between two modules with connection is an horizontal $\l_{\dR,\Delta}^+$-linear map. This defines a tensor category denoted $\mathscr{R}_{\l_{\dR,\Delta}^+}$.

Similarly, one defines the category $\mathscr{R}_{\l_{\dR,\Delta}}$ of free $\l_{\dR,\Delta}$-modules of finite rank with connection. If $(Y,\nabla_Y)\in\mathscr{R}_{\l_{\dR,\Delta}}$, the connection $\nabla_Y$ is said \emph{regular} if there exists a lattice $\mathcal{Y}$ in $Y$ such that $\nabla_Y(\mathcal{Y})\subset\mathcal{Y}\otimes_{\l_{\dR,\Delta}^+}\Omega^+$ so that $(\mathcal{Y},\nabla_{Y|\mathcal{Y}})\in\mathscr{R}_{\l_{\dR,\Delta}^+}$. We denote $\mathscr{R}_{\l_{\dR,\Delta}}^{\reg}$ the full subcategory of $\mathscr{R}_{\l_{\dR,\Delta}}$ made of modules with regular connection.
\end{defi}

\begin{nota}
Let $Y\in\Rep_{\l_{\dR,\Delta}^+}^{\free}(\Gamma_{K,\Delta})$. If $r\in\NN$, the quotient $Y_r:=\big(\l_{\dR,\Delta}^+/\Fil^{r+1}\l_{\dR,\Delta}^+\big)\otimes_{\l_{\dR,\Delta}^+}Y$ is a free $\l_{\dR,\Delta}^+/\Fil^{r+1}\l_{\dR,\Delta}^+$-module of finite rank endowed with a continuous semi-linear action of $\Gamma_{K,\Delta}$: in particular, it defines an object of $\Rep_{K_{\Delta,\infty}}^{\free}(\Gamma_{K,\Delta})$ (\cf lemma \ref{lemmgrL}). The infinitesimal action of $\Gamma_{K,\Delta}$ on $Y_r$ provides Sen operators $(\nabla_{Y_r,\alpha})_{\alpha\in\Delta}$: these are $K_{\Delta,\infty}$-linear endomorphisms of $Y_r$ characterized by the fact that for all $y\in Y_r$, there exists an open normal subgroup $\Gamma_{K,\Delta,y}\lhd\Gamma_{K,\Delta}$ such that for all $\alpha\in\Delta$ and $\gamma\in\Gamma_{K,\alpha}\cap\Gamma_{K,\Delta,y}$, we have $\gamma(y)=\exp\big(\log(\chi(\gamma))\nabla_{Y_r,\alpha}\big)y$ (\cf section \ref{sectSenoperators}).

These maps are compatible as $r$ grows: we thus obtain $K_{\Delta,\infty}$-linear endomorphisms $(\nabla_{Y,\alpha})_{\alpha\in\Delta}$ of $Y$ such that for all $y\in Y$ and all $r\in\NN$, there exists an open normal subgroup $\Gamma_{K,\Delta,y,r}\lhd\Gamma_{K,\Delta}$ such that for all $\alpha\in\Delta$ and $\gamma\in\Gamma_{K,\alpha}\cap\Gamma_{K,\Delta,y,r}$, we have
$$\gamma(y)\equiv\exp\big(\log(\chi(\gamma))\nabla_{Y_r,\alpha}\big)y\mod(\Fil^{r+1}\l_{\dR,\Delta}^+)Y.$$
\end{nota}

\begin{prop}\label{propderivY}
Let $Y\in\Rep_{\l_{\dR,\Delta}^+}(\Gamma_{K\Delta})$ and $\alpha\in\Delta$. Then $\nabla_{Y,\alpha}(\gamma(y))=\gamma(\nabla_{Y,\alpha}(y))$ for all $y\in Y$ and $\gamma\in\Gamma_{K,\Delta}$. Moreover, if $y\in Y$ and $\beta\in\Delta$, we have
$$\nabla_{Y,\alpha}(t_\beta y)=\begin{cases} t_\beta\nabla_{Y,\alpha}(y) &\text{ if }\beta\neq\alpha\\ t_\alpha y+t_\alpha\nabla_{Y,\Delta}(y) &\text{ if }\beta=\alpha\end{cases}.$$
\end{prop}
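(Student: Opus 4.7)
Both assertions will be deduced from the defining property of the Sen operators. Recall that for every $r \in \NN$ and $y \in Y$, there is an open normal subgroup $\Gamma_{K,\Delta,y,r} \lhd \Gamma_{K,\Delta}$ such that
$$\gamma_0(y) \equiv \exp\bigl(\log\chi(\gamma_0)\,\nabla_{Y_r,\alpha}\bigr)\,y \pmod{\Fil^{r+1}\l_{\dR,\Delta}^+\cdot Y}$$
for all $\gamma_0 \in \Gamma_{K,\alpha} \cap \Gamma_{K,\Delta,y,r}$. Extracting the linear term, this is equivalent to saying that
$$\nabla_{Y,\alpha}(y) \equiv \lim_{\substack{\gamma_0 \in \Gamma_{K,\alpha}\\ \gamma_0 \to 1}} \frac{\gamma_0(y) - y}{\log\chi(\gamma_0)} \pmod{\Fil^{r+1}\l_{\dR,\Delta}^+\cdot Y}.$$
I plan to establish each identity of the proposition modulo $\Fil^{r+1}$ for every $r$ using this limit formula, and then pass to the inverse limit.

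For the first assertion, I would use that $\Gamma_{K,\Delta}$ is commutative (since $\Gamma_K$ is), hence $\gamma\gamma_0 = \gamma_0\gamma$ for any $\gamma \in \Gamma_{K,\Delta}$ and $\gamma_0 \in \Gamma_{K,\alpha}$. The scalar $\log\chi(\gamma_0) \in \QQ_p$ is fixed by $\gamma$, and $\gamma$ is continuous and $\QQ_p$-linear (being semi-linear over $\l_{\dR,\Delta}^+$). Thus, enlarging $\Gamma_{K,\Delta,y,r}$ and $\Gamma_{K,\Delta,\gamma(y),r}$ if necessary so that both defining formulas apply for $\gamma_0$ in a common open neighbourhood of the identity,
$$\gamma\bigl(\nabla_{Y,\alpha}(y)\bigr) = \lim_{\gamma_0 \to 1}\gamma\!\left(\frac{\gamma_0(y) - y}{\log\chi(\gamma_0)}\right) = \lim_{\gamma_0\to1}\frac{\gamma\gamma_0(y)-\gamma(y)}{\log\chi(\gamma_0)} = \lim_{\gamma_0\to1}\frac{\gamma_0(\gamma(y))-\gamma(y)}{\log\chi(\gamma_0)} = \nabla_{Y,\alpha}(\gamma(y))$$
modulo $\Fil^{r+1}\l_{\dR,\Delta}^+ \cdot Y$. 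Since this holds for every $r$, we obtain the desired equality in $Y$.

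For the second assertion, I would compute the action of $\gamma_0 \in \Gamma_{K,\alpha}$ on $t_\beta y$ directly. Since $\gamma_0$ is the identity on every factor other than $\alpha$, we have $\gamma_0(t_\beta) = t_\beta$ whenever $\beta \neq \alpha$, while $\gamma_0(t_\alpha) = \chi(\gamma_0)t_\alpha$. In the first case, semi-linearity gives $\gamma_0(t_\beta y) = t_\beta\gamma_0(y)$, and the limit formula immediately yields $\nabla_{Y,\alpha}(t_\beta y) = t_\beta \nabla_{Y,\alpha}(y)$. In the second case, a direct manipulation gives
$$\frac{\gamma_0(t_\alpha y) - t_\alpha y}{\log\chi(\gamma_0)} = \chi(\gamma_0)\,t_\alpha\cdot\frac{\gamma_0(y) - y}{\log\chi(\gamma_0)} + \frac{\chi(\gamma_0) - 1}{\log\chi(\gamma_0)}\cdot t_\alpha y.$$
Passing to the limit $\gamma_0 \to 1$ and using $\chi(\gamma_0) \to 1$ together with the standard limit $\lim_{\gamma_0 \to 1}\frac{\chi(\gamma_0) - 1}{\log\chi(\gamma_0)} = 1$ yields $\nabla_{Y,\alpha}(t_\alpha y) = t_\alpha y + t_\alpha\nabla_{Y,\alpha}(y)$. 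The only mildly delicate point is justifying the continuity of the relevant arithmetic operations (multiplication by $t_\alpha$, application of $\gamma$) with respect to the $\Fil^\bullet$-filtration, which is automatic from the definition of the canonical topology; no deeper input is required.
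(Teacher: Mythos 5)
Your proof is correct and follows essentially the same approach as the paper: reduce modulo $\Fil^{r+1}$, deduce the first assertion from the commutativity of $\Gamma_{K,\Delta}$ (the paper just says "follows from the corresponding property of Sen operators," which is exactly your argument), and for the second assertion use $\gamma_0(t_\beta)=t_\beta$ when $\beta\neq\alpha$ and split the difference quotient for $\beta=\alpha$ before passing to the limit. The only cosmetic difference is the arrangement of the two summands in the $\beta=\alpha$ case; the paper groups them as $t_\alpha\bigl(\frac{\chi(\gamma_0)-1}{\log\chi(\gamma_0)}\gamma_0(y)+\frac{\gamma_0(y)-y}{\log\chi(\gamma_0)}\bigr)$, which is algebraically equivalent to yours.
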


\begin{proof}
This is checked modulo $\Fil^{r+1}$ for all $r\in\NN$. The first statement follows from the corresponding property of Sen operators. For the second one, fix $y\in Y_r$: we have $\nabla_{Y_r,\alpha}(t_\beta y)=\lim\limits_{\substack{\gamma\in\Gamma_{K_\alpha}\\ \gamma\to\Id}}\frac{\gamma(t_\beta y)-t_\beta y}{\log(\chi(\gamma))}$. If $\gamma\in\Gamma_{K_\alpha}$ and $\beta\neq\alpha$, we have $\gamma(t_\beta y)=t_\beta\gamma(y)$, so that $\nabla_{Y_r,\alpha}(t_\beta y)=t_\beta\nabla_{Y_r,\alpha}(y)$: assume $\beta=\alpha$. We have $\gamma(t_\alpha y)=\chi(\gamma)t_\alpha\gamma(y)$, so that
$$\frac{\gamma(t_\alpha y)-t_\alpha y}{\log(\chi(\gamma)}=\frac{\chi(\gamma)t_\alpha\gamma(y)-t_\alpha y}{\log(\chi(\gamma)}=t_\alpha\Big(\frac{\chi(\gamma)-1}{\log(\chi(\gamma)}\gamma(y)+\frac{\gamma(y)-y}{\log(\chi(\gamma)}\Big)$$
which converges to $t_\alpha(y+\nabla_{Y_r,\alpha}(y))$ as $\gamma$ converges to $\Id$.
\end{proof}

\begin{defi}
Let $Y\in\Rep_{\l_{\dR,\Delta}^+}(\Gamma_{K\Delta})$. If $y\in Y$, we put
$$\nabla_Y(y)=\sum\limits_{\alpha\in\Delta}\nabla_{Y,\alpha}(y)\otimes\frac{\dd t_\alpha}{t_\alpha}\in Y\otimes_{\l_{\dR,\Delta}^+}\Omega^+.$$
\end{defi}

\begin{prop}
Let $Y\in\Rep_{\l_{\dR,\Delta}^+}(\Gamma_{K\Delta})$. The map $\nabla_Y$ is an integrable connection.
\end{prop}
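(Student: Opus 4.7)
The plan is to verify the two axioms defining an integrable connection: that $\nabla_Y$ is a connection (a $K_{\Delta,\infty}$-linear map satisfying the Leibniz rule), and that its curvature vanishes.

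First I would verify Leibniz. Since each $\nabla_{Y,\alpha}$ is $K_{\Delta,\infty}$-linear by construction, so is $\nabla_Y$. To prove $\nabla_Y(\lambda y)=y\otimes\dd\lambda+\lambda\nabla_Y(y)$, it suffices—by the derivation property applied inductively to products, together with continuity—to check the identity on a set of topological generators of $\l_{\dR,\Delta}^+$ over $K_{\Delta,\infty}$, for which I would take $\{t_\beta\}_{\beta\in\Delta}$. Combining the two cases of proposition \ref{propderivY},
$$\nabla_Y(t_\beta y)=\bigl(t_\beta y+t_\beta\nabla_{Y,\beta}(y)\bigr)\otimes\tfrac{\dd t_\beta}{t_\beta}+\sum_{\alpha\neq\beta}t_\beta\nabla_{Y,\alpha}(y)\otimes\tfrac{\dd t_\alpha}{t_\alpha}=y\otimes\dd t_\beta+t_\beta\nabla_Y(y),$$
using $\dd t_\beta=t_\beta\cdot\frac{\dd t_\beta}{t_\beta}$ inside $\Omega^+$. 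The passage from polynomial to arbitrary $\lambda\in\l_{\dR,\Delta}^+$ relies on continuity of $\nabla_{Y,\alpha}$ for the $\Fil^\bullet$-adic topology on $Y$; this is immediate from proposition \ref{propderivY}, whose two formulas show that $\nabla_{Y,\alpha}$ preserves the filtration $(\Fil^\bullet\l_{\dR,\Delta}^+)\,Y$.

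For integrability, note that $\dd\bigl(\tfrac{\dd t_\alpha}{t_\alpha}\bigr)=0$, so a direct expansion of $\nabla_Y\circ\nabla_Y$ (using the Leibniz rule just established, together with proposition \ref{propderivY} to handle the log-differentials) gives
$$\nabla_Y^2(y)=\sum_{\alpha<\beta}[\nabla_{Y,\alpha},\nabla_{Y,\beta}](y)\otimes\tfrac{\dd t_\alpha}{t_\alpha}\wedge\tfrac{\dd t_\beta}{t_\beta}.$$
Thus integrability is equivalent to pairwise commutation $[\nabla_{Y,\alpha},\nabla_{Y,\beta}]=0$ of the generalized Sen operators on $Y$. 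I would test this modulo $\Fil^{r+1}\l_{\dR,\Delta}^+\,Y$ for every $r\in\NN$: the induced operators on $Y_r$ are, by construction, the classical Sen operators of the $K_{\Delta,\infty}$-representation $Y_r$ of $\Gamma_{K,\Delta}$, and these commute in $\End_{K_{\Delta,\infty}}(Y_r)$ because $\Gamma_{K,\Delta}$ is abelian—precisely the commutation of the $\varphi_\alpha$'s already recorded at the start of \S\ref{sectSenoperators}.

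No substantive obstacle is anticipated: both key inputs (proposition \ref{propderivY} and the commutativity of Sen operators on $K_{\Delta,\infty}$-representations) are already in place. The only mild care is the continuity argument extending Leibniz from polynomials to power series, and this is built in via the filtration-compatibility of the $\nabla_{Y,\alpha}$.
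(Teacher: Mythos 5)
Your proof is correct and takes the same route as the paper's, which records the result in a single sentence (``The Leibniz rule follows from proposition \ref{propderivY}, and the integrability from the fact that $\Gamma_{K,\Delta}$ is abelian''). You have simply filled in the routine verifications --- checking Leibniz on the $t_\beta$ and extending by multiplicativity and $\Fil^\bullet$-adic continuity, and reducing curvature vanishing to $[\nabla_{Y,\alpha},\nabla_{Y,\beta}]=0$, which is checked modulo $\Fil^{r+1}$ using the commutativity of the Sen operators on $Y_r$ --- all of which the paper implicitly leaves to the reader.
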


\begin{proof}
The Leibniz rule follows from proposition \ref{propderivY}, and the integrability from the fact that $\Gamma_{K,\Delta}$ is abelian.
\end{proof}

We thus have a functor
$$\Rep_{\l_{\dR,\Delta}^+}^{\free}(\Gamma_{K,\Delta})\to\mathscr{R}_{\l_{\dR,\Delta}^+}$$
which induces functors
$$\Rep_{\l_{\dR,\Delta}}^{\reg}(\Gamma_{K,\Delta})\to\mathscr{R}_{\l_{\dR,\Delta}}^{\reg}\text{ and }\Rep_{\B_{\dR,\Delta}}^{\reg}(G_{K,\Delta})\to\mathscr{R}_{\l_{\dR,\Delta}}^{\reg}.$$

Let $Y\in\Rep_{\l_{\dR,\Delta}^+}^{\free}(\Gamma_{K,\Delta})$. If $y\in Y^{\Gamma_{K,\Delta}}$, we have $\nabla_{Y,\alpha}(y)=0$ for all $\alpha\in\Delta$, hence $\nabla_Y(y)=0$. By $K_{\Delta,\infty}$-linearity, the inclusion $Y^{\Gamma_{K,\Delta}}\subset Y^{\nabla_Y=0}$ induces a map
$$c_\nabla(Y)\colon K_{\Delta,\infty}\otimes_{K_\Delta}Y^{\Gamma_{K,\Delta}}\to Y^{\nabla_Y=0}.$$ 

\begin{prop}\label{propsecthorizY}
The $K_\Delta$-module $Y^{\Gamma_{K,\Delta}}$ is of finite type and the map $c_\nabla(Y)$ is an isomorphism.
\end{prop}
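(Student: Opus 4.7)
The plan is to analyze the proposition level by level on the filtration quotients $Y_r := Y/\Fil^{r+1}\l_{\dR,\Delta}^+ Y$ and then pass to the inverse limit $Y = \varprojlim_r Y_r$. Since the $\Gamma_{K,\Delta}$-invariants functor and the kernel of $\nabla_Y$ both commute with these limits, one has $Y^{\Gamma_{K,\Delta}} = \varprojlim_r Y_r^{\Gamma_{K,\Delta}}$ and $Y^{\nabla_Y=0} = \varprojlim_r Y_r^{\nabla_{Y_r}=0}$. The core claim to be proved by induction on $r$ is that $Y_r^{\Gamma_{K,\Delta}}$ is of finite type over $K_\Delta$ and that the natural map $c_{\nabla,r}\colon K_{\Delta,\infty}\otimes_{K_\Delta} Y_r^{\Gamma_{K,\Delta}} \to Y_r^{\nabla_{Y_r}=0}$ is an isomorphism.

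For the base case $r=0$, lemma \ref{lemmgrL} exhibits $Y_0$ as a free $K_{\Delta,\infty}$-representation of $\Gamma_{K,\Delta}$ on which $\nabla_{Y_0,\alpha}$ coincides with the classical Sen operator $\varphi_\alpha$; proposition \ref{propSenop2} combined with corollary \ref{coroSenop} then gives the claim, the image of $c_{\nabla,0}$ being the set of $\Gamma_{K,\Delta}$-finite-orbit elements, which coincides with $\bigcap_\alpha\Ker(\varphi_\alpha) = Y_0^{\nabla_{Y_0}=0}$. For the inductive step, I consider the short exact sequence $0 \to \gr^r Y \to Y_r \to Y_{r-1} \to 0$ of $\Gamma_{K,\Delta}$-representations, in which by corollary \ref{corosuiteregBdR} the leftmost term $\gr^r Y \simeq \bigoplus_{|\underline{n}|=r} Y_0(\underline{n})$ is a finite direct sum of Tate twists of $Y_0$ to which the base case applies. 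Applying the functors $(-)^{\Gamma_{K,\Delta}}$ and $(-)^{\nabla=0}$ produces a commutative diagram with left-exact rows whose outer vertical arrows are isomorphisms (by the base case on $\gr^r Y$ and by the inductive hypothesis on $Y_{r-1}$); a four-lemma argument yields injectivity of $c_{\nabla,r}$. Surjectivity is more subtle: given $y \in Y_r^{\nabla_{Y_r}=0}$, its image in $Y_{r-1}^{\nabla=0}$ corresponds via $c_{\nabla,r-1}$ to some $\bar{x} \in K_{\Delta,\infty}\otimes Y_{r-1}^{\Gamma_{K,\Delta}}$, and lifting $\bar{x}$ to $K_{\Delta,\infty}\otimes Y_r^{\Gamma_{K,\Delta}}$ a priori has a cohomological obstruction in $H^1(\Gamma_{K,\Delta}, \gr^r Y)$; this obstruction is resolved by adapting the explicit Tate-trace construction from the proof of proposition \ref{propdecompletionLdRr}, using the maps $R_{n,\alpha}$ to convert a quasi-invariant lift into a genuine $\Gamma_{K,\Delta}$-invariant up to a correction term in $(\gr^r Y)^{\nabla=0}$ already under control by the graded-piece iso.

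For the passage to the inverse limit, the key input is the vanishing $(\gr^s Y)^{\Gamma_{K,\Delta}} = 0$ for $s$ sufficiently large, which follows from proposition \ref{propSenop2} since only finitely many Tate twists of the free $K_{\Delta,\infty}$-representation $Y_0$ can carry nonzero $\Gamma_{K,\Delta}$-invariants. Combined with the level-$s$ exact sequence, this forces $Y_{s+1}^{\Gamma_{K,\Delta}} \hookrightarrow Y_s^{\Gamma_{K,\Delta}}$ for $s$ large; since $K_\Delta$ is a finite product of fields (remark \ref{remaRR}), finite type modules over it are semisimple of finite rank, so these equi-rank injections are isomorphisms for $s \geq R_0$. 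The iso $c_{\nabla,s}$ transports the stabilization to the $\nabla=0$ side, so $Y^{\Gamma_{K,\Delta}} \simeq Y_{R_0}^{\Gamma_{K,\Delta}}$ and $Y^{\nabla_Y=0} \simeq Y_{R_0}^{\nabla_{Y_{R_0}}=0}$; the level-$R_0$ isomorphism $c_{\nabla,R_0}$ then yields the proposition. The main obstacle is the surjectivity in the inductive step, where the Tate-trace correction must be performed in a way compatible with both the $\l_{\dR,\Delta}^+$-module structure and the $\nabla$-operators, paralleling the technique already used to prove proposition \ref{propdecompletionLdRr}.
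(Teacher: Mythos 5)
Your overall two-stage plan — establish the level-$r$ isomorphism, then pass to the inverse limit via stabilization — is the same as the paper's, and the second stage (vanishing of $(\gr^rY)^{\Gamma_{K,\Delta}}$ for $r\gg0$ plus finite dimensionality forcing stabilization of $Y_r^{\Gamma_{K,\Delta}}$) is correct and matches the paper closely. The problem is in the first stage.

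You set up an induction on $r$ in which proposition \ref{propSenop2} is invoked only for $Y_0$ (and for the graded pieces $\gr^rY$). But proposition \ref{propSenop2} applies \emph{directly} to every $Y_r$: by lemma \ref{lemmgrL}, the ring $\l_{\dR,\Delta}^+/\Fil^{r+1}\l_{\dR,\Delta}^+$ is a free $K_{\Delta,\infty}$-module of finite rank, so $Y_r$ is a free $K_{\Delta,\infty}$-module of finite rank carrying a continuous semi-linear $\Gamma_{K,\Delta}$-action, i.e.\ an object of $\Rep_{K_{\Delta,\infty}}^{\free}(\Gamma_{K,\Delta})$, whose Sen operators (in the sense of \S\ref{sectSenoperators}) are precisely the $\nabla_{Y_r,\alpha}$. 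Proposition \ref{propSenop2} together with corollary \ref{coroSenop} then immediately gives that $K_{\Delta,\infty}\otimes_{K_\Delta}Y_r^{\Gamma_{K,\Delta}}\to Y_r^{\nabla_{Y_r}=0}$ is an isomorphism for every $r$, with no induction required. This is exactly what the paper does, and it removes the entire difficulty you struggle with below.

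As written, your inductive step has a genuine gap. You correctly identify that the obstruction to lifting $\bar x\in K_{\Delta,\infty}\otimes_{K_\Delta}Y_{r-1}^{\Gamma_{K,\Delta}}$ lives in $\H^1(\Gamma_{K,\Delta},\gr^rY)$, and this group is in general nonzero; the claim that it ``is resolved by adapting the explicit Tate-trace construction from the proof of proposition \ref{propdecompletionLdRr}'' is not an argument. In proposition \ref{propdecompletionLdRr} the normalized traces $R_{n,\alpha}$ are used to replace a cocycle valued in $\L_{\dR,\Delta}^+$ by a cohomologous one valued in $\l_{\dR,\Delta}^+$; here you would need instead to show that a specific cohomology class in $\H^1(\Gamma_{K,\Delta},\gr^rY)$ vanishes, which is a different statement, and nothing in the cited proof does this. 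Moreover, a genuine lift of $\bar x$ is not what is needed: one needs a preimage of $y$ itself in $K_{\Delta,\infty}\otimes_{K_\Delta}Y_r^{\Gamma_{K,\Delta}}$, and the compatibility between the (non-)surjectivity of $Y_r^{\Gamma}\to Y_{r-1}^\Gamma$ and of $Y_r^{\nabla=0}\to Y_{r-1}^{\nabla=0}$ is not addressed. The cleanest fix is simply to drop the induction and apply proposition \ref{propSenop2} to $Y_r$ directly.
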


\begin{proof}
If $r\in\NN$, put $Y_r=(\l_{\dR,\Delta}^+/\Fil^{r+1}\l_{\dR,\Delta}^+)\otimes_{\l_{\dR,\Delta}^+}Y$: this is in particular a object in $\Rep_{K_{\Delta,\infty}}^{\free}(\Gamma_{K,\Delta})$. Put also
$$Y_r^{\nabla_Y=0}=\{y\in Y_r\,;\,(\forall\alpha\in\Delta)\,\nabla_{Y_r,\alpha}(y)=0\}$$
(this is a an abuse of notation since $\nabla_Y$ does not make sense on $Y_r$). By proposition \ref{propSenop2}, the natural map
$$K_{\Delta,\infty}\otimes_{K_\Delta}Y_r^{\Gamma_{K,\Delta}}\to Y_r^{\nabla_Y=0}$$
is an $K_{\Delta,\infty}$-linear isomorphism. We have  $Y^{\nabla_Y=0}=\varprojlim\limits_rY_r^{\nabla_Y=0}$, so in particular the inverse limit of the above isomorphisms is an isomorphism
$$\varprojlim\limits_r K_{\Delta,\infty}\otimes_{K_\Delta}Y_r^{\Gamma_{K,\Delta}}\isomto Y^{\nabla_Y=0}.$$
Similarly, we have $Y^{\Gamma_{K,\Delta}}=\varprojlim\limits_rY_r^{\Gamma_{K,\Delta}}$. The exact sequence
$$0\to\gr^rY\to Y_{r+1}\to Y_r\to0$$
induces the exact sequence
$$0\to\big(\gr^rY\big)^{\Gamma_{K,\Delta}}\to Y_{r+1}^{\Gamma_{K,\Delta}}\to Y_r^{\Gamma_{K,\Delta}}.$$
We have $\gr^rY\simeq\bigoplus\limits_{\substack{\underline{n}\in\NN^\Delta\\ \abs{\underline{n}}=r}}Y_1(\underline{n})$, so that $\big(\gr^rY\big)^{\Gamma_{K,\Delta}}\simeq\bigoplus\limits_{\substack{\underline{n}\in\NN^\Delta\\ \abs{\underline{n}}=r}}\big(Y_1(\underline{n})\big)^{\Gamma_{K,\Delta}}$. By proposition \ref{propSenop2} again, we know that $\big(Y_1(\underline{n})\big)^{\Gamma_{K,\Delta}}=\{0\}$ for all but finitely many values of $\underline{n}$. This implies that $\big(\gr^rY\big)^{\Gamma_{K,\Delta}}=\{0\}$, \ie that the natural map $Y_{r+1}^{\Gamma_{K,\Delta}}\to Y_r^{\Gamma_{K,\Delta}}$ is injective when $r\gg0$. As $Y_r^{\Gamma_{K,\Delta}}$ is a $K_\Delta$-module of finite type (hence a $F_0$-space of finite dimension) for all $r\in\NN$, this implies that $Y_{r+1}^{\Gamma_{K,\Delta}}\to Y_r^{\Gamma_{K,\Delta}}$ is in fact an isomorphism when $r\gg0$. In particular, the map $Y^{\Gamma_{K,\Delta}}\to Y_r^{\Gamma_{K,\Delta}}$ is an isomorphism for $r\gg0$ (this proves the first part of the proposition), and $\varprojlim\limits_r K_{\Delta,\infty}\otimes_{K_\Delta}Y_r^{\Gamma_{K,\Delta}}\simeq K_{\Delta,\infty}\otimes_{K_\Delta}Y^{\Gamma_{K,\Delta}}$.
\end{proof}


\subsection{Application to $p$-adic representations: link with multivariate $(\varphi,\Gamma)$-modules}

Let $V\in\Rep_{\QQ_p}(G_{K,\Delta})$. Then $\B_{\dR,\Delta}\otimes_{\QQ_p}V\in\Rep_{\B_{\dR,\Delta}}^{\reg}(G_{K,\Delta})$ (a $G_{K,\Delta}$-stable lattice being given by $\B_{\dR,\Delta}^+\otimes_{\QQ_p}V\in\Rep_{\B_{\dR,\Delta}}^{\reg}(G_{K,\Delta})$). Put
$$\D_{\dif}^+(V)=\big((\B_{\dR,\Delta}^+\otimes_{\QQ_p}V)^{H_{K,\Delta}}\big)_{\free}$$
and $\D_{\dif}(V)=\l_{\dR,\Delta}\otimes_{\l_{\dR,\Delta}^+}\D_{\dif}^+(V)$. By what precedes, this provides objects in $\mathscr{R}_{\l_{\dR,\Delta}^+}$ and $\mathscr{R}_{\l_{\dR,\Delta}}^{\reg}$ respectively.

We have $\D_{\dif}(V)=\D_{\dif}^+(V)\big[\frac{1}{t_\Delta}\big]$, $\D_{\dif}^+(V)^{\Gamma_{K,\Delta}}=(\B_{\dR,\Delta}^+\otimes_{\QQ_p}V)^{G_{K,\Delta}}$ and $\D_{\dif}(V)^{\Gamma_{K,\Delta}}=\D_{\dR}(V)$.

\begin{prop}\label{propdifdR}
(\cf \cite[Proposition 7.1]{AB2}) Let $V\in\Rep_{\QQ_p}(G_{K,\Delta})$. Then $V$ is de Rham if and only if $\D_{\dif}(V)$ is trivial (as a module with connection).
\end{prop}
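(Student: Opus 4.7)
The strategy is to show that $V$ is de Rham if and only if the natural $\l_{\dR,\Delta}$-linear, $\Gamma_{K,\Delta}$-equivariant map
$$\iota_V\colon\l_{\dR,\Delta}\otimes_{K_\Delta}\D_{\dR}(V)\to\D_{\dif}(V),$$
coming from the identification $\D_{\dR}(V)=\D_{\dif}(V)^{\Gamma_{K,\Delta}}$ (which holds by the equivalence of theorem \ref{theoSenBdR2}), is an isomorphism. Since elements of $\D_{\dR}(V)$ are $\Gamma_{K,\Delta}$-fixed and therefore horizontal, bijectivity of $\iota_V$ yields a $\l_{\dR,\Delta}$-basis of $\D_{\dif}(V)$ consisting of horizontal sections, which is precisely the triviality of $\D_{\dif}(V)$ as a module with connection; conversely, triviality must be turned back into the bijectivity of $\iota_V$.

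The reduction ``$\iota_V$ is an isomorphism if and only if $\alpha_{\dR}(V)$ is an isomorphism'' goes as follows. Extending scalars along $\l_{\dR,\Delta}\hookrightarrow\B_{\dR,\Delta}$ and using the canonical isomorphism $\B_{\dR,\Delta}\otimes_{\l_{\dR,\Delta}}\D_{\dif}(V)\simeq\B_{\dR,\Delta}\otimes_{\QQ_p}V$ provided by theorem \ref{theoSenBdR2} turns $\iota_V$ into $\alpha_{\dR}(V)$. Conversely, taking $H_{K,\Delta}$-invariants of $\alpha_{\dR}(V)$ (noting that $H_{K,\Delta}$ acts trivially on $\D_{\dR}(V)$ and using corollary \ref{coroSenBdRH1}) produces the map $\L_{\dR,\Delta}\otimes_{\l_{\dR,\Delta}}\iota_V$, and the faithful flatness of $\l_{\dR,\Delta}\to\L_{\dR,\Delta}$ (a consequence of lemma \ref{lemmgrL} together with proposition \ref{propLfidplat}) descends the isomorphism to $\iota_V$. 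Combined with proposition \ref{propdRHT2}, which ensures $\rk_{K_\Delta}\D_{\dR}(V)=d=\dim_{\QQ_p}V$ whenever $V$ is de Rham, this reduction immediately gives the forward implication: $V$ de Rham makes $\iota_V$ an isomorphism, and a $K_\Delta$-basis of $\D_{\dR}(V)$ then provides a basis of horizontal sections for $\D_{\dif}(V)$.

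For the reverse implication, assume $\D_{\dif}(V)$ is trivial. The core step is to construct the isomorphism
$$K_{\Delta,\infty}\otimes_{K_\Delta}\D_{\dR}(V)\isomto\D_{\dif}(V)^{\nabla=0}.$$
This is obtained by applying proposition \ref{propsecthorizY} to each lattice $\D_{\dif}^+(V(\underline{n}))$ which, under the $G_{K,\Delta}$-equivariant identification $\B_{\dR,\Delta}\otimes V(\underline{n})\simeq\B_{\dR,\Delta}\otimes V$ via multiplication by $t_\Delta^{\underline{n}}$, corresponds to $t_\Delta^{\underline{n}}\D_{\dif}^+(V)\subset\D_{\dif}(V)$ compatibly with the connection; taking the union over $\underline{n}\in\ZZ^\Delta$ with $\underline{n}\to-\infty$ componentwise, and using the increasing exhaustion $\bigcup_{\underline{n}}t_\Delta^{\underline{n}}\B_{\dR,\Delta}^+=\B_{\dR,\Delta}$, one obtains $\bigcup_{\underline{n}}\D_{\dif}^+(V(\underline{n}))^{\Gamma_{K,\Delta}}=\D_{\dR}(V)$ on one side and $\bigcup_{\underline{n}}\D_{\dif}^+(V(\underline{n}))^{\nabla=0}=\D_{\dif}(V)^{\nabla=0}$ on the other. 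Triviality of $\D_{\dif}(V)$ then forces $\rk_{K_{\Delta,\infty}}\D_{\dif}(V)^{\nabla=0}=d$, hence $\rk_{K_\Delta}\D_{\dR}(V)=d$ by faithful flatness of $K_{\Delta,\infty}/K_\Delta$, and comparing with the decomposition $\D_{\dif}(V)\simeq\l_{\dR,\Delta}\otimes_{K_{\Delta,\infty}}\D_{\dif}(V)^{\nabla=0}$ afforded by triviality shows that $\iota_V$ is an isomorphism, so $V$ is de Rham by the reduction. The main technical obstacle is precisely the extension of proposition \ref{propsecthorizY} from the lattice level to $\D_{\dif}(V)$: it requires the careful tracking of Tate twists $V(\underline{n})$ indexed by $\ZZ^\Delta$ against the localization $\D_{\dif}^+(V)[1/t_\Delta]=\D_{\dif}(V)$, together with checking at each stage that the Sen-type connection on $\D_{\dif}^+(V(\underline{n}))$ agrees with the restriction of the connection on $\D_{\dif}(V)$.
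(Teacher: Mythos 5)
Your strategy is parallel to the paper's: both reduce the equivalence to showing that $\iota_V\colon\l_{\dR,\Delta}\otimes_{K_\Delta}\D_{\dR}(V)\to\D_{\dif}(V)$ is an isomorphism exactly when $\alpha_{\dR}(V)$ is, with proposition \ref{propsecthorizY} providing the other half of the circle. Your reverse direction (triviality implies de Rham) is essentially correct, just organized as a union over all twists where the paper instead fixes a single good $\underline{n}$ using the $K_{\Delta,\infty}$-finiteness of the horizontal sections so that everything lives in one lattice $\D_{\dif}^+(V(\underline{n}))$; this is cleaner but the difference is cosmetic.

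The genuine gap is in your forward direction, at the step descending ``$\L_{\dR,\Delta}\otimes_{\l_{\dR,\Delta}}\iota_V$ is an isomorphism'' to ``$\iota_V$ is an isomorphism.'' You invoke faithful flatness of $\l_{\dR,\Delta}\to\L_{\dR,\Delta}$ as a consequence of lemma \ref{lemmgrL} and proposition \ref{propLfidplat}, but neither gives you this. Lemma \ref{lemmgrL} only establishes faithful flatness of the truncations $\l_{\dR,\Delta}^+/\Fil^r\to\L_{\dR,\Delta}^+/\Fil^r$, and passing from these to the completed rings is not automatic, precisely because $K_{\Delta,\infty}$ is not Noetherian: $\l_{\dR,\Delta}^+=K_{\Delta,\infty}[\![t_\alpha]\!]_{\alpha}$ is neither Noetherian nor the filtered union of the $K_{n,\Delta}[\![t_\alpha]\!]_{\alpha}$, so the usual limit arguments for flatness do not apply. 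The paper avoids any appeal to faithful flatness by working at the lattice level with the $X\mapsto X_{\free}$ functor: after twisting so that all Hodge--Tate weights are non-positive (so $\D_{\dR}=\D_{\dR}^+$), it produces an injective $f\colon\l_{\dR,\Delta}^+\otimes_{K_\Delta}\D_{\dR}^+\to\D_{\dif}^+$ and, for any $y\in\D_{\dif}^+$, finds $N$ with $t_\Delta^Ny$ simultaneously in $\im(g)=\L_{\dR,\Delta}^+\otimes_{K_\Delta}\D_{\dR}^+$ and in $\D_{\dif}^+$; since $\D_{\dif}^+$ is by definition the $\l_{\dR,\Delta}^+$-finite part, and the $\l_{\dR,\Delta}^+$-finite part of $\L_{\dR,\Delta}^+\otimes_{K_\Delta}\D_{\dR}^+$ is $\l_{\dR,\Delta}^+\otimes_{K_\Delta}\D_{\dR}^+$ (because $\D_{\dR}^+$ carries the trivial $\Gamma_{K,\Delta}$-action, cf. proposition \ref{propdecompletionLdR}), it follows that $t_\Delta^Ny\in\im(f)$, so $f$ becomes an isomorphism after inverting $t_\Delta$. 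You should replace the faithful flatness invocation with this equivariance argument, or else supply a genuine proof of the flatness statement you need.
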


\begin{proof}
Assume $V$ is de Rham: there exists $\underline{n}\in\ZZ^\Delta$ such that $V(\underline{n})$ has Hodge-Tate weight whose components are all non-positive, so that $\D_{\dR}(V(\underline{n}))=\D_{\dR}^+(V(\underline{n}))=:(\B_{\dR,\Delta}^+\otimes_{\QQ_p}V(\underline{n}))^{G_{K,\Delta}}$. The map $\alpha_{\dR}(V)$ is an isomorphism, hence
$$\alpha_{\dR}^+\colon\B_{\dR,\Delta}^+\otimes_{K_\Delta}\D_{\dR}^+(V(\underline{n}))\to\B_{\dR,\Delta}^+\otimes_{\QQ_p}V(\underline{n})$$
is injective with cokernel killed by $t_\Delta$. Taking invariants under $H_{K,\Delta}$, we get an injective map
$$g\colon\L_{\dR,\Delta}^+\otimes_{K_\Delta}\D_{\dR}^+(V(\underline{n}))\to(\B_{\dR,\Delta}^+\otimes_{\QQ_p}V(\underline{n}))^{H_{K,\Delta}}$$
whose cokernel is killed by some power of $t_\Delta$. The commutative diagram
$$\xymatrix{
\l_{\dR,\Delta}^+\otimes_{K_\Delta}\D_{\dR}^+(V(\underline{n}))\ar[r]^-f\ar@{^(->}[d] & \D_{\dif}^+(V(\underline{n}))\ar@{^(->}[d]\\
\L_{\dR,\Delta}^+\otimes_{K_\Delta}\D_{\dR}^+(V(\underline{n}))\ar[r]^-g & (\B_{\dR,\Delta}^+\otimes_{\QQ_p}V(\underline{n}))^{H_{K,\Delta}}}$$
shows that $f$ is injective. If $y\in\D_{\dif}^+(V(\underline{n}))$, there exists $N\in\NN$ such that $t_\Delta^Ny\in\im(g)\cap\D_{\dif}^+(V(\underline{n}))$. By definition of the functor $X\mapsto X_{\free}$, this shows that $t_\Delta^Ny\in\l_{\dR,\Delta}^+\otimes_{K_\Delta}\D_{\textcolor{red}{\dR}}^+(V(\underline{n}))$. The cokernel of $f$ is thus of $t_\Delta$-torsion, so that $f$ induces an isomorphism
$$\l_{\dR,\Delta}\otimes_{K_\Delta}\D_{\dR}^+(V(\underline{n}))\isomto\D_{\dif}(V(\underline{n}))=\D_{\dif}(V)$$
which implies that $\D_{\dif}(V)$ is trivial (as a module with connection).

Conversely, assume that $Y:=\D_{\dif}(V)$ is trivial as a module with connection over $\l_{\dR,\Delta}$: the natural map
$$\l_{\dR,\Delta}\otimes_{K_{\Delta,\infty}}Y^{\nabla_Y=0}\to Y$$
is an isomorphism. As $Y^{\nabla_Y=0}$ is a $K_{\Delta,\infty}$-module of finite type, there exists $n\in\NN$ such that $Y^{\nabla_Y=0}\subset t_\Delta^{-n}\D_{\dif}^+(V)$. Replacing $V$ with $V(\underline{n})$ for an appropriate $\underline{n}\in\ZZ^\Delta$, we may asume that $n=0$, so that $Y^{\nabla_Y=0}=\D_{\dif}^+(V)^{\nabla_Y=0}=K_{\Delta,\infty}\otimes_{K_\Delta}\D_{\dR}^+(V)$ (the last equality by proposition \ref{propsecthorizY}). Extending the scalars from $K_{\Delta,\infty}$ to $\B_{\dR,\Delta}$, we deduce that
$$\alpha_{\dR}(V)\colon\B_{\dR,\Delta}\otimes_{K_\Delta}\D_{\dR}^+(V)\to\B_{\dR,\Delta}\otimes_{\QQ_p}V$$
is an isomorphism, \ie that $V$ is de Rham.
\end{proof}

Now we relate our constructions to multivariate overconvergent $(\varphi,\Gamma)$-modules. These were constructed in \cite{PZ} and \cite{CKZ}, under the hypothesis that $K$ is a finite extension of $\QQ_p$, what we thus assume henceforth. We start by recalling the definitions and results we will need from \cite{PZ} and \cite{CKZ}. In the classical, univariate case, put
$$\AA_{F_0}=\Big\{\sum\limits_{n\in\ZZ}a_n\varpi^n\,;\,(\forall n\in\ZZ)\,a_n\in\W(k)\,\lim\limits_{n\to-\infty}a_n=0\Big\}$$
where $\varpi$ is seen as a variable. This is a Cohen ring for the field $\EE_{F_0}=k(\!(\overline{\varpi})\!)$. We equip $\AA_{F_0}$ with the commuting (semi-linear) Frobenius operator and $\Gamma_{F_0}$-action given by
$$\varphi(\varpi)=(1+\varpi)^p-1\text{ and }\gamma(\varpi)=(1+\varpi)^{\chi(\gamma)}-1.$$
Put $\BB_{F_0}=\Frac(\AA_{F_0})=\AA_{F_0}\big[\frac{1}{p}\big]$ and let $\BB_{F_0}^{\ur}$ the maximal unramified extension of $\BB_{F_0}$: this is a DVF with uniformizer $p$ and whose residue field $\EE$ is a separable closure of $\EE_{F_0}$. We have an injective morphism $\AA_{F_0}\to\widetilde{\AA}:=\W(C^\flat)$ sending $\varpi$ to $[\varepsilon]-1$. It extends into an injective map $\AA\to\widetilde{\AA}$, where $\AA$ is the $p$-adic completion of the ring of integers of $\BB_{F_0}^{\ur}$. There is a Frobenius operator $\varphi$ and a action of $G_{F_0}$ on $\BB=\AA\big[\frac{1}{p}\big]$ so that the previous map is $G_{F_0}$-equivariant and compatible with Frobenius. Moreover, there is an isomorphism $H_{F_0}\simeq\Gal(\BB/\BB_{F_0})$. As $H_K\leq H_{F_0}$, we put $\BB_K=\BB^{H_K}$: this is a finite Galois extension of $\BB_0$, and there exists an element $\varpi_K\in\BB_K$ such that its ring of integers
$$\AA_K=\Big\{\sum\limits_{n\in\ZZ}a_n\varpi_K^n\,;\,(\forall n\in\ZZ)\,a_n\in\W(k^\prime)\,\lim\limits_{n\to-\infty}a_n=0\Big\}$$
where $k^\prime$ is the residue field of $K_\infty$. This is a Cohen ring for the field $\EE^{H_K}=:\EE_K=k^\prime(\!(\overline{\varpi}_K)\!)$.

By \cite{Font90}, the functor $T\mapsto(\AA\otimes_{\ZZ_p}T)^{H_K}$ induces an equivalence between $\Rep_{\ZZ_p}(G_K)$ and the category $\Mod_{\AA_K}^{\et}(\varphi,\Gamma)$ of \'etale $(\varphi,\Gamma)$-modules over $\AA_K$, whose objects are $\AA_K$-modules of finite type endowed with commuting and semi-linear Frobenius operator and $\Gamma_K$-action, such that linearization of the Frobenius operator is an isomorphism. By inverting $p$, there is a similar equivalence between the isogeny categories.

The multivariate generalization of Fontaine result was proved in \cite{Z1}: let $\AA_{K,\Delta}$ be the $p$-adic completion of the tensor product $\AA_K\otimes_{\ZZ_p}\cdots\otimes_{\ZZ_p}\AA_K$ where the copies of $\AA_K$ are indexed by $\Delta$ (the copy of $\AA_K$ of index $\alpha\in\Delta$ will be denoted $\AA_{K,\alpha}$), and $\BB_{K,\Delta}=\AA_{K,\Delta}\big[\frac{1}{p}\big]$. We have $\AA_{K,\Delta}/(p)=:\EE_{K,\Delta}=\EE_K\otimes_{\FF_p}\cdots\otimes_{\FF_p}\EE_K$ (where the copies of $\EE_K$ are indexed by $\Delta$). For each $\alpha\in\Delta$, let $\varphi_\alpha$ and $\Gamma_{K,\alpha}=\iota_\alpha(\Gamma_K)$ denote the actions of $\varphi$ and $\Gamma_K$ on the factor of index $\alpha$ fixing the other factors. Denote by $\varphi_\Delta$ the monoid generated by the $\varphi_\alpha$ for $\alpha\in\Delta$. There exists a multivariate analogues of $\AA$ and $\BB$: let $\AA_\Delta$ (resp. $\BB_\Delta$) be the $p$-adic completion of $\varinjlim\limits_F\AA_{F,\Delta}$ where $F$ runs over the finite subextensions of $\Kbar/K$ (resp. $\BB_\Delta=\AA_\Delta\big[\frac{1}{p}\big]$). These rings are endowed with commuting actions of $\varphi_\Delta$ and $G_{K,\Delta}$, and $\AA_\Delta/(p)\simeq\varinjlim\limits_F\EE_{F,\Delta}$. There is an equivalence of categories
\begin{align*}
\Rep_{\ZZ_p}(G_{K,\Delta}) &\to \Mod_{\AA_{K,\Delta}}^{\et}(\varphi_\Delta,\Gamma_{K,\Delta})\\
T &\mapsto (\AA_\Delta\otimes_{\ZZ_p}T)^{H_{K,\Delta}}
\end{align*}
where $\Rep_{\ZZ_p}(G_{K,\Delta})$ is the category of $\ZZ_p$-modules of finite type endowed with a continuous linear action of $G_{K,\Delta}$, and $\Mod_{\AA_{K,\Delta}}^{\et}(\varphi_\Delta,\Gamma_{K,\Delta})$ the category of \'etale $(\varphi_\Delta,\Gamma_{K,\Delta})$-modules over $\AA_{K,\Delta}$, whose objects are finitely generated projective $\AA_{K,\Delta}$-modules with commuting semilinear actions of the $\varphi_\alpha$ for $\alpha\in\Delta$ and $\Gamma_{K,\Delta}$, and such that the linearization of $\varphi_\alpha$ is an isomorphism for all $\alpha\in\Delta$ (\cf \cite[\S2.3]{CKZ} and \cite[Theorem 4.1]{CKZ}). By inverting $p$, there is a similar equivalence $\Rep_{\QQ_p}(G_{K,\Delta})\to\Mod_{\BB_{K,\Delta}}^{\et}(\varphi_\Delta,\Gamma_{K,\Delta})$ between the corresponding isogeny categories.

On the other hand, Fontaine result was refined by Cherbonnier-Colmez as follows. Let $v^\flat$ be the valuation on $C^\flat$ normalized by $v^\flat(\widetilde{p})=1$. If $r\in\QQ_{>0}$, let $\widetilde{\AA}^{(0,r]}\subset\widetilde{\AA}$ be the subset made of those elements $z=\sum\limits_{m=0}^\infty p^m[z_m]$ (with $(z_m)_{m\in\NN}\in(C^\flat)^{\NN}$) such that $\lim\limits_{m\to\infty}rv^\flat(z_m)+m=+\infty$. Recall there are embeddings $\AA_K\subset\AA\hookrightarrow\widetilde{\AA}$: put $\AA_K^{(0,r]}=\AA_K\cap\widetilde{\AA}^{(0,r]}$ and $\AA^{(0,r]}=\AA\cap\widetilde{\AA}^{(0,r]}$. Inverting $p$, we define analogues $\BB_K^{(0,r]}\subset\BB^{(0,r]}$. The subring of overconvergent elements in $\AA_K$ (resp. $\AA$) is $\AA_K^\dagger:=\bigcup\limits_{r>0}\AA_K^{(0,r]}$ (resp. $\AA^\dagger:=\bigcup\limits_{r>0}\AA^{(0,r]}$). Inverting $p$, we define analogues $\BB_K^\dagger\subset\BB^\dagger$ (these are fields). Those rings are stable under the action of $G_K$, and we have $(\AA^{(0,r]})^{H_K}=\AA_K^{(0,r]}$ and $(\AA^\dagger)^{H_K}=\AA_K^\dagger$. Moreover, we have $\varphi(\AA^{(0,r]})\subset\AA^{(0,r/p]}$, so that $\AA^\dagger$ and $\AA_K^\dagger$ are stable under $\varphi$. Defining $\Mod_{\AA_K^\dagger}^{\et}(\varphi,\Gamma)$ similarly as $\Mod_{\AA_K}^{\et}(\varphi,\Gamma)$, we have:
\begin{theo}\label{theoCC}
(\cite[Proposition III.5.1 \& corollaire III.5.2]{CC}) The functor
\begin{align*}
\Rep_{\ZZ_p}(G_K) &\to \Mod_{\AA_K^\dagger}^{\et}(\varphi,\Gamma)\\
T &\mapsto (\AA^\dagger\otimes_{\ZZ_p}T)^{H_K}\\
\end{align*}
is an equivalence of categories.
\end{theo}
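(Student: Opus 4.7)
The statement is the classical Cherbonnier--Colmez overconvergence theorem, so my plan would be to outline the structure of the original argument (which is subsequently reproved in greater axiomatic generality by Berger--Colmez in their Tate--Sen formalism, see \cite[\S3]{BC}). The goal is to show that for every $T\in\Rep_{\ZZ_p}(G_K)$, the $(\varphi,\Gamma)$-module $\D(T):=(\AA\otimes_{\ZZ_p}T)^{H_K}$ admits a basis whose entries lie in $\AA^\dagger$, or equivalently that the natural map $\AA_K\otimes_{\AA_K^\dagger}\D^\dagger(T)\to\D(T)$ (where $\D^\dagger(T):=(\AA^\dagger\otimes_{\ZZ_p}T)^{H_K}$) is an isomorphism. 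Full faithfulness is then easy: $\AA_K/\AA_K^\dagger$ is faithfully flat in the relevant sense on $\varphi$-modules, and the functor has the standard quasi-inverse $D\mapsto(\AA\otimes_{\AA_K}D)^{\varphi=1}$ composed with the overconvergent analogue, so essential surjectivity is what requires real work.

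The heart of the argument is a successive approximation, carried out rank by rank. Start from an arbitrary basis $\mathfrak{B}$ of $\D(T)$ over $\AA_K$ and let $P\in\GL_d(\AA_K)$ (resp.\ $G_\gamma\in\GL_d(\AA_K)$) denote the matrix of $\varphi$ (resp.\ of a topological generator $\gamma$ of $\Gamma_K$) in $\mathfrak{B}$. I would first approximate $P$ and $G_\gamma$ modulo a high power of $p$ by matrices with entries in $\AA_K^{(0,r]}$ for some $r>0$ (this uses the density of $\AA_K^\dagger$ in $\AA_K$ for the weak topology and the boundedness of the matrix entries). The key mechanism is then to exploit the inclusion $\varphi(\AA^{(0,r]})\subset\AA^{(0,r/p]}$: iterating $\varphi^{-1}$, the ``error term'' of order $p^n$ in the Frobenius matrix can be pushed into increasingly overconvergent subrings, and the infinite product of the resulting correction matrices converges in $\GL_d(\AA_K^\dagger)$ to a change-of-basis putting $P$ into $\GL_d(\AA_K^{(0,r_0]})$ for some $r_0>0$. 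This is essentially the Frobenius-regularization step of \cite[\S III.2--III.4]{CC}.

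The harder part, and the one I expect to be the main obstacle, is to carry out the analogous regularization for $\gamma$ without spoiling the overconvergence of $\varphi$. This is achieved by combining the Frobenius descent above with a Tate--Sen-type descent along the cyclotomic tower: one uses $\gamma$-invariant overconvergent approximations and normalized traces (here of the form $R_n\colon\BB_K\to\BB_{K_n}^\dagger$, in the style of \cite[\S3]{BC}) to correct the matrix of $\gamma$ while staying inside $\AA_K^\dagger$. The two regularizations must be intertwined: after adjusting $\gamma$ one typically loses a little overconvergence for $\varphi$, which then has to be recovered by another round of $\varphi$-iteration, etc. Convergence of this double successive approximation is guaranteed by the fact that the $\Gamma_K$-action contracts the matrices of $\gamma-1$ (on a suitable range of radii) while $\varphi^{-1}$ contracts the denominators of the radii, and the product of the resulting correction matrices converges in $\GL_d(\AA_K^\dagger)$.

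Once a common basis of overconvergence is found, one deduces that $\D^\dagger(T)$ is a projective $\AA_K^\dagger$-module of rank $d=\rg_{\ZZ_p}(T)$ and that extension of scalars to $\AA_K$ recovers $\D(T)$; full faithfulness of the functor follows by observing that a horizontal (i.e.\ $\varphi$- and $\Gamma_K$-equivariant) map between overconvergent $(\varphi,\Gamma)$-modules is automatically defined over $\AA_K^\dagger$, by the same Frobenius-iteration trick applied to its matrix. This gives the equivalence stated in Theorem~\ref{theoCC}, and the analogous $\QQ_p$-coefficient version is obtained by inverting $p$.
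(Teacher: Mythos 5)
The paper does not give a proof of this theorem: it is stated as a recalled result with the citation \cite[Proposition III.5.1 \& corollaire III.5.2]{CC} and used as a black box, the single-variable input to the multivariable overconvergence theorem \ref{theoPZ} of Pal--Z\'abr\'adi and Carter--Kedlaya--Z\'abr\'adi (itself also cited rather than proved). There is therefore no ``paper's own proof'' to compare your sketch against.

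On its own merits your outline is a reasonable high-level summary of the Cherbonnier--Colmez argument. Reducing essential surjectivity to showing that the natural map $\AA_K\otimes_{\AA_K^\dagger}\D^\dagger(T)\to\D(T)$ is an isomorphism is the right formulation, and the mechanism you describe --- starting from an arbitrary basis, approximating the matrices of $\varphi$ and $\gamma$ by overconvergent ones, iterating $\varphi^{-1}$ (using $\varphi(\AA^{(0,r]})\subset\AA^{(0,r/p]}$) to regularize the Frobenius matrix, and interleaving with a Tate--Sen descent via normalized traces to regularize $\gamma$ without destroying the overconvergence of $\varphi$ --- matches the structure of \cite[\S III]{CC} and its reformulation in the Tate--Sen formalism of \cite[\S3]{BC}. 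Your concluding remark, that full faithfulness reduces to the observation that a $\varphi$-equivariant map between overconvergent modules is itself overconvergent (again by the Frobenius iteration), is also the correct one. The only thing to keep in mind is that the present paper never needs any of this machinery directly: it is invoked solely through the single corollary \cite[Proposition 5.7]{BE} inside the proof of theorem \ref{theocompphiGammaDdif}.
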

This result was extended to the multivariate case in \cite{PZ} and \cite{CKZ}. If $r\in\RR_{>0}$, let $\AA_{F_0,\Delta}^{(0,r]}\subset\AA_{F_0,\Delta}$ be the subring made of those elements $\sum\limits_{\underline{n}\in\ZZ^\Delta}(a_{n_1}\varpi_{\alpha_1}^{n_1})\otimes\cdots\otimes(a_{n_\delta}\varpi_{\alpha_\delta}^{n_\delta})$ (with $(a_{n_1},\ldots,a_{n_\delta})\in\W(k)^\Delta$) such that
$$\lim\limits_{\abs{\underline{n}}\to\infty}v_p(a_{n_1}\cdots a_{n_\delta})+\tfrac{rp}{p-1}\min\{n_1,\ldots,n_\delta\}=+\infty.$$
Put $\AA_{F_0,\Delta}^\dagger=\bigcup\limits_{r\in\RR_{>0}}\AA_{F_0,\Delta}^{(0,r]}\subset\AA_{F_0,\Delta}$ and $\BB_{F_0,\Delta}^\dagger=\AA_{F_0,\Delta}^\dagger\big[\frac{1}{p}\big]$. The subrings $\AA_{F_0,\Delta}^\dagger$ and $\BB_{F_0,\Delta}^\dagger$ of $\BB_{F_0,\Delta}$ are stable by $\varphi_\Delta$ and $\Gamma_{F_0,\Delta}$. In this context, the analogue of $\AA^\dagger$ is constructed as follows. For a finite subextension $F$ of $\Kbar/F_0$ and $r\in\QQ_{>0}$, we put $\AA_{F,\Delta,\circ}^{(0,r]}=\AA_F^{(0,r]}\otimes_{\ZZ_p}\cdots\otimes_{\ZZ_p}\AA_F^{(0,r]}$ (where the copies of $\AA_F^{(0,r]}$ are indexed by $\Delta$), and $\AA_{F,\Delta}^{(0,r]}=\AA_{F_0,\Delta}^{(0,r]}\otimes_{\AA_{F_0,\Delta,\circ}^{(0,r]}}\AA_{F,\Delta,\circ}^{(0,r]}$. We have $\AA_{\Delta,\circ}^{(0,r]}=\varinjlim\limits_F\AA_{F,\Delta,\circ}^{(0,r]}=\AA^{(0,r]}\otimes_{\ZZ_p}\cdots\otimes_{\ZZ_p}\AA^{(0,r]}$. Put $\AA_\Delta^{(0,r]}=\AA_{F_0,\Delta}^{(0,r]}\otimes_{\AA_{F_0,\Delta,\circ}^{(0,r]}}\AA_{\Delta,\circ}^{(0,r]}$ and $\AA_\Delta^\dagger=\bigcup\limits_{r>0}\AA_\Delta^{(0,r]}$. Inverting $p$ we get rings $\BB_\Delta^{(0,r]}\subset\BB_\Delta^\dagger$. These rings admit an action of $G_{K,\Delta}$. By \cite[Lemma 3.2.1]{PZ}, we have $(\AA_\Delta^\dagger)^{H_{K,\Delta}}=\AA_{K,\Delta}^\dagger=:\bigcup\limits_{r>0}\AA_{K,\Delta}^{(0,r]}$. Moreover, if $\alpha\in\Delta$, we have an operator $\varphi_\alpha\colon\AA_\Delta^{(0,r]}\to\AA_\Delta^{(0,r/p]}$, so that there is an action of $\varphi_\Delta$ on $\AA_\Delta^\dagger$ and $\BB_\Delta^\dagger$. The natural map $\AA_\Delta^\dagger\to\AA_\Delta$ (induced by the maps $\AA_{\Delta,\circ}^{(0,r]}\to\AA_\Delta$ extended by $\AA_K^{(0,r]}$-linearity) is $G_{K,\Delta}$ and $\varphi_\Delta$-equivariant. The generalization of theorem \ref{theoCC} is:
\begin{theo}\label{theoPZ}
(\cite[Corollary 3.4.4]{PZ}, \cite[Theorem 6.15]{CKZ}) The functor
\begin{align*}
\D^\dagger\colon\Rep_{\ZZ_p}(G_{K,\Delta}) &\to \Mod_{\AA_{K,\Delta}^\dagger}^{\et}(\varphi_\Delta,\Gamma_{K,\Delta})\\
T &\mapsto (\AA_\Delta^\dagger\otimes_{\ZZ_p}T)^{H_K}\\
\end{align*}
is an equivalence of categories, where the target category is defined similarly as $\Mod_{\AA_{K,\Delta}}^{\et}(\varphi_\Delta,\Gamma_{K,\Delta})$. 
\end{theo}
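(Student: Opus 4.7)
The statement is quoted from \cite[Corollary 3.4.4]{PZ} and \cite[Theorem 6.15]{CKZ}, so strictly speaking the proof reduces to citing those references. Nevertheless, let me indicate the strategy those works follow, since it explains which input is really needed.

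The plan is to factor the functor $\D^\dagger$ through the full étale theory established in \cite{Z1} and recalled just above, namely the equivalence
\[
\Rep_{\ZZ_p}(G_{K,\Delta})\isomto\Mod_{\AA_{K,\Delta}}^{\et}(\varphi_\Delta,\Gamma_{K,\Delta}),\qquad T\mapsto(\AA_\Delta\otimes_{\ZZ_p}T)^{H_{K,\Delta}}.
\]
Granting this, it is enough to show that the natural scalar-extension functor
\[
\Mod_{\AA_{K,\Delta}^\dagger}^{\et}(\varphi_\Delta,\Gamma_{K,\Delta})\to\Mod_{\AA_{K,\Delta}}^{\et}(\varphi_\Delta,\Gamma_{K,\Delta}),\qquad \D^\dagger\mapsto\AA_{K,\Delta}\otimes_{\AA_{K,\Delta}^\dagger}\D^\dagger,
\]
is an equivalence, and that under this identification $\D^\dagger(T)$ is recovered from $(\AA_\Delta\otimes_{\ZZ_p}T)^{H_{K,\Delta}}$ by the formula $(\AA_\Delta^\dagger\otimes_{\ZZ_p}T)^{H_{K,\Delta}}$.

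The faithfulness of the scalar-extension functor is essentially formal: the ring $\AA_{K,\Delta}^\dagger$ injects into $\AA_{K,\Delta}$, and the extra information carried by a $(\varphi_\Delta,\Gamma_{K,\Delta})$-module over $\AA_{K,\Delta}$ forces the overconvergent lattice, when it exists, to be canonically determined as the set of overconvergent elements, exactly as in Cherbonnier--Colmez. Fullness follows in the same vein: a morphism becomes overconvergent after base change because its matrix in a pair of overconvergent bases must already have overconvergent entries (this reduces to the stability of $\AA_{K,\Delta}^\dagger$ inside $\AA_{K,\Delta}$ under the $\varphi_\alpha$ and $\Gamma_{K,\Delta}$).

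The main obstacle, as in the univariate setting, is essential surjectivity: one must produce, for every étale $(\varphi_\Delta,\Gamma_{K,\Delta})$-module $\D$ over $\AA_{K,\Delta}$, an overconvergent lattice $\D^\dagger\subset\D$ that is stable under all $\varphi_\alpha$ and under $\Gamma_{K,\Delta}$, with $\AA_{K,\Delta}\otimes_{\AA_{K,\Delta}^\dagger}\D^\dagger=\D$. The strategy of \cite{PZ, CKZ} is to set up a multivariable Tate--Sen formalism à la \cite{BC}: one fixes auxiliary rings $\widetilde{\AA}_{K,\Delta}^{(0,r]}$ with normalized traces in each direction $\alpha\in\Delta$, produces approximate overconvergent bases componentwise, and glues them by an inductive argument on $\#\Delta$. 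Concretely, given a basis of $\D$ over $\AA_{K,\Delta}$ in which the matrices of $\varphi_\alpha$ and of a set of topological generators of $\Gamma_{K,\Delta}$ are specified, one improves it by successive approximation, each step removing non-overconvergent contributions in one factor $\alpha\in\Delta$ while preserving the improvements already made in the other factors; the commutativity of the $\varphi_\alpha$ and of the $\Gamma_{K,\alpha}$ ensures that these improvements do not interfere. The convergence of the whole process ultimately rests on a multivariable analogue of the fundamental estimate of \cite[\S III]{CC}, which is the technical core of both \cite{PZ} and \cite{CKZ}.

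Once essential surjectivity is in hand, the identification $\D^\dagger(T)=(\AA_\Delta^\dagger\otimes_{\ZZ_p}T)^{H_{K,\Delta}}$ is a formal consequence: both sides are the canonical overconvergent lattice inside $(\AA_\Delta\otimes_{\ZZ_p}T)^{H_{K,\Delta}}$, which is unique by the faithfulness statement recalled above.
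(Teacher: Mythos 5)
Your proposal is correct and follows the paper exactly: the statement is cited from the indicated references and the paper offers no independent proof, precisely as you say. Your added sketch of the route via the full \'etale equivalence and the scalar-extension equivalence (which the paper states separately, citing \cite[Theorem 3.4.2]{PZ} and \cite[Theorem 6.14]{CKZ}) is accurate in outline; just note that your description of the essential-surjectivity argument as a multivariable Tate--Sen formalism \`a la \cite{BC} matches \cite{PZ} more closely than \cite{CKZ}, the latter proceeding instead through Drinfeld's lemma for perfectoid spaces, and that the exponent $H_K$ in the paper's statement is a typo for $H_{K,\Delta}$, which you corrected.
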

Moreover, by \cite[Theorem 3.4.2]{PZ} and \cite[Theorem 6.14]{CKZ}, base extension to $\AA_{K,\Delta}$ over $\AA_{K,\Delta}^\dagger$ induces an equivalence of categories
$$\Mod_{\AA_{K,\Delta}^\dagger}^{\et}(\varphi_\Delta,\Gamma_{K,\Delta})\isomto\Mod_{\AA_{K,\Delta}}^{\et}(\varphi_\Delta,\Gamma_{K,\Delta}).$$
\begin{nota}
If $T\in\Rep_{\ZZ_p}(G_{K,\Delta})$ and $r\in\QQ_{>0}$, we put $\D^{(0,r]}(T)=\big(\AA_\Delta^{(0,r]}\otimes_{\ZZ_p}T\big)^{H_{K,\Delta}}$. This is an $\AA_{K,\Delta}^{(0,r]}$-module endowed with an action of $\Gamma_{K,\Delta}$. Moreover, for each $\alpha\in\Delta$, the operator $\varphi_\alpha$ induces a semi-linear map $\D^{(0,r]}(T)\to\D^{(0,r/p]}(T)$.
\end{nota}

\begin{lemm}\label{lemmSurconvr}
Let $T\in\Rep_{\ZZ_p}(G_{K,\Delta})$ be free of rank $d$. There exists $r_T\in\QQ_{>0}$ such that for all $r\in\QQ\cap]0,r_T]$, the natural map
$$\alpha^{(0,r]}(T)\colon\AA_\Delta^{(0,r]}\otimes_{\AA_{K,\Delta}^{(0,r]}}\D^{(0,r]}(T)\to\AA_\Delta^{(0,r]}\otimes_{\ZZ_p}T$$
is an isomorphism and $\D^{(0,r]}(T)$ is projective of rank $d$ over $\AA_{K,\Delta}^{(0,r]}$.
\end{lemm}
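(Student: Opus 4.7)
The plan is to deduce this lemma from Theorem \ref{theoPZ} by a descent argument to a finite level in the parameter~$r$, together with a fidelity/flatness comparison between $\AA_\Delta^{(0,r]}$ and $\AA_\Delta^\dagger$. Concretely, theorem \ref{theoPZ} provides the finitely generated projective $\AA_{K,\Delta}^\dagger$-module $\D^\dagger(T)$ of rank $d$, together with a canonical $G_{K,\Delta}$- and $\varphi_\Delta$-equivariant isomorphism
$$\alpha^\dagger(T)\colon\AA_\Delta^\dagger\otimes_{\AA_{K,\Delta}^\dagger}\D^\dagger(T)\isomto\AA_\Delta^\dagger\otimes_{\ZZ_p}T$$
(which expresses the fact that the functor $\D^\dagger$ is quasi-inverse to $M\mapsto(\AA_\Delta^\dagger\otimes_{\AA_{K,\Delta}^\dagger}M)^{\varphi_\Delta=1}$).

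First I would descend the module $\D^\dagger(T)$. Pick a finite generating family $x_1,\ldots,x_N$ of $\D^\dagger(T)$ as an $\AA_{K,\Delta}^\dagger$-module, and the finitely many relations between them defining a presentation. Since $\AA_{K,\Delta}^\dagger=\varinjlim\limits_{r>0}\AA_{K,\Delta}^{(0,r]}$ and $\D^\dagger(T)=\varinjlim\limits_{r>0}\D^{(0,r]}(T)$, there exists $r_0\in\QQ_{>0}$ such that $x_1,\ldots,x_N\in\D^{(0,r_0]}(T)$ and all chosen relations hold in $\AA_{K,\Delta}^{(0,r_0]}$. Choosing a $\ZZ_p$-basis $t_1,\ldots,t_d$ of $T$ and writing $1\otimes t_j=\sum_ib_{ij}x_i$ via $\alpha^\dagger(T)^{-1}$, each $b_{ij}\in\AA_\Delta^\dagger$ lies in some $\AA_\Delta^{(0,s]}$; shrinking further, fix $r_T\leq r_0$ so that for every $r\leq r_T$ the coefficients $b_{ij}$ belong to $\AA_\Delta^{(0,r]}$.

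Second, I verify that $\alpha^{(0,r]}(T)$ is an isomorphism for $r\leq r_T$. Surjectivity is immediate from the displayed decomposition of $1\otimes t_j$. For injectivity, consider the commutative square
$$\xymatrix@R=14pt{
\AA_\Delta^{(0,r]}\otimes_{\AA_{K,\Delta}^{(0,r]}}\D^{(0,r]}(T)\ar[r]^-{\alpha^{(0,r]}(T)}\ar[d] & \AA_\Delta^{(0,r]}\otimes_{\ZZ_p}T\ar@{^(->}[d]\\
\AA_\Delta^\dagger\otimes_{\AA_{K,\Delta}^\dagger}\D^\dagger(T)\ar[r]^-{\alpha^\dagger(T)}_-{\sim} & \AA_\Delta^\dagger\otimes_{\ZZ_p}T}$$
whose right vertical map is injective since $\AA_\Delta^{(0,r]}\hookrightarrow\AA_\Delta^\dagger$ and $T$ is $\ZZ_p$-free. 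Thus $\alpha^{(0,r]}(T)$ is injective as soon as the left vertical map is, which amounts to the flatness of the transition $\AA_{K,\Delta}^{(0,r]}\to\AA_{K,\Delta}^\dagger$ — a filtered colimit along inclusions — applied to the finitely presented module obtained after step one.

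Finally I deduce that $\D^{(0,r]}(T)$ is projective of rank $d$ over $\AA_{K,\Delta}^{(0,r]}$. Once $\alpha^{(0,r]}(T)$ is bijective, base change gives a free $\AA_\Delta^{(0,r]}$-module of rank $d$, so it suffices to descend projectivity along $\AA_{K,\Delta}^{(0,r]}\to\AA_\Delta^{(0,r]}$. I would identify $\D^{(0,r]}(T)$ with $\D^\dagger(T)\cap(\AA_\Delta^{(0,r]}\otimes_{\ZZ_p}T)$ inside $\AA_\Delta^\dagger\otimes_{\ZZ_p}T$ (which is straightforward from the definition and the equality $\big(\AA_\Delta^{(0,r]}\big)^{H_{K,\Delta}}=\AA_{K,\Delta}^{(0,r]}$ from \cite[Lemma 3.2.1]{PZ}), and then use that $\D^\dagger(T)$ is projective over $\AA_{K,\Delta}^\dagger=\varinjlim\AA_{K,\Delta}^{(0,r]}$ to conclude that the intersection, which is the pullback along a flat map of a projective module of rank $d$, is itself projective of rank $d$. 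The main obstacle is precisely this descent of projectivity: the rings involved are not Noetherian for $\delta>1$, so one cannot invoke the usual finiteness arguments directly; instead one must combine the flatness of $\AA_{K,\Delta}^{(0,r]}\to\AA_{K,\Delta}^\dagger$ with the structural results of \cite{PZ} and \cite{CKZ} describing $\AA_\Delta^{(0,r]}$ as a filtered union of finite étale extensions of $\AA_{K,\Delta}^{(0,r]}$, on each of which classical Galois descent of projectivity applies.
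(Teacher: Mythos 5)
Your overall strategy — descend the overconvergence theorem \ref{theoPZ} from level $\dagger$ to a fixed radius $(0,r]$ by shrinking $r$ — is the same as the paper's, but the two arguments then part ways, and your version has two genuine gaps.

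First, the injectivity of $\alpha^{(0,r]}(T)$. You reduce it to the injectivity of the left vertical map $\AA_\Delta^{(0,r]}\otimes_{\AA_{K,\Delta}^{(0,r]}}\D^{(0,r]}(T)\to\AA_\Delta^\dagger\otimes_{\AA_{K,\Delta}^\dagger}\D^\dagger(T)$ and then assert that this ``amounts to the flatness of the transition $\AA_{K,\Delta}^{(0,r]}\to\AA_{K,\Delta}^\dagger$ — a filtered colimit along inclusions.'' That phrase does not prove flatness: a filtered colimit of rings is flat over one of its terms only when the \emph{transition maps themselves} $\AA_{K,\Delta}^{(0,r]}\hookrightarrow\AA_{K,\Delta}^{(0,r']}$ ($r'\leq r$) are flat, and that is exactly what has to be established. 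In the univariate case these rings are B\'ezout domains and the inclusions of domains are automatically flat, but for $\delta>1$ the rings $\AA_{K,\Delta}^{(0,r]}$ are neither domains nor Noetherian, and you give no structural reason for flatness. Moreover, even granting that flatness, the left vertical map also changes the scalar ring $\AA_\Delta^{(0,r]}\to\AA_\Delta^\dagger$ and the module $\D^{(0,r]}(T)\hookrightarrow\D^\dagger(T)$ simultaneously, so the reduction to a single flatness statement is not correct as stated.

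Second, the projectivity of $\D^{(0,r]}(T)$. You propose to identify $\D^{(0,r]}(T)$ with $\D^\dagger(T)\cap\big(\AA_\Delta^{(0,r]}\otimes_{\ZZ_p}T\big)$ and ``use that $\D^\dagger(T)$ is projective over $\AA_{K,\Delta}^\dagger$'' to conclude that this intersection (a ``pullback along a flat map'') is projective. The intersection is not a categorical pullback in the sense that would give projectivity, and you candidly flag this as ``the main obstacle'' without resolving it, appealing instead to unreferenced structural results. So this part is incomplete as written.

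The paper's proof sidesteps both issues by working \emph{locally}. Since $\D^\dagger(T)$ is finitely generated projective, one chooses finitely many $a_1,\ldots,a_s$ generating the unit ideal of $\AA_{K,\Delta}^\dagger$ so that each $\D^\dagger(T)_{a_i}$ is \emph{free} of rank $d$, with a basis $(x_{i,j})_j$ that one may take inside $\D^{(0,r_T]}(T)_{a_i}$ after shrinking $r_T$. In this basis the isomorphism $\alpha^\dagger(T)_{a_i}$ is an invertible matrix $M_i\in\GL_d\big((\AA_\Delta^\dagger)_{a_i}\big)$; shrinking $r_T$ further puts $M_i\in\GL_d\big((\AA_\Delta^{(0,r_T]})_{a_i}\big)$. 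Then the composite from the free module $D_i:=\bigoplus_j(\AA_{K,\Delta}^{(0,r_T]})_{a_i}x_{i,j}$ through $\D^{(0,r_T]}(T)_{a_i}$ to $(\AA_\Delta^{(0,r_T]})_{a_i}\otimes_{\ZZ_p}T$ is an isomorphism, forcing $(\alpha^{(0,r_T]}(T))_{a_i}$ to be bijective; taking $H_{K,\Delta}$-invariants then identifies $\D^{(0,r_T]}(T)_{a_i}$ with the free module $D_i$. Local freeness on the finite trivializing family $(a_i)$ gives projectivity of $\D^{(0,r_T]}(T)$ directly, with no appeal to flatness of the transition maps and no separate descent-of-projectivity step. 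I would suggest you replace your global presentation and intersection argument by this local trivialization: it is both shorter and avoids the two points above.
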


\begin{proof}
The $\AA_{K,\Delta}^\dagger$-module $\D^\dagger(T)$ is projective of finite type: we can find a finite set $(a_i)_{1\leq i\leq s}$ generating the unit ideal in $\AA_{K,\Delta}^\dagger$ such that the localization $\D^\dagger(T)_{a_i}$ is free of rank $d=\rg_{\ZZ_p}(T)$ over $(\AA_{K,\Delta}^\dagger)_{a_i}$ for all $i\in\{1,\ldots,s\}$. We can choose $r_T\in\QQ_{>0}$ small enough such that $a_i\in\AA_{K,\Delta}^{(0,r_T]}$ and there exists a $(\AA_{K,\Delta}^\dagger)_{a_i}$-basis $(x_{i,j})_{1\leq j\leq d}$ of $\D^\dagger(T)_{a_i}$ made of elements in $\D^{(0,r_T]}(T)_{a_i}$ for all $i\in\{1,\ldots,s\}$. Put $D_i=\bigoplus\limits_{j=1}^d\big(\AA_{K,\Delta}^{(0,r]}(T)\big)_{a_i}x_{i,j}\subset\D^{(0,r]}(T)_{a_i}$. By theorem \ref{theoPZ}, the natural map
$$\alpha^\dagger(T)\colon\AA_\Delta^\dagger\otimes_{\AA_{K,\Delta}^\dagger}\D^\dagger(T)\to\AA_\Delta^\dagger\otimes_{\ZZ_p}T$$
is an isomorphism. Localizing, this provides an isomorphism $\big(\AA_\Delta^\dagger\big)_{a_i}\otimes_{\AA_{K,\Delta}^\dagger}\D^\dagger(T)\to\big(\AA_\Delta^\dagger\big)_{a_i}\otimes_{\ZZ_p}T$, which implies that the localization $(\alpha^{(0,r_T]}(T))_{a_i}\colon\big(\AA_\Delta^{(0,r]}\big)_{a_i}\otimes_{\AA_{K,\Delta}^{(0,r]}}\D^{(0,r]}(T)\to\big(\AA_\Delta^{(0,r]}\big)_{a_i}\otimes_{\ZZ_p}T$ is injective. In the basis $(1\otimes x_{i,j})_{1\leq j\leq d}$ and the basis induced by any basis of $T$ over $\ZZ_p$, this isomorphism is given by a matrix $M_i\in\GL_d\big(\big(\AA_\Delta^\dagger\big)_{a_i}\big)$. Shrinking $r_T$ further if necessary, we may assume that $M_i\in\GL_d\big(\big(\AA_\Delta^{(0,r_T]}\big)_{a_i}\big)$. This means that the composite map
$$\big(\AA_\Delta^{(0,r_T]}\big)_{a_i}\otimes_{(\AA_{K,\Delta}^{(0,r_T]})_{a_i}}D_i\to\big(\AA_\Delta^{(0,r_T]}\big)_{a_i}\otimes_{\AA_{K,\Delta}^{(0,r_T]}}\D^{(0,r_T]}(T)\xrightarrow{(\alpha^{(0,r_T]}(T))_{a_i}}\big(\AA_\Delta^{(0,r_T]}\big)_{a_i}\otimes_{\ZZ_p}T$$
is an isomorphism. This implies that $(\alpha^{(0,r_T]}(T))_{a_i}$ is surjective, hence an isomorphism, so that the map $\big(\AA_\Delta^{(0,r_T]}\big)_{a_i}\otimes_{\AA_{K,\Delta}^{(0,r_T]}}D_i\to\big(\AA_\Delta^{(0,r_T]}\big)_{a_i}\otimes_{\AA_{K,\Delta}^{(0,r_T]}}\D^{(0,r_T]}(T)$ is an isomorphism as well. Taking invariants under $H_{K,\Delta}$ implies that the map $D_i\to\big(\AA_{K,\Delta}^{(0,r_T]}\big)_{a_i}\otimes_{\AA_{K,\Delta}^{(0,r_T]}}\D^{(0,r_T]}(T)$ is an isomorphism. As this holds for all $i\in\{1,\ldots,s\}$, this shows that $\alpha^{(0,r_T]}(T)$ is an isomorphism and that $\D^{(0,r_T]}(T)$ is projective of rank $d$ over $\AA_{K,\Delta}^{(0,r_T]}$. This implies that the similar statement holds when $r_T$ is replaced by any $r\in\QQ\cap]0,r_T]$.
\end{proof}

If $n\in\NN_{>0}$ and $r\geq r_n:=\frac{1}{(p-1)p^{n-1}}$, there is an injective map $i_n\colon\widetilde{\AA}^{(0,r]}\to\B_{\dR}^+$ such that $i_n(\varpi)=[\varepsilon^{1/p^n}]-1=\varepsilon^{(n)}\exp(t/p^n)-1$ and $i_n(\AA_K^{(0,r]})\subset K_n[\![t]\!]$ (\cf \cite[Proposition III.2.1]{CC2}). The tensor product of these maps, indexed by $\Delta$, induces a ring homomorphism
$$i_{n,\Delta}\colon\AA_{\Delta,\circ}^{(0,r]}\to\bigotimes\limits_{\alpha\in\Delta}\B_{\dR}^+\to\B_{\dR,\Delta}^+$$
(note that it is not injective in general, since the tensor product in the LHS is taken over $\ZZ_p$ whereas it is  taken over $F_0$ in the RHS). It restricts into a ring homomorphism
$$i_{n,\Delta}\colon\AA_{K,\Delta,\circ}^{(0,r]}\to\bigotimes\limits_{\alpha\in\Delta}K_n[\![t_\alpha]\!]\to\l_{\dR,\Delta}^+.$$

\begin{lemm}\label{lemmextinAf0Delta}
If $r>\max\{r_n,\delta p^{-n}\}$, the restriction of $i_{n,\Delta}$ to $\AA_{F_0,\Delta,\circ}^{(0,r]}$ extends into a map $\AA_{F_0,\Delta}^{(0,r]}\to F_{0,\infty}[\![t_\alpha]\!]\to\l_{\dR,\Delta}^+$.
\end{lemm}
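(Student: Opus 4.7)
The plan is to define the extension termwise by substitution $\varpi_\alpha \mapsto i_n(\varpi_\alpha)$ and to verify convergence in the $(t_\alpha)$-adic topology of $\l_{\dR,\Delta}^+$, by combining uniform $p$-adic valuation estimates on $i_n(\varpi_\alpha)^{n_\alpha}$ with the growth condition defining $\AA_{F_0,\Delta}^{(0,r]}$.

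First, I would establish a uniform $p$-adic valuation estimate for the coefficients of $i_n(\varpi)^m$ in $F_{0,n}[\![t]\!]$, for arbitrary $m \in \ZZ$. Starting from $i_n(\varpi) = (\varepsilon^{(n)}-1) + \varepsilon^{(n)}(\exp(t/p^n)-1)$ and factoring out $\pi_n := \varepsilon^{(n)}-1$ (of $p$-adic valuation $r_n$), the expansion $(1+w)^m = \sum_{j} \binom{m}{j} w^j$ (understood as a geometric series for $m<0$) yields, after tracking valuations of the binomial and Taylor coefficients, a bound of the form
\[
v_p\bigl([t^k]\,i_n(\varpi)^m\bigr) \;\geq\; m\,r_n - C(k,n),
\]
where $C(k,n)$ is independent of $m$. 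Tensoring over $\Delta$, the coefficient of $\prod_\alpha t_\alpha^{k_\alpha}$ in $\prod_\alpha i_n(\varpi_\alpha)^{n_\alpha}$ lies in the $p$-adically complete finite $F_0$-algebra $F_{0,n,\Delta} = F_{0,n}^{\otimes\Delta}$, and has $p$-adic valuation at least $r_n \sum_\alpha n_\alpha - C_{\underline{k},n}$, with $C_{\underline{k},n}$ independent of $\underline{n}$.

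Next, for $x = \sum_{\underline{n}\in\ZZ^\Delta} c_{\underline{n}} \prod_\alpha \varpi_\alpha^{n_\alpha} \in \AA_{F_0,\Delta}^{(0,r]}$, set $i_{n,\Delta}(x) := \sum_{\underline{n}} c_{\underline{n}} \prod_\alpha i_n(\varpi_\alpha)^{n_\alpha}$. To verify convergence in the $(t_\alpha)$-adic topology, fix $\underline{k}\in\NN^\Delta$: the coefficient of $t^{\underline{k}}$ is $\sum_{\underline{n}} c_{\underline{n}} \cdot [t^{\underline{k}}]\prod_\alpha i_n(\varpi_\alpha)^{n_\alpha}$, whose general term has $p$-adic valuation at least $v_p(c_{\underline{n}}) + r_n\sum_\alpha n_\alpha - C_{\underline{k},n}$. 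Using $\sum_\alpha n_\alpha \geq \delta\min\{n_\alpha\}$, this is at least $v_p(c_{\underline{n}}) + \delta r_n\min\{n_\alpha\} - C_{\underline{k},n}$. The growth condition $v_p(c_{\underline{n}}) + \tfrac{rp}{p-1}\min\{n_\alpha\} \to +\infty$ combined with the hypothesis $r > \delta p^{-n}$ (equivalent to $\tfrac{rp}{p-1} > \delta r_n$) forces this lower bound to tend to $+\infty$ as $|\underline{n}|\to\infty$, giving $p$-adic convergence of the coefficient. Completeness of $F_{0,n,\Delta}$ places the limit in $F_{0,n,\Delta} \subset F_{0,\Delta,\infty}$, so the assembled series lies in $F_{0,\Delta,\infty}[\![t_\alpha]\!]_{\alpha\in\Delta} \subset \l_{\dR,\Delta}^+$. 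That the resulting map is a ring homomorphism extending $i_{n,\Delta}$ on $\AA_{F_0,\Delta,\circ}^{(0,r]}$ is automatic from the termwise definition, which agrees with the original map on pure tensors.

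The main obstacle I anticipate is the honest derivation of the valuation estimate in Step 1 uniformly in $m$, particularly for $m<0$: each summand $\binom{m}{j}w^j$ implicitly carries factors of $\pi_n^{-1}$ arising upon inversion, and one must show that the $t$-adic order of $w^j$ (growing linearly in $j$) suffices to bound the denominator growth with effective slope exactly $r_n$ rather than something worse. This precise matching is what forces the combined hypothesis $r > \max\{r_n,\delta p^{-n}\}$: the bound $r > r_n$ is inherited from the univariate case ensuring $i_n$ itself exists and controls the positive part, while $r > \delta p^{-n}$ arises specifically from the $\delta$-fold product, via the conversion from $\sum_\alpha n_\alpha$ to $\min_\alpha n_\alpha$ that bridges the valuation estimate to the defining growth condition of $\AA_{F_0,\Delta}^{(0,r]}$.
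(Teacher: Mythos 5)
Your proposal is correct and follows the same overall strategy as the paper: define the extension termwise by substitution, then verify that each coefficient of $\prod_\alpha t_\alpha^{m_\alpha}$ converges $p$-adically by combining a valuation lower bound on $[t^{\underline{m}}]\prod_\alpha i_n(\varpi_\alpha)^{n_\alpha}$ with the growth condition defining $\AA_{F_0,\Delta}^{(0,r]}$, exactly as in the printed proof (which works modulo $\Fil^{s+1}\l_{\dR,\Delta}^+$).

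The one genuine difference is in the shape of the valuation estimate, and yours is the tighter of the two. The paper expands each factor around the constant term $(\varepsilon^{(n)}-1)^{n_i}$, so its lower bound $v_p\geq r_n\sum_{m_i=0}n_i - n\lvert\underline{m}\rvert$ collects the $r_n n_i$ contribution only from the factors where $m_i=0$; when all $m_i\neq 0$ the bound degenerates to a constant and the assertion that it tends to $+\infty$ for $\mu_{\underline n}\geq 0$ requires the reader to supply an additional appeal to the growth condition forcing $v_p(a_{n_1}\cdots a_{n_\delta})\to\infty$ when $\mu$ stays bounded. You instead factor $i_n(\varpi)=(\varepsilon^{(n)}-1)(1+w)$ with $w$ of positive $t$-adic order, which yields the uniform bound $v_p\bigl([t^{\underline m}]\prod_\alpha i_n(\varpi_\alpha)^{n_\alpha}\bigr)\geq r_n\sum_\alpha n_\alpha - C(\underline m,n)$ valid for \emph{all} $\underline n\in\ZZ^\Delta$; the key point you correctly flag --- that $v_p\binom{m}{j}\geq 0$ for all $m\in\ZZ$ (binomial coefficients are integers) and $[t^k]w^j=0$ for $j>k$ --- makes $C$ independent of $m$ even for negative $m$, so the ``slope'' really is $r_n$ and not worse. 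With this sharper estimate the two sign cases for $\mu=\min_\alpha n_\alpha$ are handled symmetrically: for $\mu<0$ one uses $r_n\sum_\alpha n_\alpha\geq\delta r_n\mu>\tfrac{rp}{p-1}\mu$ (the inequality $r>\delta p^{-n}$ entering precisely here), while for $\mu\geq 0$ one should not pass all the way down to $\delta r_n\mu$ --- which can stay bounded --- but keep $r_n\sum_\alpha n_\alpha\geq r_n\max_\alpha n_\alpha\to\infty$ (or, if $\mu$ stays bounded, note that the growth condition forces $v_p(c_{\underline n})\to\infty$). Spelling out this bifurcation would close the one small compression in your last paragraph; once that is done your argument is complete and, to my mind, a cleaner version of the one in the paper.
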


\begin{proof}
This is checked modulo $\Fil^{s+1}\l_{\dR,\Delta}^+$ for all $s\in\NN$. If $x=\sum\limits_{\underline{n}\in\ZZ^\Delta}(a_{n_1}\varpi^{n_1})\otimes\cdots\otimes(a_{n_\delta}\varpi^{n_\delta})\in\AA_{F_0,\Delta}^{(0,r]}$, we have to check that the series $i_{n,\Delta}(x)=\sum\limits_{\underline{n}\in\ZZ^\Delta}a_{n_1}\cdots a_{n_\delta}\prod\limits_{i=1}^\delta\big(\varepsilon^{(n)}\exp(t_{\alpha_i}/p^n)-1\big)^{n_i}$ converges in $F_{0,\infty}[t_\alpha]_{\alpha\in\Delta}/(t_\alpha)_{\alpha\in\Delta}^{s+1}$ for the $p$-adic topology. For each $\underline{n}\in\ZZ^\Delta$, we have
\begin{align*}
\prod\limits_{i=1}^\delta\Big(\varepsilon^{(n)}\exp(t_{\alpha_i}/p^n)-1\Big)^{n_i} &= \prod\limits_{i=1}^\delta\Big(\sum\limits_{k_i=0}^{n_i}(-1)^{n_i-k_i}\big(\varepsilon^{(n)}\big)^{k_i}\tbinom{n_i}{k_i}\exp(k_it_{\alpha_i}/p^n)\Big)\\
&= \prod\limits_{i=1}^\delta\Big((\varepsilon^{(n)}-1)^{n_i}+\sum\limits_{k_i=0}^{n_i}(-1)^{n_i-k_i}\big(\varepsilon^{(n)}\big)^{k_i}\tbinom{n_i}{k_i}(\exp(k_it_{\alpha_i}/p^n)-1)\Big)
\end{align*}
If $\underline{m}=(m_1,\ldots,m_\delta)\in\NN^\Delta$, the coefficient of $t_{\alpha_1}^{m_1}\cdots t_{\alpha_\delta}^{m_\delta}$ in the latter expression is
$$c_{n,\underline{m}}:=(\varepsilon^{(n)}-1)^{\sum\limits_{\substack{1\leq i\leq\delta\\ m_i=0}}n_i}\prod\limits_{\substack{1\leq i\leq\delta\\ m_i\neq0}}\Big(\sum\limits_{k_i=0}^{n_i}(-1)^{n_i-k_i}\big(\varepsilon^{(n)}\big)^{k_i}\tbinom{n_i}{k_i}\big(\tfrac{k_i}{p^n}\big)^{m_i}\Big)$$
whose valuation is larger that $\frac{1}{(p-1)p^{n-1}}\Big(\sum\limits_{\substack{1\leq i\leq\delta\\ m_i=0}}n_i\Big)-n\abs{\underline{m}}$. The coefficient of $t_{\alpha_1}^{m_1}\cdots t_{\alpha_\delta}^{m_\delta}$ in $i_{n,\Delta}(x)$ is the sum of the series $\sum\limits_{\underline{n}\in\ZZ^\Delta}a_{n_1}\cdots a_{n_\delta}c_{n,\underline{m}}$. If $\underline{n}\in\ZZ^\Delta$, we have $v_p(a_{n_1}\cdots a_{n_\delta}c_{n,\underline{m}})\geq\frac{1}{(p-1)p^{n-1}}\Big(\sum\limits_{\substack{1\leq i\leq\delta\\ m_i=0}}n_i\Big)-n\abs{\underline{m}}$ if $\mu_{\underline{n}}:=\min\{n_1,\ldots,n_\delta)\geq0$, and this goes to $+\infty$ when $\abs{\underline{n}}\to\infty$. Assume $\mu_{\underline{n}}<0$: we have
\begin{align*}
v_p(a_{n_1}\cdots a_{n_\delta}c_{n,\underline{m}}) &= v_p(a_{n_1}\cdots a_{n_\delta})+\tfrac{rp}{p-1}\mu_{\underline{n}}+v_p(c_{n,\underline{m}})-\tfrac{rp}{p-1}\mu_{\underline{n}}\\
&\geq v_p(a_{n_1}\cdots a_{n_\delta})+\tfrac{rp}{p-1}\mu_{\underline{n}}+\tfrac{1}{(p-1)p^{n-1}}\sum\limits_{\substack{1\leq i\leq\delta\\ m_i=0}}n_i-n\abs{\underline{m}}-\tfrac{rp}{p-1}\mu_{\underline{n}}\\
&\geq v_p(a_{n_1}\cdots a_{n_\delta})+\tfrac{rp}{p-1}\mu_{\underline{n}}+\tfrac{1}{(p-1)p^{n-1}}\delta\mu_{\underline{n}}-n\abs{\underline{m}}-\tfrac{rp}{p-1}\mu_{\underline{n}}\\
&\geq v_p(a_{n_1}\cdots a_{n_\delta})+\tfrac{rp}{p-1}\mu_{\underline{n}}-n\abs{\underline{m}}+\tfrac{p}{p-1}\big(\tfrac{\delta}{p^n}-r\big)\mu_{\underline{n}}\geq v_p(a_{n_1}\cdots a_{n_\delta})+\tfrac{rp}{p-1}\mu_{\underline{n}}-n\abs{\underline{m}}.
\end{align*}
As $\lim\limits_{\abs{\underline{n}}\to\infty}v_p(a_{n_1}\cdots a_{n_\delta})+\tfrac{rp}{p-1}\min\{n_1,\ldots,n_\delta\}=+\infty$ by hypothesis, this shows that the series indeed converges.
\end{proof}

If $r>\max\{r_n,\delta p^{-n}\}$, lemma \ref{lemmextinAf0Delta} implies that $\B_{\dR,\Delta}^+$ is equipped with a $\AA_{F_0,\Delta}^{(0,r]}$-algebra structure: the map $i_{n,\Delta}\colon\AA_{\Delta,\circ}^{(0,r]}\to\B_{\dR,\Delta}^+$ extends into a ring homomorphism
$$i_{n,\Delta}\colon\AA_\Delta^{(0,r]}\to\B_{\dR,\Delta}^+.$$

\begin{theo}\label{theocompphiGammaDdif}
Let $T\in\Rep_{\ZZ_p}(G_{K,\Delta})$. If $r\in\QQ_{>0}$ is small enough, there is a $\Gamma_{K,\Delta}$-equivariant isomorphism of $\l_{\dR,\Delta}^+$-modules
$$\l_{\dR,\Delta}\otimes_{\AA_{K,\Delta}^{(0,r]}}\D^{(0,r]}(T)\isomto\D_{\dif}\big(T\big[\tfrac{1}{p}\big]\big).$$
\end{theo}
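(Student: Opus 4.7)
The strategy is to follow Berger's argument from \cite[Corollaire 5.8]{BE}: base-change the isomorphism $\alpha^{(0,r]}(T)$ of Lemma \ref{lemmSurconvr} along $i_{n,\Delta}$, take $H_{K,\Delta}$-invariants, and identify the result with $\D_{\dif}^+(V)$ via the decompletion theory of Section \ref{sectSenconnection}.

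Shrink $r\in\QQ_{>0}$ so that Lemma \ref{lemmSurconvr} applies (making $\alpha^{(0,r]}(T)$ an isomorphism and $\D^{(0,r]}(T)$ projective of rank $d:=\rg_{\ZZ_p}T$), and further so that Lemma \ref{lemmextinAf0Delta} yields a $\Gamma_{K,\Delta}$-equivariant ring homomorphism $i_{n,\Delta}\colon\AA_\Delta^{(0,r]}\to\B_{\dR,\Delta}^+$ whose restriction to $\AA_{K,\Delta}^{(0,r]}$ lands in $\l_{\dR,\Delta}^+$. Setting $V=T\bigl[\tfrac{1}{p}\bigr]$ and base-changing $\alpha^{(0,r]}(T)$ along $i_{n,\Delta}$ produces a $G_{K,\Delta}$-equivariant, $\B_{\dR,\Delta}^+$-linear isomorphism $\B_{\dR,\Delta}^+\otimes_{\AA_{K,\Delta}^{(0,r]}}\D^{(0,r]}(T)\isomto\B_{\dR,\Delta}^+\otimes_{\QQ_p}V$. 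By projectivity of $\D^{(0,r]}(T)$, $H_{K,\Delta}$-invariants commute with the tensor product on the left (reducing Zariski-locally to the free case), so taking $H_{K,\Delta}$-invariants yields a $\Gamma_{K,\Delta}$-equivariant $\L_{\dR,\Delta}^+$-linear isomorphism
\[
\beta\colon\L_{\dR,\Delta}^+\otimes_{\AA_{K,\Delta}^{(0,r]}}\D^{(0,r]}(T)\isomto(\B_{\dR,\Delta}^+\otimes_{\QQ_p}V)^{H_{K,\Delta}}.
\]

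Restrict $\beta$ along $\l_{\dR,\Delta}^+\hookrightarrow\L_{\dR,\Delta}^+$: the resulting map $\l_{\dR,\Delta}^+\otimes_{\AA_{K,\Delta}^{(0,r]}}\D^{(0,r]}(T)\to(\B_{\dR,\Delta}^+\otimes_{\QQ_p}V)^{H_{K,\Delta}}$ is injective (by flatness of $\D^{(0,r]}(T)$), finitely generated on the source, and its image is $\Gamma_{K,\Delta}$-stable by equivariance of $i_{n,\Delta}$. By Proposition \ref{propdecompletionLdR} (characterization of $(-)_{\free}$ as the union of $\Gamma_{K,\Delta}$-stable finitely generated sub-$\l_{\dR,\Delta}^+$-modules), the image lies in $\D_{\dif}^+(V)=\bigl((\B_{\dR,\Delta}^+\otimes_{\QQ_p}V)^{H_{K,\Delta}}\bigr)_{\free}$, so we obtain an injective $\Gamma_{K,\Delta}$-equivariant $\l_{\dR,\Delta}^+$-linear map
\[
\widetilde\iota\colon\l_{\dR,\Delta}^+\otimes_{\AA_{K,\Delta}^{(0,r]}}\D^{(0,r]}(T)\hookrightarrow\D_{\dif}^+(V).
\]
Base-changing $\widetilde\iota$ back to $\L_{\dR,\Delta}^+$ recovers $\beta$ (using Proposition \ref{propdecompletionLdR} applied to $\D_{\dif}^+(V)$), so $\widetilde\iota$ becomes an isomorphism after this extension. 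Faithful flatness of $\L_{\dR,\Delta}^+$ over $\l_{\dR,\Delta}^+$ then forces $\widetilde\iota$ itself to be an isomorphism, and inverting $t_\Delta$ gives the stated isomorphism.

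The delicate point is the faithful flatness of $\L_{\dR,\Delta}^+$ over $\l_{\dR,\Delta}^+$, as only the graded quotient version is directly provided (Lemma \ref{lemmgrL} combined with Proposition \ref{propLfidplat}). One proves it by a Mittag-Leffler-type argument on the $\Fil^\bullet$-adic completion, in the spirit of the proof of Theorem \ref{theocohoHC}; alternatively, after Zariski-localizing to make $\D^{(0,r]}(T)$ free, one argues on the determinant of $\widetilde\iota$, reducing the claim that it is a unit in $\l_{\dR,\Delta}^+$ to its reduction modulo $\Fil^1$, where the assertion becomes that an element of $K_{\Delta,\infty}$ whose image in $L_\Delta$ is a unit is itself a unit, an immediate consequence of the faithful flatness supplied by Proposition \ref{propLfidplat}.
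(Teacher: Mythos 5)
Your proof is correct and follows the same overall architecture as the paper's: base-change along $i_{n,\Delta}$, take $H_{K,\Delta}$-invariants, and decomplete. The genuine point of divergence is how the containment $\l_{\dR,\Delta}^+\otimes_{\AA_{K,\Delta}^{(0,r]}}\D^{(0,r]}(T)\hookrightarrow\D_{\dif}^+(V)$ is established. The paper first invokes \cite[Lemma 3.2.4]{PZ} to reduce to $\big(\AA_{\Delta,\circ}^{(0,r]}\otimes_{\ZZ_p}T\big)^{H_{K,\Delta}}$, and then checks the containment componentwise by quoting the single-variable result \cite[Proposition 5.7]{BE}. You instead deduce it entirely internally from Proposition \ref{propdecompletionLdR}: the image under $\beta$ of $\l_{\dR,\Delta}^+\otimes\D^{(0,r]}(T)$ is a finitely generated $\Gamma_{K,\Delta}$-stable $\l_{\dR,\Delta}^+$-submodule of $(\B_{\dR,\Delta}^+\otimes V)^{H_{K,\Delta}}$, and hence lies in $(-)_{\free}$ by the characterization of that functor as a union of such submodules. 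This is cleaner and avoids the reduction to the classical case; in effect you are re-running the proof of \cite[Proposition 5.7]{BE} in the multivariable setting rather than citing it. You also make explicit the final surjectivity step, which the paper compresses into \emph{``it is enough to show that $\D^{(0,r]}(T)$ is mapped to $\D_{\dif}^+(V)$''} but which really does require the faithful flatness of $\L_{\dR,\Delta}^+$ over $\l_{\dR,\Delta}^+$ (or your determinant argument). Of your two proposed justifications for that step, the second (reduce $\det\widetilde\iota$ modulo $\Fil^1$ after Zariski-localizing to the free case, then use Proposition \ref{propLfidplat}) is the one that works cleanly; the vague appeal to a ``Mittag--Leffler-type argument'' does not by itself produce flatness, so I would drop it in favor of the determinant argument.
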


\begin{proof}
By lemma \ref{lemmSurconvr}, there exists $r_T\in\QQ_{>0}$ such that for all $r\in\QQ\cap]0,r_T]$, the natural map
$$\AA_\Delta^{(0,r]}\otimes_{\AA_{K,\Delta}^{(0,r]}}\D^{(0,r]}(T)\to\AA_\Delta^{(0,r]}\otimes_{\ZZ_p}T$$
is a $G_{K,\Delta}$-equivariant isomorphism. Take $n\in\NN_{>0}$ large enough such that $\max\{r_n,\delta p^{-n}\}\leq r_T$, and assume that $r\in\QQ_{>0}$ is such that $\max\{r_n,\delta p^{-n}\}\leq r\leq r_T$. Extending the scalars to $\B_{\dR,\Delta}^+$ via $i_{n,\Delta}$ provides a $G_{K,\Delta}$-equivariant isomorphism
$$\B_{\dR,\Delta}^+\otimes_{\AA_{K,\Delta}^{(0,r]}}\D^{(0,r]}(T)\to\B_{\dR,\Delta}^+\otimes_{\ZZ_p}T\simeq\B_{\dR,\Delta}^+\otimes_{\l_{\dR,\Delta}^+}\D_{\dif}^+(V)$$
where $V=T\big[\frac{1}{p}\big]\in\Rep_{\QQ_p}(G_{K,\Delta})$. Taking invariants under $H_{K,\Delta}$ gives a $\Gamma_{K,\Delta}$-equivariant isomorphism
$$\L_{\dR,\Delta}^+\otimes_{\AA_{K,\Delta}^{(0,r]}}\D^{(0,r]}(T)\isomto\L_{\dR,\Delta}^+\otimes_{\l_{\dR,\Delta}^+}\D_{\dif}^+(V).$$
Applying the functor $X\mapsto X_{\free}$ provides an isomorphism
$$\big(\L_{\dR,\Delta}^+\otimes_{\AA_{K,\Delta}^{(0,r]}}\D^{(0,r]}(T)\big)_{\free}\isomto\D_{\dif}^+(V)$$
and it remains to show that $\big(\L_{\dR,\Delta}^+\otimes_{\AA_{K,\Delta}^{(0,r]}}\D^{(0,r]}(T)\big)_{\free}=\l_{\dR,\Delta}^+\otimes_{\AA_{K,\Delta}^{(0,r]}}\D^{(0,r]}(T)$. As $\D^{(0,r]}(T)$ is projective of finite rank over $\AA_{K,\Delta}^{(0,r]}$, it is enough to show that $\D^{(0,r]}(T)$ is mapped to $\D_{\dif}^+(V)$ by $i_{n,\Delta}$. By \cite[Lemma 3.2.4]{PZ}, we have $\D^\dagger(T)=\AA_{F_0,\Delta}^\dagger\otimes_{\AA_{F_0,\Delta,\circ}^\dagger}\big(\AA_{\Delta,\circ}^\dagger\otimes_{\ZZ_p}T\big)^{H_{K,\Delta}}$: similarly, we have $\D^{(0,r]}(T)=\AA_{F_0,\Delta}^{(0,r]}\otimes_{\AA_{F_0,\Delta,\circ}^{(0,r]}}\big(\AA_{\Delta,\circ}^{(0,r]}\otimes_{\ZZ_p}T\big)^{H_{K,\Delta}}$, so we are reduced to check that $\big(\AA_{\Delta,\circ}^{(0,r]}\otimes_{\ZZ_p}T\big)^{H_{K,\Delta}}$ maps to $\D_{\dif}^+(V)$ by $i_{n,\Delta}$. Working componentwise, this follows from \cite[Proposition 5.7]{BE}.
\end{proof}


\end{document}